\newcommand{\Ci}{\mathscr{C}}
\newcommand{\Sone}{\mathbb{S}^1}
\newcommand{\Es}{\mathscr{S}}
\newcommand{\N}{\mathbb{N}}
\newcommand{\R}{\mathbb{R}}
\newcommand{\C}{\mathbb{C}}
\newcommand{\Z}{\mathbb{Z}}
\newcommand{\T}{\mathbb{T}}
\newcommand{\De}{\mathscr{D}}
\newcommand{\ve}{\epsilon}
\newcommand{\supp}{\operatorname{supp}}
\newcommand{\Arg}{\operatorname{Arg}}
\newcommand{\vertiii}[1]{{\left\vert\kern-0.25ex\left\vert\kern-0.25ex\left\vert #1 
    \right\vert\kern-0.25ex\right\vert\kern-0.25ex\right\vert}}
\newcommand{\esssup}{\operatorname*{ess\,sup}}
\newcommand{\essinf}{\operatorname*{ess\,inf}}
\newcommand{\Ker}{\operatorname{Ker}}
\newcommand{\Cor}{\operatorname{Cor}}
\theoremstyle{plain}
\newtheorem{thm}{Theorem}[section]
\newtheorem{cor}[thm]{Corollary}
\newtheorem{lem}[thm]{Lemma}
\newtheorem{prop}[thm]{Proposition}
\theoremstyle{definition}
\newtheorem{dfn}[thm]{Definition}
\theoremstyle{remark}
\newtheorem*{rmk}{Remark}
\newtheorem*{rmk1}{Remark 1}
\newtheorem*{rmk2}{Remark 2}
\numberwithin{equation}{section}
\begin{document}

\title[Random perturbations of partially expanding maps]{On the spectra of quenched random perturbations of partially expanding maps on the torus}

\date{\today}

\author{Yushi Nakano and Jens Wittsten}
\address{Graduate School of Human and Environmental Studies, 
Kyoto University,  
Yoshida Nihonmatsu-cho, Sakyo-ku,
Kyoto, 606-8501,
Japan}

\email{nakano.yushi.88m@st.kyoto-u.ac.jp, jensw@maths.lth.se}

\subjclass[2010]{37C30 (primary), 37D30, 37H99, 58J40 (secondary)}

\keywords{Random dynamical system, transfer operator,
partially expanding map, semiclassical analysis, decay rate of correlations}

\begin{abstract}
We consider quenched random perturbations of skew products of rotations on the 
unit circle over uniformly expanding maps on the unit circle. It is known
that if the skew product satisfies a certain condition (shown to be generic in the 
case of linear expanding maps), 
then the transfer operator of the skew product has a spectral gap. 
Using semiclassical analysis we show that the spectral gap is preserved under small random perturbations.
This implies exponential decay of quenched random correlation functions for 
smooth observables at small noise levels.
\end{abstract}

\maketitle

\section*{Introduction}\label{section:introduction}

Let $X$ be a compact smooth Riemannian manifold. 
Recall that a dynamical system $f:X\to X$ is said to be
mixing with respect to an invariant measure $\mu$ when $\mu(A\cap f^{-n}B)$
converges to $\mu(A) \mu(B)$ as time $n$ goes to infinity for any Borel sets $A$ and $B$.
This means that the events $A$ and $f^{-n}B$ are asymptotically independent, so mixing
indicates a certain amount of complexity of the dynamical system. 
For mixing dynamical systems, a fundamental question 
is how fast the correlation functions decay (see Section \ref{section:DR} for definitions).
In fact, if the correlation functions of a mixing system decay exponentially 
fast, then several other statistical properties of the dynamical system
also hold. For an extensive background on such matters we refer to 
Bonatti, D{\' \i}az and Viana~\cite{BDV04}*{Appendix E} and the references therein.

For a hyperbolic dynamical system $f$,
individual trajectories tend to have chaotic behavior. Statistical properties
of the system, such as exponential decay of correlations,
are therefore preferably obtained by instead studying how densities of points evolve
under a so-called transfer operator $M_f$ induced by $f$.
The typical approach to proving exponential decay of correlations
is through the construction of a functional space $\mathcal H$ adapted to the dynamics such that the transfer operator
$M_f:\mathcal H\to\mathcal H$ has a {\it spectral gap},
that is, there exists a disc of 
strictly smaller radius than the spectral radius of $M_f$ outside of which the spectrum of $M_f$ consists only of 
discrete eigenvalues of finite multiplicity. 
Essentially, exponential decay of correlations is equivalent to the existence of a gap in the spectrum between
the eigenvalue 1 and the second largest eigenvalue of $M_f$, counting multiplicities. (In the presence of noise,
spectral stability thus becomes a natural object of study.)
Early work in this area was done by 
Bowen, Ruelle and Sinai, 
see for example the newly revised edition of Bowen's book~\cite{Bowen08} for a historical account.
A celebrated construction of anisotropic Banach spaces
was later established by Blank, Keller and Liverani~\cite{BKL02}, 
whose approach has been further developed by several authors.
Recently, these techniques have been shown to also be applicable
to dynamical systems with a one dimensional
nonhyperbolic direction (such as hyperbolic flow), see
for example Liverani~\cite{Liverani04}, Butterley and Liverani~\cites{BuL07,Bul13} and Tsujii~\cites{Tsujii,TsujiiQC10};
see also Baladi and Liverani~\cite{BaL12}
for a historical account. As evidenced by the mentioned articles,
the spectral analysis becomes more delicate for systems with a
nonhyperbolic direction.

Let the two dimensional torus be denoted $\mathbb{T} ^2= \Sone \times \Sone$, where $\Sone=\mathbb{R} / \mathbb{Z}$.
Consider skew products on $\T^2$ of the form
\begin{equation}
(x,s)\mapsto
( E(x) , s+\tau(x)\text{ mod }1)
\end{equation}
where $E:\Sone\to\Sone$ is a hyperbolic system, and $\tau$ is a real valued function on $\Sone$.
Known as {\it compact group extensions}, these were studied by
Dolgopyat~\cite{Dolgopyat2002} who proved superpolynomial decay of correlations under Diophantine conditions
in the case when $E$ is an Anosov diffeomorphism.
Tsujii~\cite{Tsujii} considered the closely related model given by the semi-flow
obtained by suspending a uniformly expanding map $E$ under a ceiling function $\tau$,
and obtained a precise description of the spectra of 
the corresponding transfer operators by imposing a {\it transversality} condition on the dynamics.
Tsujii~\cite{Tsujii} showed that this condition is generic for linear maps $E$,
and that it fails precisely when the ceiling $\tau$ is cohomologous to a constant.
The corresponding smooth compact group extension was studied by Faure~\cite{Faure}
who introduced a similar condition on the dynamics, using
the terminology {\it partially captive} for such systems.
(These satisfy the transversality condition,
and the conditions are comparable when the expanding map $E$ is linear, see Section \ref{subsection:transversality} below.
In particular, if $f_0$ is partially captive then the function $\tau_0$ in \eqref{eq:unperturbedsystem}
cannot be cohomologous to a constant, see the remark following Definition \ref{def:randompartiallycaptive}.)
Here we mention the recent preprints by Butterley and Eslami~\cite{ButterleyEslami}
and Eslami~\cite{Eslami}, wherein the same dynamics is studied
under much weaker regularity assumptions, utilizing an extension of Tsujii's transversality condition.

The map studied by Faure~\cite{Faure}, which
throughout this article shall be referred to as our unperturbed dynamical system,
is the simplest model of a hyperbolic system with a central direction. It is defined as follows:
Let $g_0: \Sone \rightarrow  \Sone$ be a ${\Ci}^\infty$ diffeomorphism and let
$\tau_0: \Sone \rightarrow \mathbb{R}$ be a ${\Ci}^\infty$ function. 
Let $k\ge 2$ be a positive integer, and consider the skew product $f_0:\T^2\to\T^2$ of class $\Ci^\infty$
given by
\begin{equation}\label{eq:unperturbedsystem}
f_0:\binom{x}{s}\mapsto
\left( \! \! \begin{array}{cc} kg_0(x) & \!\! \mod 1\\ s+\frac{1}{2\pi}\tau_0(x) & \!\! \mod 1\end{array} \! \! \right)
\end{equation}
on the torus. The map
$E_0:x\mapsto kg_0(x)\!\mod 1$ is assumed to be an {\it expanding map} on $\Sone$ in the sense that
$\min_x E_0'(x)>1$, and we then say that $f_0$ is a {\it partially expanding map} on $\T^2$.
(Since the differential of $x\mapsto E_0(x)$ is linear and $T_x\Sone\simeq\R$,
it follows that $(dE_0)_x:T_x\Sone\to T_{E_0(x)}\Sone$ is a scalar,
which we denote by $E_0'(x)$.) 
Note that with this terminology, 
the derivative of an expanding map is always positive, and that $g_0$ is orientation-preserving
by assumption.

Faure~\cite{Faure}
establishes the existence of a spectral gap
through semiclassical analysis, which is an asymptotic theory in which
the Planck constant appearing in the Shr{\"o}dinger equation is regarded as a small
parameter $h>0$.
It is a fairly recent discovery that spectral properties of transfer operators of (partially) hyperbolic maps
are naturally studied within this framework, the ideas having appeared in Baladi and Tsujii~\cites{BaladiTsujii07,BaladiTsujii08} 
(see also Avila, Gou{\"e}zel and Tsujii~\cite{AvilaGouezelTsujii}), and formalized 
in a series of papers primarily by Faure, Roy and Sj{\"o}strand  
\cites{FR06,FaureRoySjo,FS11}.
This approach has been getting traction lately with contributions in this and related areas also by
Arnoldi~\cite{Arnoldi}, Arnoldi, Faure and Weich~\cite{ArnoldiFaureWeich},
Dyatlov and Zworski~\cite{DyatlovZworski}, Faure and Tsujii~\cites{FaureTsujii1,FaureTsujii2},
and Tsujii~\cite{TsujiiFBI12}, among others.
For partially expanding maps, the first two references are particularly relevant.
So far, the focus seems to have been on deterministic systems,
and one of our goals is to show that the semiclassical approach is also applicable in the
case of random perturbations.

To circumvent the lack of hyperbolicity of $f_0$ in the $s$ direction, Faure~\cite{Faure}
uses Fourier analysis in the $s$ direction to
decompose the transfer operator induced by $f_0$ 
into a collection of (weighted) transfer operators
of the expanding map $E_0:\Sone _x \to \Sone _x$, indexed by a Fourier parameter $\nu \in \Z$.
The resulting operators are examples of
Fourier integral operators, and thus naturally studied using
microlocal analysis (when $\nu\in\Z$ is fixed) and semiclassical analysis
(with a semiclassical parameter of size $h\sim 1/\lvert\nu\rvert$, tending to 0).
Roughly speaking, if $f_0$ is partially captive, then the spectral 
radius decreases in the {\it semiclassical limit} $\lvert\nu\rvert\to\infty$.
On the other hand, outside a small disc,
the spectrum of each transfer operator (for fixed $\nu\in\Z$) 
consists of discrete eigenvalues of finite multiplicity (the so-called {\it Ruelle resonances}),
resulting in a spectral gap for the collection.
This (and an additional assumption on the peripheral spectrum)
is known to give exponential decay of operational correlations for smooth observables (Faure~\cite{Faure}*{Theorem 5}).

In this paper we show that the presence of the spectral gap observed in the deterministic case 
(as described above) is preserved under quenched random perturbations
at small noise levels, see Theorems \ref{thm:discretespectrum}
and \ref{thm:spectralgap}. 
For random transfer operators,
the notion of spectrum needs clarification; in particular, 
the notion of discrete spectrum should be understood in terms of Lyapunov exponents
and invariant subspaces instead of eigenvalues and eigenfunctions, see Section \ref{section:DR}.
We also show existence and strong stability of random measures, see Theorem \ref{thm:inv}. Using
the spectral results we then establish our main theorem: 
if $f_0$ is partially captive then the
quenched random correlations for $\Ci^\infty$ observables decay exponentially fast, see Theorem \ref{thm:expdecay}.

The rest of the paper is organized as follows: 
Section \ref{section:spectralgap}
is devoted to the proof of Theorem
\ref{thm:spectralgap}. A significant complication
compared to the deterministic case is that it is no longer sufficient
to only study the leading term in the pseudodifferential symbolic calculus;
this would for example result in a decrease of the spectral 
radius in the semiclassical limit $\lvert\nu\rvert\to\infty$
which holds only {\it pointwise} with respect to the noise parameter. The drawbacks
of this would in turn be quite severe,
see the remark at the end of Section \ref{section:spectralgap}.
The more detailed symbolic calculus that we shall require has been
collected in Appendix \ref{app:PsiDO}. A key ingredient in the proof of Theorem
\ref{thm:spectralgap} is a careful analysis of the partial captivity condition in a suitably adapted random setting (resulting in Proposition \ref{wPC}),
which is postponed until Section \ref{section:analysisofpc}. The cornerstone of
this analysis is the crucial perturbation result
Proposition \ref{prop:differentnoiselevels}.
The tools developed also allow for a comparison between partial captivity and
transversality in $\S$3.2, where we prove that
the partial captivity condition is generic when $E_0$ is linear,
see Theorem \ref{thm:generic}.
In Section \ref{section:discrete} we prove Theorem \ref{thm:discretespectrum}.
The proof includes a description of the peripheral spectrum of the reduced transfer operators
when $f_0$ is partially captive. This has implications also for the unperturbed dynamics
studied by Faure~\cite{Faure}, see Theorem \ref{thm:nonrandomperspec}.

\section{A randomly perturbed partially expanding map}\label{section:DR}

\subsection{Random dynamical systems}

Let $(\Omega,\mathcal F,\mathbb{P})$ be a probability space with probability measure $\mathbb{P}$.
Let $\theta :\Omega \rightarrow \Omega$ be a measure-preserving 
transformation. Let $X$ be a measurable space. We say that a measurable 
mapping $\Phi :\N \times \Omega \times X \to X$ is a {\it random dynamical system} 
(abbreviated RDS henceforth) on $X$ over $\theta$ when $\Phi$ 
satisfies the cocycle property $\Phi (0,\omega) 
=\mathrm{Id} _X$ for all $\omega \in\Omega$ and
\begin{equation*}
\Phi (n+m,\omega )=\Phi(n,\theta ^m\omega)\circ  \Phi (m,\omega ), \quad n,m\in \N, \quad \omega \in \Omega.
\end{equation*}
Here $\theta \omega$ denotes the value $\theta (\omega)$, and $\Phi(n, \omega)
=\Phi (n,\omega ,\cdot)$. 
For general properties of an RDS, we refer to Arnold~\cite{Arnold}.

For normed vector spaces $X$ and $Y$ we let $\mathscr L(X,Y)$ denote the space of bounded linear operators
from $X$ into $Y$ endowed with the operator norm $\lVert\phantom{i}\rVert_{\mathscr L(X,Y)}$. When $X=Y$ we simply write
$\mathscr L(X)=\mathscr L(X,X)$. We say that an RDS $\Phi $ on a separable Banach space $X$ 
over $\theta :\Omega \to \Omega$ 
is a linear RDS when $\Phi (n,\omega)=\Phi(n,\omega ,\cdot) \in \mathscr L(X)$.
Recall also that an operator $A:\Omega \rightarrow\mathscr L(X)$
is said to be strongly measurable (continuous) when $\Omega \ni \omega \mapsto A(\omega) \varphi $
is measurable (continuous) for all $\varphi \in X$.

We borrow some terminology from Gonz{\'a}lez-Tokman and Quas~\cite{ETS:8859243}.
Let $X$ be a separable Banach space. Recall that
the {\it index of compactness} 
for $T\in \mathscr L(X)$ is defined by
\[
\lVert T\rVert _{\mathrm{ic}(X)} = \inf{\{r>0 : T(B_1) \text{ can be covered by finitely many balls of radius }r\}},
\]
where $B_1$ denotes the unit ball of $X$.
Note that $\lVert T \rVert _{\mathrm{ic}(X)} 
\leq \lVert T\rVert_{\mathscr L(X)}$ for each $T\in \mathscr L(X)$ since $ T(B_1)$ can be 
covered by a ball of radius $ \lVert T \rVert_{\mathscr L(X)}$.
Moreover, the index of compactness is subadditive,
\[
\lVert T_1+T_2\rVert _{\mathrm{ic}(X)}\le\lVert T_1\rVert _{\mathrm{ic}(X)}
+\lVert T_2\rVert _{\mathrm{ic}(X)},\quad T_1,T_2\in\mathscr L(X),
\]
and if $T\in \mathscr L(X)$ is a compact operator then $\lVert T\rVert _{\mathrm{ic}(X)} = 0$.
This is a consequence of the properties of the underlying (Hausdorff) measure of noncompactness,
see Chapter 2, Proposition  2.3 in Ayerbe Toledano, Dom\'inguez Benavides and L{\'o}pez Acedo~\cite{TBA97}.

\begin{dfn}\label{def:lyapunovexponents}
Let $\Phi$ be a linear RDS on a separable Banach space $X$ over
a probability-preserving measurable map $\theta :\Omega \to \Omega$ such that the 
time-one map
$\Phi (1,\cdot):\Omega \to \mathscr L (X)$ is strongly measurable, that is, 
$\Omega \ni \omega \mapsto \Phi (1,\omega)\varphi$ is measurable for all 
$\varphi \in X$. Assume also that $\omega\mapsto\log ^+\lVert \Phi (1,\omega) \rVert \in L^1(\Omega,\mathbb{P})$, 
where $\log ^+x =\max{(\log x,0)}$ for $x>0$. 
For each $\omega\in\Omega$, the {\it maximal Lyapunov exponent} for $\omega$ is defined as
\begin{equation}\label{eq:r}
r(\omega)=\lim _{n\to \infty}\frac{1}{n} \log{\lVert \Phi (n,\omega) \rVert_{\mathscr L(X)}} 
\end{equation}
whenever the limit exists. For each $\omega\in\Omega$,
the {\it index of compactness} for $\omega$ is defined as
\begin{equation}\label{eq:rbeta}
 r_{\mathrm{ic}}(\omega)=\lim _{n\to \infty}\frac{1}{n} \log{\lVert \Phi (n,\omega ) \rVert _{\mathrm{ic}(X)}}
\end{equation}
whenever the limit exists.
\end{dfn}

If $\Phi$ satisfies the conditions of Definition \ref{def:lyapunovexponents},
and $\theta:\Omega\to\Omega$ in addition is assumed to be ergodic,
then $r(\omega)$ and $r_{\mathrm{ic}}(\omega)$ exist and are $\mathbb{P}$-almost surely constants 
as functions of $\omega$, see Lemma 2.4, Lemma 2.5 and Remark 2.6 in Gonz{\'a}lez-Tokman and Quas~\cite{ETS:8859243}.
Denoting these constants by $r^\ast$ and $r_{\mathrm{ic}}^\ast$, 
respectively, we then say that $r^\ast$ is the maximal Lyapunov exponent of
$\Phi$ and $r_\mathrm{ic}^\ast$ the index of compactness of $\Phi$.
In view of the definition of the 
index of compactness for bounded operators
we have $r_\mathrm{ic}^\ast\le r^\ast$. When $r_\mathrm{ic}^\ast< r^\ast$,
the linear RDS $\Phi$ is said to be {\it quasi-compact}. 
We mention that $r^\ast$ is the analog of (the logarithm of) the spectral radius of a deterministic system,
while $r_\mathrm{ic}^\ast$ is the analog of (the logarithm of) the essential spectral radius.
Note also that the assumption
$\omega\mapsto\log ^+\lVert \Phi (1,\omega) \rVert \in L^1(\Omega,\mathbb{P})$
implies that $r^\ast<\infty$.

It will be convenient to have available
a multiplicative ergodic theorem extended by Gonz{\'a}lez-Tokman and Quas~\cite{ETS:8859243} 
to cocycles of so-called `semi-invertible' linear operators such as transfer operators,
see Theorem A and Theorem 2.10 in the mentioned article.
Due to conflicting notation, we restate (a slight reformulation of)
their result here for easy reference. Let $\Phi$ be a strongly measurable
linear RDS on a separable Banach space $X$ over $\theta :\Omega \to \Omega$ 
and $\Phi(1,\cdot):\Omega \to \mathscr L(X)$ its time-one map $\Phi(1,\omega)=\Phi (1,\omega,\cdot)$.
We assume that $\theta$ is ergodic, so the numbers $r(\omega)$ and $r_\mathrm{ic}(\omega)$ 
given by \eqref{eq:r} and \eqref{eq:rbeta} exist and are $\mathbb{P}$-almost surely equal
to constants $r^\ast$ and $r_\mathrm{ic}^\ast$, respectively.

\begin{thm}[Gonz\'alez-Tokman \& Quas]\label{thm:cocycle}
Let $\Phi(1,\cdot):\Omega \to \mathscr L(X)$ be strongly measurable such that
$
\omega\mapsto \log ^+ \lVert \Phi(1,\omega)\rVert \in L^1(\Omega,\mathbb{P}).
$
Assume that 
$\Phi$ is quasi-compact in the sense that $r _\mathrm{ic}^\ast<r^\ast$, 
where the maximal Lyapunov exponent $r^\ast$ and the
index of compactness $r_\mathrm{ic}^\ast$ $\mathbb{P}$-almost surely coincides with
the expressions \eqref{eq:r} and \eqref{eq:rbeta}, respectively.
Then there is a number $1\leq \ell\leq \infty$,  sequences $r^\ast=\alpha _1>\alpha _2 >\cdots >\alpha _\ell >r_\mathrm{ic}^\ast$
$($or in the case $\ell=\infty$, $r^\ast=\alpha _1>\alpha _2 >\cdots $ with $\lim _{n\to \infty}\alpha _n =r_\mathrm{ic}^\ast)$,
$m_1,\ldots ,m_\ell\in \N$ and a $\theta$-invariant subset $\tilde{\Omega }\subset \Omega$ 
of full measure, and  a unique measurable splitting of $X$ into closed subspaces
$
X=\bigoplus ^\ell_{j=1}\Sigma_j(\omega)\oplus  \Sigma_- (\omega)
$
such that   for each
$\omega \in \tilde{\Omega}$, the following holds:
\begin{enumerate}
\item[$(1)$] For each $1\leq j\leq \ell$, $
\dim{\Sigma_j(\omega)}=m_j<\infty$ and
\[
\Phi(1,\omega)\Sigma _j(\omega) =\Sigma _j  (\theta \omega),\quad \Phi(1,\omega)\Sigma _- (\omega) \subset \Sigma _- (\theta \omega).
\]
\item[$(2)$] If $v \in \Sigma_j(\omega)\backslash \{ 0\}$ for some $1\leq j\leq \ell$, then
\[
\lim _{n\rightarrow \infty} \frac{1}{n} \log{\lVert \Phi (n,\omega) v\rVert} =\alpha _j.
\]
\item[$(3)$] If $v\in \Sigma _-(\omega)$,
\[
\lim _{n\rightarrow \infty} \frac{1}{n}\log{\lVert \Phi (n,\omega) v \rVert} \leq r_\mathrm{ic}^\ast.
\]
\end{enumerate}
\end{thm}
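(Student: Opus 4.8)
The plan is to obtain Theorem~\ref{thm:cocycle} as a direct transcription of the semi-invertible operator multiplicative ergodic theorem of Gonz{\'a}lez-Tokman and Quas, namely Theorem~A together with Theorem~2.10 in \cite{ETS:8859243}; the content of the proof is just to check that our standing hypotheses coincide with theirs and to restate the conclusion in the notation used here. First I would fix the dictionary: their operator cocycle generated by the time-one map corresponds to our $\Phi(1,\cdot)$, their ergodic base to our $\theta$, their logarithm of the index of compactness to our $r_{\mathrm{ic}}^\ast$, and their top Lyapunov exponent to our $r^\ast=\alpha_1$. The hypotheses needed are strong measurability of $\omega\mapsto\Phi(1,\omega)$ and $\omega\mapsto\log^+\lVert\Phi(1,\omega)\rVert_{\mathscr L(X)}\in L^1(\Omega,\mathbb{P})$, both of which are assumed, together with quasi-compactness $r_{\mathrm{ic}}^\ast<r^\ast$, which is our remaining hypothesis.

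Next I would recall why the quantities in \eqref{eq:r} and \eqref{eq:rbeta} are well defined and almost surely constant. By the cocycle property $\Phi(n+m,\omega)=\Phi(n,\theta^m\omega)\circ\Phi(m,\omega)$ and submultiplicativity of the operator norm, $n\mapsto\log\lVert\Phi(n,\omega)\rVert_{\mathscr L(X)}$ is a subadditive cocycle over $\theta$; the same holds for $n\mapsto\log\lVert\Phi(n,\omega)\rVert_{\mathrm{ic}(X)}$ via the submultiplicativity $\lVert T_1T_2\rVert_{\mathrm{ic}(X)}\le\lVert T_1\rVert_{\mathrm{ic}(X)}\lVert T_2\rVert_{\mathrm{ic}(X)}$, obtained by composing a finite cover of $T_2(B_1)$ by $r$-balls with a finite cover of $T_1(B_1)$ by $s$-balls to cover $T_1T_2(B_1)$ by finitely many $rs$-balls. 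Since $\lVert\cdot\rVert_{\mathrm{ic}(X)}\le\lVert\cdot\rVert_{\mathscr L(X)}$, both cocycles have $L^1$ positive part, so Kingman's subadditive ergodic theorem and ergodicity of $\theta$ yield the almost sure constants $r^\ast$ and $r_{\mathrm{ic}}^\ast$; this is Lemma~2.4, Lemma~2.5 and Remark~2.6 of \cite{ETS:8859243}.

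With the hypotheses verified, \cite{ETS:8859243} supplies the possibly infinite exponent list $r^\ast=\alpha_1>\alpha_2>\cdots>r_{\mathrm{ic}}^\ast$ (accumulating only at $r_{\mathrm{ic}}^\ast$ when $\ell=\infty$), the finite multiplicities $m_j$, a $\theta$-invariant full-measure set $\tilde{\Omega}$, and a unique measurable splitting $X=\bigoplus_{j=1}^\ell\Sigma_j(\omega)\oplus\Sigma_-(\omega)$ into closed subspaces with $\dim\Sigma_j(\omega)=m_j$, the equivariances $\Phi(1,\omega)\Sigma_j(\omega)\subseteq\Sigma_j(\theta\omega)$ and $\Phi(1,\omega)\Sigma_-(\omega)\subseteq\Sigma_-(\theta\omega)$, and the growth estimates $(2)$ on the fast spaces $\Sigma_j(\omega)$ and $(3)$ on the core space $\Sigma_-(\omega)$; separability of $X$ is what makes the measurable choice of the splitting possible. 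To upgrade the inclusion in $(1)$ to the stated equality $\Phi(1,\omega)\Sigma_j(\omega)=\Sigma_j(\theta\omega)$, note that $\Phi(1,\omega)$ restricted to the finite-dimensional space $\Sigma_j(\omega)$ is injective: if $\Phi(1,\omega)v=0$ for some $v\in\Sigma_j(\omega)\setminus\{0\}$ then $\Phi(n,\omega)v=0$ for all $n\ge1$, contradicting $(2)$ since $\alpha_j>r_{\mathrm{ic}}^\ast>-\infty$; an injective linear map between spaces of equal finite dimension $m_j$ is onto, which is precisely the semi-invertibility refinement of \cite{ETS:8859243}.

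The only point requiring care is confirming that this framework really applies in the generality stated, and that is where the bookkeeping lies. First, one must check that $\omega\mapsto\lVert\Phi(1,\omega)\rVert_{\mathrm{ic}(X)}$ is measurable, so that Kingman's theorem is available; this holds because, by separability, the covering condition defining $\lVert\cdot\rVert_{\mathrm{ic}(X)}$ can be tested on a fixed countable dense subset of $X$, as in \cite{ETS:8859243}. Second, the result of \cite{ETS:8859243} is stated for an invertible base, whereas here $\theta$ is only assumed measure-preserving and ergodic; when $\theta$ fails to be invertible one passes to its natural extension, which leaves the exponents, the multiplicities and the fast spaces $\Sigma_j$ unchanged, so the statement above is recovered. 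Apart from this, no new argument is needed: the theorem is precisely Theorem~A and Theorem~2.10 of \cite{ETS:8859243} reformulated in the present notation.
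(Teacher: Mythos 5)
Your proposal is correct and matches the paper's treatment exactly: the paper offers no proof of this statement, presenting it purely as a restatement of Theorem A and Theorem 2.10 of Gonz\'alez-Tokman and Quas in the present notation, which is precisely the citation-plus-hypothesis-check you carry out. The extra verifications you supply (submultiplicativity of the index of compactness, measurability via separability, and the injectivity argument upgrading the inclusion to equality on the finite-dimensional Oseledets spaces) are sound and consistent with the cited source.
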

We say that the splitting in Theorem \ref{thm:cocycle} is the  Oseledets splitting of $\Phi$. 
The numbers $\alpha_1,\ldots,\alpha_\ell$
are said to be the exceptional Lyapunov exponents of $\Phi$, while 
$m_j$ is the multiplicity and $\Sigma_j(\omega)$ the associated Lyapunov subspace 
of $\alpha_j$, $1\le j\le\ell$.

\subsection{The perturbation model}

Let $(\Omega, \mathcal{F})$ be a Lebesgue space with a probability measure $\mathbb{P}$.
Let $\theta :\Omega \rightarrow \Omega$ be an ergodic $\mathbb{P}$-preserving 
bi-measurable bijection.
Let ${\Ci}^{\infty}(\mathbb{T} ^2,\mathbb{T} ^2)$ be the space of smooth  endomorphisms on $\mathbb{T} ^2$ endowed with a ${\Ci}^{\infty}$ metric,
\begin{equation*}
d_{{\Ci}^{\infty}}(f,g)=\sum _{j=0}^{\infty} 2^{-j}\frac{d_{{\Ci}^j}( f,g)}{1+d_{{\Ci}^j}( f,g)},
\end{equation*}
where $d_{{\Ci}^j}(f,g )$ is the usual ${\Ci}^j$ distance between $f$ and $g$. We endow ${\Ci}^{\infty}(\T ^2,\T ^2)$ with the Borel $\sigma$-algebra.

Let $\{ f_\epsilon \} _{\ve >0}$ be a family of measurable mappings
$f_\epsilon :\Omega \rightarrow {\Ci}^{\infty} (\mathbb{T} ^2,\mathbb{T} ^2)$ such that 
for each $\epsilon >0$, $f_{\epsilon}(\omega)$ is for $\mathbb{P}$-almost every $\omega\in\Omega$ of the form  
\begin{equation}\label{eq:perturbedsystem}
f _{\epsilon}(\omega) :\binom{x}{s}\mapsto
\left( \! \! \begin{array}{cc} kg_{\epsilon}(\omega,x) & \!\! \mod 1 \\ s+\frac{1}{2\pi}\tau _{\epsilon}(\omega,x)  
& \!\! \mod 1
\end{array} \! \! \right),
\end{equation}
where $\omega \mapsto f_{\epsilon}(\omega)(z)$ is a measurable mapping from $\Omega$ to $\T ^2$ for each $z=(x,s)\in \T ^2$.
Here $g_{\epsilon}(\omega)=g_{\epsilon}(\omega ,\cdot ):\mathbb{S} ^1 \to \mathbb{S} ^1$ 
is a ${\Ci}^{\infty}$ diffeomorphism 
and  $\tau _{\epsilon} (\omega)=\tau _{\epsilon} (\omega ,\cdot ) :\mathbb{S} ^1 \to \mathbb{R}$ is a ${\Ci}^{\infty}$ function,
$\mathbb{P}$-almost surely. 
We also assume that  
\begin{equation}\label{convinC1}
\esssup_\omega d_{{\Ci}^{\infty}}(f_{\ve }(\omega) ,f_0 ) \rightarrow 0\quad
\text{as }\ve \rightarrow 0,
\end{equation}
where $f_0$ is the partially expanding map given by \eqref{eq:unperturbedsystem}. 
The value $f_{\epsilon}(\omega)(z)$ is denoted simply by $f_{\epsilon}(\omega, z)$. 
For each $\epsilon >0$, it follows that
$(\omega ,z) \mapsto f_{\epsilon}(\omega ,z)$ 
is a measurable mapping from $\Omega \times \T ^2$ to $\T ^2$,
see Castaing and Valadier~\cite{CV77}*{Lemma 3.14}.
When convenient, we will identify $f_0:\T^2\to\T^2$ with the constant map $\Omega\ni\omega\mapsto f_0$.

For each $\epsilon \ge0$ and $\omega \in \Omega$
we let $E_{\epsilon }(\omega)$ denote the map $E_{\epsilon}(\omega):x\mapsto kg_{\epsilon} (\omega ,x)\mod 1$,
interpreted for $\epsilon=0$ to mean $E_{\epsilon=0}(\omega)\equiv E_0$ for all $\omega$.
The value $E_{\epsilon }(\omega)(x)$ is denoted simply by $E_{\epsilon }(\omega,x)$. 
In view of \eqref{convinC1} it then follows that  
$E_\epsilon (\omega)$ is an expanding map
$\mathbb{P}$-almost surely if $\ve$ is sufficiently small. In fact, if $\lambda_0=\min_x E_0' (x)$
and we set $\lambda=(\lambda_0+1)/2$, then $\lambda>1$ and we can find an
$\epsilon_0>0$ such that 
\begin{equation}\label{emin}
 \essinf_{\omega} \min_x \frac{d E_\epsilon (\omega,x)}{dx}\ge\lambda,\quad 0\le\epsilon<\epsilon_0.
\end{equation}
In the sequel, the quantity $d E_\epsilon (\omega,x)/dx$ will sometimes be denoted simply by $E_\epsilon'(\omega,x)$.

\begin{rmk}\label{rmk:tauneighborhood}
When there is no ambiguity, the noise
level $\epsilon$ will sometimes be omitted from the notation, in particular
when the dependence on the noise parameter $\omega\in\Omega$ is already
displayed. In fact, with the exception of the underlying probability space
$(\Omega,\mathcal F,\mathbb{P})$ and the map $\theta:\Omega\to\Omega$, dependence
on the noise parameter $\omega\in\Omega$ will always be taken to imply dependence on
the noise level $\epsilon$.
Throughout the rest of the paper we will also permit us
to use $\epsilon_0$ as a way to denote the upper bound of a range $0\le\epsilon<\epsilon_0$ for which
\eqref{emin} holds, even if $\epsilon_0$ may change between occurrences. This will mostly be showcased
only in the statements of our results; we shall in fact always assume that $\epsilon$
belongs to such a range.
\end{rmk}

Given $\epsilon >0$ and $n\geq 1$, let $f_{\epsilon}^{(n)}(\omega,z)$ be the 
fiber component of the $n$th iteration of the (double) skew product mapping
\[
\Theta _{\epsilon} (\omega , z) =(\theta \omega ,f_{\epsilon}(\omega ,z)),\quad \omega\in\Omega,\quad z\in \T^2,
\]
and let $f_{\epsilon}^{(0)}(\omega)=\mathrm{Id} _{\T^2}$ for all $\omega \in \Omega$.
With the notation $f_{\epsilon}^{(n)}(\omega)=f_{\epsilon}^{(n)}(\omega,\cdot)$ we explicitly have
\begin{equation}\label{fibercomponent2}
f_{\epsilon}^{(n)}(\omega )=f_{\epsilon}(\theta^{n-1}\omega)\circ f_{\epsilon}(\theta^{n-2}\omega)\circ\cdots\circ f_{\epsilon}(\omega).
\end{equation}
The mapping given by $(n,\omega ,z)\mapsto f_{\epsilon}^{(n)}(\omega ,z)$ is an 
RDS on $\T^2$ over $\theta :\Omega \to \Omega$, which we call the RDS induced 
by $f_{\epsilon}$. (Naturally, this RDS depends also on $\theta$, but 
since $\theta$ will be fixed throughout, mention of this map will be omitted.)
For convenience we introduce the notation 
\begin{align}\label{notation:E^{(n)}}
E_\epsilon^{(n)}(\omega,x)&=E_\epsilon(\theta^{n-1}\omega)\circ\ldots\circ E_\epsilon(\omega)(x),\quad n\ge 1,\\
\tau_\epsilon^{(n)}(\omega,x)&=\sum_{j=0}^{n-1} 
\tau_\epsilon(\theta^j\omega, E_\epsilon^{(j)}(\omega,x)),\quad n\ge 1.
\label{notation:tau^{(n)}}
\end{align}
In the last sum, $\theta^0$ and $E_\epsilon^{(0)}(\omega,\cdot)$ are to be interpreted as the identity maps
on $\Omega$ and $\Sone$, respectively, so that $E_\epsilon^{(1)}(\omega ,\cdot)=E_\epsilon(\omega)$ and
$\tau_\epsilon^{(1)}(\omega ,\cdot)=\tau_\epsilon(\omega)$. Then
\[
f_{\epsilon}^{(n)}(\omega):\binom{x}{s}\mapsto
\left( \! \! \begin{array}{cc} E_\epsilon^{(n)}(\omega,x)\\ s+\frac{1}{2\pi}\tau_\epsilon^{(n)}(\omega,x) \mod 1
\end{array} \! \! \right),
\quad n\ge1.
\]

The Perron-Frobenius transfer operator
$M_{f_\epsilon ^{(n)}(\omega)}^\ast:{\Ci}^\infty(\mathbb{T}^2)\rightarrow {\Ci}^\infty(\mathbb{T}^2)$
corresponding to $f_\epsilon^{(n)}(\omega)$ is defined
as the random operator cocycle
\begin{equation}\label{PFtransfer}
M^\ast_{f_\epsilon ^{(n)}(\omega)}\psi (z)=\sum_{f_\epsilon ^{(n)}(\omega,z')=z} 
\frac{\psi (z')}{\lvert \det \partial f_\epsilon^{(n)}(\omega,z')/\partial z
 \rvert },\quad \psi\in {\Ci}^\infty(\T^2),
\end{equation}
where $\partial f_\epsilon^{(n)}(\omega,z')/\partial z$ is the Jacobian matrix of $z\mapsto f_\epsilon^{(n)}(\omega,z)$ at $z'\in\T^2$, and
${\Ci}^{\infty}(\mathbb{T} ^2)$ is the space of complex valued functions on $\mathbb{T} ^2$ of class ${\Ci}^{\infty}$.
Note that by \eqref{eq:perturbedsystem} and \eqref{emin} we $\mathbb{P}$-almost surely have that
\begin{equation}\label{eq:det}
\det{(\partial f_\epsilon^{(n)}(\omega,z)/\partial z)}
=d E_\epsilon^{(n)}(\omega,x)/dx\ge\lambda^n,\quad z=(x,s).
\end{equation}
Thus, 
the operator $M_{f_\epsilon ^{(n)}(\omega)}^\ast$ extends to a bounded operator on $L^2(\T^2)$,
$\mathbb{P}$-almost surely.
The extension will also be denoted by $M_{f_\epsilon ^{(n)}(\omega)}^\ast$.
Its adjoint $M_{f_\epsilon ^{(n)}(\omega)}$ with respect to the usual scalar product on 
$L^2(\T^2)$ is the Ruelle transfer operator 
given by $M_{f_\epsilon^{(n)}(\omega)} \psi (z) = \psi (f_\epsilon ^{(n)}(\omega,z))$.

\subsection{The reduced transfer operator}\label{subsection:reduction}
For the time being, we shall fix $\epsilon$ and suppress it from the notation.
As in the treatment of the unperturbed case \eqref{eq:unperturbedsystem} undertaken by Faure~\cite{Faure}, 
we now employ the following decomposition in Fourier modes,
\begin{equation}\label{orthogonaldecomp}
L^2(\T^2)=\bigoplus _{\nu \in \mathbb{Z}} \mathcal{H} _{\nu} ,
\quad \mathcal{H} _{\nu}=\{ (x,s)\mapsto\varphi (x)e^{2i\pi \nu s} : \varphi \in L^2(\Sone) \}.
\end{equation}
The spaces $(\mathcal H_\nu,\lVert\phantom{i}\rVert_{L^2(\T^2)})$ and $L^2(\Sone)$ are isometrically isomorphic. 
For given $\nu\in\Z$ and fixed $\omega\in\Omega$, let $M_{\nu,n}(\omega)$ denote
the operator $M_{f^{(n)}(\omega)}$ restricted to $\mathcal H_\nu$. It is straightforward to check that for 
$\psi(x,s)=\varphi(x)e^{2i\pi\nu s}$ with $\varphi\in L^2(\Sone)$, we have $\mathbb{P}$-almost surely that
\[
M_{f^{(n)}(\omega)}\psi(x,s)=\varphi(E^{(n)}(\omega,x))e^{i\nu\tau^{(n)}(\omega,x)}e^{2i\pi \nu s},
\quad n\ge 1,
\]
where $E^{(n)}(\omega,x)$ and $\tau^{(n)}(\omega,x)$ are given by \eqref{notation:E^{(n)}}--\eqref{notation:tau^{(n)}}.
Thus, by identifying $\mathcal H_\nu$
with $L^2(\Sone)$ we can view $M_{\nu ,n}(\omega)$ for $\mathbb{P}$-almost every $\omega$ as an operator on $L^2(\Sone)$ given by
\begin{equation}\label{restrictionoperator}
M_{\nu ,n}(\omega)\varphi(x)=\varphi(E^{(n)}(\omega,x))e^{i\nu\tau^{(n)}(\omega,x)}, 
\quad\varphi\in L^2(\Sone).
\end{equation}

Similarly, for given $\nu\in\Z$ and $\omega\in\Omega$, let $M_{\nu,n}^\ast(\omega)$ denote
$M^\ast_{f^{(n)}(\omega)}$ restricted to $\mathcal H_\nu$. 
For $\psi(x,s)=\varphi(x)e^{2i\pi\nu s}$ with $\varphi\in L^2(\Sone)$, we have $\mathbb{P}$-almost surely that
\[
M^\ast_{f^{(n)}(\omega)}\psi(x,s)=\sum_{f^{(n)}(\omega,(x',s'))
=(x,s)} \frac{\varphi (x')e^{2i\pi\nu s'}}{\lvert \det D_{x,s}f^{(n)}(\omega,(x',s')) \rvert }
\]
in view of \eqref{PFtransfer}.
If $f^{(n)}(\omega,(x',s'))=(x,s)$ then $s'=s-\frac{1}{2\pi}\tau ^{(n)}(\omega,x')\! \mod 1$,
so the decomposition \eqref{orthogonaldecomp} is preserved by virtue of \eqref{eq:det}.
Furthermore, for fixed $s$ we have that $s'$ is uniquely determined by $x'$,
so by identifying $\mathcal H_\nu$
with $L^2(\Sone)$ we can view $M_{\nu,n}^\ast(\omega)$ for $\mathbb{P}$-almost every $\omega$ as an operator on $L^2(\Sone)$ given by
\begin{equation}\label{adjointrestrictionoperator}
M_{\nu ,n}^\ast(\omega)\varphi(x)=\sum_{E^{(n)}(\omega,y)=x}\frac{e^{-i\nu\tau ^{(n)}(\omega,y)}}{dE^{(n)}(\omega,y)/dy}\varphi(y),
\quad\varphi\in L^2(\Sone).
\end{equation}
By duality, $M_{\nu ,n}^\ast(\omega)$ coincides with the
$L^2$ adjoint of the operator $M_{\nu,n}(\omega)$ defined by \eqref{restrictionoperator}.
For convenience, we introduce $M_{\nu ,0}(\omega)=M_{\nu ,0}^\ast (\omega) 
=\mathrm{Id} _{L^2(\Sone)}$ for each $\omega \in \Omega$ and $\nu \in \Z$.
It is straightforward to check that $(n,\omega ,\varphi) \mapsto M^\ast_{\nu ,n}(\omega)\varphi$ 
satisfies the cocycle property 
$M^\ast _{\nu ,n+m}(\omega) \varphi=M^\ast _{\nu ,n}(\theta ^m\omega) 
M_{\nu ,m}(\omega) \varphi $ for each $\varphi \in L^2(\Sone)$, 
$\omega \in \Omega$ and $n,m\in \N$, so
$(n,\omega ,\varphi) \mapsto M^\ast_{\nu ,n}(\omega)\varphi$ is an RDS.

\begin{rmk}
For fixed $\omega\in\Omega$ and $n\in\N$, we shall have reason to view $M_{\nu,n}^\ast(\omega)$ both as
a collection of operators indexed by $\nu\in\Z$, as well as {\it one}
operator depending on a parameter $\nu\to\pm\infty$. More generally,
we shall let $h$ be a small semiclassical parameter and study
$M_{\pm 1/h,n}^\ast(\omega)$ in the {\it semiclassical limit} as $h\to 0$. The connection to $M_{\nu,n}^\ast(\omega)$
is obtained by setting $h=1/\lvert\nu\rvert$. When referring to this construction
we will say that $\nu\in\Z$ is viewed as a semiclassical parameter.
\end{rmk}

Next, we recall some basic facts concerning distribution theory and Sobolev spaces on $\Sone$. 
We note that the operator $M_{\nu,n}(\omega):L^2(\Sone)\to L^2(\Sone)$
defined by \eqref{restrictionoperator} has a continuous
extension to $\De'(\Sone)$, expressed through duality
by
\[
\langle M_{\nu,n}(\omega) u,\bar\varphi\rangle=\langle u, \overline{M_{\nu ,n}^\ast(\omega)\varphi}\rangle,
\quad u\in\De'(\Sone),\quad\varphi\in {\Ci}^\infty(\Sone),
\]
where $\langle\phantom{i},\phantom{i}\rangle$ is the distributional pairing and
$M_{\nu ,n}^\ast(\omega)$ is the $L^2$ adjoint of $M_{\nu ,n}(\omega)$, which explains the appearances of the complex conjugations.
For $m\in\R$ we let $H^m(\Sone)\subset\De'(\Sone)$ denote
the Sobolev spaces of index $m$, defined as the set of distributions $u$ satisfying
\[
\lVert u\rVert_{H^m(\Sone)}^2=\lVert u\rVert_{(m)}^2=\sum_{\xi\in2\pi\Z}(1+\lvert\xi\rvert^2)^m\lvert\hat u(\xi)\rvert^2<\infty.
\]
The Fourier coefficients of a distribution $u\in\De'(\Sone)$ are defined by 
\[
\hat{u}(\xi)=\langle u,\phi_{-\xi}\rangle,\quad \xi\in2\pi\Z,
\]
where the functions $\phi_\xi\in {\Ci}^\infty(\Sone)$ are given by $\phi_\xi(x)= e^{ix\xi}$
for $\xi\in2\pi\Z$ and $x\in \Sone$.
It is well-known that the Fourier series 
$\sum\hat{u}(\xi)\phi_\xi$ converges to $u$ in $\De'(\Sone)$ in the usual  
sense, thus
\[
\langle u,\varphi\rangle=\sum_{\xi\in2\pi\Z}\hat{u}(\xi)\hat{\varphi}(-\xi),
\quad \varphi\in {\Ci}^\infty(\Sone). 
\]

Let $\langle\xi\rangle=(1+\lvert\xi\rvert^2)^{\frac{1}{2}}$, $\xi\in\R$, and
for $m\in\R$, let $\langle D\rangle^m:\De'(\Sone)\to\De'(\Sone)$ denote the operator which acts through multiplication by 
$\langle\xi\rangle^m$ on the Fourier side,
\begin{equation}\label{Fouriermultiplier}
\langle \langle D\rangle^m u,\varphi\rangle=\sum_{\xi\in2\pi\Z}\langle\xi\rangle^m\hat{u}(\xi)\hat{\varphi}(-\xi),
\quad \varphi\in {\Ci}^\infty(\Sone). 
\end{equation}
Here $D$ is understood to be the differential operator $D_x=-i\partial_x$.
Then $\langle D\rangle^m$ is a pseudodifferential operator with symbol $(x,\xi)\mapsto\langle\xi\rangle^m$ (see Appendix \ref{app:PsiDO}
for a discussion on pseudodifferential operators on $\Sone$). Moreover,
\begin{align*}
\lVert \langle D\rangle^m u\rVert_{(s-m)}^2&=\sum_{\xi\in2\pi\Z}(1+\lvert\xi\rvert^2)^{s-m}\lvert(\langle D\rangle^m u)\hat{\ }(\xi)\rvert^2\\
&=\sum_{\xi\in2\pi\Z}(1+\lvert\xi\rvert^2)^{s}\lvert\hat u(\xi)\rvert^2=\lVert u\rVert_{(s)}^2,\quad u\in H^s(\Sone),
\end{align*}
so $\langle D\rangle^m:H^s\to H^{s-m}$ defines an isomorphism for all $s\in\R$.
Since $H^0(\Sone)=L^2(\Sone)$ by Parceval's formula, and
$\langle D\rangle^m$ is invertible with inverse $(\langle D\rangle^m)^{-1}=\langle D\rangle^{-m}$, this gives the standard
identification of the Sobolev space $H^m(\Sone)$ with $\langle D\rangle^{-m}(L^2(\Sone))$ for each $m\in\R$.
As usual, we identify the dual space of $H^m(\Sone)$ with $H^{-m}(\Sone)$,
expressing the action of $u\in H^{-m}(\Sone)$ on $H^m(\Sone)$ as $u(v)=(u,v)_{L^2}$ for $v\in H^m(\Sone)$,
where $(\phantom{i},\phantom{i})_{L^2}$ is the usual scalar product on $L^2(\Sone)$.

The symbol $(x,\xi)\mapsto \langle\xi\rangle^m$ also defines a semiclassical operator $\langle hD\rangle^m$ acting
through multiplication by $\langle h\xi\rangle^m$ on the Fourier side,
\begin{equation}\label{eq:toroidalsymbols}
\langle \langle hD\rangle^m u,\varphi\rangle=\sum_{\xi\in2\pi\Z}\langle h\xi\rangle^m\hat{u}(\xi)\hat{\varphi}(-\xi),
\quad \varphi\in {\Ci}^\infty(\Sone). 
\end{equation}
When $\nu\in\Z$ is viewed as a semiclassical parameter 
this gives rise to an alternative $\lvert\nu\rvert$-dependent norm $\lVert\phantom{i}\rVert_{H_\nu^m}$
on $H^m(\Sone)$ defined
by
\begin{equation}\label{eq:nu_dependentnorm}
\lVert u\rVert_{H_\nu^m}^2=\sum_{\xi\in2\pi\Z}\lvert\langle \xi/\nu\rangle^m\hat u(\xi)\rvert^2,\quad u\in H^m(\Sone),\quad 0\ne\nu\in\Z.
\end{equation}
Let $H_\nu^m(\Sone)$ denote the space $H^m(\Sone)$ equipped with the norm $\lVert\phantom{i}\rVert_{H_\nu^m}$.
If $h=1/\nu$, then $\langle hD\rangle^m:H_\nu^s(\Sone)\to H_\nu^{s-m}(\Sone)$ is
an isomorphism for all $s\in\R$. In particular, we can identify the space $H_\nu^m(\Sone)$
with $\langle h D\rangle^{-m}(L^2(\Sone))$ for each $m\in\R$, and $H_\nu^m(\Sone)$ is the
dual of $H_\nu^{-m}(\Sone)$ with respect to the usual scalar product on $L^2(\Sone)$.

Before turning to our results on (what in the deterministic case corresponds
to) the spectrum of the operator $M_{\nu ,n}^\ast(\omega):H^m(\Sone)\to H^m(\Sone)$,
we first establish the existence of an absolutely continuous $f_\epsilon$-invariant probability measure.

\subsection{Invariant measures}\label{subsection:invmeas}

Let $f:\Omega \to {\Ci}^{\infty}(\T^2,\T ^2)$ be a measurable mapping. 
Let $\mathcal{B} (\T ^2)$ be the Borel $\sigma$-field of $\T^2$. 
Recall that a measure $\mu$ on $\Omega \times \T^2$ is called $f$-invariant  when $\mu$ is invariant with respect 
to the skew product mapping $\Theta (\omega,z) =(\theta \omega,  f(\omega, z))$ and the marginal 
$\pi _{\Omega} \mu$ of $\mu$ coincides with $\mathbb{P}$, where $\pi _{\Omega} :\Omega \times \T ^2\rightarrow \Omega$
is given by $\pi _{\Omega}(\omega ,z)=\omega$. It is known that when $\mu$ is an $f$-invariant probability measure, 
there is a unique function $\mu _{\cdot}(\cdot):\Omega \times \mathcal{B} (\T^2) \to [0,1]$, 
$(\omega,B)\mapsto \mu_{\omega}(B)$, 
such that
\begin{itemize}
\item[$(\mathrm{i})$] $\omega \mapsto \mu _{\omega}(B)$ is measurable for each $B\in \mathcal{B} (\T^2)$,
\item[$(\mathrm{ii})$] $\mu _{\omega}$ is $\mathbb{P}$-almost surely a probability measure on $\T^2$,
\item[$(\mathrm{iii})$] $\int u d\mu =\int u d\mu _{\omega} d\mathbb{P}$ 
for each $u \in L^1(\mu)$.
\end{itemize} 
Moreover, since we assume that $\theta$ is measurably invertible, the pushforward 
$f(\omega)_\ast\mu_{\omega}$ of $\mu_{\omega}$ by $f(\omega)$ $\mathbb{P}$-almost surely coincides with 
$\mu _{\theta \omega}$, see Arnold~\cite{Arnold}*{Chapter 1}. We call the function $\mu _{\cdot} (\cdot)$ 
the disintegration of $\mu$ (with respect to $\mathbb{P}$).

For the perturbation scheme $f_{\epsilon}$ given by \eqref{eq:perturbedsystem}, the existence of an
absolutely continuous invariant probability measure on $\Omega\times\T^2$
is an immediate consequence
of established results concerning the existence
of such measures for uniformly expanding maps.

\begin{thm}\label{thm:inv}
For each $0\le\epsilon <\epsilon _0$, 
there exists an $f_{\epsilon}$-invariant probability measure $\mu ^{\epsilon}$ on $\Omega\times\T ^2$
such that if $\mu _{\cdot}^\epsilon(\cdot)$ is the disintegration of $\mu ^\epsilon$
then $\mu_\omega^\epsilon$ is $\mathbb{P}$-almost surely equivalent to normalized Lebesgue measure on $\T^2$,
and $d\mu_\omega ^\epsilon=h_\epsilon(\omega,x)dxds$.
Each density $h_\epsilon(\omega)$ is the uniquely defined positive function in ${\Ci}^\infty(\Sone)$
such that $\int_{\Sone} h_\epsilon(\omega,x)dx=1$ and
$M_{0,n}^\ast(\epsilon;\omega)h_\epsilon(\omega)=h_\epsilon(\theta^n\omega)$ for $n\ge1$.
Moreover,
\begin{equation}\label{eq:stability}
\esssup_\omega{\lVert h_\epsilon(\omega)-h_0\rVert_{(m)}}\to0\quad\text{as }\epsilon\to0
\end{equation}
for all $m\in\N$, where $h_0\equiv h_{\epsilon=0}(\omega)$ is independent of $\omega$.
\end{thm}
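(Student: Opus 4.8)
The plan is to reduce the statement about the torus to a statement about the circle, since the skew-product structure \eqref{eq:perturbedsystem} means that the Perron--Frobenius operator $M_{f_\epsilon^{(n)}(\omega)}^\ast$ preserves the Fourier decomposition \eqref{orthogonaldecomp} and acts on the $\nu=0$ block simply as the transfer operator $M_{0,n}^\ast(\epsilon;\omega)$ of the uniformly expanding circle map cocycle $E_\epsilon^{(n)}(\omega,\cdot)$, namely
\begin{equation*}
M_{0,n}^\ast(\epsilon;\omega)\varphi(x)=\sum_{E_\epsilon^{(n)}(\omega,y)=x}\frac{\varphi(y)}{dE_\epsilon^{(n)}(\omega,y)/dy}.
\end{equation*}
So the first step is to invoke the standard theory of random (or quenched) absolutely continuous invariant measures for random uniformly expanding maps of the circle: under \eqref{emin} and the $\Ci^\infty$ bounds coming from \eqref{convinC1}, for each fixed $\epsilon<\epsilon_0$ there is a unique family of positive densities $h_\epsilon(\omega)\in\Ci^\infty(\Sone)$ with $\int h_\epsilon(\omega,x)\,dx=1$ and $M_{0,n}^\ast(\epsilon;\omega)h_\epsilon(\omega)=h_\epsilon(\theta^n\omega)$; this is what gives the disintegration $d\mu_\omega^\epsilon=h_\epsilon(\omega,x)\,dx\,ds$ of an $f_\epsilon$-invariant probability measure $\mu^\epsilon$ on $\Omega\times\T^2$, with the $s$-fibre being flat because the second coordinate of $f_\epsilon$ is a fibrewise rotation (area-preserving in $s$). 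The existence and uniqueness of $h_\epsilon(\omega)$ follows by a standard cone/Lasota--Yorke argument: one shows the transfer operator cocycle contracts a suitable convex cone of positive densities in a fibrewise Hilbert or Birkhoff metric, uniformly in $\omega$ and in $\epsilon<\epsilon_0$, and then assembles the fibrewise limits into a measurable family using measurability of $\omega\mapsto M_{0,1}^\ast(\epsilon;\omega)$.

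The substantive part is the stability statement \eqref{eq:stability}. The plan is to establish a uniform (in $\omega$ and $\epsilon$) Lasota--Yorke / Doeblin--Fortet inequality of the form
\begin{equation*}
\lVert M_{0,n}^\ast(\epsilon;\omega)\varphi\rVert_{(m)}\le C\Lambda^{-n}\lVert\varphi\rVert_{(m)}+C_n\lVert\varphi\rVert_{(m-1)}
\end{equation*}
for each $m\in\N$, with constants depending only on $m$, on $\lambda$ in \eqref{emin}, and on a finite number of $\Ci^{m+1}$-norms of the data, all of which are uniformly bounded for $\epsilon<\epsilon_0$ by \eqref{convinC1}; here $\Lambda>1$. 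Combined with compactness of the embedding $H^m\hookrightarrow H^{m-1}$ this gives uniform-in-$\epsilon$ quasi-compactness and, via the spectral gap of the unperturbed operator $M_{0,\cdot}^\ast(0;\cdot)$ acting on densities (where $h_0$ is the unique fixed density, a constant by the linearity in the $\nu=0$ sector together with the structure of $E_0$), a genuine spectral/Lyapunov gap that persists for small $\epsilon$. Then one writes $h_\epsilon(\omega)-h_0=(M_{0,n}^\ast(\epsilon;\omega)-M_{0,n}^\ast(0;\omega))h_0 + M_{0,n}^\ast(\epsilon;\omega)(h_\epsilon(\theta^{-n}\omega)-h_0)$ applied along a backward orbit (using invertibility of $\theta$), iterate, and control the first term by $\esssup_\omega\lVert (M_{0,1}^\ast(\epsilon;\omega)-M_{0,1}^\ast(0))h_0\rVert_{(m)}\to0$, which follows from \eqref{convinC1} since $M_{0,1}^\ast(\cdot;\omega)$ depends continuously on the map in $\Ci^{m+1}$ and the dependence is uniform in $\omega$ by \eqref{convinC1}; the contraction factor $\Lambda^{-n}$ on the second term, uniform in $\epsilon$, then lets $n\to\infty$ to kill the carried-over error. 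This yields $\esssup_\omega\lVert h_\epsilon(\omega)-h_0\rVert_{(m)}\to0$ for every $m\in\N$.

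I would organise the proof as: (i) the circle reduction and the observation that an invariant measure of product form is equivalent to a family of invariant densities of $M_{0,\cdot}^\ast$; (ii) uniform Lasota--Yorke estimates and the fibrewise cone contraction giving existence, uniqueness, positivity and $\Ci^\infty$ regularity of $h_\epsilon(\omega)$, together with measurability of $\omega\mapsto h_\epsilon(\omega)$; (iii) the perturbation argument above giving \eqref{eq:stability}; (iv) assembling $\mu^\epsilon$ from the disintegration and checking $f_\epsilon$-invariance via the relation $M_{0,1}^\ast(\epsilon;\omega)h_\epsilon(\omega)=h_\epsilon(\theta\omega)$ and the pushforward characterisation $f_\epsilon(\omega)_\ast\mu_\omega^\epsilon=\mu_{\theta\omega}^\epsilon$. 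The main obstacle I anticipate is getting all constants in the Lasota--Yorke inequality genuinely uniform in both $\omega$ and $\epsilon<\epsilon_0$ simultaneously, and arranging the backward-orbit telescoping so that the uniform contraction is applied before the $\esssup_\omega$ is taken; once the uniform inequalities are in hand, the rest is a routine assembly of known results on random expanding maps (which is why the theorem is stated as an "immediate consequence" of the expanding-map theory).
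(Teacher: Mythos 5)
Your approach is essentially the paper's: reduce to the $\nu=0$ Fourier block, obtain the invariant density family for the random expanding circle cocycle, and lift back to the torus via the product structure; the paper simply outsources the circle-level existence, positivity, $\Ci^\infty$ regularity and the strong stability \eqref{eq:stability} to Kifer's theorem on random expanding maps and to Baladi--Kondah--Schmitt's Theorem A (which gives convergence of $h_\epsilon(\omega)$ to $h_0$ in $\Ci^m$, hence in $H^m$), rather than re-deriving them through the cone contraction and uniform Lasota--Yorke estimates you sketch. One small correction: $h_0$ is the unique smooth positive fixed density of the transfer operator of $E_0$ and is independent of $\omega$, but it is not a constant function on $\Sone$ unless $E_0$ is linear.
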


\begin{proof}
Consider $M_{0,n}^\ast(\omega)=M_{0,n}^\ast(\epsilon;\omega)$. For $0\le\epsilon<\epsilon_0$
it follows that $M_{0,1}^\ast(\omega)$ is the transfer operator of a
uniformly expanding map $E_\epsilon(\omega)$, $\mathbb{P}$-almost surely, for which the existence
of invariant measures is well-known. In fact, by Kifer~\cite{Kifer08}*{Theorem 2.2}
there is a unique $E_\epsilon$-invariant ergodic probability measure $\nu^\epsilon$
whose disintegration $\nu_\omega^\epsilon(dx)=h_\epsilon(\omega,x)dx$
is $\mathbb{P}$-almost surely equivalent to normalized Lebesgue measure $dx$ on $\Sone$.
Here $h_\epsilon=h_\epsilon(\omega)=h_\epsilon(\omega,x)$ is a positive function which is measurable
in $(\omega,x)$ and H{\"o}lder continuous in $x$ such that
$M_{0,1}^\ast(\omega)h_\epsilon(\omega)(x)=h_\epsilon(\theta\omega,x)$ and
$\int_{\Sone} h_\epsilon(\omega,x)dx=1$. It follows that
$M_{0,n}^\ast(\epsilon;\omega)h_\epsilon(\omega)=h_\epsilon(\theta^n\omega)$ for $n\ge1$.
Since $E_\epsilon(\omega)\in{\Ci}^\infty$ we find
in view of Baladi, Kondah and Schmitt~\cite{BKS96}*{Theorem A} that also $h_\epsilon(\omega)\in{\Ci}^\infty$
and that $h_\epsilon(\omega)$ is uniquely defined by the properties above.
The same result also immediately gives \eqref{eq:stability}
since the Sobolev norm on the left can be controlled by the 
$\Ci^m$ norm of $h_\epsilon(\omega)-h_0$ when $m\in\N$.

For $\mathbb{P}$-almost every $\omega\in\Omega$,
we now define a probability measure $\mu_\omega^\epsilon(dxds)=h_\epsilon(\omega,x)dxds$ on $\T^2$ equivalent to Lebesgue measure,
where each density $h_\epsilon(\omega)$ has the desired properties.
It is straightforward to check that the pushforward
of $\mu_\omega^\epsilon$ by $f_\epsilon(\omega)$ satisfies
$f_\epsilon(\omega)_\ast \mu_\omega^\epsilon=\mu_{\theta\omega}^\epsilon$.
In fact, if $h_\epsilon(\omega)\otimes 1_{\Sone}$ denotes the tensor product
$(x,s)\mapsto h_\epsilon(\omega,x)$ for $(x,s)\in\T^2$, then the
Perron-Frobenius transfer operator
$M_{f_\epsilon(\omega)}^\ast$ given by \eqref{PFtransfer}
satisfies
\begin{equation}\label{eq:eigenvalue1}
M_{f_\epsilon(\omega)}^\ast h_\epsilon(\omega)\otimes 1_{\Sone}=
(M_{0,1}^\ast(\omega)h_\epsilon(\omega))\otimes 1_{\Sone}=h_\epsilon(\theta\omega)\otimes 1_{\Sone}
\end{equation}
since the decomposition \eqref{orthogonaldecomp} is preserved.
If we let $\mu_\cdot^\epsilon(\cdot)$ be the disintegration of a measure $\mu^\epsilon$
with respect to $\mathbb{P}$,
then $\mu^\epsilon$ is an $f_\epsilon$-invariant probability measure.
This completes the proof.
\end{proof}

\subsection{Spectral properties}
We now return to the operator $M_{\nu ,n}^\ast(\epsilon;\omega):H^m(\Sone)\to H^m(\Sone)$.
To shorten the notation we shall (as in Section \ref{subsection:reduction}) usually write $M_{\nu ,n}^\ast(\omega)$
instead of $M_{\nu ,n}^\ast(\epsilon;\omega)$ when $\epsilon$ is fixed.
The first result shows that for fixed $\nu\in\Z$, the system
\begin{equation}\label{eq:linearRDS}
\N\times\Omega\times H^m(\Sone)\ni(n,\omega ,\varphi) \mapsto M_{\nu ,n}^\ast(\omega)\varphi
\end{equation}
is a linear RDS on $H^m(\Sone)$ satisfying the conditions in Definition \ref{def:lyapunovexponents}.
To indicate the dependence on the parameter $\nu$,
the corresponding maximal Lyapunov exponent and index of compactness will (for fixed $\epsilon$) be denoted by
$r^\ast\{\nu\}$ and $r_\mathrm{ic}^\ast\{\nu\}$, respectively.
The dependence on the Sobolev index $m$ will usually be clear from
context, so this should cause no confusion. In what follows, we will denote the time-one map
$M_{\nu ,1}^\ast :\Omega\to\mathscr L(H^m(\Sone))$ by 
$M_{\nu}^\ast$, that is, $M_{\nu}^\ast(\omega)=M_{\nu ,1}^\ast(\omega)$.
For convenience, we will sometimes say that the linear RDS
given by \eqref{eq:linearRDS}
is the linear RDS on $H^m(\Sone)$ induced by the time-one map $M_{\nu}^\ast:\Omega\to\mathscr L(H^m(\Sone))$.
Similarly, we will write $M_{\nu}(\omega)=M_{\nu ,1}(\omega)$
for the adjoint of $M_{\nu}^\ast(\omega)$.

Let $m>0$ and set
\begin{equation}\label{eq:rm}
r_m=\log {(\lambda ^{-m-\frac{1}{2}}k^{\frac{1}{2}} )}.
\end{equation}
Then $r_m$ does not depend on $n$, $\nu$ or $\omega$, and $r_m \rightarrow -\infty$ as $m \rightarrow \infty$.

\begin{thm}\label{thm:discretespectrum}
Let $\{f_{\epsilon}\}_{\epsilon >0}$ be a family of ${\Ci}^{\infty}(\T^2,\T^2)$ 
valued random variables satisfying \eqref{eq:perturbedsystem} and
\eqref{convinC1}.
For $\nu\in\Z$, let $M_{\nu,n}^\ast(\omega)=M_{\nu,n}^\ast(\epsilon;\omega)$ be the operator defined
by means of the decomposition \eqref{adjointrestrictionoperator}. 
Let $m$ be a positive integer and define $r_m$ by \eqref{eq:rm}. 
Then there is an $\epsilon_0(m)$, depending on $m$ but
independent of $\nu$, such that if $0\le\epsilon<\epsilon_0$ then
$(n,\omega ,\varphi) \mapsto M_{\nu ,n}^\ast(\omega)\varphi$
is a linear RDS on the Sobolev space $H^m(\Sone)$
with maximal Lyapunov exponent $r^\ast\{\nu\}\le 0$ and
index of compactness $r_\mathrm{ic}^\ast\{\nu\}\le r_m$.
For $\nu=0$ we have $r^\ast\{0\}=0$ and the linear RDS is quasi-compact
if $m$ is sufficiently large, and
\begin{itemize}
\item[$(\mathrm{i})$] the Lyapunov subspace associated to
$r^\ast\{0\}=0$ is one dimensional and spanned by the function $h_\epsilon(\omega)$
given by Theorem \ref{thm:inv}.
\end{itemize}
If, in addition, $f_0$ is partially captive (see Definition \ref{def:randompartiallycaptive}),
then
\begin{itemize}
\item[$(\mathrm{ii})$] for every $\nu\ne0$ there is an $\epsilon_\nu(m)$ such that
if $0\le\epsilon<\epsilon_\nu$ then $r^\ast\{\nu\}< 0$.
\end{itemize}
\end{thm}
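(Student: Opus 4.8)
The plan is to prove Theorem~\ref{thm:discretespectrum} in several stages, treating the three claims in order and building on the machinery developed earlier in the excerpt. First I would establish that for each fixed $\nu\in\Z$ the system \eqref{eq:linearRDS} is a linear RDS on $H^m(\Sone)$ satisfying the hypotheses of Definition~\ref{def:lyapunovexponents}: the cocycle property was already verified in Section~\ref{subsection:reduction}, strong measurability follows from the measurability of $(\omega,x)\mapsto E_\epsilon(\omega,x)$ and $(\omega,x)\mapsto\tau_\epsilon(\omega,x)$ together with the explicit formula \eqref{adjointrestrictionoperator}, and the integrability $\omega\mapsto\log^+\lVert M_\nu^\ast(\omega)\rVert_{\mathscr L(H^m)}\in L^1(\Omega,\mathbb P)$ follows from a uniform (in $\omega$) bound on the operator norm. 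That uniform bound, and the two estimates $r^\ast\{\nu\}\le0$ and $r_{\mathrm{ic}}^\ast\{\nu\}\le r_m$, are the quantitative heart of the first part. I would obtain them by a Lasota--Yorke type argument, exactly as in the deterministic analysis of Faure~\cite{Faure}: split $H^m(\Sone)$ using the Fourier multiplier $\langle D\rangle^m$, conjugate $M_{\nu,n}^\ast(\omega)$ to an operator on $L^2(\Sone)$, and decompose it into a ``low-frequency'' piece (which is smoothing, hence compact, with small index of compactness) and a ``high-frequency'' piece whose norm is controlled by $\lambda^{-m-1/2}k^{1/2}$ thanks to \eqref{emin}, \eqref{eq:det} and the change-of-variables in \eqref{adjointrestrictionoperator}. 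The uniformity in $\omega$ is automatic because \eqref{emin} is an essential-infimum bound and $k$ is fixed; the key point is that the threshold noise level $\epsilon_0(m)$ can be chosen independently of $\nu$, which is clear since the Lasota--Yorke constants depend only on $\lambda$, $k$, $m$ and on $\Ci^1$-bounds for $E_\epsilon(\omega)$ and $\Ci^0$-bounds for $\tau_\epsilon(\omega)$, all controlled uniformly by \eqref{convinC1}. With $r_{\mathrm{ic}}^\ast\{\nu\}\le r_m$ in hand, quasi-compactness for $\nu=0$ (once $m$ is large enough that $r_m<0=r^\ast\{0\}$) is immediate.

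For claim~(i) I would argue as follows. For $\nu=0$ the operator $M_{0,n}^\ast(\omega)$ is the Perron--Frobenius operator of the expanding cocycle $E_\epsilon$, and Theorem~\ref{thm:inv} provides a positive $\Ci^\infty$ density $h_\epsilon(\omega)$ with $M_{0,n}^\ast(\omega)h_\epsilon(\omega)=h_\epsilon(\theta^n\omega)$ and $\int h_\epsilon(\omega,x)\,dx=1$; this forces $r^\ast\{0\}\ge0$, and combined with the upper bound from the first part (the Perron--Frobenius operator preserves the integral, so its norm cannot grow) we get $r^\ast\{0\}=0$, realized by the equivariant family $\omega\mapsto h_\epsilon(\omega)$. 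To see that the associated Lyapunov subspace $\Sigma_1(\omega)$ is exactly one-dimensional, I would invoke the Oseledets splitting of Theorem~\ref{thm:cocycle} and then a random Perron--Frobenius / cone argument: since $E_\epsilon$ is a random uniformly expanding map with $\Ci^\infty$ fibers, standard results (e.g. Baladi--Kondah--Schmitt~\cite{BKS96} together with the mixing of the base, or a direct Birkhoff-cone contraction) show that the top Lyapunov exponent is simple, i.e. any equivariant family with exponent $0$ is $\mathbb P$-a.s. a scalar multiple of $h_\epsilon(\omega)$. Uniqueness of $h_\epsilon(\omega)$ from Theorem~\ref{thm:inv} then pins down $\dim\Sigma_1(\omega)=1$.

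Claim~(ii) is the substantial one and where the real work lies. Here one must show that, under the partial captivity hypothesis on $f_0$ (Definition~\ref{def:randompartiallycaptive}), for each fixed $\nu\ne0$ there is a threshold $\epsilon_\nu(m)$ below which $r^\ast\{\nu\}<0$ strictly. The strategy is to use the semiclassical picture: rewrite $M_{\nu,n}^\ast(\omega)$ with $h=1/\lvert\nu\rvert$ as a semiclassical Fourier integral operator and extract, via the pseudodifferential calculus of Appendix~\ref{app:PsiDO}, an ``escape'' estimate saying that the $H^m$-norm of $M_{\nu,n}^\ast(\omega)$ decays once $n$ is large — this is the randomized analog of Faure's observation that partial captivity forces the spectral radius to drop. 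The technical obstruction, flagged already in the introduction of the excerpt, is that working only with the leading symbol yields a decay statement that is merely \emph{pointwise} in $\omega$; one needs a genuinely uniform-in-$\omega$ (or at least $L^1(\Omega)$-controlled) subexponential estimate, which is why Proposition~\ref{wPC} (the careful analysis of partial captivity in the adapted random setting, via the perturbation result Proposition~\ref{prop:differentnoiselevels}) is invoked. Concretely, I would: (a) fix $\nu\ne0$, so $h$ is fixed and the ``semiclassical'' parameter is really just a fixed nonzero number — the point being that $\lvert\nu\rvert$ need not be large, so one cannot simply quote semiclassical asymptotics but must use the quantitative weak partial captivity of Proposition~\ref{wPC}; (b) combine this with the Lasota--Yorke bound from the first part to get, for some $N=N(\nu,m)$ and some $\delta>0$, the estimate $\esssup_\omega\lVert M_{\nu,N}^\ast(\omega)\rVert_{\mathscr L(H^m)}\le e^{-\delta}$ whenever $\epsilon<\epsilon_\nu(m)$; (c) conclude $r^\ast\{\nu\}=\lim_n\frac1n\log\lVert M_{\nu,n}^\ast(\omega)\rVert\le-\delta/N<0$ by submultiplicativity along the cocycle. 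The main obstacle is precisely step (b): transferring the partial captivity of the \emph{unperturbed} $f_0$ into a uniform statement for the perturbed cocycle requires the delicate perturbation analysis of Section~\ref{section:analysisofpc}, and I expect that verifying the hypotheses of Proposition~\ref{wPC} for the randomly perturbed dynamics at noise level $\epsilon<\epsilon_\nu(m)$ — in particular checking that the ``captivity'' counting estimate survives the perturbation with constants controlled by \eqref{convinC1} — will be the crux of the argument.
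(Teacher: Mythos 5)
Your treatment of the first part (linear RDS structure, $r^\ast\{\nu\}\le 0$ via a Lasota--Yorke bound, $r_{\mathrm{ic}}^\ast\{\nu\}\le r_m$ via a compact-plus-small decomposition, and $r^\ast\{0\}=0$ from the equivariant density) matches the paper's argument. For claim (i) you take a genuinely different route: a Birkhoff-cone / random Perron--Frobenius argument to get simplicity of the top exponent directly for the random cocycle, whereas the paper first proves simplicity for the \emph{unperturbed} operator $M_0^\ast(0)$ (classical, via Ma\~n\'e) and then transfers it to the cocycle by Bogensch\"utz's spectral stability theorem for asymptotically small random perturbations. Your route is viable (and arguably more self-contained), though you would still need to connect the cone picture to the $H^m$ Oseledets splitting; the paper's route has the advantage that the same stability theorem simultaneously delivers claim (ii).

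For claim (ii) there is a genuine gap. Your mechanism is the semiclassical escape-function estimate fed by Proposition \ref{wPC}, but that machinery only controls the \emph{principal} symbol of $P_n(\omega)=(Q_{\nu,n}(\omega))^\ast Q_{\nu,n}(\omega)$; the operator-norm bound of Theorem \ref{thm:Pn} carries a remainder $\mathcal O_{n,m}(h^{1/2})$ with $h=1/\lvert\nu\rvert$, which is only negligible as $\lvert\nu\rvert\to\infty$. For a fixed $\nu\ne0$ (say $\nu=\pm1$) the remainder is $O(1)$ and no contraction $\esssup_\omega\lVert M_{\nu,N}^\ast(\omega)\rVert\le e^{-\delta}$ can be extracted this way; this is exactly why Theorem \ref{thm:spectralgap} only covers $\lvert\nu\rvert\ge\nu_0(m)$. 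The paper closes this gap by an entirely different, non-semiclassical argument: Theorem \ref{thm:nonrandomperspec} shows that for $\nu\ne0$ the unperturbed operator $M_\nu^\ast(0)$ has spectral radius strictly less than $1$ --- the point being a rigidity argument (equality in the triangle inequality for a putative unimodular eigenfunction forces $\tau_0$ to be cohomologous to a constant, contradicting partial captivity) combined with quasi-compactness. One then needs a spectral stability theorem to pass from $\epsilon=0$ to small $\epsilon>0$; since transfer operators are not norm-continuous in the map, this requires verifying the ``asymptotically small random perturbation'' conditions \eqref{eq:Bog1}--\eqref{eq:Bog2} of Bogensch\"utz (the content of Proposition 4.5, a Baladi--Young type estimate), after which $r^\ast\{\nu\}$ converges to the logarithm of the unperturbed spectral radius as $\epsilon\to0$. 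Neither the rigidity step nor the stability step appears in your plan, and without them step (b) of your outline does not go through.
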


If $(\Omega,\mathcal F,\mathbb{P})$ is the trivial probability space consisting of one point and $\theta$
is the identity on $\Omega$, then $M_{\nu,n}(\omega)=(M_\nu(\omega))^n$.
Theorem \ref{thm:discretespectrum} then means that $M_{\nu}^\ast(\omega)$ has 
spectral radius $\le1$ with 
discrete (possibly empty) spectrum outside the circle of radius $e^{r_m}$,
consisting of eigenvalues of finite multiplicity, 
compare with Faure~\cite{Faure}*{Theorem 2}. 
The eigenvalues are called {\it Ruelle resonances}, and are intrinsic to $M_\nu^\ast(\omega)$
and do not depend on the Sobolev index $m$. 
For $\nu=0$, the Lyapunov exponents $\alpha_j$ and associated subspaces $\Sigma_j(\epsilon;\omega)$
of the linear RDS on $H^m(\Sone)$ induced by $M_0^\ast(\omega)$
(in the notation of Theorem \ref{thm:cocycle})
are related to the eigenvalues of $M_0^\ast(\omega)$
in the following way: If $\gamma$ is an eigenvalue lying outside the circle of radius
$e^{r_m}$ then $\log{\lvert\gamma\rvert}=\alpha_j$ for some $j$.
$\Sigma_j(\epsilon;\omega)$ is the direct sum of the generalized eigenspaces for all such eigenvalues.
The reader is asked to compare with Bogensch{\"u}tz~\cite{Bogenschutz00}*{Example 1.4}.
In a similar fashion,
the next result concerns the `spectrum' of $M_{\nu ,n}^\ast(\omega):H^m(\Sone)\to H^m(\Sone)$
in the semiclassical limit $\nu\to\pm\infty$. Together with Theorem \ref{thm:discretespectrum}
it shows that the presence of a spectral gap
which can be seen for the unperturbed system (Theorems 2 and 3 in Faure~\cite{Faure}) is preserved under small 
random perturbations.

\begin{thm}\label{thm:spectralgap}
Let $\{ f_{\epsilon}\}_{\epsilon >0}$ be a family of ${\Ci}^{\infty}(\T^2,\T^2)$ 
valued random variables satisfying \eqref{eq:perturbedsystem} and
\eqref{convinC1}, and assume that $f_0$ is partially captive, see Definition \ref{def:randompartiallycaptive}. 
Then for any $\rho >\lambda ^{-\frac{1}{2}}$,
there is a positive integer $m_0$, together with numbers $\epsilon_0(m)$, $\nu_0(m)$ and $c_0(m)$ 
depending also on the integer $m\ge m_0$, 
such that for all $m\ge m_0$, $\lvert\nu\rvert\ge\nu_0$ and $0\le\epsilon<\epsilon_0$
we have 
\[
\lVert M_{\nu ,n}^\ast(\omega) \rVert_{\mathscr L(H_\nu^{m}(\Sone))}
\leq c_0 \rho ^n,
\quad n\ge1,
\]
$\mathbb{P}$-almost surely.
\end{thm}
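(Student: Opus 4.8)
The plan is to bound $\lVert M_{\nu,n}^\ast(\omega)\rVert_{\mathscr L(H_\nu^m)}$ by combining a microlocal decomposition of the reduced transfer operator with the random partial captivity estimate. Write $h=1/\lvert\nu\rvert$ and treat $M_\nu^\ast(\omega)$ as a semiclassical Fourier integral operator on $\Sone$ with the underlying canonical relation given by the graph of the map induced by $E_\epsilon(\omega)$ on $T^\ast\Sone$; this is the random analogue of Faure's construction. The key geometric point is that under iteration the expanding map pushes the cotangent variable $\xi$ toward the zero section at rate $\lambda^{-1}$, while only finitely many backward branches keep $\xi$ bounded away from zero. One localizes in frequency: on the region $\lvert\xi/\nu\rvert$ large (the ``hyperbolic'' or outgoing region), each application of $M_\nu^\ast(\omega)$ gains a factor controlled by $\lambda^{-m-1/2}k^{1/2}$ from the change-of-variables Jacobian measured in the $H_\nu^m$ norm — this is exactly what produces $r_m$ in Theorem \ref{thm:discretespectrum} and is $<\rho^n$ once $m$ is large; on the region $\lvert\xi/\nu\rvert$ bounded, the operator essentially reduces to the weighted sum $\sum_{E^{(n)}(\omega,y)=x} e^{-i\nu\tau^{(n)}(\omega,y)}/(dE^{(n)}(\omega,y)/dy)$ acting on low frequencies, whose size is governed by how many backward orbits fail to be expelled.

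The main input for the low-frequency part is Proposition \ref{wPC} (the random partial captivity estimate), which should say that, for $f_0$ partially captive and $\epsilon$ small, the number of length-$n$ backward branches of $E_\epsilon^{(n)}(\omega,\cdot)$ staying in a fixed neighborhood of the ``trapped'' set — equivalently, the relevant operator norm of the truncated sum — grows subexponentially, or more precisely is dominated by $C(\lambda^{-1/2}+o(1))^n$ after combining with the $\lambda^{-n/2}$ gain from \eqref{eq:det}. So the first step is to invoke Proposition \ref{wPC} to get, uniformly in $\omega$ and in $\epsilon<\epsilon_0$, a bound of the form $\lVert \chi_{\mathrm{low}} M_{\nu,n}^\ast(\omega)\chi_{\mathrm{low}}\rVert \le c\,\tilde\rho^{\,n}$ with $\tilde\rho<\rho$, once $\lvert\nu\rvert\ge\nu_0$. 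The second step is the high-frequency estimate: using the pseudodifferential calculus of Appendix \ref{app:PsiDO} and conjugation by $\langle hD\rangle^m$, show that the part of $M_{\nu,n}^\ast(\omega)$ microlocalized away from the zero section has $H_\nu^m$-operator norm $\le c\,e^{nr_m}$ plus an error from the symbolic calculus that is $O(h^\infty)$ uniformly in $\omega$; here one must be careful — as the introduction warns — to control not just the principal symbol but enough lower-order terms that all estimates are uniform in the noise parameter $\omega$, which is why the refined calculus of the Appendix and the uniform version of partial captivity (Proposition \ref{prop:differentnoiselevels}) are needed. The third step is to glue: choose the frequency cutoff scale so that the cross terms between the two regions are negligible (again $O(h^\infty)$ by non-stationary phase / disjoint wavefront sets), pick $m_0$ so that $e^{r_{m_0}}<\rho$, then $\nu_0$ and $\epsilon_0$ so that both the captivity bound and the symbolic errors are $\le \rho^n$ up to the constant $c_0$, and finally absorb everything into $c_0\rho^n$.

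The hard part will be the uniformity in $\omega$ of the gluing and of the error terms, together with the passage from the single-step operator to the $n$-step operator. Naively iterating single-step bounds loses the semiclassical gain because the canonical transformations associated to $E_\epsilon(\theta^j\omega)$ are different for each $j$, so one cannot simply compose a fixed symbol estimate; instead one must propagate the frequency localization through all $n$ steps at once, tracking how the cutoff $\chi$ evolves under the (random) classical dynamics and showing the composition stays within a fixed symbol class with seminorms bounded uniformly in $\omega$ and in $0\le\epsilon<\epsilon_0$. This is where Proposition \ref{prop:differentnoiselevels} — comparing the dynamics at noise level $\epsilon$ with that at level $0$ — does the essential work, allowing the geometric trapping estimates proved for $f_0$ to be transferred, with uniform constants, to the whole family $\{f_\epsilon\}$. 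Once that uniform $n$-step microlocal decomposition is in hand, the two regional estimates above combine to give the claimed bound $\lVert M_{\nu,n}^\ast(\omega)\rVert_{\mathscr L(H_\nu^m)}\le c_0\rho^n$ for all $n\ge1$, $\mathbb{P}$-almost surely.
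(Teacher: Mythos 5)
Your overall strategy---semiclassical analysis of the reduced operator, a trapped/escaping dichotomy on $T^\ast\Sone$, Proposition \ref{wPC} for the trapped part, and an $\omega$-uniform symbolic calculus---is the right one, and you correctly identify the two essential inputs. But two of your concrete steps would fail. The first is the gluing: you decompose with frequency cutoffs $\chi_{\mathrm{low}},\chi_{\mathrm{high}}$ and claim the cross terms are negligible by non-stationary phase and disjointness of wavefront sets. They are not disjoint: the canonical relation of $M_{\nu,n}^\ast(\omega)$ maps the bounded region $\mathcal Z=\Sone\times[-R_\kappa,R_\kappa]$ partly \emph{into} the high-frequency region---that is precisely what an escaping trajectory is---so $\chi_{\mathrm{high}}M_{\nu,n}^\ast(\omega)\chi_{\mathrm{low}}$ carries an essential part of the operator and is nowhere near $\mathcal O(h^\infty)$. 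The paper avoids cross terms altogether: one conjugates by a single smooth escape function $a_m$ (equal to $1$ on $\mathcal Z$, to $(1+\eta^2)^{m/2}$ outside a slightly larger set, and nondecreasing along random trajectories), sets $Q_{\nu,n}(\omega)=A_m^{-1}M_{\nu,n}(\omega)A_m$, and computes the principal symbol of $P_n(\omega)=Q_{\nu,n}^\ast(\omega)Q_{\nu,n}(\omega)$ via Egorov's theorem (Theorem \ref{thm:Pn}). The trapped/escaping dichotomy then enters only as a pointwise case analysis on the quotients $a_m^2(y,\eta)/a_m^2(F_\alpha^{(n)}(\omega,y,\eta))$, yielding $\lVert p_n(\omega)\rVert_{L^\infty}\le (k/\lambda)^nC_\kappa^{2m}+k\mathcal N(\theta\omega,n-1)\lambda^{-n}$ with no off-diagonal pieces to estimate.

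The second gap is the passage to arbitrary $n$. You propose to run the microlocal analysis ``through all $n$ steps at once'' with symbol seminorms uniform in $\omega$ and $\epsilon$; but to conclude you would also need uniformity in $n$, which is not available: the $n$-fold composition has $k^n$ branches and derivatives of $\tau^{(n)}(\omega)$ growing with $n$, so the $L^2$ remainder is only $\mathcal O_{n,m}(h^{1/2})$ and degenerates as $n\to\infty$; moreover Proposition \ref{wPC} only supplies a noise threshold $\epsilon(n)$ depending on $n$, since partial captivity itself is not shown to be stable. The paper's resolution is to fix a single time $n_0$ determined by $\rho$ through \eqref{eq:choosingnzero1}, prove $\lVert Q_{\nu,n_0}(\omega)\rVert_{\mathscr L(L^2)}\le\rho^{n_0}$ $\mathbb{P}$-almost surely for $m\ge m_0$, $\nu\ge\nu_0(m)$ and $0\le\epsilon<\epsilon(n_0)$, and then to treat general $n=\ell n_0+r$ by plain submultiplicativity of the operator norm along the cocycle decomposition $Q_{\nu,n}(\omega)=Q_{\nu,n_0}(\omega)\circ\cdots\circ Q_{\nu,n_0}(\theta^{(\ell-1)n_0}\omega)\circ Q_{\nu,r}(\theta^{\ell n_0}\omega)$. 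This works only because the error term at the fixed time $n_0$ is uniform in $\omega$, so one value of $\nu_0$ serves all the shifted fibers simultaneously---which is exactly the point of the refined calculus in Appendix \ref{app:PsiDO}, as the remark at the end of Section \ref{section:spectralgap} explains. You should replace your uniform-in-$n$ propagation by this fixed-$n_0$ block argument.
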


Note that Theorem \ref{thm:spectralgap} implies that if
$0\le\epsilon<\epsilon_0$ then $r^\ast\{\nu\}< 0$ for all
$\lvert\nu\rvert\ge\nu_0$.

\begin{rmk}
The terminology {\it partially captive} is due to Faure~\cite{Faure}*{Definition 15}.
The definition was inspired by a condition of {\it transversality} introduced by Tsujii~\cite{Tsujii}, who showed that
transversality is a generic condition for suspensions of linear expanding maps, and equivalent
to the condition that the ceiling function $\tau_0$ not be cohomologous to a constant.
(See Theorems 1.2 and 1.4 in Tsujii~\cite{Tsujii}
for precise statements.) The transversality condition
has recently been introduced also for partially expanding maps of the form \eqref{eq:unperturbedsystem}
by Butterley and Eslami \cite{ButterleyEslami} (see also Eslami~\cite{Eslami}), who showed that this equivalence is still in force.
In $\S$3.2 we compare the conditions partial captivity and transversality in the special case when
the expanding map $E_0$ is linear, and establish that
partial captivity is a generic condition. The reader is asked to
compare with the remark on p.~1479 in Faure~\cite{Faure}.
\end{rmk}

In $\S$1.6 the results of Theorems \ref{thm:discretespectrum} and \ref{thm:spectralgap} will be used to obtain information
concerning the long-term behavior of the RDS induced by $f_\epsilon$.
The proofs of these theorems occupy sections \ref{section:discrete}
and \ref{section:spectralgap}, respectively.
Since we will now almost exclusively deal with function spaces defined
on $\Sone$, we will henceforth sometimes omit $\Sone$ from the notation,
and for example write $H^m$ instead of $H^m(\Sone)$ when there is
no ambiguity.

\subsection{Decay of random correlation functions}\label{subsection:decay}

Let $\mu$ be an $f$-invariant measure, and $\mu _{\cdot}(\cdot)$ the disintegration of $\mu$. 
For each $\phi,\psi \in {\Ci}^{\infty}(\T^2)$ we define the {\it quenched operational correlation function} 
$\Cor_{\phi,\psi}^\mathrm{op}(\omega,n)$ of $(f,\mu)$ by
\begin{equation}
\Cor_{\phi,\psi}^\mathrm{op} (\omega ,n) =\int \phi \circ f^{(n)}(\omega) \cdot \bar\psi  dxds 
-\int \phi d\mu _{\theta ^n\omega}\int \bar\psi  dxds.
\end{equation}
We say that the operational correlation functions of $(f,\mu)$ decay exponentially fast when there 
exists a number $0<\rho <1$ (independent of $\omega$) and a set $\tilde\Omega$ of full measure such that 
for any $\omega\in\tilde\Omega$ and $\phi ,\psi \in {\Ci}^{\infty}(\T ^2)$ there is a constant $c(\omega)$
(depending on $\phi$ and $\psi$) such that
\begin{equation}\label{eq:constantsinexpdecay}
\lvert \Cor_{\phi ,\psi }^\mathrm{op} (\omega ,n) \rvert \leq c(\omega) \rho ^n. 
\end{equation} 
Similarly, we define the {\it quenched classical correlation function} 
$\Cor_{\phi,\psi}^\mathrm{cl}(\omega,n)$ of $(f,\mu)$ by
\begin{equation}
\Cor_{\phi,\psi}^\mathrm{cl} (\omega ,n) =\int \phi \circ f^{(n)}(\omega) \cdot \bar\psi  d\mu_\omega
-\int \phi d\mu _{\theta ^n\omega}\int \bar\psi d\mu_\omega,
\end{equation}
and define exponential decay in the same way.
We can now state and prove our main result.

\begin{thm}\label{thm:expdecay}
Let $\{f_{\epsilon}\}_{\epsilon >0}$ be a family of ${\Ci}^{\infty}(\T^2,\T^2)$ 
valued random variables satisfying \eqref{eq:perturbedsystem} and
\eqref{convinC1}, and let $\mu^\epsilon$ be the $f_\epsilon$-invariant
measure provided by Theorem \ref{thm:inv} with disintegration 
$\mu_\omega^\epsilon(dxds)=h_\epsilon(\omega,x)dxds$.
Assume that $f_0$ is partially captive. Then there is an $\epsilon_0$ such that
if $0\le\epsilon <\epsilon _0$ then the quenched random (operational and classical) correlation
functions of $(f_\epsilon,\mu^\epsilon)$ decay exponentially fast.
\end{thm}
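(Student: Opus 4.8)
The plan is to reduce both correlation functions to the Fourier modes $\Hi_\nu$ of \eqref{orthogonaldecomp} and to estimate them mode by mode using Theorems \ref{thm:inv}, \ref{thm:discretespectrum} and \ref{thm:spectralgap}. Fix $0\le\epsilon<\epsilon_0$, and for $\phi,\psi\in\Ci^\infty(\T^2)$ write $\phi(x,s)=\sum_{\nu\in\Z}\phi_\nu(x)e^{2i\pi\nu s}$ and $\psi(x,s)=\sum_{\nu\in\Z}\psi_\nu(x)e^{2i\pi\nu s}$ for the partial Fourier expansions in the fiber variable; then $\phi_\nu,\psi_\nu\in\Ci^\infty(\Sone)$, and by repeated integration by parts in $s$ the norms $\lVert\phi_\nu\rVert_{(m)}$ and $\lVert\psi_\nu\rVert_{(m)}$ decay faster than any power of $\lvert\nu\rvert$, for every $m$. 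Since $\phi\circ f_\epsilon^{(n)}(\omega)=M_{f_\epsilon^{(n)}(\omega)}\phi$, since $M_{f_\epsilon^{(n)}(\omega)}$ is the $L^2(\T^2)$ adjoint of $M^\ast_{f_\epsilon^{(n)}(\omega)}$, and since the latter respects \eqref{orthogonaldecomp} and restricts on $\Hi_\nu\cong L^2(\Sone)$ to the operator $M^\ast_{\nu,n}(\omega)$ of \eqref{adjointrestrictionoperator}, orthogonality of the $\Hi_\nu$ gives
\[
\int\phi\circ f_\epsilon^{(n)}(\omega)\cdot\bar\psi\,dxds=\bigl(\phi,M^\ast_{f_\epsilon^{(n)}(\omega)}\psi\bigr)_{L^2(\T^2)}=\sum_{\nu\in\Z}\bigl(\phi_\nu,M^\ast_{\nu,n}(\omega)\psi_\nu\bigr)_{L^2(\Sone)}.
\]
The operator $M_{0,n}^\ast(\omega)$ is the transfer operator of the expanding cocycle $E_\epsilon$, so it preserves Lebesgue integrals, while by Theorem \ref{thm:inv} it satisfies $M_{0,n}^\ast(\omega)h_\epsilon(\omega)=h_\epsilon(\theta^n\omega)$; hence, in the notation of Theorem \ref{thm:discretespectrum}(i), the top Oseledets projection of the linear RDS on $H^m(\Sone)$ induced by $M_0^\ast$ is $v\mapsto\bigl(\int_{\Sone}v\,dx\bigr)h_\epsilon(\omega)$. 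Writing $w_\omega=\psi_0-\bigl(\int_{\Sone}\psi_0\,dx\bigr)h_\epsilon(\omega)$ for the complementary part, and using $\int_{\T^2}\bar\psi\,dxds=\overline{\int_{\Sone}\psi_0\,dx}$ together with $\int\phi\,d\mu_{\theta^n\omega}^\epsilon=(\phi_0,h_\epsilon(\theta^n\omega))_{L^2(\Sone)}$, the $\nu=0$ contribution cancels the subtracted product and one is left with
\[
\Cor_{\phi,\psi}^\mathrm{op}(\omega,n)=\bigl(\phi_0,M_{0,n}^\ast(\omega)w_\omega\bigr)_{L^2(\Sone)}+\sum_{\nu\ne0}\bigl(\phi_\nu,M_{\nu,n}^\ast(\omega)\psi_\nu\bigr)_{L^2(\Sone)}.
\]

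It then remains to produce an $\omega$-independent $\rho<1$, a full-measure set $\tilde\Omega$, and for each $\omega\in\tilde\Omega$ a finite constant $c(\omega)$ (depending on $\phi,\psi$) bounding the right-hand side by $c(\omega)\rho^n$. First I would choose $m$ large enough that the linear RDS induced by $M_0^\ast$ on $H^m(\Sone)$ is quasi-compact (Theorem \ref{thm:discretespectrum}), that $H^m(\Sone)\hookrightarrow L^\infty(\Sone)$, and that $m\ge m_0$ in Theorem \ref{thm:spectralgap} for some fixed $\rho_\infty$ with $\lambda^{-1/2}<\rho_\infty<1$; this fixes $\nu_0=\nu_0(m)$. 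Then, invoking Theorem \ref{thm:discretespectrum}(ii) for the finitely many integers $\nu$ with $0<\lvert\nu\rvert<\nu_0$, I would shrink $\epsilon_0$ so that in addition $r^\ast\{\nu\}<0$ for all such $\nu$ whenever $0\le\epsilon<\epsilon_0$. Now estimate in three groups. For $\nu=0$: the complementary Oseledets bundle is $M_0^\ast$-invariant, and by quasi-compactness and Theorem \ref{thm:cocycle} the restricted cocycle has maximal Lyapunov exponent strictly negative, so $\lVert M_{0,n}^\ast(\omega)w_\omega\rVert_{L^2(\Sone)}\le C(\omega)\rho_0^n$ for some $\rho_0<1$, while $\lVert w_\omega\rVert_{(m)}$ is bounded uniformly in $\omega$ by \eqref{eq:stability}. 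For $0<\lvert\nu\rvert<\nu_0$: since $r^\ast\{\nu\}<0$, the defining limit \eqref{eq:r} gives $\lVert M_{\nu,n}^\ast(\omega)\rVert_{\mathscr L(H^m(\Sone))}\le C_\nu(\omega)\rho_\nu^n$ with $\rho_\nu<1$, and there are finitely many such $\nu$. For $\lvert\nu\rvert\ge\nu_0$: Theorem \ref{thm:spectralgap} gives $\lVert M_{\nu,n}^\ast(\omega)\rVert_{\mathscr L(H_\nu^m(\Sone))}\le c_0\rho_\infty^n$ \emph{uniformly in $\nu$} and $\mathbb{P}$-almost surely, and combining the elementary bounds $\lVert u\rVert_{L^2(\Sone)}\le\lVert u\rVert_{H_\nu^m(\Sone)}$, $\lVert\psi_\nu\rVert_{H_\nu^m(\Sone)}\le\lVert\psi_\nu\rVert_{(m)}$ (valid for $\lvert\nu\rvert\ge1$, $m\ge0$) and $\lVert\cdot\rVert_{L^2(\Sone)}\le\lVert\cdot\rVert_{(m)}$ with the super-polynomial decay of $\lVert\phi_\nu\rVert_{(m)}$, $\lVert\psi_\nu\rVert_{(m)}$ shows $\sum_{\lvert\nu\rvert\ge\nu_0}\lvert(\phi_\nu,M_{\nu,n}^\ast(\omega)\psi_\nu)_{L^2(\Sone)}\rvert\le c_0 C(\phi,\psi)\rho_\infty^n$. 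Summing the three groups and intersecting the countably many full-measure sets on which the almost sure bounds hold yields $\lvert\Cor_{\phi,\psi}^\mathrm{op}(\omega,n)\rvert\le c(\omega)\rho^n$ with $\rho=\max\{\rho_0,\rho_\infty,\max_{0<\lvert\nu\rvert<\nu_0}\rho_\nu\}<1$.

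For the classical correlation functions I would reduce to the operational ones. By Theorem \ref{thm:inv} the disintegration is $d\mu_\omega^\epsilon=h_\epsilon(\omega,x)\,dxds$ with $h_\epsilon(\omega)$ real, independent of $s$, positive, and — by \eqref{eq:stability} — bounded in every $\Ci^N(\Sone)$ norm uniformly in $\omega$; hence, with $\tilde\psi_\omega=\psi\cdot(h_\epsilon(\omega)\otimes1_{\Sone})\in\Ci^\infty(\T^2)$, one checks that $\Cor_{\phi,\psi}^\mathrm{cl}(\omega,n)=\Cor_{\phi,\tilde\psi_\omega}^\mathrm{op}(\omega,n)$. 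The Fourier modes $(\tilde\psi_\omega)_\nu=\psi_\nu\cdot h_\epsilon(\omega)$ retain the super-polynomial decay of $\psi_\nu$, and $\lVert(\tilde\psi_\omega)_\nu\rVert_{(m)}$ and $\lvert\int_{\Sone}(\tilde\psi_\omega)_0\,dx\rvert$ are controlled by $\lVert\psi_\nu\rVert_{(m)}$ and $\lVert\psi_0\rVert_{L^\infty}$ uniformly in $\omega$, so the operational estimate applies with $\psi$ replaced by $\tilde\psi_\omega$ and produces the same $\rho$. I expect the main obstacle to be the high-frequency group $\lvert\nu\rvert\ge\nu_0$: because $c(\omega)$ is in effect a sum over all $\nu\in\Z$, it is essential that the rate $\rho_\infty$ and constant $c_0$ in Theorem \ref{thm:spectralgap} be uniform in $\nu$ — a merely pointwise-in-$\omega$, $\nu$-dependent decay (the weaker conclusion warned about in the introduction) would not let the series be summed — and that this uniformity be paired with the rapid decay of the Fourier coefficients of the smooth observables; by contrast the low-frequency and $\nu=0$ groups amount to a finite-dimensional bookkeeping exercise with the Oseledets splitting.
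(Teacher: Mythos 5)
Your proposal is correct and follows essentially the same route as the paper's proof: the same Fourier-mode decomposition with cancellation of the $\nu=0$ product via the projection onto $\C h_\epsilon(\omega)$, the same three-group estimate (Oseledets splitting for $\nu=0$, negative Lyapunov exponents from Theorem \ref{thm:discretespectrum}(ii) for $0<\lvert\nu\rvert<\nu_0$, and the $\nu$-uniform bound of Theorem \ref{thm:spectralgap} paired with rapid Fourier decay for $\lvert\nu\rvert\ge\nu_0$), and the same reduction of classical to operational correlations by multiplying the observable by $h_\epsilon(\omega)$ and invoking \eqref{eq:stability}.
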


\begin{proof}
Let $\lambda^{-\frac{1}{2}}<\rho <1$. By combining Theorems \ref{thm:discretespectrum} and \ref{thm:spectralgap}
we can find numbers $m\geq m_0$, $\epsilon_0=\epsilon_0(m)$ and $\nu_0=\nu_0(m)$
for which those results are in force,
where $m$ is chosen sufficiently large so that
$r_m =\log {(\lambda ^{- m -\frac{1}{2}}k^{\frac{1}{2}})}<0$.
After reducing $\epsilon_0$ if necessary, it follows that for all $0\le\epsilon<\epsilon_0$
we have $r^\ast\{\nu\}<0$ if $\nu\ne0$, while for $\nu=0$ the Lyapunov subspace 
$\Sigma_1(\epsilon;\omega)$ associated to the maximal Lyapunov exponent
$r^\ast\{0\}$ is one dimensional,
so that $\Sigma_1(\epsilon;\omega)=\C h_\epsilon(\omega)$.
We now fix $0\le\epsilon <\epsilon _0$, and let $\tilde\Omega$ be a $\theta$-invariant
set of full measure where the statements just made are guaranteed to hold for all $\omega$.
In particular, the definitions of $r^\ast\{\nu\}$ and $r^\ast_\mathrm{ic}\{\nu\}$
are realized in $\tilde\Omega$ (compare with \eqref{eq:r} and \eqref{eq:rbeta}, respectively.)
Below we always assume that $\omega$ belongs to $\tilde\Omega$ and make no further mention of $\tilde\Omega$.

We first prove that the operational correlation functions decay exponentially fast.
To this end, let $\pi_\epsilon (\omega) : H^{m}(\Sone) \to H^{m}(\Sone)$ be the projection defined by
\begin{equation*}
\pi_\epsilon (\omega) u= (u,1_{\Sone})_{L^2} h_\epsilon (\omega).
\end{equation*}
Note that $\pi_\epsilon(\omega)$ is idempotent since we have $(h_\epsilon (\omega),1_{\Sone})_{L^2}=1$
according to Theorem \ref{thm:inv}. The kernel $\Ker{\pi_\epsilon(\omega)}$ of $\pi_\epsilon(\omega)$
consists of those elements $u\in H^{m}(\Sone)$ such that $\int u dx=0$, and 
by Theorem \ref{thm:cocycle} we have
\begin{equation}\label{eq:estimateforthekernel}
\varphi\in\Ker{\pi_\epsilon(\omega)}\quad\Longrightarrow\quad
\limsup_{n\to\infty}{\frac{1}{n}\log{\lVert M_{0,n}^\ast(\epsilon;\omega)\varphi\rVert_{(m)}}}
\le\alpha_2<0
\end{equation}
since the range of $\pi_\epsilon(\omega)$
coincides with $\Sigma_1(\epsilon;\omega)$ by Theorem \ref{thm:discretespectrum}.

Let $\phi_1, \phi_2 \in {\Ci}^{\infty}(\T ^2)$. By
Theorem \ref{thm:inv}, the operational correlation function can be written
\begin{align}
\Cor_{\phi_1,\phi_2}^\mathrm{op}(\epsilon;\omega,n)&=(\phi_1,M^\ast_{f_\epsilon^{(n)}(\omega)}\phi_2)_{L^2(\T^2)}
\\ & \quad-(\phi_1,h_\epsilon(\theta^n\omega)\otimes 1_{\Sone})_{L^2(\T^2)}\cdot(1_{\T^2},\phi_2)_{L^2(\T^2)},
\end{align}
where the tensor product $h_\epsilon(\theta^n\omega)\otimes 1_{\Sone}$ is defined
by $(x,s)\mapsto h_\epsilon(\theta^n\omega,x)$.
For $j=1,2$ let $\varphi _{j,\nu}$ denote the ``Fourier coefficients'' of $\phi_j$
with respect to the Fourier decomposition \eqref{orthogonaldecomp} of $L^2(\T^2)$. 
Then
\begin{equation}
\Cor_{\phi_1,\phi_2}^\mathrm{op}(\epsilon;\omega,n)=\sum_\nu(\varphi_{1,\nu},M^\ast_{\nu,n}(\omega)\varphi_{2,\nu})_{L^2}
-(\varphi_{1,0},h_\epsilon(\theta^n\omega))_{L^2}\cdot(1_{\Sone},\varphi_{2,0})_{L^2}.
\end{equation}
In view of the definition of $\pi_\epsilon(\omega)$ we have according to Theorem \ref{thm:inv} that
\[
(\varphi _{1,0},M_{0,n} ^\ast (\omega) \pi_\epsilon (\omega) \varphi _{2,0}) _{L^2}
=(\varphi _{1,0},h_\epsilon (\theta^n\omega))_{L^2}\cdot(1_{\Sone},\varphi _{2,0}  )_{L^2},
\]
$\mathbb{P}$-almost surely.
Hence,
\begin{equation}\label{eq:exponentialdecaystep1}
\Cor_{\phi_1,\phi_2}^\mathrm{op}(\epsilon;\omega,n)=
(\varphi _{1,0},M_{0,n} ^\ast (\omega)(\mathrm{Id}- \pi_\epsilon (\omega)) \varphi _{2,0}) _{L^2}
+\sum_{\nu\ne0}(\varphi_{1,\nu},M^\ast_{\nu,n}(\omega)\varphi_{2,\nu})_{L^2}.
\end{equation}
In the sum appearing in the right-hand side above, each term with $\lvert \nu \rvert > \nu _0$ is bounded
by $C\rho ^{n}\lVert \varphi _{1,\nu}\rVert _{H_{\nu}^{-m}} \lVert \varphi _{2,\nu} \rVert _{H_{\nu}^{m}}$
by Theorem \ref{thm:spectralgap}.
It is straightforward to check that
$\lVert \varphi _{1,\nu} \rVert _{H_{\nu}^{-m}}$ and $\lVert \varphi _{2,\nu} \rVert _{H_{\nu}^{m}}$ are
$\mathcal O(\nu ^{-N})$ for any $N\in\N$ as $\lvert\nu\rvert\to\infty$.  Choosing $N=2$ we obtain
\begin{equation}\label{eq:exponentialdecaystep2}
\sum _{\lvert \nu \rvert > \nu _0} \lvert(\varphi _{1,\nu } ,M_{\nu,n} ^\ast (\omega) \varphi _{2,\nu })_{L^2}
\rvert \leq C\rho ^n
\end{equation}
for some new constant $C$.

To estimate the other finitely many terms, we first note that
$(\mathrm{Id}- \pi_\epsilon (\omega)) \varphi _{2,0}\in\Ker{\pi_\epsilon(\omega)}$
so \eqref{eq:estimateforthekernel} is in force. Moreover,
the maximal Lyapunov exponents $r^\ast\{\nu\}$ are negative
for $\nu\ne0$. Since also $r_m<0$, we can therefore find a positive number $\hat\rho<1$ such that
\[
\log{\hat\rho}>\max{\{\alpha_2,r_m,r^\ast\{\nu\} : \nu=\pm 1,\ldots,\pm \nu_0 \}}.
\]
In view of \eqref{eq:estimateforthekernel} 
there is an $n_0(\epsilon;\omega)$ such that
\begin{equation}\label{asymLyp2for0}
\frac{1}{n}\log{\lVert M_{0,n}^\ast(\omega)
(\mathrm{Id}- \pi_\epsilon (\omega)) \varphi _{2,0}\rVert_{(m)}}
<\log{\hat\rho}, \quad n\ge n_0(\epsilon;\omega).
\end{equation}
Similarly, for $0<\lvert\nu\rvert\le\nu_0$ we can find numbers $n_\nu(\epsilon;\omega)$ such that
\begin{equation}\label{asymLyp2}
\frac{1}{n}\log{\lVert M_{\nu,n}^\ast(\omega)
 \varphi _{2,\nu}\rVert_{(m)}}
<\log{\hat\rho}, \quad n\ge n_\nu(\epsilon;\omega).
\end{equation}
For such $\nu$, we may by Theorem \ref{thm:discretespectrum} (or rather its proof,
see Lemma \ref{lem:LYineq} below) find a constant $\tilde{C}_{\nu,m}$ independent of $\epsilon$ and $\omega$ such that
\begin{equation}
\max_{1\le n< n_\nu(\epsilon;\omega)}
\{\lVert M_{\nu,n}^\ast(\omega)
 \varphi _{2,\nu}\rVert_{(m)}\}\le \tilde{C}_{\nu,m} \lVert \varphi _{2,\nu}\rVert_{(m)}.
\end{equation}
If $1\le n<n_\nu(\epsilon;\omega)$ then
\[
\lVert M_{\nu,n}^\ast(\omega)
 \varphi _{2,\nu}\rVert_{(m)}\le \frac{\tilde{C}_{\nu,m} \lVert \varphi _{2,\nu}\rVert_{(m)}}{\hat \rho^n}\hat \rho^n
\leq \tilde{C}_{\nu,m} \lVert \varphi _{2,\nu}\rVert_{(m)} e^{-r_m n _{\nu}(\epsilon ;\omega)} \hat\rho ^n
\]
since $e^{r_m}\leq \hat \rho$. 
Together with \eqref{asymLyp2}, this is easily seen to imply that
for $0<\lvert\nu\rvert\le\nu_0$
there $\mathbb{P}$-almost surely exists a constant $C_\nu(\epsilon;\omega)>0$ such that
\begin{equation}\label{asymLyp}
\lVert M_{\nu,n}^\ast(\omega)
 \varphi _{2,\nu}\rVert_{(m)} \le C_\nu(\epsilon;\omega) \hat \rho ^n,
\quad n\ge1.
\end{equation}
The case $\nu=0$ can be treated similarly by using \eqref{asymLyp2for0} in place of \eqref{asymLyp2}.
Combined with \eqref{eq:exponentialdecaystep1} and \eqref{eq:exponentialdecaystep2},
this gives the estimate
\begin{equation*}
\lvert \Cor_{\phi _1,\phi _2}^\mathrm{op}(\epsilon;\omega ,n)\rvert\le
C\rho^n+(2\nu_0+1)\max_{\lvert \nu \rvert \le \nu _0}{\{ C_\nu(\epsilon;\omega)\lVert \varphi _{1,\nu}\rVert_{(-m)}\}}\hat\rho^n,
\end{equation*}
from which the conclusion follows.

We now turn to the classical
correlation function. However, $d\mu_\omega^\epsilon=h_\epsilon(\omega,x)dxds$ and $h_\epsilon(\omega)\in\Ci^\infty$ is real valued,
so by definition we have
\begin{equation}
\Cor_{\phi_1 ,\phi_2 }^\mathrm{cl} (\epsilon;\omega ,n)=\Cor_{\phi_1 ,h_\epsilon(\omega)\phi_2 }^\mathrm{op} (\epsilon;\omega ,n).
\end{equation}
Moreover, the ``Fourier coefficients'' of $(x,s)\mapsto h_\epsilon(\omega,x)\phi_2(x,s)$
with respect to the Fourier decomposition \eqref{orthogonaldecomp} of $L^2(\T^2)$
are given by $h_\epsilon(\omega)\varphi_{2,\nu}$ with $\varphi_{2,\nu}$ defined above. 
Hence, a repetition of the proof with $\varphi_{2,\nu}$ replaced by $h_\epsilon(\omega)\varphi_{2,\nu}$
yields the result. Indeed, the only thing we need to check is that 
$\lVert h_\epsilon(\omega)\varphi_{2,\nu}\rVert _{H_{\nu}^{m}}$ is still
$\mathcal O(\nu ^{-N})$ for any $N\in\N$ as $\lvert\nu\rvert\to\infty$, and that
$\lVert h_\epsilon(\omega)\varphi_{2,\nu}\rVert _{(m)}$ can still be estimated by a constant $C_{\nu,m}$.
But this can be done in the same way as before if we also use \eqref{eq:stability}
(or Baladi et al.~\cite{BKS96}*{Theorem A}). The proof is complete.
\end{proof}

\begin{rmk}
It is usually desirable to see if decay of quenched correlation
can lead to decay of integrated correlation. This question boils down to
$\mathbb P$-integrability of the constants $c(\omega)$ appearing in \eqref{eq:constantsinexpdecay}.
Inspecting the proof above, we see that this seems to be intractable with our methods
since we have no information about the numbers $n_\nu(\epsilon;\omega)$ responsible
for the dependence on the noise parameters $\epsilon$ and $\omega$.
\end{rmk}

\section{The proof of Theorem \ref{thm:spectralgap}}\label{section:spectralgap}

In this section we give the proof of Theorem \ref{thm:spectralgap}, by essentially
adapting the proof of Faure~\cite{Faure}*{Theorem 3}, which is made possible by a careful
microlocal analysis of pseudodifferential operators depending on the parameter
$\omega$, found in Appendix \ref{app:PsiDO}. To make comparison easier,
we have tried to follow the structure of Faure's proof.

We first require some preparation.
Let $g_\ve(\omega)$, $E_\ve(\omega)$ 
and $\tau_\ve(\omega)$ be defined through the perturbation scheme $\{f_\ve\}_{\ve>0}$
as in \eqref{eq:perturbedsystem}.
By assumption we can find a number $\epsilon_0>0$ 
such that \eqref{emin} holds for any $0\le\epsilon<\epsilon_0$.
For such $\epsilon$, let $M_{\nu,n}(\omega)=M_{\nu,n}(\epsilon;\omega)$ be
the operator given by \eqref{restrictionoperator}.
As before, we simply write $M_{\nu}(\omega)$ instead of $M_{\nu,1}(\omega)$ when $n=1$.
We remark that when $\nu\in\Z$ is fixed, the operator $M_{\nu,n}(\omega)$
can be regarded as a Fourier integral operator
acting on ${\Ci}^\infty(\Sone)$. 
When $\nu=1/h$ is considered as a semiclassical parameter,
the operator $M_{\nu,n}(\omega)$
can be regarded as a semiclassical Fourier integral operator
acting on ${\Ci}^\infty(\Sone)$. 
Since the general theory of Fourier integral operators will not
be essential to our exposition, we refer the reader to H{\"o}rmander~\cite{Hormander4}*{Chapter 25}
for details. For a discussion relevant to our context,
see also Faure~\cite{Faure}*{Section 3.2} where a description of semiclassical Fourier integral operators
is given using wave packets. However, we do wish to record the following observation.

If $X$ and $Y$ are ${\Ci}^\infty$ manifolds, and $A:{\Ci}^\infty(Y)\to\De'(X)$
is a Fourier integral operator, then to $A$ is associated a {\it canonical relation}
$\Lambda\subset T^\ast(X\times Y)\cong T^\ast(X)\times T^\ast(Y)$.
Recall that a canonical relation is a manifold such that the {\it twisted manifold}
\[
\Lambda'=\{(x,\xi;y,\eta):(x,\xi;y,-\eta)\in\Lambda\}
\]
is Lagrangian with respect to the standard symplectic form on $T^\ast(X\times Y)$.
We may also view $\Lambda$ as the graph of a relation from $T^\ast (Y)$ to $T^\ast (X)$
which we also denote by $\Lambda$. If $V\subset T^\ast (Y)$, the action of this relation is defined by
\begin{equation}\label{Faurecanonicalmap}
\Lambda (V)=\{(x,\xi)\in T^\ast(X):\exists \, (y,\eta)\in V\text{ such that }(x,\xi;y,\eta)\in\Lambda\}.
\end{equation}
In the case when $A:{\Ci}^\infty(\Sone)\to {\Ci}^\infty(\Sone)$ is the operator
$M_\nu(\omega)$ given by \eqref{restrictionoperator},
the map defined by \eqref{Faurecanonicalmap} is referred to by Faure~\cite{Faure} as the {\it canonical map of $A$},
and we shall adhere to this terminology.
When $\nu$ is viewed as a semiclassical parameter, we shall let $F(\epsilon;\omega)$ denote the canonical map of
$M_\nu(\epsilon;\omega)$. Explicitly, we have
\begin{equation}\label{canonicalmap1}
F(\epsilon;\omega,y,\eta)=\{(x,E_\epsilon'(\omega,x)\eta+\tau_\epsilon'(x,\omega)): y=E_\epsilon(\omega,x)\},
\end{equation}
see Faure~\cite{Faure}*{Proposition 11}. Note that this differs from the canonical map of $M_\nu(\epsilon;\omega)$
when $\nu$ is fixed by the additional term $\tau_\epsilon'(\omega)$, see Faure~\cite{Faure}*{Proposition 6}.
Note also that since $E_\epsilon(\omega):\Sone\to \Sone$ is a $k:1$ map,
there are precisely $k$ distinct elements belonging to the set in the right-hand side of \eqref{canonicalmap1}.
Hence the map $F(\epsilon;\omega):T^\ast (\Sone)\to T^\ast (\Sone)$ is $k$ valued.
We mention that matters are generally simplified if $\Lambda$ is the graph of
a canonical transformation $\chi:T^\ast (Y)\to T^\ast (X)$,
and then $A$ is said to be associated with $\chi$. In particular, if $X=Y$ and $A$ is
a pseudodifferential operator then $\Lambda$ is the graph of the identity $\chi(x,\xi)=(x,\xi)$.

Finally, we remark that it suffices to prove Theorem \ref{thm:spectralgap} in the case when $\nu\to\infty$. Indeed,
it is straightforward to check that $M_{-\nu,n}(\omega)u(x)$
is the complex conjugate of $M_{\nu,n}(\omega)\bar u(x)$, and 
that $M_{\nu,n}(\omega)$ and $M_{-\nu,n}(\omega)$ therefore have
the same operator norms. This proves the claim.

\subsection{Dynamics on the contangent bundle}
With the previous discussion in mind, we now define (injective) maps
$F_j(\epsilon;\omega)$ on the cotangent bundle $T^\ast(\Sone)$ for $0\le j\le k-1$ by
\begin{equation}\label{canonicalmap2}
F_j(\epsilon;\omega):\binom{y}{\eta}\mapsto 
\binom{x_j(\epsilon;\omega)}{\xi_j(\epsilon;\omega)}=
\left( \! \! \begin{array}{c} g_\epsilon(\omega)^{-1}((y+j)/k)\\ 
E_\epsilon'(\omega,x_j(\epsilon;\omega))\eta+\tau_\epsilon'(\omega,x_j(\epsilon;\omega))
\end{array} \! \! \right).
\end{equation}
Then the canonical map of $M_\nu(\epsilon;\omega)$
given by \eqref{canonicalmap1} satisfies
\[
F(\epsilon;\omega):(y,\eta)\mapsto\{F_0(\epsilon;\omega,y,\eta),\ldots,F_{k-1}(\epsilon;\omega,y,\eta)\},\quad(y,\eta)\in T^\ast (\Sone)
\]
by virtue of \eqref{eq:perturbedsystem} and \eqref{canonicalmap1}.
For future purposes, we introduce the following notation, where $\N^\ast$ refers to the set of positive integers.

\begin{dfn}\label{def:randomtrajectory}
Let $(y,\eta)\in T^\ast (\Sone)$. For an arbitrary sequence $\alpha=(\ldots,\alpha_3,\alpha_2,\alpha_1)$ in $\{0,\ldots,k-1\}^{\N^\ast}$
and time $n\in\N^\ast$ let 
\[
F_\alpha^{(n)}(\epsilon;\omega,y,\eta)=F_{\alpha_n}(\epsilon;\omega)\circ
F_{\alpha_{n-1}}(\epsilon;\theta\omega)\circ\ldots\circ
F_{\alpha_{1}}(\epsilon;\theta^{n-1}\omega)(y,\eta).
\]
For a given sequence $\alpha\in\{0,\ldots,k-1\}^{\N^\ast}$,
we say that $\{F_\alpha^{(n)}(\epsilon;\omega,y,\eta):n\in\N\}$ is a {\it random trajectory}
of $F(\epsilon;\omega)$ issued from the point $(y,\eta)$. At time $n\in\N^\ast$, there are $k^n$ points
issued from a given point $(y,\eta)$. This set of points is the image of $(y,\eta)$
under the relation $F(\epsilon;\omega)\circ\ldots\circ F(\epsilon;\theta^{n-1}\omega)$ and will be denoted by
$F^{(n)}(\epsilon;\omega,y,\eta)$, that is,
\begin{equation}\label{treeofrandomtrajectories}
F^{(n)}(\epsilon;\omega,y,\eta)=\{F_\alpha^{(n)}(\epsilon;\omega,y,\eta),\ \alpha\in\{0,\ldots,k-1\}^n\}.
\end{equation}
\end{dfn}

It will be convenient to record the following version of Faure~\cite{Faure}*{Lemma 13} in the presence of
parameters $\epsilon$ and $\omega$, proved in the same way.

\begin{lem}\label{lemma13}
Let $\epsilon_0>0$ and $\lambda$ be given by \eqref{emin},
and assume that $\epsilon_0$ is small enough so that
$\esssup_\omega{\max_x{\lvert\tau_\epsilon'(\omega,x)\rvert}}\le C_\tau$ for $0\le\epsilon<\epsilon_0$
and some constant $C_\tau>0$ independent of $\omega$.
Then, for any $1<\kappa<\lambda$, there exists an $R_\kappa\ge 0$
such that if $0\le\epsilon<\epsilon_0$ and $\lvert\eta\rvert>R_\kappa$ then
\begin{equation}\label{eq:escape}
\lvert\xi_j(\epsilon;\omega)\rvert>\kappa\lvert\eta\rvert,\quad 0\le j\le k-1,
\end{equation}
$\mathbb{P}$-almost surely, where $\xi_j(\epsilon;\omega)$ is given by \eqref{canonicalmap2}.
\end{lem}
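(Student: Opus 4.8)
The plan is to estimate $|\xi_j(\epsilon;\omega)|$ directly from the defining formula \eqref{canonicalmap2}, using the uniform lower bound \eqref{emin} on the derivative of the expanding map together with the hypothesized uniform bound $C_\tau$ on $\tau_\epsilon'$. First I would recall that by \eqref{canonicalmap2} the second component of $F_j(\epsilon;\omega)(y,\eta)$ is
\[
\xi_j(\epsilon;\omega)=E_\epsilon'(\omega,x_j(\epsilon;\omega))\,\eta+\tau_\epsilon'(\omega,x_j(\epsilon;\omega)),
\]
where $x_j(\epsilon;\omega)=g_\epsilon(\omega)^{-1}((y+j)/k)\in\Sone$. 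Since $E_\epsilon(\omega)$ is $\mathbb{P}$-almost surely an expanding map with $E_\epsilon'(\omega,x)\ge\lambda>1$ for every $x\in\Sone$ whenever $0\le\epsilon<\epsilon_0$ (by \eqref{emin}), and since $|\tau_\epsilon'(\omega,x)|\le C_\tau$ for every $x\in\Sone$ in the same range of $\epsilon$, $\mathbb{P}$-almost surely, the triangle inequality together with positivity of $E_\epsilon'$ gives
\[
|\xi_j(\epsilon;\omega)|\ge E_\epsilon'(\omega,x_j(\epsilon;\omega))\,|\eta|-|\tau_\epsilon'(\omega,x_j(\epsilon;\omega))|\ge\lambda|\eta|-C_\tau
\]
for all $0\le j\le k-1$, $\mathbb{P}$-almost surely.

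It then remains only to choose $R_\kappa$ so that $\lambda|\eta|-C_\tau>\kappa|\eta|$ whenever $|\eta|>R_\kappa$. Since $1<\kappa<\lambda$ we have $\lambda-\kappa>0$, so the choice $R_\kappa=C_\tau/(\lambda-\kappa)\ge0$ works and depends neither on $\omega$, nor on $\epsilon\in[0,\epsilon_0)$, nor on $j$, nor on $y$. This yields \eqref{eq:escape} exactly as stated. I would also remark at the outset that a finite constant $C_\tau$ with $\esssup_\omega\max_x|\tau_\epsilon'(\omega,x)|\le C_\tau$ for $0\le\epsilon<\epsilon_0$ exists, after shrinking $\epsilon_0$ if necessary, because \eqref{convinC1} controls the $\Ci^1$-distance of $f_\epsilon(\omega)$ to $f_0$ essentially uniformly in $\omega$, and $\tau_0'$ is bounded on $\Sone$.

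There is essentially no serious obstacle here; the proof is a one-line estimate. The only points requiring mild care are (i) that the bounds \eqref{emin} and $|\tau_\epsilon'|\le C_\tau$ are uniform in $x\in\Sone$, so that evaluating them at the particular point $x_j(\epsilon;\omega)$ — which itself depends on $\epsilon$, $\omega$, $y$ and $j$ — is harmless, and (ii) that $R_\kappa$ comes out independent of $\omega$ and $\epsilon$, which is automatic since $\lambda$ and $C_\tau$ are. All of the almost-sure qualifiers simply propagate from \eqref{emin} and the assumed bound on $\tau_\epsilon'$, so one fixes a single $\theta$-invariant full-measure set on which both hold for all $0\le\epsilon<\epsilon_0$ and argues on that set.
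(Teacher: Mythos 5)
Your proof is correct and takes essentially the same approach as the paper's: apply the reverse triangle inequality to $\xi_j = E_\epsilon'\eta + \tau_\epsilon'$, use $E_\epsilon'\ge\lambda$ and $|\tau_\epsilon'|\le C_\tau$ uniformly, and set $R_\kappa = C_\tau/(\lambda-\kappa)$. The paper's intermediate display is written in a slightly more roundabout form, but the estimate and the choice of $R_\kappa$ are identical.
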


\begin{proof}
Note that the existence of an $\epsilon_0$ such that
$\esssup_\omega{\max_x{\lvert\tau_\epsilon'(\omega,x)\rvert}}\le C_\tau$ for $0\le\epsilon<\epsilon_0$
is guaranteed by \eqref{convinC1}.
We can for example take $C_\tau=\max_x{\lvert\tau_0'(x)\rvert}+1$ so that $C_\tau$ ultimately depends on $f_0$.
In view of \eqref{canonicalmap2} we have
\[
\lvert\xi_j(\epsilon;\omega)\rvert 
\ge \lvert (E_\epsilon'(\omega,x_j(\epsilon;\omega))-\kappa)\lvert\eta\rvert+\kappa\lvert\eta\rvert-\lvert\tau_\epsilon'(\omega,x_j(\epsilon;\omega))\rvert\rvert.
\]
If $\lvert\eta\rvert>C_\tau/(\lambda-\kappa)$ then
\[
(E_\epsilon'(\omega,x_j(\epsilon;\omega))-\kappa)\lvert\eta\rvert-\lvert\tau_\epsilon'(\omega,x_j(\epsilon;\omega))\rvert> 0,
\]
$\mathbb{P}$-almost surely, which yields the result.
\end{proof}

We now fix $1<\kappa<\lambda$, say $\kappa=(\lambda+1)/2$.
In view of the proof of Lemma \ref{lemma13} and the definition of $\lambda$, 
it then follows that the number $R_\kappa$ given by Lemma \ref{lemma13} ultimately depends on $f_0$.
Define a set $\mathcal Z\subset T^\ast (\Sone)$ by
\begin{equation}\label{def:Z}
\mathcal Z=\Sone\times [-R_\kappa,R_\kappa].
\end{equation}
If $(y,\eta)\in\complement\mathcal Z$, then Lemma \ref{lemma13} implies that a random trajectory issued from $(y,\eta)$ will
$\mathbb{P}$-almost surely escape in a controlled manner in the sense of \eqref{eq:escape}.
Hence, for sufficiently small $\epsilon>0$ the set defined by \eqref{treeofrandomtrajectories} 
$\mathbb{P}$-almost surely satisfies
\begin{equation}\label{eq:controledescape}
(y,\eta)\in\complement\mathcal Z\quad\Longrightarrow\quad F^{(n)}(\epsilon;\omega,y,\eta)\cap\mathcal Z=\emptyset,
\quad n\ge 1.
\end{equation}
Contrary to the deterministic case, knowledge about a random trajectory at time $n$
does not allow us to say much about the same trajectory at a later time, that is,
\eqref{eq:controledescape} cannot be used to deduce that if
$F_{\alpha}^{(n)}(\epsilon;\omega,y,\eta)$ belongs to $\complement\mathcal Z$ then so does
$F_{\alpha}^{(n')}(\epsilon;\omega,y,\eta)$ for all $n'>n$.
Indeed, if $\alpha\in\{0,\ldots,k-1\}^{\N^\ast}$
is a given sequence, let $n'>n$. Comparing $F_\alpha^{(n)}(\epsilon;\omega,y,\eta)$ and
$F_\alpha^{(n')}(\epsilon;\omega,y,\eta)$, we find that
\[
F_\alpha^{(n')}(\epsilon;\omega,y,\eta)=F_{\alpha_{n'}}(\epsilon;\omega)\circ\ldots\circ F_{\alpha_{n+1}}(\epsilon;\theta^{n'-n-1}\omega)
(F_{\alpha}^{(n)}(\epsilon;\theta^{n'-n}\omega,y,\eta)),
\]
which is not of the form $F_{\alpha_{n'}}(\epsilon;\omega)\circ\ldots\circ F_{\alpha_{n+1}}(\epsilon;\theta^{n'-n-1}\omega)
(F_{\alpha}^{(n)}(\epsilon;\omega,y,\eta))$, so \eqref{eq:controledescape} cannot be used.
However, the same arguments show that if at some time $n\in\N^\ast$ we $\mathbb{P}$-almost
surely have $F^{(n)}(\epsilon;\omega,y,\eta)\cap\mathcal Z=\emptyset$,
then we $\mathbb{P}$-almost surely also have $F^{(n')}(\epsilon;\omega,y,\eta)\cap\mathcal Z=\emptyset$ for all $n'>n$.
Note that even if the entire tree $F^{(n)}(\epsilon;\omega,y,\eta)$ does not in this way escape toward infinity,
some random trajectories issued from the point $(y,\eta)\in\mathcal Z$ still might.

\subsection{The trapped set}
To quantify
the number of random trajectories that do not escape toward infinity, we define the {\it trapped set} $K(\epsilon;\omega)$
as the complement of the set
\[
\{(y,\eta):\exists \, n\in\N\text{ such that }F^{(n)}(\epsilon;\omega,y,\eta)\cap\mathcal Z=\emptyset\}. 
\]

\begin{dfn}\label{trappedset}
The trapped set is defined as
\begin{align*}
K(\epsilon;\omega)&=\bigcap_{n\in\N}(F(\epsilon;\omega)\circ\ldots\circ F(\epsilon;\theta^{n-1}\omega))^{-1}(\mathcal Z)\\
&=\{(y,\eta):F^{(n)}(\epsilon;\omega,y,\eta)\cap\mathcal Z\neq\emptyset\text{ for all }n\ge 1\}. 
\end{align*}
\end{dfn}

For $n\in\N^\ast$, let $\mathcal N (\epsilon;\omega,n)$ be the number of random trajectories which do not escape
outside the vicinity $\mathcal Z$ of the trapped set $K(\epsilon;\omega)$ before time $n$,
\begin{equation}\label{numbertrajectories}
\mathcal N(\epsilon;\omega,n)=\max_{(y,\eta)\in 
\mathcal Z}\#\{F_\alpha^{(n)}(\epsilon;\omega,y,\eta)\in\mathcal Z:\alpha\in\{0,\ldots,k-1\}^n\}.
\end{equation}
Since the number of points issued from $(y,\eta)$ at time $n$ are $k^n$, we have $\mathcal N(\epsilon;\omega,n)\le k^n$.

Next, we recall the notion of partial captivity.
The definition will only be applied to the deterministic map $f_0$ given by \eqref{eq:unperturbedsystem},
and requires the notions corresponding to $F(\epsilon;\omega)$ and
$\mathcal N(\epsilon;\omega,n)$ introduced above for the perturbation scheme $f_\epsilon$.
Instead of introducing additional notation, we prefer the following
(somewhat convoluted) statement. The reader is asked to compare with
the equivalent Faure~\cite{Faure}*{Definition 15}.

\begin{dfn}\label{def:randompartiallycaptive}
The $k$ valued map $F(\epsilon;\omega):T^\ast (\Sone)\to T^\ast (\Sone)$ is said to be partially captive for $\omega$ if
\begin{equation}\label{eq:randompartiallycaptive}
\lim_{n\to\infty}\frac{1}{n}\log{\mathcal N(\epsilon;\omega,n)}=0.
\end{equation}
When $F(\epsilon;\omega)$ is  partially captive for $\omega$, we shall permit us to say
that also $f_{\epsilon}(\omega)$ is partially captive for $\omega$, where $f_{\epsilon}(\omega)$
is the map given by \eqref{eq:perturbedsystem}, or by \eqref{eq:unperturbedsystem}
when $\epsilon=0$.
\end{dfn}

\begin{rmk}
Consider the deterministic case when
$(\Omega,\mathcal F,\mathbb{P})$ is the trivial probability space consisting of one point and $\theta$
is the identity on $\Omega$, and assume that $\tau_\epsilon(\omega)$ is {\it cohomologous to a constant}, 
i.e.~$\tau_\epsilon(\omega,x)=\varphi(E_\epsilon(\omega,x))-\varphi(x)+c$ for some $\varphi\in\Ci^\infty(\Sone)$ and $c\in\R$.
Then, as mentioned in the introduction, $f_\epsilon(\omega)$ cannot be partially captive for $\omega$,
see Faure~\cite{Faure}*{Appendix A} together with
the remark on p.~1490 in the mentioned paper. Actually, in Faure~\cite{Faure} only the case
$c=0$ is considered but the arguments apply also to $c\ne0$. 
Indeed, if $\tau_\epsilon(\omega)$ is constant then it does not contribute to the
canonical map $F(\epsilon;\omega)$ given by \eqref{canonicalmap1},
and trajectories issued from the zero section $\{(x,0):x\in\Sone\}\subset T^\ast\Sone$ do not
leave this section. Hence, $\mathcal N(\epsilon;\omega,n)=k^n$ so $f_\epsilon(\omega)$
cannot be partially captive. As mentioned by Faure, one might instead say that $f_\epsilon(\omega)$ is then
{\it totally captive}. The case when $\tau_\epsilon(\omega)$ is cohomologous to a constant can be treated
using the arguments in Faure~\cite{Faure}*{Appendix A}.
\end{rmk}

The following proposition will be crucial for the proof of Theorem \ref{thm:spectralgap}.
In order not to interrupt the exposition we postpone its proof, which is the contents of Section \ref{app:wPC}.

\begin{prop}\label{wPC}
Suppose that $f_0$ is partially captive. Then for any $\delta >0$, there exists $n_{\delta} \in \mathbb{N}$ such that for any $n\geq n_{\delta}$ one can find $\epsilon (n)>0$ such that 
\begin{align*}
\esssup _{\omega} \left\lvert \frac{\log{\mathcal{N} (\epsilon;\omega ,n)}}{n} \right\rvert <\delta,
\quad 0\le\epsilon <\epsilon (n).
\end{align*}
\end{prop}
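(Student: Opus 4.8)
The plan is to bootstrap from the deterministic partial captivity of $f_0$ to a uniform (in $\omega$ and in small $\epsilon$) sub-exponential bound on the number of trapped random trajectories, by exploiting the fact that for a fixed time $n$ the relevant dynamics depends continuously on the data $(g_\epsilon(\omega),\tau_\epsilon(\omega))$, which by \eqref{convinC1} converges to $(g_0,\tau_0)$ uniformly in $\omega$. First I would fix $\delta>0$ and, using the hypothesis that $f_0$ is partially captive, i.e.~$\lim_{n\to\infty}\frac1n\log\mathcal N(0;\omega,n)=0$, choose $n_\delta$ so large that $\frac1n\log\mathcal N(0;\omega,n)<\delta/2$ for all $n\ge n_\delta$ (recall $\mathcal N(0;\omega,n)\ge1$ always, so the absolute value is automatic for $n$ large once we also note $\mathcal N\le k^n$ gives the trivial lower side; in fact $\mathcal N\ge1$ forces $\log\mathcal N\ge0$). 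Here I use that for $\epsilon=0$ there is no genuine dependence on $\omega$, so $\mathcal N(0;\omega,n)=\mathcal N_0(n)$ is a single sequence.

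The core of the argument is a perturbation/semicontinuity statement at fixed time: for each fixed $n$, the quantity $\mathcal N(\epsilon;\omega,n)$ counts the points of the finite tree $F^{(n)}(\epsilon;\omega,y,\eta)$ that land in the compact set $\mathcal Z=\Sone\times[-R_\kappa,R_\kappa]$, maximized over $(y,\eta)\in\mathcal Z$. The maps $F_j(\epsilon;\omega)$ given by \eqref{canonicalmap2} are built from $g_\epsilon(\omega)^{-1}$ and $E_\epsilon'(\omega)$, $\tau_\epsilon'(\omega)$, all of which by \eqref{convinC1} are $\Ci^1$-close (uniformly in $\omega$ for small $\epsilon$) to the corresponding quantities for $f_0$. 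Hence the $n$-fold compositions $F_\alpha^{(n)}(\epsilon;\omega,\cdot)$ are uniformly $\Ci^0$-close to $F_\alpha^{(n)}(0,\cdot)$ over $\alpha\in\{0,\dots,k-1\}^n$ and $\omega\in\Omega$, provided $\epsilon<\epsilon(n)$ for a threshold depending on $n$. The only subtlety is that membership in $\mathcal Z$ is not an open condition: a trajectory of $f_0$ could land exactly on the boundary $\lvert\eta\rvert=R_\kappa$. To handle this I would enlarge $\mathcal Z$ slightly, say to $\mathcal Z^+=\Sone\times[-R_\kappa-1,R_\kappa+1]$, observe (from the proof of Lemma \ref{lemma13} and the controlled escape \eqref{eq:controledescape}, applied with a slightly smaller $R_\kappa$ or a slightly larger vicinity) that the count with respect to $\mathcal Z^+$ still has the same exponential growth rate for $f_0$, and then use that any trapped point for $f_\epsilon(\omega)$ with respect to $\mathcal Z$ is, for $\epsilon<\epsilon(n)$, within distance $1$ of a point trapped for $f_0$ with respect to $\mathcal Z^+$; an injectivity/separation argument on the finitely many branches then bounds $\mathcal N(\epsilon;\omega,n)\le\mathcal N^+_0(n)$ with $\mathcal N^+_0$ the corresponding $f_0$-count for $\mathcal Z^+$. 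Choosing $n_\delta$ for $\mathcal N^+_0$ rather than $\mathcal N_0$ (still possible, same growth rate) then gives $\frac1n\log\mathcal N(\epsilon;\omega,n)<\delta$ for all $n\ge n_\delta$, $\omega\in\Omega$, and $0\le\epsilon<\epsilon(n)$.

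I would then assemble the pieces in order: (1) fix $\delta$; (2) pick $n_\delta$ from deterministic partial captivity (for the slightly enlarged vicinity); (3) for each $n\ge n_\delta$, invoke the fixed-time uniform closeness of the canonical maps together with compactness of $\mathcal Z$ to produce $\epsilon(n)>0$ with $\mathcal N(\epsilon;\omega,n)\le\mathcal N^+_0(n)$ for all $\omega$ and $0\le\epsilon<\epsilon(n)$; (4) conclude $\esssup_\omega\lvert\frac1n\log\mathcal N(\epsilon;\omega,n)\rvert<\delta$. The main obstacle I anticipate is step (3): making the passage from "the branch maps are $\Ci^0$-close" to "the trapped count does not increase" fully rigorous, because one must rule out that a small perturbation creates new trajectories entering $\mathcal Z$ that were absent (or were non-trapped, landing just outside) for $f_0$. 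This is exactly why enlarging the vicinity is essential — the enlarged count is robust to perturbations of size controlled by the (fixed-$n$) modulus of continuity — and why the order of quantifiers in the statement is "$\forall n\ge n_\delta$, $\exists\,\epsilon(n)$" rather than a single $\epsilon$ working for all $n$. I expect this to be precisely where one needs the hypotheses carefully, and it may well be that in the actual paper this is where Proposition \ref{prop:differentnoiselevels} (the "crucial perturbation result" advertised in the introduction) is brought to bear to compare different noise levels cleanly.
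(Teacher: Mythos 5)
Your strategy is sound and arrives at the statement by a genuinely more direct route than the paper. The paper never compares the counts $\mathcal N$ on $T^\ast(\Sone)$ directly; instead it lifts the dynamics to $\R^2$, constructs the invariant section $S_\epsilon(\omega,\cdot)$ by solving the random cohomological equation \eqref{eq:cohomological}, re-encodes the trapped count as $\widetilde{\mathcal N}_R(\epsilon;\omega,n)$ (the number of $\bar\alpha$ with $\lvert\eta-S_\epsilon(\theta^n\omega,y+\bar\alpha)\rvert\le R\,dG_\epsilon^{(n)}(\omega,y+\bar\alpha)/dy$), proves the two-sided sandwich $\widetilde{\mathcal N}_{R_\kappa-C_1}\le\mathcal N\le\widetilde{\mathcal N}_{R_\kappa+C_1}$ (Proposition \ref{prop:PCequiv1}), and then compares $\widetilde{\mathcal N}_R(\epsilon;\omega,n)$ with $\widetilde{\mathcal N}_{R\pm\varrho}(0;n)$ for $0\le\epsilon<\epsilon(n)$ (Proposition \ref{prop:differentnoiselevels}, which you correctly anticipated as the place where the work happens); the deterministic input is Faure's equivalence \eqref{eq:newdefofpartiallycaptive}. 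Your ``enlarge $\mathcal Z$ by a fixed margin to absorb a fixed-time $\Ci^0$ error'' plays exactly the role of the paper's ``enlarge $R$ by $\varrho$'', except that in the paper the perturbation error must beat the contracted scale $dG^{(n)}/dy$, i.e.\ the quantity $L(n)\to0$, which is the paper's reason why $\epsilon(n)$ depends on $n$; in your version the same dependence enters through the growth in $n$ of the Lipschitz constants of the $n$-fold compositions on compact sets. Both routes rest on the same external fact, namely that the deterministic growth rate is insensitive to the choice of vicinity $\mathcal Z$ (the Remark following Definition \ref{def:randompartiallycaptive}, citing Faure). What the paper's detour buys is reuse: $S_\epsilon$ and $\widetilde{\mathcal N}_R$ also drive the comparison with transversality and the genericity theorem in \S3.2.

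One loose end: the statement involves $\lvert\log\mathcal N\rvert$, so you also need $\mathcal N(\epsilon;\omega,n)\ge1$ for the \emph{perturbed} random system, which you assert but do not prove; your comparison bounds $\mathcal N$ only from above. In the paper this drops out of the lower half of the sandwich, since trivially $\widetilde{\mathcal N}_R(0;n)\ge1$ (take $\eta=S_0(y+\bar\alpha)$). Within your framework the cheapest fix is to observe that each inverse branch of $F_j(\epsilon;\theta^i\omega)$ maps $\mathcal Z$ into $\mathcal Z$: if $\lvert\xi\rvert\le R_\kappa$, the unique preimage satisfies $\lvert\eta\rvert\le(R_\kappa+C_\tau)/\lambda\le R_\kappa$ because $R_\kappa\ge C_\tau/(\lambda-\kappa)\ge C_\tau/(\lambda-1)$; pulling an arbitrary point of $\mathcal Z$ back $n$ times along a suitable branch word then exhibits a trapped trajectory. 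With that supplied, your argument is complete.
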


\begin{rmk1}
Proposition \ref{wPC} does not imply that partial captivity is stable. 
What we mean is that the proposition does not guarantee the
existence of an $\epsilon _0$ such that $f_{\epsilon}(\omega)$ is $\mathbb{P}$-almost surely  
partially captive for $\omega$ provided $0\le \epsilon <\epsilon _0$. In particular,
our methods only guarantee the existence of a number $\epsilon (n)$ which must be allowed to depend on 
$n$, see Proposition \ref{prop:differentnoiselevels} and the remark at the end of Section \ref{app:wPC}.
This is reminiscent of the perturbation lemmas of Baladi and Young~\cite{BY93}*{Section 2}.
\end{rmk1}

\begin{rmk2}
In \eqref{numbertrajectories}, the number $\mathcal N(\omega,n)$ naturally depends on
the definition of the set $\mathcal Z$.
However, property \eqref{eq:randompartiallycaptive} does not, see the remark
on page 1490 in Faure~\cite{Faure}.
\end{rmk2}

As a final preparation before turning to the proof of Theorem \ref{thm:spectralgap},
we will indicate how Proposition \ref{wPC} will be used. 
Let therefore $\rho>\lambda^{-\frac{1}{2}}$ be
an arbitrary number as in the statement of Theorem \ref{thm:spectralgap}.
For reasons which will become apparent, 
apply Proposition \ref{wPC} with $\delta=\log{(\rho^2\lambda)}/2$ to find
numbers $n_0\in\N$ and $\epsilon(n_0)>0$ such that for any $\epsilon$ in the range
$0\le\epsilon<\epsilon(n_0)$ we have
\begin{equation}\label{eq:choosingnzero1}
\frac{\log{(C_{\mathrm{dim}}\cdot k)}}{n_0-1}
+\esssup_\omega\bigg\lvert\frac{\log{\mathcal N(\epsilon;\omega,n_0-1)}}{n_0-1}\bigg\rvert<\log{(\rho^2\lambda)},
\end{equation}
where $C_{\mathrm{dim}}$ is a constant depending only on the dimension $\dim{\Sone}=1$,
which will appear in Theorem \ref{thm:Pn} below.

\subsection{The escape function}
We can now prove Theorem \ref{thm:spectralgap}. We will keep the notation already
introduced in this section, and
let $\epsilon>0$ be an arbitrary number
for which \eqref{emin}, \eqref{eq:choosingnzero1} and Lemma \ref{lemma13} are in force,
and suppress it from the notation.
Note that the latter condition implies that, in
particular, we have
\begin{equation}\label{eq:choosingnzero2}
\frac{C_{\mathrm{dim}}\cdot k\mathcal N(\theta\omega,n_0-1)}{\lambda^{n_0}}<\rho^{2n_0},\quad
\text{$\mathbb{P}$-almost surely}.
\end{equation}
One additional restriction on the range of $\epsilon$ will be imposed in Theorem \ref{thm:Pn}
below, but since the interluding discussion is unaffected, this should cause no confusion.
Throughout the rest of the proof, $h=1/\nu$ and we will permit us to switch
between $h$ and $\nu$ as befitting the situation.

Recall that we fixed $\kappa=(\lambda+1)/2$ above, so that the number
$R=R_\kappa>0$ given by Lemma \ref{lemma13} only depends on $f_0$.
Now let $0<\delta_0<(\kappa-1)R$ be small. Let $m>0$ and define an {\it escape function} $a_m\in {\Ci}^\infty(T^\ast (\Sone))$ by
\begin{equation}\label{eq:escapefunction}
a_m(y,\eta)\equiv  a_{R,\delta_0,m}(y,\eta)=\begin{cases} (1+\eta^2)^{m/2} & \text{if }\lvert\eta\rvert\ge R+\delta_0, \\
1 & \text{if }\lvert\eta\rvert\le R, \end{cases}
\end{equation}
in such a way that for $R<\lvert\eta\rvert<R+\delta_0$, $a_m(y,\eta)$ is an even function of $\eta$, independent of $y$, which
satisfies
\[
1< a_m(y,\eta)< (1+\eta^2)^{m/2},\quad R<\lvert\eta\rvert<R+\delta_0.
\]
Extend the definition to negative numbers $-m<0$ by requiring that the identity $a_m^{-1}=a_{-m}$ holds for all $m>0$.
By virtue of \eqref{eq:escape}, the construction ensures that if $m>0$ then
$a_m$ is $\mathbb{P}$-almost surely 
strictly increasing along the random trajectories of $F(\omega)$
outside the vicinity $\mathcal Z$ of the trapped set,
and that for all $\lvert\eta\rvert>R$ and $0\le j\le k-1$ we have
\begin{equation}\label{eq:outsidetrappedbound}
\frac{a_m(y,\eta)}{a_m(F_j(\omega,y,\eta))}\le \left(\frac{1+R^2}{1+\kappa^2R^2}\right)^{m/2},
\quad\text{$\mathbb{P}$-almost surely if $\lvert\eta\rvert>R$}. 
\end{equation}
Indeed, if $\lvert\eta\rvert>R$ then $\lvert\xi_j(\omega)\rvert>\kappa R=R+(\kappa-1)R>R+\delta_0$, so
\[
a_m(F_j(\omega,y,\eta))=(1+\lvert\xi_j(\omega)\rvert^2)^{m/2}\ge (1+\kappa^2\eta^2)^{m/2},
\]
where the last inequality follows by Lemma \ref{lemma13}.
Since $a_m(y,\eta)\le (1+\eta^2)^{m/2}$, standard differential calculus yields the estimate
\eqref{eq:outsidetrappedbound}.

Let $C_\kappa$ denote the square root appearing
in \eqref{eq:outsidetrappedbound}, so that when $\lvert\eta\rvert>R$ we $\mathbb{P}$-almost surely have
$a_m(y,\eta)\le C_\kappa^m a_m(F_j(\omega,y,\eta))$. Note that $C_\kappa<1$.
If $\lvert\eta\rvert\le R$ then $a_m(y,\eta)=1$ so we have the general bound
\begin{equation}\label{eq:generalbound}
\frac{a_m(y,\eta)}{a_m(F_j(\omega,y,\eta))}\le 1,\quad\text{for all }(y,\eta)\in T^\ast (\Sone),\quad 0\le j\le k-1.
\end{equation}

Let $A_m=a_m(hD)$ be the semiclassical operator
acting through multiplication by $\xi\mapsto a_m(h\xi)$ on the Fourier side, so that
for $u\in\De'(\Sone)$, the Fourier coefficients of $A_m u$ are given by
\[
(A_m u)\hat{\ }(\xi)=a_m(h\xi)\hat u(\xi),
\quad u\in\De'(\Sone).
\]
Then $A_m$ is a semiclassical operator with symbol $a_m$ belonging to the symbol class $S^m(T^\ast(\Sone))$,
see Appendix \ref{app:PsiDO}. Note that $A_m$ is formally self-adjoint and invertible on ${\Ci}^\infty(\Sone)$.
With $h=1/\nu>0$, define a norm $\vertiii{\phantom{i}}_{H_\nu^m}$ on $H^m(\Sone)$
by
\begin{equation}\label{equivalentnorm}
\vertiii{u}_{H_\nu^m}^2=\sum_{\xi\in2\pi\Z}\lvert a_m(\xi/\nu)\hat u(\xi)\rvert^2,\quad u\in H^m(\Sone),\quad h=1/\nu>0.
\end{equation}
This norm is equivalent to the Sobolev norm $\lVert\phantom{i}\rVert_{H_\nu^m}$ on $H^m(\Sone)$
which appears in the statement of Theorem \ref{thm:spectralgap}. In fact, with $C=(1+(R+\delta_0)^2)^{\frac{1}{2}}$
we have
\[
\frac{1}{C^m}\lVert u\rVert_{H_\nu^m}\le \vertiii{u}_{H_\nu^m}\le C^m \lVert u\rVert_{H_\nu^m}. 
\]
Since the constant $c_0(m)$ appearing in the statement of Theorem \ref{thm:spectralgap}
is allowed to depend on $m$, it therefore suffices to prove the theorem with $\lVert\phantom{i}\rVert_{H_\nu^m}$
replaced by $\vertiii{\phantom{i}}_{H_\nu^m}$.
(That $c_0(m)$ depends on $m$ is a direct artifact of this approach.)
Since we will not be switching between these norms for the rest of this section,
we shall for simplicity write $\lVert\phantom{i}\rVert_{H_\nu^m}$ for the norm defined by \eqref{equivalentnorm}, and
let $H_\nu^m(\Sone)$ denote the set of distributions $u\in H^m(\Sone)$
equipped with this norm. 
$A_m:H_\nu^s(\Sone)\to H_\nu^{s-m}(\Sone)$ is then
an isomorphism for all $s\in\R$, and we identify $H_\nu^m(\Sone)$
with $A_m^{-1}(L^2(\Sone))$ for each $m$.

The commutative diagram
\[
\begin{array}{ccc}
L^2(\Sone) & \stackrel{Q_\nu(\omega)}{\to} & L^2(\Sone) \\
\downarrow\text{\scriptsize{$A_m$}} & \circlearrowleft & \downarrow\text{\scriptsize{$A_m$}} \\
H_\nu^{-m}(\Sone) & \stackrel{M_\nu(\omega)}{\to} & H_\nu^{-m}(\Sone)
\end{array}
\]
shows that $M_\nu(\omega):H_\nu^{-m}(\Sone)\to H_\nu^{-m}(\Sone)$ is unitarily equivalent to
\[
Q_\nu(\omega)=A_m^{-1} M_\nu(\omega)A_m:L^2(\Sone)\to L^2(\Sone).
\]
With $M_{\nu,n}(\omega)$ given by \eqref{restrictionoperator},
a straightforward computation shows that 
$M_{\nu,n}(\omega):H_\nu^{-m}(\Sone)\to H_\nu^{-m}(\Sone)$ 
is then unitarily equivalent to the operator $Q_{\nu,n}(\omega)$, defined by
\begin{equation}\label{auxiliaryoperator}
Q_{\nu,n}(\omega)=Q_\nu(\omega)\circ\ldots\circ Q_\nu(\theta^{n-1}\omega):L^2(\Sone)\to L^2(\Sone).
\end{equation}
For each $n\in\N^\ast$, define $P_n(\omega)=(Q_{\nu,n}(\omega))^\ast Q_{\nu,n}(\omega)$. Then
\[
P_n(\omega)
=A_m M_{\nu}^\ast(\theta^{n-1}\omega)\circ\ldots\circ
M_{\nu}^\ast(\omega)A_m^{-2} M_{\nu}(\omega)
\circ\ldots\circ M_{\nu}(\theta^{n-1}\omega)A_m.
\]
Although not signified in the notation, $P_n(\omega)$ implicitly depends on the Sobolev index $m$.
For a sequence $\alpha\in\{0,\ldots,k-1\}^{\N^\ast}$ 
let $G_{\alpha}^{(n)}(\omega):T^\ast(\Sone)\to\R$ 
be the image of the projection onto the first coordinate of $F_\alpha^{(n)}(\omega,y,\eta)$. 
Then $G_{\alpha}^{(n)}(\omega)$ is independent of $\eta$, and
\begin{equation}\label{eq:introducingG}
G_\alpha^{(n)}(\omega,y)=G_{\alpha_n}(\omega)\circ
G_{\alpha_{n-1}}(\theta\omega)\circ\ldots\circ
G_{\alpha_{1}}(\theta^{n-1}\omega)(y),
\end{equation}
where $G_j(\omega,y)=g(\omega)^{-1}((y+j)/k)=x_j(\omega)$
in view of Definition \ref{def:randomtrajectory} and \eqref{canonicalmap2}.
It is straightforward to check that
\begin{equation}\label{eq:derivativeofG}
\frac{d G_\alpha^{(n)}(\omega,y)}{dy}
=\prod_{j=1}^{n}\frac{1}{E'(\theta^{n-j}\omega)\circ G_\alpha^{(j)}(\theta^{n-j}\omega,y)}.
\end{equation}
Hence, by \eqref{emin} it follows that $dG_{\alpha}^{(n)}(\omega,y)/dy\le\lambda^{-n}$ $\mathbb{P}$-almost surely.

\begin{thm}\label{thm:Pn}
The operator $P_n(\omega)$ is $\mathbb{P}$-almost surely a semiclassical operator with
principal symbol $p_n(\omega)$ given by
\begin{equation}\label{principalsymbolP}
p_n(\omega,y,\eta)=\sum_{\alpha\in\{0,\ldots,k-1\}^{n}}
\frac{a_m^2(y,\eta)}{a_m^2(F_\alpha^{(n)}(\omega,y,\eta))}\cdot
\frac{d G_\alpha^{(n)}(\omega,y)}{dy}.
\end{equation}
Moreover, there is an $\epsilon_0$, depending on $m$ but independent of time $n\in\N^\ast$, such that
for all $0\le\epsilon<\epsilon_0$ and all $n\in\N^\ast$, the operator
$P_n(\omega):L^2(\Sone)\to L^2(\Sone)$ $\mathbb{P}$-almost surely satisfies
\begin{equation}\label{normbound1}
\lVert P_n(\omega)\rVert_{\mathscr L(L^2(\Sone))}\le C_{\mathrm{dim}}\lVert p_n(\omega)\rVert_{L^\infty(T^\ast(\Sone))}
+\mathcal O_{n,m}(h^{\frac{1}{2}})
\quad\text{as }h\to0,
\end{equation}
where the error term as indicated depends on $n$ and $m$ but not on $\omega$,
and $C_{\mathrm{dim}}$ only depends on the dimension $\dim{\Sone}=1$.
\end{thm}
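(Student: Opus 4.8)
The plan is to identify $P_n(\omega)$ as an $n$-fold composition of semiclassical Fourier integral operators whose underlying canonical transformations are, at each step, either the identity (coming from $A_m^{\pm 1}$, which are pseudodifferential) or one of the maps $F_j(\omega)$ and their inverses (coming from the transfer operators $M_\nu(\omega)$, $M_\nu^\ast(\omega)$). Since $M_\nu^\ast(\theta^{n-1}\omega)\circ\cdots\circ M_\nu^\ast(\omega)$ followed by $M_\nu(\omega)\circ\cdots\circ M_\nu(\theta^{n-1}\omega)$ traverses the cotangent dynamics forward and then backward along the {\it same} random trajectory, the composite canonical relation for $P_n(\omega)$ reduces to the identity on $T^\ast(\Sone)$: the geometry is arranged precisely so that the FIO parts cancel and $P_n(\omega)$ is again a pseudodifferential (semiclassical) operator. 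This is the random analog of the computation in Faure~\cite{Faure}, and the stationary phase / nonstationary phase bookkeeping is handled by the calculus collected in Appendix~\ref{app:PsiDO}; the only new feature is that every symbol and phase now carries the parameters $\epsilon$ and $\omega$, so one must track that all symbol seminorms are bounded uniformly in $\omega$ (this is where \eqref{convinC1}, \eqref{emin} and the bound $\esssup_\omega\max_x\lvert\tau'_\epsilon(\omega,x)\rvert\le C_\tau$ enter).

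First I would treat the case $n=1$: write $Q_\nu(\omega)=A_m^{-1}M_\nu(\omega)A_m$ and observe that $M_\nu(\omega)$ is a semiclassical FIO associated with the $k$-valued canonical map $F(\omega)$; conjugating by the elliptic pseudodifferential operators $A_m^{\pm1}$ (symbol $a_m$, class $S^m$) multiplies the principal symbol by the factor $a_m(y,\eta)/a_m(F_j(\omega,y,\eta))$ on the branch $F_j$. Then $P_1(\omega)=Q_\nu(\omega)^\ast Q_\nu(\omega)$ is the composition of a semiclassical FIO with canonical map $F(\omega)$ and its adjoint; by the standard Egorov/composition calculus for FIOs whose canonical relations compose cleanly (here $F_j^{-1}\circ F_j=\mathrm{Id}$ on each branch), $P_1(\omega)$ is pseudodifferential with principal symbol $\sum_{j=0}^{k-1}\frac{a_m^2(y,\eta)}{a_m^2(F_j(\omega,y,\eta))}\cdot\frac{dG_j(\omega,y)}{dy}$, the Jacobian factor coming from the amplitude $1/E'$ in \eqref{restrictionoperator} squared against the change of variables in the adjoint. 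Next I would iterate: by the cocycle structure \eqref{auxiliaryoperator} and induction on $n$, $P_{n}(\omega)=Q_\nu(\omega)^\ast P_{n-1}(\theta\omega)Q_\nu(\omega)$, and the same clean-composition argument propagates the pseudodifferential property and multiplies symbols along trajectories, producing \eqref{principalsymbolP} after relabeling indices via \eqref{eq:introducingG} and \eqref{eq:derivativeofG}; note the sum over $\alpha\in\{0,\ldots,k-1\}^n$ is exactly the $k^n$ preimage branches, and the estimate $dG_\alpha^{(n)}/dy\le\lambda^{-n}$ from \eqref{emin} guarantees each term is $\mathbb{P}$-a.s.\ bounded, so $p_n(\omega)\in S^0$ uniformly.

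Finally, for the norm bound \eqref{normbound1}: once $P_n(\omega)$ is known to be a semiclassical pseudodifferential operator of order $0$ with symbol $p_n(\omega)$, apply the (sharp-constant) $L^2$-boundedness statement from Appendix~\ref{app:PsiDO} — a Calder\'on--Vaillancourt-type estimate giving $\lVert\mathrm{Op}_h(p)\rVert_{\mathscr L(L^2)}\le C_{\mathrm{dim}}\lVert p\rVert_{L^\infty}+\mathcal O(h^{1/2})$ where the $\mathcal O(h^{1/2})$ error involves only finitely many derivatives of $p_n(\omega)$ and $C_{\mathrm{dim}}$ depends only on $\dim\Sone=1$. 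The point, and the place where some care is needed, is that the error term must be uniform in $\omega$: this follows because the finitely many symbol seminorms of $p_n(\omega)$ entering the remainder are controlled by $n$, $m$, and the uniform bounds on $g_\epsilon(\omega)$, $E_\epsilon(\omega)$, $\tau_\epsilon(\omega)$ from \eqref{convinC1}, which in turn let us choose $\epsilon_0$ depending only on $m$ (an additional smallness restriction on $\epsilon$, as flagged before the theorem, ensuring \eqref{eq:controledescape} and the clean-composition hypotheses hold $\mathbb{P}$-a.s.). I expect the main obstacle to be precisely this uniformity: verifying that the remainder in the composition calculus — the contributions of non-leading terms in the stationary-phase expansion and the error in Egorov's theorem — can be bounded by symbol seminorms that are finite uniformly over $\omega\in\Omega$, rather than merely $\mathbb{P}$-a.s.\ or pointwise in $\omega$; this is the reason the detailed parameter-dependent calculus of Appendix~\ref{app:PsiDO} is needed instead of the leading-order calculus, as emphasized in the introduction.
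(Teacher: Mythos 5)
Your proposal is correct in substance and rests on the same analytic pillars as the paper's proof (an Egorov-type computation showing $P_n(\omega)$ is pseudodifferential with the stated principal symbol, followed by a Calder\'on--Vaillancourt bound on $\Sone$ with remainder uniform in $\omega$), but it is organized differently. You build $P_n(\omega)$ by induction on $n$, conjugating step by step with the one-step operators $Q_\nu$; the paper instead treats the time-$n$ operator in one shot, writing $M_{\nu,n}^\ast(\omega)A_m^{-2}M_{\nu,n}(\omega)$ explicitly as a sum over the $k^n$ branches $\bar\alpha$ of the lifted dynamics on $\R$ and passing the symbol through three lemmas (composition with the Jacobian weight, conjugation by $e^{\pm i\tau^{(n)}(\omega)/h}$, conjugation by the diffeomorphism $E^{(n)}(\omega)$) in Kumano-go's double-symbol calculus, with the $\omega$-uniformity encoded once and for all in condition $(\mathscr A)$ of Definition \ref{def:conditionA}. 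The one-shot route avoids having to re-verify, at each of the $n$ induction steps, that the accumulated $\mathcal O(h)$ remainders retain $\omega$-uniform seminorm bounds; your route is workable because $n$ is fixed and the error in \eqref{normbound1} is allowed to depend on $n$, but you would need a one-step Egorov theorem for conjugation of a general (not merely principal) symbol by $Q_\nu$, with seminorm continuity uniform in $\omega$, which is essentially what the paper's Lemmas \ref{lem:multiplicationcomposition}--\ref{lem:conjugationdiffeomorphism} provide anyway. One bookkeeping slip: from \eqref{auxiliaryoperator} the correct recursion is $P_n(\omega)=Q_\nu(\theta^{n-1}\omega)^\ast P_{n-1}(\omega)Q_\nu(\theta^{n-1}\omega)$ (peeling off the rightmost factor), or alternatively $P_n(\omega)=Q_{\nu,n-1}(\theta\omega)^\ast P_1(\omega)Q_{\nu,n-1}(\theta\omega)$; your formula $P_n(\omega)=Q_\nu(\omega)^\ast P_{n-1}(\theta\omega)Q_\nu(\omega)$ places the $\theta$-shifts incorrectly. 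This is fixable and does not affect the final symbol \eqref{principalsymbolP}.
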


The proof of this result relies on the contents of the extensive Appendix \ref{app:PsiDO},
and can be found at the end of that appendix, see page \pageref{appendixproofoftheorem}.
The complicating factor is that the error term of the operator norm bound
needs to be independent of $\omega$ for the proof of Theorem \ref{thm:spectralgap} to work,
and this means that it does not suffice to calculate only the principal symbol $p_n(\omega)$.
This notwithstanding, we wish to mention that Theorem \ref{thm:Pn} is essentially a special
case of Egorov's theorem, together with an $L^2$ continuity result for semiclassical operators
on $\Sone$.

To find a bound for the principal symbol $p_n(\omega)$ we note that
\eqref{principalsymbolP} together with the properties of $dG_{\alpha}^{(n)}(\omega,y)/dy$ gives
\begin{equation}\label{estimate:principalsymbolP}
0<p_n(\omega,y,\eta)\le\lambda^{-n}\sum_{\alpha\in\{0,\ldots,k-1\}^{n}}
\frac{a_m^2(y,\eta)}{a_m^2(F_\alpha^{(n)}(\omega,y,\eta))}.
\end{equation}
Next, we consider three cases:

1) $(y,\eta)\notin \mathcal Z$. Note that if $\alpha=(\ldots,\alpha_2,\alpha_1)\in\{0,\ldots,k-1\}^{\N^\ast}$
and $\omega\in\Omega$, then $F_\alpha^{(n)}(\omega,y,\eta)=F_{\alpha_n}(\omega,F_\alpha^{(n-1)}(\theta\omega,y,\eta))$.
Hence, we observe that we can write 
\[
\frac{a_m^2(y,\eta)}{a_m^2(F_\alpha^{(n)}(\omega,y,\eta))}=
\frac{a_m^2(F_\alpha^{(n-1)}(\theta\omega,y,\eta))}{a_m^2(F_{\alpha_n}(\omega,F_\alpha^{(n-1)}(\theta\omega,y,\eta)))}
\cdots \frac{a_m^2(y,\eta)}{a_m^2(F_{\alpha_1}(\theta^{n-1}\omega,y,\eta))},
\]
which in view of \eqref{eq:outsidetrappedbound} gives
$a_m^2(y,\eta)/a_m^2(F_\alpha^{(n)}(\omega,y,\eta))\le (C_\kappa^{2m})^n$, $\mathbb{P}$-almost surely.
Thus, $\mathbb{P}$-almost surely
\[
(y,\eta)\notin \mathcal Z\quad\Longrightarrow\quad 0< p_n(\omega,y,\eta)\le \Big(\frac{k}{\lambda}\Big)^n (C_\kappa^{2m})^n.
\]

2) $(y,\eta)\in\mathcal Z$ but $F_\alpha^{(n-1)}(\theta\omega,y,\eta)\notin\mathcal Z$. By using the observation above
we find in view of \eqref{eq:outsidetrappedbound} and \eqref{eq:generalbound} that
\[
\frac{a_m^2(y,\eta)}{a_m^2(F_\alpha^{(n)}(\omega,y,\eta))}
\le C_\kappa^{2m},\quad\text{$\mathbb{P}$-almost surely}.
\]

3) $(y,\eta)\in\mathcal Z$ and $F_\alpha^{(n-1)}(\theta\omega,y,\eta)\in\mathcal Z$. Then \eqref{eq:generalbound}
gives
\[
\frac{a_m^2(y,\eta)}{a_m^2(F_\alpha^{(n)}(\omega,y,\eta))}\le 1,\quad\text{$\mathbb{P}$-almost surely}.
\]

Recall that by \eqref{numbertrajectories} we have that
\[
\#\{F_\alpha^{(n-1)}(\theta\omega,y,\eta)\in\mathcal Z:\alpha\in\{0,\ldots,k-1\}^{n-1}\}\le\mathcal N(\theta\omega,n-1),
\]
which implies that
\begin{equation}\label{eq:countingbound}
\#\{F_\alpha^{(n-1)}(\theta\omega,y,\eta)\in\mathcal Z:\alpha\in\{0,\ldots,k-1\}^{n}\}\le k\mathcal N(\theta\omega,n-1).
\end{equation}
For $(y,\eta)\in\mathcal Z$, we split the sum in \eqref{estimate:principalsymbolP} into cases 2 and 3 considered
above. 
Using the estimate from case 2 together with the fact that $\#\{\alpha\in\{0,\ldots,k-1\}^{n}\}=k^n$,
we find in view of the estimate from case 3 together with \eqref{eq:countingbound},
that $0< p_n(\omega,y,\eta)\le\mathcal B(\omega,n)$ for $(y,\eta)\in\mathcal Z$, where
\begin{equation}\label{bofomega}
\mathcal B(\omega,n)=\Big(\frac{k}{\lambda}\Big)^n C_\kappa^{2m}+\frac{k\mathcal N(\theta\omega,n-1)}{\lambda^n}.
\end{equation}
Next, note that the bound obtained
for $(y,\eta)\notin\mathcal Z$ (case 1) is smaller than $\mathcal B(\omega,n)$
since $(C_\kappa^{2m})^n\le C_\kappa^{2m}$. Thus
$\lVert p_n(\omega)\rVert_{L^\infty(T^\ast(\Sone))}\le\mathcal B(\omega,n)$,
which by Theorem \ref{thm:Pn} implies that $\mathbb{P}$-almost surely,
\begin{equation}\label{normclaim}
\lVert P_n(\omega)\rVert_{\mathscr L(L^2(\Sone))}\le C_{\mathrm{dim}}
\mathcal B(\omega,n)+\mathcal O_{n,m}(h^{\frac{1}{2}}),\quad h\to0,
\end{equation}
where the error term as indicated is independent of $\omega$.

We now proceed to analyze $Q_{\nu,n}(\omega)$. Note that if
$(\phantom{i},\phantom{i})_{L^2}$
denotes the scalar product on $L^2(\Sone)$, then
\begin{align*}
\lVert Q_{\nu,n}(\omega)u\rVert_{L^2}^2&
=(Q_{\nu,n}(\omega)u,Q_{\nu,n}(\omega)u)_{L^2}\\
&=(Q_{\nu,n}^\ast(\omega)Q_{\nu,n}(\omega)u,u)_{L^2}
\le \lVert Q_{\nu,n}^\ast(\omega)Q_{\nu,n}(\omega)u\rVert_{L^2}\lVert u\rVert_{L^2}
\end{align*}
and that $\lVert Q_{\nu,n}^\ast(\omega)Q_{\nu,n}(\omega)\rVert_{\mathscr L(L^2(\Sone))}
\le \lVert Q_{\nu,n}(\omega)\rVert_{\mathscr L(L^2(\Sone))}^2$. Hence we have equality. 
By virtue of \eqref{normclaim} together with the fact that $P_n(\omega)=Q_{\nu,n}^\ast(\omega)Q_{\nu,n}(\omega)$, this gives
\begin{equation}\label{normbound3}
\lVert Q_{\nu,n}(\omega)\rVert_{\mathscr L(L^2(\Sone))}\le 
(C_{\mathrm{dim}}\mathcal B(\omega,n)+\mathcal O_{n,m}(\nu^{-\frac{1}{2}}))^{\frac{1}{2}}\quad\text{as }\nu\to\infty.
\end{equation}
We now use the assumption that $f_0$ is partially captive, that is, as explained above we
apply Proposition \ref{wPC} to find
numbers $n_0\in\N$ and $\epsilon(n_0)>0$ such that \eqref{eq:choosingnzero1}
holds for any $\epsilon$ in the range
$0\le\epsilon<\epsilon(n_0)$. Recall that this implies \eqref{eq:choosingnzero2},
and consider the right-hand side of \eqref{normbound3} for $n=n_0$,
with $\mathcal B(\omega,n_0)$ defined in accordance with \eqref{bofomega}.
Recall also that the constant $C_\kappa$ in \eqref{bofomega} denotes the square root appearing
in \eqref{eq:outsidetrappedbound}, so $C_\kappa<1$.
Since the inequality in \eqref{eq:choosingnzero2} is strict, 
we can thus find a number $m_0$ such that
\begin{equation}\label{eq:rhobound1}
\lVert Q_{\nu,n_0}(\omega)\rVert_{\mathscr L(L^2(\Sone))}
\le\rho^{n_0},\quad\text{$\mathbb{P}$-almost surely},
\end{equation}
for all $m\ge m_0$ as long as $\nu\ge\nu_m$ for some sufficiently large $\nu_m$. 
For arbitrary $n\in\N$, we write $n=\ell n_0+r$ with $0\le r<n_0$, and
note that we for each $0<r<n_0$
can bound $\mathcal B(\omega,r)$ by a constant depending only on $n_0$
since $\lambda>1$ by \eqref{emin}, $k\mathcal N(\omega,r-1)< k^{n_0}$ for $0<r<n_0$ and $C_\kappa^{2m}\le 1$ for all $m\ge 0$.
Hence, (after possibly increasing $\nu_m$ if necessary to make sure that, for $0< r<n_0$, the error terms
$\mathcal O_{r,m}(\nu^{-\frac{1}{2}})$ are valid and smaller than 1 for $\nu\ge\nu_m$) there is a constant $c$,
independent of $\nu\ge\nu_m$, such that for any $0<r<n_0$ we have
$\lVert Q_{\nu,r}(\omega)\rVert_{\mathscr L(L^2(\Sone))}<c$, $\mathbb{P}$-almost surely.
We may assume that $c\ge 1$, so that if $Q_{\nu,0}(\omega)$ is understood to be the identity on $L^2(\Sone)$, then
$\lVert Q_{\nu,r}(\omega)\rVert_{\mathscr L(L^2(\Sone))}\le c$ for all $0\le r<n_0$, $\mathbb{P}$-almost surely.
Now, by \eqref{auxiliaryoperator} we have
\begin{equation}\label{eq:Qdecomposition}
Q_{\nu,n}(\omega)=Q_{\nu,n_0}(\omega)
\circ\ldots\circ Q_{\nu,n_0}(\theta^{(\ell-1)n_0}\omega)\circ Q_{\nu,r}(\theta^{\ell n_0}\omega).
\end{equation}
Since $Q_{\nu,n}(\omega):L^2(\Sone)\to L^2(\Sone)$ is unitarily
equivalent to $M_{\nu,n}(\omega):H_\nu^{-m}(\Sone)\to H_\nu^{-m}(\Sone)$,
we have $\lVert M_{\nu,n}(\omega)\rVert_{\mathscr L(H_\nu^{-m}(\Sone))}=\lVert Q_{\nu,n}(\omega)\rVert_{\mathscr L(L^2(\Sone))}$.
Thus, by \eqref{eq:rhobound1} and \eqref{eq:Qdecomposition} we have
\[
\lVert M_{\nu,n}(\omega)\rVert_{\mathscr L(H_\nu^{-m})} \le
(\rho^{n_0})^\ell \lVert Q_{\nu,r}(\theta^{\ell n_0}\omega)\rVert_{\mathscr L(L^2)}
\le \frac{c}{\rho^r}\rho^{\ell n_0+r}\le \lambda^{n_0/2}c\rho^n,
\]
where the last inequality follows from the fact that $\rho>\lambda^{-\frac{1}{2}}$ and $\lambda>1$. Since $n_0$ only depends
on $\rho$, we conclude that $\lambda^{n_0/2}c$
satisfies the properties required of the constant $c_0$ in the statement of Theorem \ref{thm:spectralgap}.
We set $\nu_0=\nu_0(m)=\nu_m$.
Since $H_\nu^m(\Sone)$ is the dual of $H_\nu^{-m}(\Sone)$ with respect to the usual scalar product on $L^2(\Sone)$,
a standard duality argument yields the statement concerning the norm $\lVert M_{\nu,n}^\ast(\omega)\rVert_{\mathscr L(H_\nu^{m}(\Sone))}$
of the adjoint of $M_{\nu,n}(\omega):H_\nu^{-m}(\Sone)\to H_\nu^{-m}(\Sone)$
with respect to the $L^2$ pairing.
This completes the proof of Theorem \ref{thm:spectralgap}.

\begin{rmk}
If the error term in Theorem \ref{thm:Pn} is not shown to be independent of $\omega$,
then the previous proof yields a significantly weaker statement compared to Theorem \ref{thm:spectralgap}.
In fact, \eqref{eq:rhobound1}
would then hold for all $m\ge m_0$ as long as $\nu\ge\nu_m(\omega)$ for some sufficiently large 
$\nu_m(\omega)$. Combined with \eqref{eq:Qdecomposition}, this leads to
\[
\lVert M_{\nu,n}(\omega)\rVert_{\mathscr L(H_\nu^{-m})} \le\lambda^{n_0/2}c\rho^n
\]
for all $m\ge m_0$ and $\nu\ge\tilde\nu_0(m)$, where
\[
\tilde\nu_0(m)=\max{(\nu_m(\omega),\nu_m(\theta^{n_0}\omega),\ldots,\nu_m(\theta^{(\ell-1)n_0}\omega))}
\]
and $\ell$ is determined by $n=\ell n_0+r$. Hence, this $\tilde\nu_0$ would depend not only
on $\omega$ but also on time $n\in\N^\ast$.
\end{rmk}

\section{Analysis of the partial captivity condition}\label{section:analysisofpc}

The main purpose of this section is to prove Proposition \ref{wPC}. The key ingredient is a
perturbation lemma which compares the ``captivity'' at different noise levels,
see Proposition \ref{prop:differentnoiselevels}. 
We also include a brief discussion on the connection between
partial captivity and the transversality condition mentioned in the introduction.

\subsection{Proof of Proposition \ref{wPC}}\label{app:wPC}
We will assume that the hypotheses of Theorem \ref{thm:spectralgap} are in force,
and use notation introduced in Section \ref{section:spectralgap}. 
In particular, we assume that $\epsilon$ is in the range
$0\leq \epsilon <\epsilon _0$ for some $\epsilon_0$ such that \eqref{emin} holds.
In the case $\epsilon=0$, we identify $f_0$ with the constant mapping $\Omega \ni \omega \mapsto f_0$. 
We shall assume that $\epsilon_0$ is chosen sufficiently small so that
\begin{equation}\label{eq:assumptionC1close}
\esssup_\omega{\sup_x{\lvert E_\epsilon'(\omega,x)-E_0'(x)\rvert}}<1,\quad 0\le\epsilon<\epsilon_0.
\end{equation}
Moreover, we let $\kappa$ be a fixed number in the range $1<\kappa<\lambda$.
In accordance with Lemma \ref{lemma13}, we let  
$R_\kappa$ be a number such that $R_\kappa\ge C_\tau(\lambda-\kappa)^{-1}$,
where $C_\tau$ is a constant depending only on $\tau_0$ and $\epsilon_0$ such that
\begin{equation}\label{eq:tauconstant}
\esssup_\omega{\sup_x{\lvert\tau_\epsilon'(\omega,x)\rvert}}\le C_\tau,\quad 0\le\epsilon<\epsilon_0.
\end{equation}
For future purposes we remark that we by construction have
\begin{equation}\label{eq:choiceofkappagives}
R_\kappa-C_\tau\sum_{j=1}^\infty\lambda^{-j}>0,
\end{equation}
which can be checked by a straightforward calculation.
Recall also that $\mathcal Z$ denotes the set \eqref{def:Z} defined in terms of the number $R_\kappa$ chosen above.

We first adapt the ideas in Faure~\cite{Faure}*{Appendix B}
and describe partial captivity
in terms of the random dynamics \eqref{canonicalmap1}, lifted on the cover $\mathbb{R} ^2$.
To this end, let $g_\epsilon(\omega)$, $E_\epsilon(\omega)$ and $\tau_\epsilon(\omega)$ 
be defined through the perturbation scheme $\{f_\epsilon\}_{\epsilon>0}$
as in \eqref{eq:perturbedsystem}. Thus, for fixed $\epsilon>0$ and $\omega\in\Omega$ we have that
$g_\epsilon(\omega): \Sone \rightarrow  \Sone$ is a ${\Ci}^\infty$ diffeomorphism,
$E_\epsilon(\omega)$ is a $k$ valued map on $\Sone$ given by $E_\epsilon(\omega):x\mapsto E_\epsilon(\omega,x)$, while
$\tau_\epsilon(\omega):\Sone\to\R$ is given by $\tau_\epsilon(\omega):x\mapsto \tau_\epsilon(\omega,x)$.
We identify $\Sone$ with the fundamental domain
\[
\Sone\simeq\{x:0\le x<1\}\subset\R,
\]
and recall that an orientation preserving
${\Ci}^\infty$ diffeomorphism $g_\epsilon(\omega): \Sone \rightarrow  \Sone$ can be viewed as a smooth function $g_\epsilon(\omega):\R\to\R$
with the property that $g_\epsilon(\omega,x+1)=g_\epsilon(\omega,x)+1$ for all $x\in\R$, and with an inverse $g_\epsilon^{-1}(\omega)$
enjoying the same property. Similarly, $E_\epsilon(\omega)$ can be identified with the smooth function $E_\epsilon(\omega):\R\to\R$
given by $E_\epsilon(\omega,x)=kg_\epsilon(\omega,x)$ for $x\in\R$. 
Thus $E_\epsilon(\omega,x+\ell)=E_\epsilon(\omega,x)+k\ell$ for all $\ell\in\Z$ and all $x\in\R$,
and it follows that $E_\epsilon'(\omega):\R\to\R$ is a smooth periodic function with period 1. Similarly,
any smooth function $\tau_\epsilon(\omega):\Sone\to\R$ can be identified with a periodic function
$\tau_\epsilon(\omega):\R\to\R$ with period 1. By \eqref{convinC1} we have
\begin{equation}\label{eq:assumptionCrclose}
\esssup_\omega{d_{{\Ci}^\infty}( E_\epsilon(\omega),E_0)}\to0,
\quad\esssup_\omega{d_{{\Ci}^\infty}( \tau_\epsilon(\omega),\tau_0)}\to0,\quad \epsilon\to0.
\end{equation}
Here $E_0$ and $\tau_0$ are the smooth functions given by the deterministic dynamical system $f_0$ defined by \eqref{eq:unperturbedsystem},
lifted on $\R$.
Note also that the map $E_\epsilon(\omega):\R\to\R$ is invertible, with inverse 
$E_\epsilon^{-1}(\omega,x)=g_\epsilon^{-1}(\omega,x/k)$ for $x\in\R$.
$E_\epsilon^{-1}(\omega)$ then satisfies $E_\epsilon^{-1}(\omega,x+k)=E_\epsilon^{-1}(\omega,x)+1$, so $E_\epsilon^{-1}(\omega)$ cannot be
identified with a map $\Sone\to \Sone$. ($E_\epsilon(\omega):\R\to\R$ is a diffeomorphism on $\R$,
but not on the fundamental domain $[0,1)$.) However, each inverse branch of 
$E_\epsilon(\omega):\Sone\to \Sone$ can be identified 
with a map $\R\ni x\mapsto E_\epsilon^{-1}(\omega,x+j)$ for some $0\le j\le k-1$.
For convenience, we shall throughout this section reserve the notation $G_\epsilon(\omega)$ for the inverse of 
$E_\epsilon(\omega):\R\to\R$,
and permit us to switch between $G_\epsilon(\omega)$ and $E^{-1}_{\epsilon}(\omega)$
as befitting the situation.

In view of the previous discussion, we now introduce the lifted dynamics on the cover $\R^2$,
that is, the diffeomorphism on $\R^2$ which 
for each $\omega \in \Omega$ is given by
\begin{align}\label{lifted}
\tilde{F} (\epsilon ;\omega) :\binom{y}{\eta} \mapsto \binom{x}{\xi} =
\left( \! \! \begin{array}{c} E_{\epsilon}^{-1}(\omega,y) \\ 
E_\epsilon '(\omega,x)\eta +\tau_\epsilon '(\omega,x) \end{array} \! \! \right).
\end{align}
(We will distinguish $\tilde{F} (\epsilon ;\omega)$ from
the random dynamics \eqref{canonicalmap1} for reasons of comparison, see 
\eqref{eq:setequality} below.) 
Let  $\N \times \Omega \times \R ^2\ni (n,\omega ,y,\eta) \mapsto \tilde{F} ^{(n)} (\epsilon ;\omega ,y,\eta)$ be the backward 
cocycle induced by $\tilde{F} (\epsilon ;\cdot):\Omega \to {\Ci}^{\infty} (\R^2,\R^2)$, that is,
$\tilde F^{(0)}(\epsilon ;\omega) =\mathrm{Id} _{\R^2}$
for each $\omega \in \Omega$ and
\begin{equation}\label{eq:lifteddynamics}
\tilde{F} ^{(n)}(\epsilon;\omega) =\tilde{F}(\epsilon ;\omega) \circ \tilde{F} 
(\epsilon ;\theta \omega)\circ \cdots \circ \tilde{F} (\epsilon ;\theta ^{n-1}\omega),
\quad n\ge1.
\end{equation}
Let $\N \times \Omega \times \R \ni (n,\omega ,y) \mapsto G_\epsilon^{(n)}(\omega ,y)$ 
be the backward cocycle induced by $G_\epsilon(\cdot):\Omega \to {\Ci}^{\infty}(\R ,\R)$, 
defined in the same manner. 
(Note that if we omit the subscript $\epsilon$, this is in accordance with the notation used in \eqref{eq:introducingG}
where the subscript $\alpha$ refers to translation.)
Then $G_\epsilon^{(n)}(\omega ,y)$
is the first component of $\tilde F^{(n)}(\epsilon;\omega ,y,\eta)$; the second component is
\begin{align}\label{eq:Ftildetimensecondargument}
\frac{\eta}{dG_\epsilon^{(n)}(\omega,y)/dy}+\sum_{j=0}^{n-1}\frac{\tau_\epsilon'(\theta^j\omega,G_\epsilon^{(n-j)}(\theta^j\omega ,y))}
{dG_\epsilon^{(j)}(\omega,G_\epsilon^{(n-j)}(\theta^j\omega ,y))/dy},
\end{align}
where $dG_\epsilon^{(0)}(\omega,\cdot)/dy=1$, and
\begin{equation}\label{eq:Ftildetimensecondargument2}
(dG_\epsilon^{(j)}(\omega,G_\epsilon^{(n-j)}(\theta^j\omega ,y))/dy)^{-1}
=\prod_{\ell=0}^{j-1}E_\epsilon'(\theta^\ell\omega,G_\epsilon^{(n-\ell)}(\theta^\ell\omega ,y)),
\quad 1\le j\le n.
\end{equation}

For each $0\le\epsilon<\epsilon_0$ we now solve the equation
\begin{equation}\label{eq:cohomological}
S_\epsilon(\omega,G_\epsilon(\omega,x))=E_\epsilon'(\omega,G_\epsilon(\omega,x))
S_\epsilon(\theta\omega,x)+\tau_\epsilon'(\omega,G_\epsilon(\omega,x))
\end{equation}
for $S_\epsilon:\Omega\to {\Ci}^\infty(\R)$.
(When $\epsilon=0$ we obtain a constant map $S_0(\omega)\equiv S_0$.)
Note that $S_\epsilon$ is a function such that
the point $(x,S_\epsilon(\theta\omega,x))$ is mapped to $(x',S_\epsilon(\omega,x'))$ under $\tilde F(\epsilon;\omega)$.
More generally we have 
\begin{equation}\label{eq:invariantgraphoftildeF}
\tilde F^{(n)}(\epsilon;\omega)(x,S_\epsilon(\theta^n\omega,x))=(x',S_\epsilon(\omega,x')),
\quad x'=G_\epsilon^{(n)}(\omega,x).
\end{equation}
In the deterministic case, the graph of $S_\epsilon(\omega)$ 
coincides with the stable manifold of $\tilde F(\epsilon ;\omega)$; see Faure~\cite{Faure}*{Appendix B.1}.
To solve \eqref{eq:cohomological}, we observe that if $S_\epsilon$ is a solution, then replacing $\omega$ with $\theta^{-1}\omega$
gives
\begin{equation}
S_\epsilon(\omega,x)=\frac{S_\epsilon(\theta^{-1}\omega,G_\epsilon(\theta^{-1}\omega,x))}
{E_\epsilon'(\theta^{-1}\omega,G_\epsilon(\theta^{-1}\omega,x))}
-\frac{\tau_\epsilon'(\theta^{-1}\omega,G_\epsilon(\theta^{-1}\omega,x))}{E_\epsilon'(\theta^{-1}\omega,G_\epsilon(\theta^{-1}\omega,x))}.
\end{equation}
Repeated use of this identity yields the formula
\begin{equation}\label{eq:solutiontocohomological}
S_\epsilon(\omega,x)=-\sum_{j=1}^\infty
\tau_\epsilon'(\theta^{-j}\omega,G_\epsilon^{(j)}(\theta^{-j}\omega,x))\cdot\frac{dG_\epsilon^{(j)}(\theta^{-j}\omega,x)}{dx},
\end{equation}
where $G_\epsilon^{(j)}(\theta^{-j}\omega , x)=E_\epsilon^{-1}(\theta^{-j}\omega)\circ\cdots\circ E_\epsilon^{-1}(\theta^{-1}\omega)(x)$ and
\begin{equation}\label{eq:ee1}
\frac{dG_\epsilon^{(j)}(\theta^{-j}\omega , x)}{dx} =\prod _{\ell=1}^{j}
\frac{1}{E_{\epsilon}'(\theta ^{-\ell}\omega , G_\epsilon^{(\ell)}(\theta^{-\ell}\omega,x))}.
\end{equation}
Since $\theta$ is $\mathbb{P}$-preserving, the series \eqref{eq:solutiontocohomological}
$\mathbb{P}$-almost surely converges uniformly in $x$ by virtue of \eqref{eq:ee1} and \eqref{emin}.
Now $G_\epsilon^{(\ell)}(\theta^{-\ell}\omega)\circ G_\epsilon(\omega)(x)=G_\epsilon^{(\ell+1)}(\theta^{-\ell}\omega,x)$,
and using this observation it is straightforward to check that \eqref{eq:solutiontocohomological}
solves \eqref{eq:cohomological}.

For the proof of Proposition \ref{wPC}, we need a modified version of 
a result by Faure~\cite{Faure}*{Proposition 17}, proved using similar techniques,
see Proposition \ref{prop:PCequiv1} below. The strategy includes a comparison between
the set \eqref{treeofrandomtrajectories}
of random trajectories of the dynamics \eqref{canonicalmap1} on $T^\ast(\Sone)$
with a related set defined in terms of the backward cocycle
induced by $\tilde{F} (\epsilon ;\cdot)$ on $\R^2$.
To this end, let $\mathcal P:\R^2\to T^\ast(\Sone)=\Sone\times\R$ be the canonical projection.
Given a sequence $\alpha=(\alpha_n,\ldots,\alpha_1)\in\{0,\ldots,k-1\}^n$,
let $\bar\alpha$ be the number $\bar\alpha=\sum_{j=1}^n\alpha_j k^{j-1}$.
We can think of $\alpha$ as being the number $\bar\alpha$ written in base $k$,
and this gives a bijection between the sets $\{0,\ldots,k-1\}^n$ and $\{0,\ldots,k^n-1\}$.
For fixed $(y,\eta)\in\R^2$, it is straightforward to check that as $\alpha$
varies in $\{0,\ldots,k-1\}^n$ we obtain the set equality
\begin{equation}\label{eq:setequality}
F^{(n)}(\epsilon;\omega,\mathcal P(y,\eta))
=\{\mathcal P(\tilde F^{(n)}(\epsilon;\omega,y+\bar\alpha,\eta)):0\le\bar\alpha\le k^n-1\},
\end{equation}
where the left-hand side is the set defined by \eqref{treeofrandomtrajectories}.
Hence, if $\mathcal N(\epsilon;\omega,n)$ is the number given by \eqref{numbertrajectories}
of random trajectories that do not escape outside $\mathcal Z=\Sone\times[-R_\kappa,R_\kappa]$ before time $n$,
then $\mathcal N(\epsilon;\omega,n)$ coincides with
the maximum number of $\bar\alpha\in\{0,\ldots,k^n-1\}$ such that
the second component of $\tilde F^{(n)}(\epsilon;\omega,y+\bar\alpha,\eta)$
has length bounded by $R_\kappa$.
The following notation will prove useful.

\begin{dfn}\label{def:tildeAandbarA}
Let $n\in \N^\ast$ and $R >0$. For each $(y,\eta)\in\R^2$, let $\widetilde{\mathcal{A}}_{R}(\epsilon;\omega,n)(y,\eta)$
be the set of $0\leq \bar\alpha \leq k^n-1$ such that
\begin{equation}\label{eq:defofalphatilde}
\lvert\eta-S_\epsilon(\theta^n\omega,y+\bar\alpha)\rvert
\leq R\cdot \frac{dG_\epsilon^{(n)}(\omega,y+\bar\alpha)}{dy}.
\end{equation}
The maximum cardinality of $\widetilde{\mathcal{A}}_{R}(\epsilon;\omega,n)(y,\eta)$
as $(y,\eta)$ varies in $\R^2$ will be denoted by $\widetilde{\mathcal{N}} _{R} (\epsilon ;\omega ,n)$,
that is,
\[
\widetilde{\mathcal{N}}_{R} (\epsilon ;\omega ,n) 
=\sup _{y,\eta} \# \widetilde{\mathcal{A}} _{R}(\epsilon ;\omega ,n)(y,\eta).
\]
\end{dfn}

When $\epsilon=0$ we shall write $\widetilde{\mathcal{A}}_{R}(0;n)(y,\eta)$
and $\widetilde{\mathcal N}_R(0;n)$ for the corresponding set and number, respectively.
By Faure~\cite{Faure}*{Proposition 17} it then follows that the unperturbed dynamical system
$f_0$ is partially captive if and only if
\begin{equation}\label{eq:newdefofpartiallycaptive}
\lim_{n\to\infty}\frac{1}{n}\log{\widetilde{\mathcal N}_R(0;n)}=0
\end{equation}
for all $R>0$. We shall prove Proposition \ref{wPC} using \eqref{eq:newdefofpartiallycaptive}
together with the following comparison results.

\begin{prop}\label{prop:PCequiv1}
Let $0\le\epsilon<\epsilon_0$. Let $C_\tau$ be given by \eqref{eq:tauconstant}, and set
$C_1=C_\tau\sum_{j=1}^\infty\lambda^{-j}$.
For each $n\in \N^\ast$ we $\mathbb{P}$-almost surely have 
\begin{equation*}
\widetilde{\mathcal{N} }_{R_\kappa-C_1}(\epsilon ;\omega, n)
\leq \mathcal{N}(\epsilon ;\omega ,n)\leq \widetilde{\mathcal{N}} _{R_\kappa +C_1}(\epsilon ;\omega,n),
\end{equation*}
where $\mathcal N (\epsilon ;\omega ,n)=\mathcal N_{R_\kappa}(\epsilon ;\omega ,n)$ is defined in \eqref{numbertrajectories}.
\end{prop}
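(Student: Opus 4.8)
The plan is to reduce the proposition to a uniform bound on the solution $S_\epsilon$ of the cohomological equation \eqref{eq:cohomological}, together with two facts already available: the second (fibre) component of the lift $\tilde F^{(n)}(\epsilon;\omega)$ is affine in $\eta$, by \eqref{eq:Ftildetimensecondargument}, and the graph of $S_\epsilon$ is invariant, by \eqref{eq:invariantgraphoftildeF}. For brevity write $[\tilde F^{(n)}(\epsilon;\omega,y,\eta)]_2$ for that second component, that is, the right-hand side of \eqref{eq:Ftildetimensecondargument}, and recall (from the discussion preceding Definition \ref{def:tildeAandbarA}, via \eqref{eq:setequality}) that $\mathcal N(\epsilon;\omega,n)$ is the largest number of $\bar\alpha\in\{0,\ldots,k^n-1\}$ such that $\lvert[\tilde F^{(n)}(\epsilon;\omega,y+\bar\alpha,\eta)]_2\rvert\le R_\kappa$, the maximum being over the starting point $(y,\eta)$.

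First I would establish that $\esssup_\omega\sup_x\lvert S_\epsilon(\omega,x)\rvert\le C_1$ with $C_1=C_\tau\sum_{j=1}^\infty\lambda^{-j}$. This is immediate from the explicit formula \eqref{eq:solutiontocohomological}: the $j$th summand is, in absolute value, at most $C_\tau$ times $dG_\epsilon^{(j)}(\theta^{-j}\omega,x)/dx$, which by \eqref{eq:ee1} and \eqref{emin} is $\mathbb{P}$-almost surely bounded by $\lambda^{-j}$. Recall also that $R_\kappa-C_1>0$ by \eqref{eq:choiceofkappagives}.

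Next I would derive, $\mathbb{P}$-almost surely and for all $(y,\eta)\in\R^2$, the identity
\[
[\tilde F^{(n)}(\epsilon;\omega,y,\eta)]_2=S_\epsilon(\omega,G_\epsilon^{(n)}(\omega,y))+\frac{\eta-S_\epsilon(\theta^n\omega,y)}{dG_\epsilon^{(n)}(\omega,y)/dy}.
\]
Indeed, by \eqref{eq:Ftildetimensecondargument} the map $\eta\mapsto[\tilde F^{(n)}(\epsilon;\omega,y,\eta)]_2$ is affine with slope $(dG_\epsilon^{(n)}(\omega,y)/dy)^{-1}$, while substituting $\eta=S_\epsilon(\theta^n\omega,y)$ produces the value $S_\epsilon(\omega,G_\epsilon^{(n)}(\omega,y))$ by \eqref{eq:invariantgraphoftildeF}; subtracting gives the claimed formula. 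Combining it with the bound of the previous paragraph and the triangle inequality yields, $\mathbb{P}$-almost surely,
\[
\left\lvert\frac{\eta-S_\epsilon(\theta^n\omega,y)}{dG_\epsilon^{(n)}(\omega,y)/dy}\right\rvert-C_1\le\lvert[\tilde F^{(n)}(\epsilon;\omega,y,\eta)]_2\rvert\le\left\lvert\frac{\eta-S_\epsilon(\theta^n\omega,y)}{dG_\epsilon^{(n)}(\omega,y)/dy}\right\rvert+C_1
\]
for all $(y,\eta)\in\R^2$.

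Finally I would conclude. Fixing $(y,\eta)$ and $0\le\bar\alpha\le k^n-1$ and applying the last display with $y$ replaced by $y+\bar\alpha$ — using $dG_\epsilon^{(n)}(\omega,\cdot)/dy>0$ — the right-hand inequality shows that $\lvert[\tilde F^{(n)}(\epsilon;\omega,y+\bar\alpha,\eta)]_2\rvert\le R_\kappa$ implies $\bar\alpha\in\widetilde{\mathcal A}_{R_\kappa+C_1}(\epsilon;\omega,n)(y,\eta)$, while the left-hand inequality together with $R_\kappa-C_1>0$ shows that $\bar\alpha\in\widetilde{\mathcal A}_{R_\kappa-C_1}(\epsilon;\omega,n)(y,\eta)$ implies $\lvert[\tilde F^{(n)}(\epsilon;\omega,y+\bar\alpha,\eta)]_2\rvert\le R_\kappa$. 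Taking cardinalities and then the supremum over $(y,\eta)\in\R^2$, and using \eqref{eq:setequality} and the identification of $\mathcal N(\epsilon;\omega,n)$ recalled in the first paragraph, one obtains $\widetilde{\mathcal N}_{R_\kappa-C_1}(\epsilon;\omega,n)\le\mathcal N(\epsilon;\omega,n)\le\widetilde{\mathcal N}_{R_\kappa+C_1}(\epsilon;\omega,n)$, $\mathbb{P}$-almost surely. The main obstacle I anticipate is the bookkeeping here matching the two domains of suprema — $\mathcal N$ being a maximum over $(y,\eta)\in\mathcal Z$ while $\widetilde{\mathcal N}_R$ is a supremum over all of $\R^2$ — but this causes no loss, since by the escape estimate \eqref{eq:escape} of Lemma \ref{lemma13} the relevant count vanishes once $\lvert\eta\rvert>R_\kappa$; apart from that, the argument is only summation of a geometric series and an elementary affine computation.
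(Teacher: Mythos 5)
Your proposal is correct and follows essentially the same route as the paper: both rest on the uniform bound $\esssup_\omega\sup_x\lvert S_\epsilon(\omega,x)\rvert\le C_1$ from \eqref{eq:solutiontocohomological}--\eqref{eq:ee1}, the affine relation $\xi_{\bar\alpha}-S_\epsilon(\omega,x_{\bar\alpha})=(dG_\epsilon^{(n)}(\omega,y+\bar\alpha)/dy)^{-1}(\eta-S_\epsilon(\theta^n\omega,y+\bar\alpha))$ coming from \eqref{eq:invariantgraphoftildeF}, and the triangle inequality, with your explicit handling of the $\mathcal Z$ versus $\R^2$ supremum via Lemma \ref{lemma13} being a valid (and welcome) point the paper leaves implicit. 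The only blemish is that in the final paragraph the labels ``left-hand'' and ``right-hand'' for the two inequalities of your sandwich are swapped relative to the implications you draw from them, but both inequalities are present and the logic is sound.
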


\begin{proof}
We adapt the ideas used in the proof of Faure~\cite{Faure}*{Proposition 17}.
First note that
\eqref{eq:solutiontocohomological} and \eqref{eq:ee1} implies that
\begin{equation}\label{eq:propertiesofC1}
\esssup_\omega{\sup_x{\lvert S_\epsilon(\omega,x)\rvert}}\le C_1,
\end{equation}
and this holds independently of $0\le\epsilon<\epsilon_0$ in view of \eqref{emin}.
We now fix $\epsilon$. 
Given $(y,\eta ) \in \mathbb{R} ^2$, $n\in \N^\ast$ and $0\leq \bar\alpha  \leq k^n-1$, let 
$(x_{\bar\alpha}(\omega) ,\xi _{\bar\alpha}(\omega))=\tilde{F} ^{(n)}(\epsilon;\omega)(y+\bar\alpha,\eta )$.
Then \eqref{eq:setequality} implies that
\begin{equation}\label{eq:samecardinality}
\#\{\bar\alpha:\lvert \xi _{\bar\alpha} (\omega ) \rvert \leq R_\kappa\}
=\#\{F_\alpha^{(n)}(\epsilon;\omega,y,\eta)\in\mathcal Z:\alpha\in\{0,\ldots,k-1\}^n\}.
\end{equation}
Now note that the second component of $\tilde{F} ^{(n)} (\epsilon;\omega) (y+\bar\alpha,S_\epsilon (\theta ^n\omega,y+\bar\alpha))$
is equal to $S_\epsilon (\omega,x_{\bar\alpha}(\omega))$ according to \eqref{eq:invariantgraphoftildeF}.
In view of \eqref{lifted}, a straightforward calculation then shows that
\begin{equation*}
\xi _{\bar\alpha}(\omega)-S_\epsilon (\omega,x_{\bar\alpha}(\omega))
=\prod _{j=0}^{n-1}E_\epsilon' (\theta ^j\omega ,G_\epsilon^{(n-j)}(\omega ,y+\bar\alpha))\cdot 
(\eta -S_\epsilon (\theta ^{n}\omega,y+\bar\alpha)).
\end{equation*}
Therefore, it follows from \eqref{eq:ee1} that
$\lvert \xi _{\bar\alpha} (\omega ) \rvert \leq R_\kappa$ if and only if 
\begin{equation*}
\bigg\lvert S_\epsilon (\omega,x_{\bar\alpha}(\omega))\cdot
 \frac{dG_\epsilon^{(n)}(\omega ,y+\bar\alpha)}{dy}
 +\eta -S_\epsilon (\theta ^n\omega,y+\bar\alpha) \bigg\rvert
 \leq R_\kappa\cdot \frac{dG_\epsilon^{(n)}(\omega ,y+\bar\alpha)}{dy}.
\end{equation*}
Hence, if $\bar\alpha\in\widetilde{\mathcal A}_{R_\kappa-C_1}(\epsilon;\omega,n)(y,\eta)$
then $\lvert\xi_{\bar\alpha}(\omega)\rvert\le R_\kappa$ by the triangle inequality and \eqref{eq:propertiesofC1}.
In view of \eqref{eq:samecardinality} together with the definitions of
$\widetilde{\mathcal{N}}_{R_\kappa- C_1}(\epsilon;\omega,n)$ and $\mathcal N(\epsilon;\omega,n)$,
we conclude that $\widetilde{\mathcal{N} }_{R_\kappa-C_1}(\epsilon ;\omega, n)
\leq \mathcal{N}(\epsilon ;\omega ,n)$. The other inequality is proved in a similar fashion,
which completes the proof.
\end{proof}

\begin{prop}\label{prop:differentnoiselevels}
Let $\varrho>0$. Then for each $R>\varrho$ and $n\in \N^\ast$ there is an
$\epsilon(n)>0$ depending also on $R$ and $\varrho$ such that for all $0\le\epsilon<\epsilon(n)$
and $\mathbb{P}$-almost every $\omega$ we have 
\begin{equation}
\widetilde{\mathcal{N}}_{R-\varrho}(0;n)
\leq\widetilde{\mathcal{N}}_{R}(\epsilon;\omega,n)
\leq\widetilde{\mathcal{N}}_{R+\varrho}(0;n).
\end{equation}
\end{prop}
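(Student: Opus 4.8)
The plan is to reduce everything to a bounded, $\epsilon$-uniform perturbation of the defining inequality \eqref{eq:defofalphatilde} in Definition \ref{def:tildeAandbarA}, for a \emph{fixed} time $n$. The point is that for fixed $n$ the two ingredients of \eqref{eq:defofalphatilde}, namely the map $x\mapsto S_\epsilon(\omega,x)$ and the derivative $x\mapsto dG_\epsilon^{(n)}(\omega,x)/dx$, converge uniformly (in $x$ and, essentially, in $\omega$) to their deterministic counterparts $S_0$ and $dG_0^{(n)}/dx$ as $\epsilon\to0$; feeding these approximations into \eqref{eq:defofalphatilde} turns membership in $\widetilde{\mathcal A}_R(\epsilon;\omega,n)(y,\eta)$ into membership in $\widetilde{\mathcal A}_{R\pm\varrho}(0;n)(y,\eta)$, which is exactly what is required.

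First I would record three uniform estimates, valid for $0\le\epsilon<\epsilon_0$ with $\epsilon_0$ as in \eqref{emin} and \eqref{eq:assumptionC1close}. (a) By \eqref{eq:assumptionC1close} the derivative $E_\epsilon'(\omega,x)=kg_\epsilon'(\omega,x)$ is bounded above by a constant $\Lambda_0$, independently of $\epsilon$, $\omega$ and $x$; since $G_\epsilon^{(n)}(\omega,\cdot)=G_\epsilon(\omega)\circ\cdots\circ G_\epsilon(\theta^{n-1}\omega)$ is an $n$-fold composition whose chain-rule derivative is a product of $n$ factors each lying in $[\Lambda_0^{-1},\lambda^{-1}]$, we get $\Lambda_0^{-n}\le dG_\epsilon^{(n)}(\omega,x)/dx\le\lambda^{-n}$, and the same bounds for the deterministic $dG_0^{(n)}/dx$. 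Set $c_n=\Lambda_0^{-n}>0$. (b) Since $g_\epsilon(\omega)\to g_0$ in $\Ci^1$ uniformly in $\omega$ by \eqref{convinC1} (equivalently \eqref{eq:assumptionCrclose}), the inverse branches converge likewise (the inverse function theorem applies uniformly thanks to \eqref{emin}), and hence for each fixed $n$
\[
\esssup_\omega\sup_x\Bigl\lvert\tfrac{dG_\epsilon^{(n)}(\omega,x)}{dx}-\tfrac{dG_0^{(n)}(x)}{dx}\Bigr\rvert\longrightarrow 0,\qquad\epsilon\to0.
\]
(c) From the explicit series \eqref{eq:solutiontocohomological} for $S_\epsilon$: by \eqref{emin}, \eqref{eq:tauconstant} and \eqref{eq:ee1} its $j$-th summand is bounded by $C_\tau\lambda^{-j}$ uniformly in $\epsilon$ and $\omega$, and since $\theta$ is $\mathbb{P}$-preserving each summand converges uniformly in $x$ and $\omega$ to the corresponding summand of $S_0$; an $\epsilon/3$ argument (truncate the series, let $\epsilon\to0$ on the finite part) gives $\esssup_\omega\sup_x\lvert S_\epsilon(\omega,x)-S_0(x)\rvert\to0$ as $\epsilon\to0$. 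By $\theta$-invariance the same holds with $S_\epsilon(\theta^n\omega,\cdot)$ in place of $S_\epsilon(\omega,\cdot)$.

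Now fix $R>\varrho>0$ and $n\in\N^\ast$, choose $\delta\in(0,\varrho c_n/(R+1))$, and use (b) and (c) to pick $\epsilon(n)>0$ so small that for all $0\le\epsilon<\epsilon(n)$ and $\mathbb{P}$-a.e.\ $\omega$ both suprema above are $<\delta$. For such $\epsilon$ and $\omega$, any $(y,\eta)\in\R^2$, and any $\bar\alpha$ with $0\le\bar\alpha\le k^n-1$ lying in $\widetilde{\mathcal A}_R(\epsilon;\omega,n)(y,\eta)$, the triangle inequality together with (c), (b) and (a) gives
\[
\lvert\eta-S_0(y+\bar\alpha)\rvert\le R\,\tfrac{dG_\epsilon^{(n)}(\omega,y+\bar\alpha)}{dy}+\delta\le R\,\tfrac{dG_0^{(n)}(y+\bar\alpha)}{dy}+(R+1)\delta\le(R+\varrho)\,\tfrac{dG_0^{(n)}(y+\bar\alpha)}{dy},
\]
so $\bar\alpha\in\widetilde{\mathcal A}_{R+\varrho}(0;n)(y,\eta)$; passing to cardinalities and then to the supremum over $(y,\eta)$ yields $\widetilde{\mathcal N}_R(\epsilon;\omega,n)\le\widetilde{\mathcal N}_{R+\varrho}(0;n)$. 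For the left inequality, note that $\#\widetilde{\mathcal A}_{R-\varrho}(0;n)(y,\eta)$ takes values in $\{0,\dots,k^n\}$, so the supremum defining $\widetilde{\mathcal N}_{R-\varrho}(0;n)$ is attained at some $(y_0,\eta_0)$; running the same estimate in the opposite direction (replace $S_0$ and $dG_0^{(n)}/dy$ by $S_\epsilon(\theta^n\omega,\cdot)$ and $dG_\epsilon^{(n)}(\omega,\cdot)/dy$ at cost $\delta$, and use $dG_\epsilon^{(n)}/dy\ge c_n$) shows $\widetilde{\mathcal A}_{R-\varrho}(0;n)(y_0,\eta_0)\subseteq\widetilde{\mathcal A}_R(\epsilon;\omega,n)(y_0,\eta_0)$, whence $\widetilde{\mathcal N}_{R-\varrho}(0;n)\le\widetilde{\mathcal N}_R(\epsilon;\omega,n)$.

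The main obstacle is step (c): because $S_\epsilon(\omega,\cdot)$ is an infinite series running over the full backward orbit $\{\theta^{-j}\omega\}_{j\ge1}$, one cannot pass to the limit term by term and must genuinely invoke the $\epsilon$- and $\omega$-uniform geometric decay $C_\tau\lambda^{-j}$ (from \eqref{emin} and \eqref{eq:tauconstant}) together with the $\theta$-invariance of $\mathbb{P}$ in order to reduce to a finite sum before letting $\epsilon\to0$; the rest is elementary. It is worth emphasizing that $\epsilon(n)$ unavoidably depends on $n$ through the lower bound $c_n=\Lambda_0^{-n}$, which degenerates as $n\to\infty$ — precisely the limitation recorded in the first remark following Proposition \ref{wPC}.
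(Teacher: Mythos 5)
Your proposal is correct and follows essentially the same route as the paper's proof: your $c_n=\Lambda_0^{-n}$ plays the role of the paper's lower bound $L(n)$, your estimates (b) and (c) are exactly the paper's claims that $I_n(\epsilon)\to0$ and $J_n(\epsilon)\to0$ (the latter proved by the same truncate-the-series argument using the $\epsilon$- and $\omega$-uniform geometric decay and $\theta$-invariance), and the concluding triangle-inequality step with $\delta<\varrho c_n/(R+1)$ mirrors the paper's condition $RI_n(\epsilon)+J_n(\epsilon)<\varrho L(n)$. No gaps.
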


For the proof we need a technical lemma.

\begin{lem}\label{lem:technical1}
Let $0\le\epsilon<\epsilon_0$.
For every $n\ge1$
we have
\[
\lvert G_0^{(n)}(x)-G_\epsilon^{(n)}(\omega,x)\rvert
\le\sum_{j=1}^n\lambda_0^{-j}\sup_x{\lvert E_\epsilon(\theta^{j-1}\omega,x)-E_0(x)\rvert}.
\] 
Moreover,
\[
\esssup_\omega{\sup_x{\lvert G_0^{(n)}(x)-G_\epsilon^{(n)}(\omega,x)\rvert}}
\le(\lambda_0-1)^{-1}\esssup_\omega{\sup_x{\lvert E_\epsilon(\omega,x)-E_0(x)\rvert}},
\]
where the right-hand side tends to zero as $\epsilon\to0$.
\end{lem}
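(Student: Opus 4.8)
The plan is to establish the pointwise estimate by induction on $n$, exploiting that the unperturbed inverse branch $G_0=E_0^{-1}$ is a uniform contraction on the cover, and then to deduce the ``moreover'' part by summing a geometric series once we invoke that $\theta$ preserves $\mathbb{P}$. First I would record two elementary observations. Since $E_0$ is expanding, $E_0'(x)\ge\lambda_0>1$ for all $x$, so its inverse satisfies $\lvert G_0'(x)\rvert=1/E_0'(G_0(x))\le\lambda_0^{-1}$; thus $G_0$ is $\lambda_0^{-1}$-Lipschitz on $\R$. Second, the backward cocycle factorizes as $G_\epsilon^{(n)}(\omega,x)=G_\epsilon(\omega,G_\epsilon^{(n-1)}(\theta\omega,x))$ and $G_0^{(n)}(x)=G_0(G_0^{(n-1)}(x))$, and shifting the base point gives the reindexing $\sup_x\lvert E_\epsilon(\theta^{j-1}(\theta\omega),x)-E_0(x)\rvert=\sup_x\lvert E_\epsilon(\theta^{j}\omega,x)-E_0(x)\rvert$.

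The induction step is a one-term telescoping: write
\[
G_0^{(n)}(x)-G_\epsilon^{(n)}(\omega,x)=\bigl(G_0(G_0^{(n-1)}(x))-G_0(G_\epsilon^{(n-1)}(\theta\omega,x))\bigr)+\bigl(G_0-G_\epsilon(\omega)\bigr)(G_\epsilon^{(n-1)}(\theta\omega,x)).
\]
The first parenthesis is bounded in absolute value by $\lambda_0^{-1}\lvert G_0^{(n-1)}(x)-G_\epsilon^{(n-1)}(\theta\omega,x)\rvert$ by the Lipschitz bound, and the induction hypothesis (with $\omega$ replaced by $\theta\omega$) controls it. For the second parenthesis, put $w=G_\epsilon(\omega,z)$ with $z=G_\epsilon^{(n-1)}(\theta\omega,x)$, so that $E_\epsilon(\omega,w)=z=E_0(G_0(z))$; then $G_0(z)-w=G_0(E_\epsilon(\omega,w))-G_0(E_0(w))$, and the Lipschitz bound yields $\lvert G_0(z)-G_\epsilon(\omega,z)\rvert\le\lambda_0^{-1}\sup_x\lvert E_\epsilon(\omega,x)-E_0(x)\rvert$ uniformly in $z$, hence in $x$ (using that $E_\epsilon(\omega,\cdot)-E_0$ is $1$-periodic). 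Taking $\sup_x$, inserting the induction hypothesis, reindexing via the shift identity above, and combining the two bounds telescopes precisely to $\sum_{j=1}^{n}\lambda_0^{-j}\sup_x\lvert E_\epsilon(\theta^{j-1}\omega,x)-E_0(x)\rvert$; the base case $n=1$ is just the second-parenthesis estimate.

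For the ``moreover'' part I would apply $\esssup_\omega$ to the pointwise bound. Because $\theta$ is a $\mathbb{P}$-preserving bimeasurable bijection, $\esssup_\omega\sup_x\lvert E_\epsilon(\theta^{j-1}\omega,x)-E_0(x)\rvert=\esssup_\omega\sup_x\lvert E_\epsilon(\omega,x)-E_0(x)\rvert$ for every $j$; pulling this common quantity out of the sum and estimating $\sum_{j=1}^n\lambda_0^{-j}\le\sum_{j=1}^\infty\lambda_0^{-j}=(\lambda_0-1)^{-1}$ (convergent since $\lambda_0>1$) gives the stated inequality. Finally, $\esssup_\omega\sup_x\lvert E_\epsilon(\omega,x)-E_0(x)\rvert\to0$ as $\epsilon\to0$ by \eqref{convinC1} (equivalently by \eqref{eq:assumptionCrclose}).

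The only real work is bookkeeping: ensuring that each application of $G_0$ in the telescoping shifts the noise parameter by $\theta$ so that the indices $\theta^{j-1}\omega$ in the final sum line up correctly. There is no analytic subtlety beyond the uniform contraction of $G_0$ and a measurability check for $\omega\mapsto\sup_x\lvert E_\epsilon(\omega,x)-E_0(x)\rvert$ (which is routine, since $E_\epsilon(\omega,\cdot)$ is measurable in $\omega$ and continuous in $x$ on the compact $\Sone$), so I expect this step to be the main — and quite mild — obstacle.
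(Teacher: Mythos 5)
Your proof is correct and follows essentially the same route as the paper's: the same induction on $n$ with the same two-term decomposition $G_0\circ G_0^{(n-1)} - G_0\circ G_\epsilon^{(n-1)}(\theta\omega,\cdot)$ plus $(G_0-G_\epsilon(\omega))\circ G_\epsilon^{(n-1)}(\theta\omega,\cdot)$, the same $\lambda_0^{-1}$-Lipschitz (mean value) bound on $G_0$ for both the base case and the contraction step, and the same use of $\mathbb{P}$-invariance of $\theta$ plus the geometric series for the ``moreover'' part. No gaps.
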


\begin{proof}
We begin by proving the first claim.
When $n=1$ we need to show that
\begin{equation}\label{eq:inductionstep1}
\lvert E_0^{-1}(x)-E_\epsilon^{-1}(\omega,x)\rvert
\le\lambda_0^{-1}\sup_x{\lvert E_\epsilon(\omega,x)-E_0(x)\rvert}.
\end{equation}
Write $y=E_\epsilon^{-1}(\omega,x)$
and apply Taylor's formula to $E_0^{-1}(E_\epsilon(\omega,y))$ about the point $E_0(y)$.
Since $\min_x{E_0'(x)}>\lambda_0$, the estimate follows. We proceed by induction,
so assume that the result holds for some $n\ge1$. The triangle inequality gives
\begin{align}
\lvert G_0^{(n+1)}(x)-G_\epsilon^{(n+1)}(\omega,x)\rvert
& \le \lvert E_0^{-1}\circ G_0^{(n)}(x)-E_0^{-1}\circ G_\epsilon^{(n)}(\theta\omega)(x)\rvert
\\ & \quad+\lvert E_0^{-1}\circ G_\epsilon^{(n)}(\theta\omega)(x)
 -E_\epsilon^{-1}(\omega)\circ G_\epsilon^{(n)}(\theta\omega)(x)\rvert.
\end{align}
By \eqref{eq:inductionstep1}, the second term on the right is bounded by
$\lambda_0^{-1}\lVert E_\epsilon(\omega)-E_0\rVert_{L^\infty}$.
Using Taylor's formula to estimate the first term on the right we get
\begin{align}
\lvert E_0^{-1}\circ G_0^{(n)}(x)-E_0^{-1}\circ G_\epsilon^{(n)}(\theta\omega)(x)\rvert
&\le\lambda_0^{-1}\lvert G_0^{(n)}(x)-G_\epsilon^{(n)}(\theta\omega,x)\rvert
\\ &\le\sum_{j=2}^{n+1}\lambda_0^{-j}\sup_x{\lvert E_\epsilon(\theta^{j-1}\omega,x)-E_0(x)\rvert},
\end{align}
where we invoked the induction hypothesis to obtain the last inequality.
Combining these estimates proves the first claim.
Since $\theta$ is $\mathbb{P}$-preserving, 
the second claim now follows by taking the essential supremum in $\omega$
and noting that $\sum_{j=1}^n\lambda_0^{-j}<(\lambda_0-1)^{-1}$
for all $n$ since $1<\lambda_0$. The proof is completed by invoking \eqref{eq:assumptionCrclose}.
\end{proof}

\begin{proof}[Proof of Proposition \ref{prop:differentnoiselevels}]
We begin with the second inequality.
Suppose that $\bar\alpha\in\widetilde{\mathcal{A}}_R(\epsilon;\omega,n)(y,\eta)$. By
Definition \ref{def:tildeAandbarA} and the triangle inequality, we then have
\[
\lvert\eta-S_0(y+\bar\alpha)\rvert\le R\cdot\frac{dG_0^{(n)}(y+\bar\alpha)}{dy}
+RI_n(\epsilon)+J_n(\epsilon),
\]
where $I_n(\epsilon)\equiv I_n(\epsilon;\omega,y,\bar\alpha)$
and $J_n(\epsilon)\equiv J_n(\epsilon;\omega,y,\bar\alpha)$ are given by
\begin{align}
I_n(\epsilon)&=\bigg\lvert \frac{dG_\epsilon^{(n)}(\omega,y+\bar\alpha)}{dy}
-\frac{dG_0^{(n)}(y+\bar\alpha)}{dy}\bigg\rvert,\\
J_n(\epsilon)&=\lvert S_\epsilon(\theta^n\omega,y+\bar\alpha)-S_0(y+\bar\alpha)\rvert.
\end{align}
Introduce a positive number $L(n)$ (independent of $\epsilon$ and $\omega$) defined by
\[
L(n)=\inf_{0\leq \epsilon <\epsilon_0}{
\essinf_{\omega}{\inf_{x}{ \frac{dG_\epsilon^{(n)}(\omega , x)}{dx}}}}. 
\]
It is clear that $L(n)\to0$ as $n\to\infty$.
To see that $L(n)>0$, note that since $\theta$ is $\mathbb{P}$-preserving, we have
for $\mathbb{P}$-almost every $\omega$ that
\begin{align}\label{eq:controlofG(n)}
\frac{dG_\epsilon^{(n)}(\omega , x)}{dx}&>\prod_{j=0}^{n-1}\frac{1}{E_0'(G_\epsilon^{(n-j)}(\theta^j\omega,x))
+\sup_x{\lvert E_\epsilon'(\theta^j\omega,x)-E_0'(x) \rvert}}\\
&>(\sup_x{\lvert E_0'(x)\rvert}+1)^{-n}
\end{align}
by virtue of \eqref{eq:assumptionC1close}.

Assume for the moment that $I_n(\epsilon)$ and $J_n(\epsilon)$ tend to zero as $\epsilon\to0$,
uniformly with respect to $y$ and $\bar\alpha$ and $\mathbb{P}$-almost every $\omega$.
Then there is an $\epsilon(n)$, depending on $n$ and also on $R$ and $\varrho$, such that
$RI_n(\epsilon)+J_n(\epsilon)<\varrho L(n)$ for all $0\le\epsilon<\epsilon(n)$,
$\mathbb{P}$-almost surely.
By the definition of $L(n)$, this $\mathbb{P}$-almost surely gives
\[
\lvert\eta-S_0(y+\bar\alpha)\rvert\le R\cdot\frac{dG_0^{(n)}(y+\bar\alpha)}{dy}
+\varrho L(n)<(R+\varrho)\frac{dG_0^{(n)}(y+\bar\alpha)}{dy},
\]
that is, $\bar\alpha\in\widetilde{\mathcal{A}}_{R+\varrho}(0;n)(y,\eta)$.
Hence $\#\widetilde{\mathcal{A}}_{R}(\epsilon;\omega,n)(y,\eta)
\le\#\widetilde{\mathcal{A}}_{R+\varrho}(0;n)(y,\eta)$, $\mathbb{P}$-almost surely, and taking the supremum over
$(y,\eta)\in\R^2$ shows that we $\mathbb{P}$-almost surely have 
$\widetilde{\mathcal{N}}_{R}(\epsilon;\omega,n)\leq\widetilde{\mathcal{N}}_{R+\varrho}(0;n)$.

To prove the first inequality, suppose that $\bar\alpha\in\widetilde{\mathcal{A}}_{R-\varrho}(0;n)(y,\eta)$. Then
\[
\lvert \eta-S_\epsilon(\theta^n\omega,y+\bar\alpha)\rvert
 \le (R-\varrho)\frac{dG_\epsilon^{(n)}(\omega,y+\bar\alpha)}{dy}
+RI_n(\epsilon)+J_n(\epsilon),
\]
where we also used the fact that $R-\varrho\le R$ to estimate the coefficient of $I_n$. 
Hence, a repetition of the first part of the proof
shows that $\bar\alpha\in\widetilde{\mathcal{A}}_R(\epsilon;\omega,n)(y,\eta)$,
$\mathbb{P}$-almost surely, and the desired inequality follows.

We now prove that $I_n(\epsilon)$ and $J_n(\epsilon)$ tend to zero as $\epsilon\to0$,
uniformly with respect to $y$ and $\bar\alpha$ and $\mathbb{P}$-almost every $\omega$.
Since 
\[
I_n(\epsilon)\le\sup_x{
\bigg\lvert \frac{dG_\epsilon^{(n)}(\omega,x)}{dx}
-\frac{dG_0^{(n)}(x)}{dx}\bigg\rvert}
\]
the claim for $I_n(\epsilon)$ follows by \eqref{eq:assumptionCrclose}
and Lemma \ref{lem:technical1}, if $n\in\N^\ast$ is fixed. For future purposes we also note that
if $0\le\epsilon<\epsilon_0$ then
\begin{equation}\label{eq:controlastimetoinfinity}
I_n(\epsilon)\le \lambda^{-n}+\lambda_0^{-n}\le 2\lambda^{-n}, \quad
\text{$\mathbb{P}$-almost surely}.
\end{equation}

For $J_n(\epsilon)$, note that
$J_n(\epsilon)\le \sup_x{\lvert S_0(x)-S_\epsilon(\theta^n\omega,x)\rvert}$. Recalling 
\eqref{eq:solutiontocohomological}, a straightforward estimation then gives
\begin{align}
J_n(\epsilon) &\le 
  \sup_x\sum_{j=1}^\infty \bigg\{ 
  \lvert\tau_0'(G_0^{(j)}(x))\rvert
  \bigg\lvert\frac{dG_0^{(j)}(x)}{dx}-\frac{dG_\epsilon^{(j)}(\theta^{n-j}\omega,x)}{dx} \bigg\rvert\\
  &\qquad\qquad\quad+
  \bigg\lvert\frac{dG_\epsilon^{(j)}(\theta^{n-j}\omega,x)}{dx}\bigg\rvert
  \bigg( \lvert\tau_0'(G_0^{(j)}(x))-\tau_0'(G_\epsilon^{(j)}(\theta^{n-j}\omega,x))\rvert\\
  &\qquad\qquad\quad+\lvert\tau_0'(G_\epsilon^{(j)}(\theta^{n-j}\omega,x))
  -\tau_\epsilon'(\theta^{n-j}\omega,G_\epsilon^{(j)}(\theta^{n-j}\omega,x))\rvert \bigg)
  \bigg\},
\end{align}
which implies that
\begin{align}
J_n(\epsilon)&\le
 \sum_{j=1}^\infty  \bigg(
\lVert\tau_0'\rVert_{L^\infty} \sup_x
\bigg\lvert\frac{dG_0^{(j)}(x)}{dx}-\frac{dG_\epsilon^{(j)}(\theta^{n-j}\omega,x)}{dx} \bigg\rvert\bigg)\\
&\quad+\sum_{j=1}^\infty 
\lambda^{-j} 
\bigg( \lVert\tau_0''\rVert_{L^\infty}\sup_x{\lvert G_0^{(j)}(x)-G_\epsilon^{(j)}(\theta^{n-j}\omega,x)\rvert}\\
&\qquad\qquad\qquad\qquad+\sup_x{\lvert\tau_0'(x)-\tau_\epsilon'(\theta^{n-j}\omega,x)\rvert} \bigg).
\end{align}
Since $\theta$ is $\mathbb{P}$-preserving, the first series on the right is $\mathbb{P}$-almost surely bounded by
\begin{equation}\label{eq:smallordostep1}
\lVert\tau_0'\rVert_{L^\infty}\bigg(\sum_{j=1}^{N-1}  \esssup_\omega\sup_xI_j(\epsilon)
+2\sum_{j=N}^\infty  \lambda^{-j}\bigg)
\end{equation}
according to \eqref{eq:controlastimetoinfinity},
where $\sum_{j=N}^\infty  \lambda^{-j}\to0$ as $N\to\infty$ since $\lambda>1$. For
$1\le j<N$ we can argue as above and conclude that $\esssup_\omega\sup_xI_j(\epsilon)=o_j(1)$
as $\epsilon\to0$. Hence, \eqref{eq:smallordostep1} is $o(1)$ as $\epsilon\to0$.
If we use Lemma \ref{lem:technical1} and \eqref{eq:assumptionCrclose},
respectively, 
to estimate the two types of terms making up the second series, similar arguments show that
also the second series is $\mathbb{P}$-almost surely $o(1)$ as $\epsilon\to0$.
Hence, $J_n(\epsilon)$ tends to zero as $\epsilon\to0$,
uniformly with respect to $y$ and $\bar\alpha$ and $\mathbb{P}$-almost every $\omega$ (the convergence is even uniform with
respect to time $n$). This completes the proof.
\end{proof}

\begin{proof}[Proof of Proposition \ref{wPC}]
Recall that $C_1=C_\tau\sum_{j=1}^\infty\lambda^{-j}$,
where $C_\tau$ is given by \eqref{eq:tauconstant}.
By \eqref{eq:choiceofkappagives} we then have $R_\kappa-C_1>0$. For each $n\in\N^\ast$
we can therefore use
Propositions \ref{prop:PCequiv1} and \ref{prop:differentnoiselevels}
with $\varrho=(R_\kappa-C_1)/2$ to find an $\epsilon(n)>0$ such that for all $0\le\epsilon<\epsilon(n)$
and $\mathbb{P}$-almost every $\omega$ we have
\begin{equation}\label{eq:PCstep1}
\widetilde{\mathcal N}_{R_\kappa-C_1-\varrho}(0;n)\le
\mathcal N_{R_\kappa}(\epsilon;\omega,n)\le
\widetilde{\mathcal N}_{R_\kappa+C_1+\varrho}(0;n),
\end{equation}
where $R_\kappa-C_1-\varrho=(R_\kappa-C_1)/2>0$.

Let $\delta>0$. By \eqref{eq:newdefofpartiallycaptive}
we can find an $n_0$ such that $\widetilde{\mathcal N}_{R_\kappa-C_1-\varrho}(0;n)\ge 1$
and $n^{-1}\log{\widetilde{\mathcal N}_{R_\kappa+C_1+\varrho}(0;n)}<\delta$ for all $n\ge n_0$.
In view of \eqref{eq:PCstep1} it follows that for such $n$ we have
\[
0\le\frac{1}{n}\log{\mathcal N_{R_\kappa}(\epsilon;\omega,n)}<\delta
\]
for all $0\le\epsilon<\epsilon(n)$
and $\mathbb{P}$-almost every $\omega$. This completes the proof.
\end{proof}

\begin{rmk}
Using the notation from the proof of Proposition \ref{prop:differentnoiselevels},
we make the following observation.
With the help of Lemma \ref{lem:technical1} it is not hard to show that the quantity $I_n(\epsilon)$
is in fact $o(1)$ as $\epsilon\to0$, independent of time $n\in\N^\ast$.
As shown, the same is true for $J_n(\epsilon)$. However, this does not by itself imply that
the number $\epsilon(n)$ can be chosen independently of $n$, since the number $L(n)$ tends to zero as $n\to\infty$.
\end{rmk}

\subsection{Relation to the condition of transversality}\label{subsection:transversality}

Here we compare
partial captivity to the {\it transversality condition} of Tsujii~\cite{Tsujii},
introduced to study mixing of suspension semi-flows and
shown to be generic there for linear expanding maps. The transversality condition was 
recently formulated for \eqref{eq:unperturbedsystem} by Butterley and Eslami~\cites{ButterleyEslami,Eslami}
under weak regularity assumptions.
After establishing the connection we discuss in what sense partial captivity is generic
when the unperturbed expanding map $E_0$ is linear, i.e.~$E_0(x)=kx$ mod 1.
Since the general case
will be studied
in an upcoming joint paper with Masato Tsujii we only sketch the details.

We first recall the definitions involved.
Given a ceiling function $\tau_0$, define
\begin{equation}\label{eq:varthetatau}
\vartheta_\tau=\frac{1}{2\pi}\frac{\sup_x{\lvert\tau_0'(x)\rvert}}{\lambda_0-1}.
\end{equation}
(The presence of the factor $(2\pi)^{-1}$ is a result of the normalization in \eqref{eq:unperturbedsystem}.)
Introduce also, for each $R>0$, the number
\begin{equation}\label{eq:varthetaK}
\vartheta_R=\frac{R}{\lambda_0-1},
\end{equation}
and the corresponding cone $\mathscr K_R=\{(\xi,\eta)\in\R^2:\lvert\eta\rvert\le\vartheta_R\lvert\xi\rvert\}$.
It is straightforward to check that this cone is invariant under the Jacobian
\[
D f_0(z)=\left( \begin{array}{cc} E_0 '(x) & 0\\ (2\pi)^{-1}\tau'_0(x) & 1\end{array} \right),
\quad z=(x,s)\in\T^2,
\]
as long as $\vartheta_R\ge\vartheta_\tau$, 
where by invariant we mean $D f_0(z)\mathscr K_R\subset \mathscr K_R$.
Fix $z=(x,s)\in\T^2$ and $n\ge1$, and let $\zeta$ and $w$ be two distinct points in the pre-image of $z$
under $f_{0}^{(n)}$, that is, $\zeta\ne w$ and $f_{0}^{(n)}(\zeta)=z=f_{0}^{(n)}(w)$.
In this subsection, we will write e.g.~$\zeta\in f_0^{-n}(z)$ in this case.
We also write $f_0^n$ instead of $f_0^{(n)}=f_0\circ\cdots\circ f_0$,
and $G_0^n$ instead of $G_0^{(n)}$ for iterates of the inverse of the map $E_0$ lifted on $\R$.
We say that $\zeta$ and $w$ are {\it transversal}, denoted $\zeta\pitchfork w$,
when $Df_0^n(\zeta) \mathscr K_R\cap Df_0^n(w)\mathscr K_R=\{0\}$.
As a way to measure transversality, define
\[
\varphi(n)\equiv\varphi_R(n)=\sup_z{\sup_{\zeta\in f_0^{-n}(z)}{\sum_{w\not\pitchfork \zeta}
\frac{1}{\det Df_0^n(w)}}}
\]
with the sum taken over those $w\in f_0^{-n}(z)$ such that 
$Df_0^n(\zeta) \mathscr K_R\cap Df_0^n(w)\mathscr K_R\neq\{0\}$.
When $E_0$ is linear, this reduces to
\begin{equation}\label{eq:defofvarphi(n)}
\varphi(n)=k^{-n}\sup_z{\sup_{\zeta\in f_0^{-n}(z)}{\#\{ w\in f_0^{-n}(z) : w\not\pitchfork\zeta\}}}.
\end{equation}
We say that $f_0$ satisfies the {\it transversality condition} if
\begin{equation}\label{def:transversality}
\limsup_{n\to\infty}\varphi(n)^\frac{1}{n}<1.
\end{equation}
The condition is violated precisely when $\tau_0$ is cohomologous to a constant, 
see Butterley and Eslami~\cite{ButterleyEslami}*{Proposition 1} (compare with Tsujii~\cite{Tsujii}*{Theorem 1.4}).
In view of the remark following Definition \ref{def:randompartiallycaptive}
it follows that $f_0$ satisfies the transversality condition if $f_0$ is partially captive.
Conversely, we make the following observation.

\begin{lem}\label{lem:transversal}
When $E_0$ is linear we have
\begin{equation}\label{eq:conversecomparisonprime}
\mathcal{N}_{R_\kappa}(0;n)\le k^n\varphi_R(n)
\end{equation}
if $R\ge \pi^{-1}(\sup_{x}{\lvert\tau_0'(x)\rvert}+\frac{1}{2}(k-1)R_\kappa)$. 
\end{lem}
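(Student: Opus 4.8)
The plan is to relate the count $\mathcal N_{R_\kappa}(0;n)$ of trapped random (here: deterministic) trajectories on the cotangent bundle to the transversality count $\varphi_R(n)$ by understanding when two preimages $\zeta,w\in f_0^{-n}(z)$ fail to be transversal in terms of the cotangent dynamics $F(0)=F_{\epsilon=0}$. Recall from Proposition \ref{prop:PCequiv1} (with $\epsilon=0$) and Definition \ref{def:tildeAandbarA} that $\mathcal N_{R_\kappa}(0;n)$ is comparable to the maximal number of symbol sequences $\alpha\in\{0,\ldots,k-1\}^n$ for which the corresponding trajectory $F_\alpha^{(n)}(0,y,\eta)$ stays inside $\mathcal Z=\Sone\times[-R_\kappa,R_\kappa]$; equivalently, by \eqref{eq:setequality} and \eqref{eq:defofalphatilde}, the number of translates $y+\bar\alpha$ with $\lvert\eta-S_0(y+\bar\alpha)\rvert\le R_\kappa\,dG_0^{(n)}(y+\bar\alpha)/dy$. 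Since $E_0$ is linear, $dG_0^{(n)}/dy=k^{-n}$ is constant, which is what makes the reduction to the clean formula \eqref{eq:defofvarphi(n)} possible. Thus the first step is to rewrite $\mathcal N_{R_\kappa}(0;n)$ using the lifted formulation of Section \ref{app:wPC}.

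Second, I would translate the nontransversality condition $Df_0^n(\zeta)\mathscr K_R\cap Df_0^n(w)\mathscr K_R\ne\{0\}$ into a statement about the stable sections $S_0$. The cone $\mathscr K_R$ is pushed forward by $Df_0^n$ to a thin cone around the contracting direction of $f_0^n$ at each preimage; on the cotangent side this contracting direction corresponds (after the canonical identification used in \eqref{lifted} and \eqref{eq:invariantgraphoftildeF}) to the value $S_0$ of the solution of the cohomological equation \eqref{eq:cohomological}, and the aperture of the image cone is governed by $\vartheta_R$ through \eqref{eq:varthetaK}. Concretely, for two preimages indexed by translates $y+\bar\alpha$ and $y+\bar\beta$ of a common base point, nontransversality should be equivalent (up to the linear change of variables between tangent cones and the dual picture, which produces the factor $\pi^{-1}$ and the $\tfrac12(k-1)R_\kappa$ correction coming from the spread of the $k$ inverse branches and the contribution of $\tau_0$ via \eqref{eq:varthetatau}) to the two points $y+\bar\alpha$ and $y+\bar\beta$ lying in a common set $\widetilde{\mathcal A}_{R_\kappa}(0;n)(y,\eta)$ for a suitable $\eta$. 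The hypothesis $R\ge\pi^{-1}(\sup_x\lvert\tau_0'(x)\rvert+\tfrac12(k-1)R_\kappa)$ is precisely the threshold ensuring the image cones $Df_0^n(w)\mathscr K_R$ are wide enough that any two preimages counted as ``close'' in the $\widetilde{\mathcal A}_{R_\kappa}$ sense are indeed nontransversal; this is the step I expect to be the main obstacle, since it requires carefully matching the geometric normalizations between the two descriptions of the contracting cone.

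Third, with that equivalence in hand, the counting is immediate: fixing the base point $z$ and one preimage $\zeta$ (corresponding to one translate $y+\bar\alpha\in\widetilde{\mathcal A}_{R_\kappa}(0;n)(y,\eta)$), the number of $w\in f_0^{-n}(z)$ with $w\not\pitchfork\zeta$ is at least the number of other translates in $\widetilde{\mathcal A}_{R_\kappa}(0;n)(y,\eta)$, so taking the supremum over $z$, $\zeta$ and over $(y,\eta)$ gives
\[
\sup_z\sup_{\zeta\in f_0^{-n}(z)}\#\{w\in f_0^{-n}(z):w\not\pitchfork\zeta\}\ \ge\ \widetilde{\mathcal N}_{R_\kappa}(0;n)\ \ge\ \mathcal N_{R_\kappa}(0;n),
\]
where the last inequality is the lower bound in Proposition \ref{prop:PCequiv1} (note $C_1\ge0$, so $\widetilde{\mathcal N}_{R_\kappa-C_1}$ can only be larger; more directly one compares $\widetilde{\mathcal N}_{R_\kappa}$ with $\mathcal N_{R_\kappa}$). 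Combining this with the linear-case identity \eqref{eq:defofvarphi(n)}, namely $k^n\varphi_R(n)=\sup_z\sup_{\zeta}\#\{w\in f_0^{-n}(z):w\not\pitchfork\zeta\}$, yields $\mathcal N_{R_\kappa}(0;n)\le k^n\varphi_R(n)$, which is \eqref{eq:conversecomparisonprime}. I would keep the verification of the cone-matching inequality at the level of the lifted maps \eqref{lifted}, using that $E_0'\equiv k$ and the uniform bound $\esssup\lvert S_0\rvert\le C_1$ from \eqref{eq:propertiesofC1}, so that all constants can be tracked explicitly and shown to be absorbed by the stated lower bound on $R$.
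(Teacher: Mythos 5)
Your overall strategy is the same as the paper's: reduce $\mathcal N_{R_\kappa}(0;n)$ to the lifted count $\widetilde{\mathcal N}$ of Definition \ref{def:tildeAandbarA} via Proposition \ref{prop:PCequiv1}, show that two translates lying in a common set $\widetilde{\mathcal A}(0;n)(y,\eta)$ produce two non-transversal preimages, and then count. But there are two genuine problems. First, the comparison step is used in the wrong direction. You claim $\mathcal N_{R_\kappa}(0;n)\le\widetilde{\mathcal N}_{R_\kappa}(0;n)$ and justify it by noting $C_1\ge 0$, "so $\widetilde{\mathcal N}_{R_\kappa-C_1}$ can only be larger." The monotonicity is the opposite: $\widetilde{\mathcal A}_{R-\varrho}\subset\widetilde{\mathcal A}_{R}$, hence $\widetilde{\mathcal N}_{R_\kappa-C_1}\le\widetilde{\mathcal N}_{R_\kappa}\le\widetilde{\mathcal N}_{R_\kappa+C_1}$, and the only upper bound Proposition \ref{prop:PCequiv1} provides is $\mathcal N_{R_\kappa}(0;n)\le\widetilde{\mathcal N}_{R_\kappa+C_1}(0;n)$ (a trapped trajectory satisfies \eqref{eq:defofalphatilde} only after absorbing the term $\lvert S_0\rvert\le C_1$ by the triangle inequality). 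One must therefore run the transversality argument at the enlarged radius $\widetilde R=R_\kappa+C_1=R_\kappa+(k-1)^{-1}\sup_x\lvert\tau_0'(x)\rvert$; this enlargement is precisely why the stated threshold on $R$ contains both $\sup_x\lvert\tau_0'(x)\rvert$ and $\tfrac12(k-1)R_\kappa$. Working at radius $R_\kappa$ the chain of inequalities simply breaks.

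Second, the heart of the lemma — that $\bar\alpha_1,\bar\alpha_2\in\widetilde{\mathcal A}_{\widetilde R}(0;n)(y,\eta_0)$ forces $\zeta_1\not\pitchfork\zeta_2$ under the stated lower bound on $R$ — is only described ("should be equivalent up to the linear change of variables\dots which produces the factor $\pi^{-1}$ and the $\tfrac12(k-1)R_\kappa$ correction"), and you yourself flag it as the main obstacle. Since the exact constant in the hypothesis is the entire content of the lemma, this cannot be left at the level of a plausibility argument. The paper's proof makes it concrete by exhibiting an explicit common vector in the two image cones: it sets $\xi_j=-2\pi k^{-n}$ and $\eta_j=\eta_0+k^{-n}\,d\tau_0^{(n)}(\pi_x(\zeta_j))/dx$, verifies $Df_0^n(\zeta_j)(\xi_j,\eta_j)^t=(-2\pi,\eta_0)^t$, recognizes $k^{-n}d\tau_0^{(n)}/dx$ as a partial sum of $-S_0$, and bounds $\lvert\eta_j\rvert\le k^{-n}(\widetilde R+(k-1)^{-1}\sup_x\lvert\tau_0'(x)\rvert)$, which lies in $\mathscr K_R$ (i.e.\ $\lvert\eta_j\rvert\le 2\pi k^{-n}\vartheta_R$ with $\vartheta_R$ as in \eqref{eq:varthetaK}) exactly when $R$ satisfies the stated inequality. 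You would need to supply this computation, at the corrected radius $\widetilde R$, for the proof to stand.
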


\begin{proof}
Let $\widetilde R=R_\kappa+(k-1)^{-1}\sup_{x}{\lvert\tau_0'(x)\rvert}$.
An application of Proposition \ref{prop:PCequiv1} with $\epsilon=0$ shows that the lemma follows
if we prove that 
\begin{equation}\label{eq:conversecomparison}
\widetilde{\mathcal{N}}_{\widetilde R}(0;n)\le k^n\varphi_R(n)
\end{equation}
when
\begin{equation}\label{eq:conditiononR}
R\ge (2\pi )^{-1}(\sup_{x}{\lvert\tau_0'(x)\rvert}+(k-1)\widetilde R).
\end{equation}
Fix $z=(x,s)\in\T^2$ and $n\ge1$, and find $\eta_0\in\R$
such that
\[
\#\widetilde{\mathcal{A}}_{\widetilde R}(0;n)(x,\eta_0)\ge \#\widetilde{\mathcal{A}}_{\widetilde R}(0;n)(x,\eta),
\quad\eta\in\R.
\]
For any $0\le\bar\alpha\le k^n-1$, the choice $\eta=S(x+\bar\alpha)$ gives $\bar\alpha\in \widetilde{\mathcal{A}}_{\widetilde R}(0;n)(x,\eta)$,
so we have $\bar\alpha_1\in\widetilde{\mathcal{A}}_{\widetilde R}(0;n)(x,\eta_0)$ for some $\bar\alpha_1$.
We now show that if there is an $\bar\alpha_2\ne\bar\alpha_1$ with $\bar\alpha_2\in\widetilde{\mathcal{A}}_{\widetilde R}(0;n)(x,\eta_0)$
then there are two distinct points $\zeta_1$ 
and $\zeta_2$ in $f_0^{-n}(z)$ with $\zeta_1\not\pitchfork\zeta_2$.
To this end, let $\pi_x:\T^2\to\Sone$ denote the canonical projection
and define $\zeta_j\in f_0^{-n}(z)$ by $\pi_x(\zeta_j)=G_0^n(x+\bar\alpha_j)$ for $j=1,2$.
(A point $(y,s')\in f_{0}^{-n}(z)$
is uniquely determined by $y$ and $z$ and vice versa, see the discussion preceding \eqref{adjointrestrictionoperator}.)
Set $\xi_j=-2\pi k^{-n}$ and
\[
\eta_j=\eta_0-\frac{\xi_j}{2\pi}\frac{d\tau_0^{(n)}(\pi_x(\zeta_j))}{dx}
=\eta_0+\frac{1}{k^n}\frac{d\tau_0^{(n)}(\pi_x(\zeta_j))}{dx},
\]
where $\tau_0^{(n)}$ is defined in accordance with \eqref{notation:tau^{(n)}}.
Then $Df_0^n(\zeta_j)(\xi_j,\eta_j)^t=(-2\pi,\eta_0)^t$ and we claim that $(\xi_j,\eta_j)\in\mathscr K_R$
if $R$ satisfies \eqref{eq:conditiononR}. 
Indeed, it is easy to see that
\[
\frac{1}{k^n}\frac{d\tau_0^{(n)}(\pi_x(\zeta_j))}{dx}
=\sum_{\ell=1}^n \tau_0'(G_0^\ell(x+\alpha_j))k^{-\ell}
\]
which is the partial sum of the first $n$
terms of $-S_0(x+\bar\alpha_j)$ when $E_0$ is linear, see \eqref{eq:solutiontocohomological}.
Hence, Definition \ref{def:tildeAandbarA} and a simple calculation gives
\begin{align}
\lvert\eta_j\rvert&\le\lvert\eta_0-S(x+\bar\alpha_j)\rvert
+\lvert S(x+\bar\alpha_j)+k^{-n}d\tau_0^{(n)}(\pi_x(\zeta_j))/dx\rvert
\\ &\le k^{-n}\bigg(\widetilde R+\frac{\sup_x{\lvert\tau_0'(x)\rvert}}{k-1}\bigg).
\end{align}
Now, since $\xi_j=-2\pi k^{-n}$ we have
$(\xi_j,\eta_j)\in\mathscr K_R$ if $\lvert\eta_j\rvert\le2\pi k^{-n}\vartheta_R$,
where $\vartheta_R$ is defined by \eqref{eq:varthetaK}.
This is easily seen to hold if $R$ satisfies 
\eqref{eq:conditiononR}. 
It follows that $Df_0^n(\zeta_1)\mathscr K_R\cap Df_0^n(\zeta_2)\mathscr K_R\ne\{0\}$,
i.e.~$\zeta_1\not\pitchfork\zeta_2$. Hence, by \eqref{eq:defofvarphi(n)} we have
\[
\#\widetilde{\mathcal{A}}_{\widetilde R}(0;n)(x,\eta_0)
\le \#\{ \zeta_2\in f_0^{-n}(z) : \zeta_1\not\pitchfork\zeta_2\}\le k^n\varphi_R(n)
\]
which yields \eqref{eq:conversecomparison}. This completes the proof.
\end{proof}

\begin{thm}\label{thm:generic}
Let $3\le m\le\infty$, and let $E_0$ be fixed and linear.
Let $\mathscr V_{(m)}\subset\Ci^m(\Sone)$ be the subset consisting of the $\tau_0$'s for which \eqref{eq:unperturbedsystem}
is not partially captive. Then $\mathscr V_{(m)}$ is a meager set with empty interior.
\end{thm}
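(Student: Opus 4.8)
The plan is to show that $\mathscr V_{(m)}$ is contained in a \emph{proper closed linear subspace} of $\Ci^m(\Sone)$. Since $\Ci^m(\Sone)$ is a Banach space when $m<\infty$ and a Fréchet space when $m=\infty$, it is in either case a Baire space, so any proper closed subspace is nowhere dense, hence meager and of empty interior; the same then holds a fortiori for $\mathscr V_{(m)}$, which is exactly the assertion of Theorem~\ref{thm:generic}. Writing $\mathscr C_{(m)}=\{\phi\circ E_0-\phi+c:\phi\in\Ci^m(\Sone),\ c\in\R\}$ for the (linear) space of functions cohomologous to a constant, I would split the argument into $(\mathrm a)$ $\mathscr V_{(m)}\subseteq\mathscr C_{(m)}$, and $(\mathrm b)$ $\mathscr C_{(m)}$ is contained in a proper closed hyperplane of $\Ci^m(\Sone)$. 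The hypothesis $m\ge 3$ is used only to guarantee that the results of Butterley--Eslami~\cite{ButterleyEslami} and Lemma~\ref{lem:transversal} apply.

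For part $(\mathrm a)$ I would argue by contraposition. Suppose $\tau_0$ is not cohomologous to a constant. Then by Butterley and Eslami~\cite{ButterleyEslami}*{Proposition 1} (compare Tsujii~\cite{Tsujii}*{Theorem 1.4}) the map $f_0$ satisfies the transversality condition \eqref{def:transversality}. Since $E_0$ is linear, formula \eqref{eq:defofvarphi(n)} identifies $k^n\varphi_R(n)$ with the maximal number of non-transversal preimages $\sup_z\sup_{\zeta\in f_0^{-n}(z)}\#\{w\in f_0^{-n}(z):w\not\pitchfork\zeta\}$. The point I would establish here — only sketching the details, as the general non-linear situation is the subject of the forthcoming joint paper with M.~Tsujii — is that for linear $E_0$ the transversality condition forces this quantity to grow \emph{subexponentially} in $n$, rather than merely at a rate strictly below $k^n$: one checks that it is submultiplicative, so that its exponential growth rate exists, and that (using the linear structure) this rate obeys a zero--one law, equal to $k$ precisely when $\tau_0$ is cohomologous to a constant and equal to $1$ otherwise. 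Granting this, Lemma~\ref{lem:transversal} gives, for every $\delta>0$, the bound $\mathcal N_{R_\kappa}(0;n)\le k^n\varphi_R(n)=o(e^{\delta n})$, so that \eqref{eq:randompartiallycaptive} holds for $f_0$, i.e.\ $f_0$ is partially captive and $\tau_0\notin\mathscr V_{(m)}$. (Combined with the remark following Definition~\ref{def:randompartiallycaptive} this in fact yields $\mathscr V_{(m)}=\mathscr C_{(m)}$.)

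For part $(\mathrm b)$ I would exhibit one explicit obstruction. By the Livšic constraint, $\psi\in\mathscr C_{(m)}$ if and only if the normalized Birkhoff sums $n^{-1}\sum_{i=0}^{n-1}\psi(E_0^i x)$ coincide for all $E_0$-periodic points $x$ (of every period $n$). Choosing two $E_0$-periodic orbits on which the normalized Birkhoff sums of $x\mapsto\cos 2\pi x$ differ — the distinct fixed points $0$ and $1/(k-1)$ when $k\ge 3$, or the fixed point $0$ and the period-two orbit $\{1/3,2/3\}$ when $k=2$ — the difference $\ell_0$ of the two corresponding (normalized Birkhoff-sum) evaluation functionals is a continuous linear functional on $\Ci^m(\Sone)$ that vanishes identically on $\mathscr C_{(m)}$ but satisfies $\ell_0(\cos 2\pi x)\neq 0$. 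Hence $\mathscr C_{(m)}\subseteq\ker\ell_0$, a proper closed hyperplane, which proves $(\mathrm b)$. (An alternative route to $(\mathrm b)$, specific to $E_0(x)=kx$, uses the Fourier characterization that $\psi\in\mathscr C_{(m)}$ forces $\sum_{b\ge 0}\hat\psi(2\pi k^b)=0$.)

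The main obstacle is the step sketched inside $(\mathrm a)$: bridging the gap between the transversality condition \eqref{def:transversality}, which controls the growth rate of $k^{-n}\#\{w\not\pitchfork\zeta\}$ only below $1$, and the subexponential control of $\#\{w\not\pitchfork\zeta\}$ itself needed for partial captivity. This refinement genuinely relies on the linearity of $E_0$, which is why we treat it by a sketch and defer the general case to the forthcoming joint work.
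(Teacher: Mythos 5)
Your strategy is genuinely different from the paper's: you propose to prove the stronger assertion $\mathscr V_{(m)}=\mathscr C_{(m)}$ (where $\mathscr C_{(m)}$ is the set of functions cohomologous to a constant) and then dispose of $\mathscr C_{(m)}$ by a single linear functional. Part $(\mathrm b)$ of your argument is fine and elegant (the Birkhoff-sum evaluation functional works, and only the trivial direction of the Liv\v sic characterization is used), but part $(\mathrm a)$ contains a genuine gap that you yourself flag as ``the main obstacle'' and do not resolve.

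The gap is the claimed zero--one law for the growth rate of $\sup_z\sup_{\zeta\in f_0^{-n}(z)}\#\{w\in f_0^{-n}(z):w\not\pitchfork\zeta\}=k^n\varphi_R(n)$. The transversality condition \eqref{def:transversality} only says $\limsup_n\varphi_R(n)^{1/n}<1$, which for $k\ge 2$ is compatible with $k^n\varphi_R(n)$ growing exponentially at any rate in $(1,k)$; nothing in Butterley--Eslami~\cite{ButterleyEslami} or Tsujii~\cite{Tsujii} gives the dichotomy $\lim (k^n\varphi_R(n))^{1/n}\in\{1,k\}$. Indeed, Tsujii's Theorem 1.2 is a \emph{genericity} statement --- for each fixed target rate $\varrho/k$ there is a dense open set of $\tau_0$'s achieving it --- and is carefully not phrased as a rigidity/dichotomy for all $\tau_0$. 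If your zero--one law were true it would settle the equivalence between transversality and partial captivity for linear $E_0$, a question the paper deliberately leaves open (the introduction says the two conditions are ``comparable,'' not equivalent, and only the implication ``partially captive $\Rightarrow$ not cohomologous to a constant'' is proved, in the remark following Definition \ref{def:randompartiallycaptive}). Submultiplicativity of $n\mapsto k^n\varphi_R(n)$ would only give existence of the limit, not its value, so that observation does not bridge the gap.

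The paper's actual route sidesteps this issue: for each $\varrho>1$ it extracts from Tsujii's argument an \emph{open dense} set $U_\varrho\subset\Ci^m(\Sone)$ on which $\limsup_n\varphi_R(n)^{1/n}\le\varrho k^{-1}$ (inequality \eqref{eq:transversalityimprovedagain}), then feeds this into Lemma \ref{lem:transversal} to get $\inf_n n^{-1}\log\mathcal N_{R_\kappa}(0;n)<\log\varrho$ on $U_\varrho$, uses subadditivity of $n\mapsto\log\mathcal N_{R_\kappa}(0;n)$ (Faure~\cite{Faure}*{p.~1489}) to replace the infimum by the limit, and finally takes a countable intersection $\mathscr U_{(m)}=\bigcap_j U_{\varrho(j)}$ with $\varrho(j)\downarrow 1$ to land in the partially captive regime. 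Genericity comes from Baire category applied to this nested family, not from a pointwise dichotomy. You would do well to replace your step $(\mathrm a)$ with this quantitative open-dense family; your step $(\mathrm b)$ is then superfluous, since the complement of $\mathscr U_{(m)}$ is already a countable union of closed nowhere dense sets.
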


\begin{proof}[Sketch of proof]
Let $m$ be finite.
By adapting the proof of Tsujii~\cite{Tsujii}*{Theorem 1.2} it can be checked that for each $\varrho>1$ (and $R>1$) there
is an open dense subset $U_\varrho\subset\Ci^m(\Sone)$ such that if $\tau_0\in U_\varrho$
then
\begin{equation}\label{eq:transversalityimprovedagain}
\limsup_{n\to\infty}\varphi_R(n)^\frac{1}{n}\le\varrho k^{-1}.
\end{equation}
In particular, we can replace the space $\Ci_+^m(\Sone)$ of positive ceiling functions under consideration in Tsujii~\cite{Tsujii}*{Theorem 1.2}
by $\Ci^m(\Sone)$ since addition of $2\pi n$ ($n$ integer) does not change the situation, see \eqref{eq:unperturbedsystem}.
Let $j\in\N^\ast$ be arbitrary and choose $\varrho$ such that $\log\varrho< 1/j$. By Lemma \ref{lem:transversal}
and \eqref{eq:transversalityimprovedagain} we get
\begin{equation}\label{eq:infimuminsteadoflimit}
\inf_{n}{ n^{-1}\log{\mathcal{N}_{R_\kappa}(0;n)}}< 1/j
\end{equation}
for $\tau_0\in U_j\equiv U_{\varrho(j)}$. (Here $R_\kappa$ depends on $\tau_0$.)
In fact, $n\mapsto\log{\mathcal{N}_{R_\kappa}(0;n)}$
is subadditive (Faure~\cite{Faure}*{p.~1489}) so the infimum coincides with the limit as $n\to\infty$.
If $\tau_0$ belongs to the intersection $\mathscr U_{(m)}=\cap_{j=1}^\infty U_j$ then $f_0$ is partially captive.
The complement of $\mathscr U_{(m)}$ is a countable union of closed nowhere dense sets,
i.e.~a meager set. Since $\mathscr V_{(m)}\subset\complement\mathscr U_{(m)}$ and subsets of meager sets are meager, the claim follows
by Baire's category theorem.

Next, consider the situation in $\Ci^\infty(\Sone)$. Let $X_j$ be the set of $\tau\in\Ci^\infty$
such that there is an $n$ for which $n^{-1}\log{\mathcal{N}_{R_\kappa}(0;n)}<1/j$.
In view of the discussion following \eqref{eq:infimuminsteadoflimit} we have
$\mathscr V_{(\infty)}\subset\cup_{j=1}^\infty\complement X_j$.
In view of Proposition \ref{wPC}
it is easy to see that each $X_j$ is open in $\Ci^\infty(\Sone)$. 
We also claim that each $X_j$
is dense in $\Ci^\infty(\Sone)$. Indeed, if not then there is a $\tau_0$ together with $m$ and $\epsilon$
such that $\tau\in\Ci^\infty(\Sone)$ and $\lVert\tau-\tau_0\rVert_{\Ci^m}<\epsilon$ implies
that $\tau\in\complement X_j$. But viewed as a subset of $\Ci^m(\Sone)$
we have $\complement X_j\subset\complement U_j$
with $U_j$ defined above, so this contradicts the fact that the closure of $\complement U_j$
has empty interior. Hence, $\complement X_j$ is closed and nowhere dense in $\Ci^\infty(\Sone)$,
and the proof is completed by repeating the arguments at the end of the previous paragraph.
\end{proof}

\section{The proof of Theorem \ref{thm:discretespectrum}}\label{section:discrete}

In this section we give the proof of Theorem \ref{thm:discretespectrum}.
We begin by showing that $(n,\omega ,\varphi) \mapsto M^\ast_{\nu ,n}(\omega)\varphi$ is a
linear RDS on $H^m(\Sone)$.
This is a consequence of known results
for transfer operators of expanding maps on Sobolev spaces. It will be
convenient to use a formulation of Faure~\cite{Faure}*{Theorem 2}, but we mention
the earlier results of Baillif and Baladi~\cite{BaillifBaladi}*{Section 4} as well as Baladi and Tsujii~\cite{BaladiTsujii07}*{Theorem 1.1}.
The corresponding results for operators on $\Ci^m$ go back to
Ruelle~\cites{Ruelle89,Ruelle90}.

\begin{prop}\label{prop:discretespectrum2}
For $\nu\in\Z$, let $M_{\nu,n}^\ast(\omega)=M_{\nu,n}^\ast(\epsilon;\omega)$ be the operator defined
by means of the decomposition \eqref{adjointrestrictionoperator}.
Let $m$ be a positive integer and define $r_m$ by \eqref{eq:rm}. 
The following holds $\mathbb{P}$-almost surely:
For each $\omega\in\Omega$ and $n\ge1$, $M_{\nu ,n}^\ast (\omega) :H^m(\Sone) \rightarrow H^m(\Sone)$
is continuous and can be written
\[
M_{\nu ,n}^\ast(\omega) =R_{\nu ,n}^\ast(\omega) +K_{\nu ,n}^\ast(\omega)
\]
where $K_{\nu ,n}^\ast(\omega):H^m(\Sone) \rightarrow H^m(\Sone)$ is a compact operator, and
\[
\lVert R_{\nu ,n}^\ast (\omega) \rVert_{\mathscr L(H^m(\Sone))} \leq (e^{r_m})^n.
\]
\end{prop}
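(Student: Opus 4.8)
The plan is to recognize $M_{\nu,n}^\ast(\omega)$ as the reduced transfer operator of a \emph{single} $\Ci^\infty$ expanding map of $\Sone$, and then to quote the known Lasota--Yorke type estimate on Sobolev spaces, in the form of Faure~\cite{Faure}*{Theorem 2}, for that map.

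First I would restrict to the $\theta$-invariant full-measure set on which $f_\epsilon(\omega)$ has the form \eqref{eq:perturbedsystem} and \eqref{emin} holds, and work with such $\omega$ throughout. For $n\ge1$ the map $E_\epsilon^{(n)}(\omega)=E_\epsilon(\theta^{n-1}\omega)\circ\cdots\circ E_\epsilon(\omega)$ is a composition of $\Ci^\infty$ expanding self-maps of $\Sone$, hence itself a $\Ci^\infty$ self-map of $\Sone$; it has degree $k^n$, and by \eqref{emin} and the chain rule $\min_x dE_\epsilon^{(n)}(\omega,x)/dx\ge\lambda^n>1$. Thus $E_\epsilon^{(n)}(\omega)$ is a $k^n$-to-$1$ expanding map, and $\tau_\epsilon^{(n)}(\omega,\cdot)$ defined by \eqref{notation:tau^{(n)}} is a $\Ci^\infty$ function on $\Sone$. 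Comparing \eqref{adjointrestrictionoperator} with the reduced transfer operator of a skew product of the form \eqref{eq:unperturbedsystem} (with $k$, $g_0$, $\tau_0$ replaced by $k^n$, the lift of $E_\epsilon^{(n)}(\omega)$, and $\tau_\epsilon^{(n)}(\omega,\cdot)$ respectively), one sees that $M_{\nu,n}^\ast(\omega)$ is exactly the transfer operator of the expanding map $E_\epsilon^{(n)}(\omega)$ with the smooth unimodular weight $e^{-i\nu\tau_\epsilon^{(n)}(\omega,\cdot)}$.

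Next I would apply Faure~\cite{Faure}*{Theorem 2} (cf.~also Baillif and Baladi~\cite{BaillifBaladi}*{Section 4} and Baladi and Tsujii~\cite{BaladiTsujii07}*{Theorem 1.1}) to this operator. Since $E_\epsilon^{(n)}(\omega)\in\Ci^\infty$ is expanding, the cited result gives that $M_{\nu,n}^\ast(\omega)$ is continuous on $H^m(\Sone)$ and admits a decomposition $M_{\nu,n}^\ast(\omega)=R_{\nu,n}^\ast(\omega)+K_{\nu,n}^\ast(\omega)$ with $K_{\nu,n}^\ast(\omega)$ compact on $H^m(\Sone)$ and
\[
\lVert R_{\nu,n}^\ast(\omega)\rVert_{\mathscr L(H^m(\Sone))}\le \Lambda_n(\omega)^{-m-\frac12}\,(k^n)^{\frac12},\qquad
\Lambda_n(\omega)=\min_x\frac{dE_\epsilon^{(n)}(\omega,x)}{dx},
\]
the unimodular weight not entering this bound. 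Since $m+\frac12>0$ and $\Lambda_n(\omega)\ge\lambda^n$, this yields $\lVert R_{\nu,n}^\ast(\omega)\rVert_{\mathscr L(H^m(\Sone))}\le\lambda^{-(m+\frac12)n}k^{\frac n2}=(\lambda^{-m-\frac12}k^{\frac12})^n=(e^{r_m})^n$ with $r_m$ as in \eqref{eq:rm}, uniformly in $\nu\in\Z$ and in $\omega$, which is the assertion.

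The one point worth flagging is that in the random setting the cocycle $n\mapsto M_{\nu,n}^\ast(\omega)$ is not a one-parameter semigroup (only the cocycle identity over $\theta$ holds), so the claim cannot be obtained by iterating the $n=1$ estimate; one must instead apply the Lasota--Yorke bound directly to the composed map $E_\epsilon^{(n)}(\omega)$. It is precisely the uniform expansion estimate \eqref{emin}, together with the fact that a composition of uniformly expanding maps is uniformly expanding with the rates multiplied, that makes the resulting constant $\Lambda_n(\omega)^{-m-\frac12}k^{n/2}$ both independent of $\nu$ and $\omega$ and equal to $(e^{r_m})^n$; beyond this the argument is a direct appeal to the cited results for expanding maps.
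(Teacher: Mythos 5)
Your proposal is correct, but it takes a genuinely different route from the paper. The paper applies Faure~\cite{Faure}*{Theorem 2} only to the time-one operator: it decomposes $M_{\nu}(\omega)=R_\nu(\omega)+K_\nu(\omega)$ on $H^{-m}(\Sone)$ with $\lVert R_\nu(\omega)\rVert\le e^{r_m}$, passes to adjoints by duality and Schauder's theorem, and then expands the $n$-fold composition $(R_\nu^\ast(\theta^{n-1}\omega)+K_\nu^\ast(\theta^{n-1}\omega))\circ\cdots\circ(R_\nu^\ast(\omega)+K_\nu^\ast(\omega))$, taking $R_{\nu,n}^\ast(\omega)=R_\nu^\ast(\theta^{n-1}\omega)\circ\cdots\circ R_\nu^\ast(\omega)$; every remaining term contains a compact factor, and submultiplicativity of the operator norm together with $\theta$ being $\mathbb{P}$-preserving gives $(e^{r_m})^n$. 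You instead apply the cited theorem once, to the composed map $E_\epsilon^{(n)}(\omega)$ viewed as a $k^n$-to-one expanding map with minimal expansion $\ge\lambda^n$, and read off the constant $\Lambda_n(\omega)^{-m-\frac12}(k^n)^{\frac12}\le(e^{r_m})^n$ directly; your identification of \eqref{adjointrestrictionoperator} with the weighted transfer operator of $E_\epsilon^{(n)}(\omega)$, and of the latter with a map of the form \eqref{eq:unperturbedsystem} of degree $k^n$, is sound. What your version buys is a single invocation of the deterministic result with no bookkeeping of cross terms; what it costs is that you must use the quantitative form of the cited bound (explicitly $\Lambda^{-m-\frac12}(\deg)^{\frac12}$ in the minimal expansion and the degree of the given map), and you should still record the duality/Schauder step, since Faure's theorem is stated for $M_\nu$ on $H^{-m}$ rather than for the adjoint on $H^m$. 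One side remark of yours is inaccurate: it is not true that the claim \emph{cannot} be obtained from the $n=1$ estimate — the cocycle identity $M_{\nu,n}^\ast(\omega)=M_\nu^\ast(\theta^{n-1}\omega)\circ\cdots\circ M_\nu^\ast(\omega)$ is exactly what the paper exploits, composing the time-one decompositions along the $\theta$-orbit rather than taking powers of a single operator. This does not affect the validity of your argument.
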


\begin{proof}
By virtue of \eqref{emin}
we can $\mathbb{P}$-almost surely  apply Faure~\cite{Faure}*{Theorem 2} and conclude that 
$M_{\nu} (\omega) :H^{-m}(\Sone) \rightarrow H^{-m}(\Sone)$
is $\mathbb{P}$-almost surely a bounded operator which can be written
$M_{\nu}(\omega) =R_{\nu}(\omega) +K_{\nu}(\omega)$,
where $K_{\nu}(\omega):H^{-m} \rightarrow H^{-m}$ is compact
and $\lVert R_{\nu} (\omega) \rVert_{\mathscr L(H^{-m})} \leq e^{r_m}$ with the number $r_m$ defined above.
Since $\lVert R_{\nu}^\ast (\omega) \rVert_{\mathscr L(H^m)}=\lVert R_{\nu} (\omega) \rVert_{\mathscr L(H^{-m})}$
by duality, and $K_{\nu}^\ast(\omega):H^m \rightarrow H^m$ is compact by Schauder's theorem
(see Chapter X, Section 4 in Yosida~\cite{Yosida}), it follows that
$M_{\nu}^\ast(\omega) =R_{\nu}^\ast(\omega) +K_{\nu}^\ast(\omega)$
can be decomposed in a similar manner.

Since $\theta$ is $\mathbb{P}$-preserving it follows that
for fixed $j$, the conclusions made in the previous paragraph concerning
the operator $M_{\nu}^\ast (\omega)$ acting on $H^m(\Sone)$ 
remain valid for $M_{\nu}^\ast (\theta^j\omega) :H^m(\Sone) \rightarrow H^m(\Sone)$
$\mathbb{P}$-almost surely. This implies that we $\mathbb{P}$-almost surely have
\[
M_{\nu ,n}^\ast (\omega )=(R_{\nu}^\ast(\theta^{n-1}\omega) 
+K_{\nu}^\ast(\theta^{n-1}\omega))\circ\ldots\circ(R_{\nu}^\ast(\omega) +K_{\nu}^\ast(\omega)).
\]
Let $K_{\nu ,n}^\ast(\omega)$ denote the difference between the right-hand side and
$R_{\nu ,n}^\ast(\omega)=R_{\nu}^\ast(\theta^{n-1}\omega)\circ\ldots\circ R_{\nu}^\ast(\omega)$,
so that $M_{\nu ,n}^\ast (\omega )=R_{\nu ,n}^\ast(\omega) +K_{\nu ,n}^\ast(\omega)$.
Then $K_{\nu ,n}^\ast(\omega)$ is a finite sum of operators where each term is given by
the composition of finitely many bounded operators, at least one of which is compact.
Hence $K_{\nu ,n}^\ast(\omega)$ is compact. Moreover,
\begin{align*}
\lVert R_{\nu ,n}^\ast (\omega) \rVert_{\mathscr L(H^m)}
&=\lVert R_{\nu}^\ast(\theta^{n-1}\omega)\circ\ldots\circ 
R_{\nu}^\ast(\omega) \rVert_{\mathscr L(H^m)}\\
&\le \lVert R_{\nu}^\ast(\theta^{n-1}\omega)\rVert_{\mathscr L(H^m)} \cdots \lVert R_{\nu}^\ast(\omega) \rVert_{\mathscr L(H^m)}\le (e^{r_m})^n,
\end{align*}
which completes the proof.
\end{proof}

Next, we show that the conditions of Definition \ref{def:lyapunovexponents}
are satisfied. To verify that the time-one map $M_\nu^\ast:\Omega\to\mathscr L(H^m(\Sone))$
is strongly measurable, note that it suffices to check that
$M_\nu^\ast$ is measurable with respect to the $\sigma$-algebra on $\Omega$ and the Borel $\sigma$-algebra
on $\mathscr L(H^m(\Sone))$ generated by the strong operator topology, see for example
Gonz{\'a}lez-Tokman and Quas~\cite{ETS:8859243}*{Appendix A}.
We may regard $M_\nu^\ast$ as the
composition of a map from $\Omega$ into ${\Ci}^\infty(\Sone,\Sone)\times {\Ci}^\infty(\Sone,\R)$
given by $\omega\mapsto (E_\epsilon(\omega),\tau_\epsilon(\omega))$,
and a map from a subset of ${\Ci}^\infty(\Sone,\Sone)\times {\Ci}^\infty(\Sone,\R)$ into $\mathscr L(H^m(\Sone))$,
the domain being a neighborhood of $(E_0,\tau_0)$
consisting of pairs $(E,\tau)$ where the first component is a uniformly expanding map.
It is straightforward (although somewhat tedious) to check that the second map is
continuous with respect to the strong operator topology, while the first
is measurable by assumption. Hence, $M_\nu^\ast:\Omega\to\mathscr L(H^m(\Sone))$
is strongly measurable.
We remark for expanding maps, the corresponding transfer operators
$\Omega\to\mathscr L({\Ci}^m(\Sone))$ are known
to be strongly measurable, and since $M_\nu^\ast(\omega)$ can be viewed as a {\it weighted} transfer operator
of the expanding map $x\mapsto E_\epsilon(\omega,x)$ with weight $\exp{(i\nu\tau_\epsilon(\omega))}$,
said property is to be expected. 
(Due to \eqref{eq:perturbedsystem}--\eqref{convinC1},
the presence of  the additional weight is unproblematic in this regard.)
However, note that transfer operators 
are generally not continuous in the norm topology, see Baladi and Young~\cite{BY93}
as well as Bogensch{\"u}tz~\cite{Bogenschutz00}*{Section 3.6}.
Compare with Gonz{\'a}lez-Tokman and Quas~\cite{ETS:8859243}*{Section 1}.

To prove that $\omega\mapsto \log^+\lVert M_\nu^\ast(\omega)\rVert_{\mathscr L(H^m)}$
is integrable with respect to the probability measure $\mathbb{P}$, we shall have to
revisit the method of proof of Faure~\cite{Faure}*{Theorem 2}, if only briefly.
Identify $H^m(\Sone)$ with $\langle D\rangle^{-m}(L^2(\Sone))$, where
$\langle \xi\rangle^{m}=(1+\lvert\xi\rvert^2)^{m/2}$ and $\langle D\rangle^{m}$ is 
the formally self-adjoint invertible pseudodifferential operator
acting through multiplication by $\langle \xi\rangle^{m}$ on the Fourier side, see
\eqref{Fouriermultiplier}. The commutative diagram
\[
\begin{array}{ccc}
L^2(\Sone) & \stackrel{Q_\nu(\omega)}{\to} & L^2(\Sone) \\
\downarrow\text{\scriptsize{$\langle D\rangle^{m}$}} & \circlearrowleft & \downarrow\text{\scriptsize{$\langle D\rangle^{m}$}} \\
H^{-m}(\Sone) & \stackrel{M_\nu(\omega)}{\to} & H^{-m}(\Sone)
\end{array}
\]
shows that $M_\nu(\omega):H^{-m}(\Sone)\to H^{-m}(\Sone)$ is unitarily equivalent to
\begin{equation}\label{eq:definitionofQnew}
Q_\nu(\omega)=\langle D\rangle^{-m} M_\nu(\omega)\langle D\rangle^{m}:L^2(\Sone)\to L^2(\Sone).
\end{equation}
Consider the operator $(Q_{\nu}(\omega))^\ast Q_{\nu}(\omega)$ (which naturally depends on $m$
although this is not reflected in the notation). A careful analysis
of this operator, including the calculation of its principal symbol, is at the core
of the proof of Faure~\cite{Faure}*{Theorem 2}, and would provide an alternative proof of
Proposition \ref{prop:discretespectrum2}. At this point however, all we need is the following
result, the proof of which can be found at the end of Appendix \ref{app:PsiDO}.

\begin{thm}\label{thm:Q}
Let $\nu\in\Z$ be fixed and let $Q_\nu(\omega)$ be given by \eqref{eq:definitionofQnew}. Then 
there is an $\epsilon_0=\epsilon_0(m)$, independent of $\nu$, such that if $0\le\epsilon<\epsilon_0$,
the operator $(Q_{\nu}(\omega))^\ast Q_{\nu}(\omega):L^2(\Sone)\to L^2(\Sone)$ is a pseudodifferential operator
satisfying
\begin{equation*}
\lVert (Q_{\nu}(\omega))^\ast Q_{\nu}(\omega)\rVert_{\mathscr L(L^2(\Sone))}\le C_{\nu,m}, \quad\text{$\mathbb{P}$-almost surely},
\end{equation*}
where the constant as indicated may depend on $\nu$ and $m$ but not on $\omega$.
\end{thm}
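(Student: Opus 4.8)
The plan is to mirror the proof of \cite{Faure}*{Theorem 2} for the fixed-$\nu$ transfer operator, but carried out inside the $\omega$-dependent pseudodifferential calculus of Appendix~\ref{app:PsiDO} so that every symbol estimate comes with a bound that is uniform in $\omega$; Theorem~\ref{thm:Q} is in fact the classical ($\nu$ fixed) counterpart, for $n=1$ and with the escape function replaced by $\langle\cdot\rangle^{m}$, of Theorem~\ref{thm:Pn}, and is proved by the same means, which is why the complete argument is placed at the end of Appendix~\ref{app:PsiDO}. By \eqref{eq:definitionofQnew},
\[
(Q_\nu(\omega))^\ast Q_\nu(\omega)=\langle D\rangle^{m}\,T_\nu(\omega)\,\langle D\rangle^{m},
\qquad T_\nu(\omega):=M_\nu^\ast(\omega)\langle D\rangle^{-2m}M_\nu(\omega),
\]
so it suffices to show that $T_\nu(\omega)$ is a pseudodifferential operator on $\Sone$ of order $-2m$ all of whose symbol seminorms are bounded by constants depending only on $\nu$ and $m$: conjugation by the elliptic operator $\langle D\rangle^{m}$ then presents $(Q_\nu(\omega))^\ast Q_\nu(\omega)$ as a pseudodifferential operator of order $0$, again with symbol seminorms controlled uniformly in $\omega$, and the $L^2$-continuity theorem for pseudodifferential operators on $\Sone$ from Appendix~\ref{app:PsiDO}, which estimates the operator norm by finitely many such seminorms, gives the assertion.

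To treat $T_\nu(\omega)$ I would compute its distribution kernel directly. Writing $b_{2m}$ for the translation invariant kernel on $\Sone$ of the Fourier multiplier $\langle D\rangle^{-2m}$ of \eqref{Fouriermultiplier}, and changing variables in each of the $k$ monotone inverse branches $G_j(\omega)$ of $E_\epsilon(\omega)$ (so that $G_j'(\omega,y)=1/E_\epsilon'(\omega,G_j(\omega,y))$, which is positive and $\leq\lambda^{-1}$ by \eqref{emin}), one finds from \eqref{restrictionoperator} and \eqref{adjointrestrictionoperator} that the kernel of $T_\nu(\omega)$ is
\[
\sum_{i,j=0}^{k-1} G_i'(\omega,x)\,G_j'(\omega,x')\,
e^{\,i\nu(\tau_\epsilon(\omega,G_j(\omega,x'))-\tau_\epsilon(\omega,G_i(\omega,x)))}\,
b_{2m}(G_i(\omega,x)-G_j(\omega,x')).
\]
For fixed $\nu$ the exponential factor is a smooth bounded function of $(x,x')$ whose $C^{j}$ norms grow only polynomially in $\nu$, so the diagonal terms $i=j$ are, by the change of variables rule of Appendix~\ref{app:PsiDO}, conjugates of $\langle D\rangle^{-2m}$ by the diffeomorphisms $G_i(\omega)$ followed by multiplication by smooth weights, hence pseudodifferential operators of order $-2m$ whose principal symbols sum to $(x,\xi)\mapsto\sum_{j=0}^{k-1}\langle\xi\rangle^{2m}\langle E_\epsilon'(\omega,G_j(\omega,x))\xi\rangle^{-2m}G_j'(\omega,x)$ (the weight $e^{i\nu\tau_\epsilon}$ contributing, for fixed $\nu$, only to lower order). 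The off-diagonal terms $i\neq j$ correspond to distinct inverse branches, whose images partition $\Sone$, so $b_{2m}$ is evaluated away from its diagonal singularity; they define smoothing operators — equivalently, the underlying canonical relations do not compose — and are therefore harmless. Thus $T_\nu(\omega)$ is a pseudodifferential operator of order $-2m$, and since the $G_j(\omega)$, the weights and all their derivatives are bounded uniformly in $\omega$ by \eqref{emin} and \eqref{convinC1}, all of its symbol seminorms — principal and lower order alike — are bounded by constants depending only on $\nu$ and $m$; combined with the first paragraph this proves the theorem. The threshold $\epsilon_0$ must be allowed to depend on $m$ because the number of derivatives of the dynamical data that one has to control grows with $m$; it does not depend on $\nu$, which enters only through the oscillatory weight and thus affects $C_{\nu,m}$ but not how small $\epsilon$ has to be for $f_\epsilon(\omega)$ to remain uniformly expanding and uniformly $C^{N(m)}$-close to $f_0$ by \eqref{convinC1}.

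The main obstacle is keeping these seminorm bounds uniform in $\omega$ all the way through the composition: one has to follow not only the principal symbol but also the lower order and remainder terms produced by the change of variables and composition calculus, and check that each of them is estimated by finitely many $C^{j}$ norms of $g_\epsilon(\omega)$, $E_\epsilon(\omega)$ and $\tau_\epsilon(\omega)$ that are controlled uniformly in $\omega$ by \eqref{emin} and \eqref{convinC1}. This is precisely the point for which the $\omega$-dependent calculus of Appendix~\ref{app:PsiDO} is designed, and it is the reason the full proof of Theorem~\ref{thm:Q} is carried out there, by the same methods as the proof of Theorem~\ref{thm:Pn}.
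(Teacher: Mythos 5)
Your overall strategy coincides with the paper's: you use the same factorization
\[
(Q_\nu(\omega))^\ast Q_\nu(\omega)=\langle D\rangle^{m}\,M_\nu^\ast(\omega)\langle D\rangle^{-2m}M_\nu(\omega)\,\langle D\rangle^{m},
\]
reduce to showing that the middle operator is a pseudodifferential operator of order $-2m$ with symbol seminorms controlled uniformly in $\omega$, and then invoke the $L^2$-continuity result of Appendix~\ref{app:PsiDO}. The paper executes the middle step not by a kernel computation but by Proposition~\ref{prop:composition} (for the multiplication operators $e^{\pm i\nu\tau_\epsilon(\omega)}$ and $1/E_\epsilon'(\omega)$, which for fixed $\nu$ satisfy condition $(\mathscr A(0))$) together with Lemma~\ref{lem:conjugationdiffeomorphism}, which handles the change of variables $\sum_{\bar\alpha}T_{-\bar\alpha}(E^{-1}(\omega))^\ast(\cdot)(E(\omega))^\ast$ in the lifted picture on $\R$ and establishes, via condition $(\mathscr A(\cdot))$, exactly the $\omega$-uniformity of all seminorms that you need.

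The kernel-based argument you sketch has one genuine gap: the claim that the cross-branch terms $i\ne j$ are smoothing is false. The kernel $b_{2m}$ of $\langle D\rangle^{-2m}$ is \emph{periodic} with period $1$, hence singular at all integer values of its argument. If the inverse branches, lifted to $\R$, have ranges $[a_0,a_1),\ldots,[a_{k-1},a_k)$ with $a_0<\cdots<a_k$ and $a_k-a_0=1$, then for \emph{adjacent} branches (say $j=i+1$) the argument $G_j(\omega,x)-G_i(\omega,x')$ tends to $0$ as $(x,x')\to(0,1^-)$, while for $j=0$, $i=k-1$ the difference tends to $-1$ as $(x,x')\to(0,1^-)$; in both cases the periodic kernel $b_{2m}$ is singular in the limit. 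On the torus $\Sone\times\Sone$ the point $(0,1)$ \emph{is} the diagonal point $(0,0)$, so these adjacent-branch terms are singular on the diagonal, contribute to the pseudodifferential symbol, and are decidedly not smoothing (the canonical relations may have empty composition on a fundamental domain, but the wrap-around reattaches them at the diagonal). Only the non-adjacent pairs give genuinely smoothing contributions, and only when $k\ge 3$. Your final formula for the principal symbol is correct (up to a stray factor $\langle\xi\rangle^{2m}$ that belongs to $Q_\nu^\ast Q_\nu$ rather than to $T_\nu(\omega)$) precisely because the adjacent-branch contribution is lower order, but the argument as stated does not justify this: you would have to show that the corner singularity is compatible with a periodic symbol in $S^{-2m}(T^\ast\Sone)$. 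This is exactly what the paper's Lemma~\ref{lem:conjugationdiffeomorphism} is for: by working with the lifted inverse $G(\omega)=E^{-1}(\omega):\R\to\R$ satisfying $G(\omega,x+k)=G(\omega,x)+1$ and summing the translates $T_{-\bar\alpha}$, each translate carries its own diagonal singularity on a shifted copy $\{x+\bar\alpha=x'\}$, and the sum assembles into a single periodic double symbol in $S^{-2m,0}$ satisfying condition $(\mathscr A(-2m,0))$, with no need to split into ``diagonal'' and ``off-diagonal'' pieces.
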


Note that if $(\phantom{i},\phantom{i})_{L^2}$
denotes the scalar product on $L^2(\Sone)$, then
\begin{align*}
\lVert Q_{\nu}(\omega)u\rVert_{L^2}^2&
=(Q_{\nu}(\omega)u,Q_{\nu}(\omega)u)_{L^2}\\
&=(Q_{\nu}^\ast(\omega)Q_{\nu}(\omega)u,u)_{L^2}
\le \lVert Q_{\nu}^\ast(\omega)Q_{\nu}(\omega)u\rVert_{L^2}\lVert u\rVert_{L^2}
\end{align*}
and that 
$\lVert Q_{\nu}^\ast(\omega)Q_{\nu}(\omega)\rVert_{\mathscr L(L^2)}
\le \lVert Q_{\nu}(\omega)\rVert_{\mathscr L(L^2)}^2$. Hence we have equality.
Since $M_\nu(\omega):H^{-m}(\Sone)\to H^{-m}(\Sone)$ is unitarily equivalent to
$Q_\nu(\omega):L^2(\Sone)\to L^2(\Sone)$,
we can therefore by virtue of Theorem \ref{thm:Q} together with a duality argument find a constant
(depending on $\nu$ and $m$ but not on $\omega$) such that
\begin{equation}\label{eq:consequenceofthm:Q}
\lVert M_{\nu}^\ast(\omega)\rVert_{\mathscr L(H^{m}(\Sone))}\le C_{\nu,m}\quad\text{$\mathbb{P}$-almost surely}.
\end{equation}
Hence, $\omega\mapsto \log^+\lVert M_{\nu}^\ast(\omega)\rVert_{\mathscr L(H^m)}\in L^1(\Omega,\mathbb{P})$
for each $\nu\in\Z$.

We have now shown that the conditions in Definition \ref{def:lyapunovexponents}
are satisfied by the linear RDS on $H^m(\Sone)$ induced by
$M_{\nu}^\ast$. In view of the discussion following the definition,
the maximal Lyapunov exponent $r^\ast\{\nu\}$ and
the index of compactness $r_{\mathrm{ic}}^\ast\{\nu\}$ then exist.
By Proposition \ref{prop:discretespectrum2} we have $M_{\nu,n}^\ast(\omega)
=R_{\nu,n}^\ast(\omega)+K_{\nu,n}^\ast(\omega)$, where $K_{\nu,n}^\ast(\omega)$
is a compact operator and $\lVert R_{\nu ,n}^\ast (\omega)\rVert_{\mathscr L(H^m)}
\le (e^{r_m})^n$.
By virtue of the definition of the 
index of compactness for bounded operators,
this gives
\[
\lVert M_{\nu ,n}^\ast (\omega)\rVert_{\mathrm{ic}(H^m)}
\le\lVert R_{\nu ,n}^\ast (\omega)\rVert_{\mathrm{ic}(H^m)}
\le\lVert R_{\nu ,n}^\ast (\omega)\rVert_{\mathscr L(H^m)}
\le (e^{r_m})^n.
\]
Hence,
\begin{equation}\label{riclessthanrm}
r_{\mathrm{ic}}^\ast\{\nu\}=\lim_{n\to\infty}\frac{1}{n}\log{\lVert
M_{\nu ,n}^\ast (\omega)\rVert_{\mathrm{ic}(H^m)}}\le r_m.
\end{equation}

Next, we show that $r^\ast\{\nu\}\le 0$. This will be a direct consequence
of the following so-called 
weak Lasota-Yorke inequality,
which we prove by 
using \eqref{eq:consequenceofthm:Q} and adapting the ideas in 
Baladi et al.~\cite{BKS96}*{Lemma 4.2} to our situation.
(Note that writing
$M_{\nu,n}^\ast(\omega)=M_{\nu}^\ast(\theta^{n-1}\omega)\circ\cdots\circ
M_{\nu}^\ast(\omega)$ and using \eqref{eq:consequenceofthm:Q} at $\omega,\ldots,\theta^{n-1}\omega$
gives
\begin{equation}\label{eq:continuityestimate}
\lVert M_{\nu ,n}^\ast(\omega) \rVert_{\mathscr L(H^m(\Sone))} \leq (C_{\nu,m})^n, \quad n\ge1,
\end{equation}
$\mathbb{P}$-almost surely, which
only shows that $r^\ast\{\nu\}\le \log{C_{\nu,m}}$.)

\begin{lem}\label{lem:LYineq}
For each $m\in \N$ there is an $\epsilon_0=\epsilon_0(m)$ such that
if $0\le\epsilon <\epsilon_0$ and $\nu\in\Z$ then
\begin{equation}\label{eq:LYineq1}
\lVert  M_{\nu,n}^\ast (\epsilon ;\omega)\rVert _{\mathscr L(H^m(\Sone))}\leq C_{\nu,m}
\end{equation}
for all $n\geq 1$, $\mathbb{P}$-almost surely, where the constant $C_{\nu,m}$ is independent of 
$0\le\epsilon <\epsilon_0$ and $\omega\in\Omega$. 
\end{lem}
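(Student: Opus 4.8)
The plan is to upgrade the bound \eqref{eq:continuityestimate}, which only gives $\lVert M_{\nu,n}^\ast(\omega)\rVert_{\mathscr L(H^m(\Sone))}\le C_{\nu,m}^n$, to a bound uniform in $n$, by combining the single-step estimate \eqref{eq:consequenceofthm:Q} with a Lasota--Yorke type inequality, adapting Baladi et al.~\cite{BKS96}*{Lemma 4.2}. We work on a $\theta$-invariant set of full measure on which \eqref{eq:perturbedsystem}, \eqref{emin}, \eqref{eq:consequenceofthm:Q} and the distortion bounds below hold for every iterate, and we take $\epsilon_0=\epsilon_0(m)$ to be the constant furnished by Theorem \ref{thm:Q}, shrunk if necessary so that \eqref{emin} holds as well. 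First I would record two elementary uniform bounds on the $L^p$ scale. Since $\lvert e^{-i\nu\tau^{(n)}(\omega,y)}\rvert=1$, the operator $M_{\nu,n}^\ast(\omega)$ given by \eqref{adjointrestrictionoperator} is dominated in modulus by the Perron--Frobenius operator $M_{0,n}^\ast(\omega)$ applied to $\lvert\varphi\rvert$, so $\lVert M_{\nu,n}^\ast(\omega)\varphi\rVert_{L^1(\Sone)}\le\lVert\varphi\rVert_{L^1(\Sone)}$ because $M_{0,n}^\ast(\omega)$ preserves the integral of nonnegative functions. Moreover, standard distortion estimates for the $\Ci^\infty$ expanding maps $E_\epsilon(\omega)$, uniform over $0\le\epsilon<\epsilon_0$ and $\omega$ by \eqref{convinC1}, produce a constant $C$, independent of $n$, $\omega$ and $\epsilon$, with $\sum_{E_\epsilon^{(n)}(\omega,y)=x}(dE_\epsilon^{(n)}(\omega,y)/dy)^{-1}\le C$ for every $x$; applying the Cauchy--Schwarz inequality to \eqref{adjointrestrictionoperator} and changing variables then yields $\lVert M_{\nu,n}^\ast(\omega)\rVert_{\mathscr L(L^2(\Sone))}\le C^{1/2}$ uniformly in $n$, $\omega$ and $\epsilon$. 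In particular this already proves the lemma when $m=0$.

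For $m\ge1$, the core of the argument is a Lasota--Yorke inequality: there exist an integer $N=N(\nu,m)$ and a constant $B=B(\nu,m)$, both independent of $\omega$ and of $0\le\epsilon<\epsilon_0$, such that
\[
\lVert M_{\nu,N}^\ast(\omega)u\rVert_{(m)}\le\tfrac12\lVert u\rVert_{(m)}+B\lVert u\rVert_{(0)},\qquad u\in H^m(\Sone),
\]
$\mathbb{P}$-almost surely. To establish it I would differentiate the branch expansion of $M_{\nu,N}^\ast(\omega)u$ coming from \eqref{adjointrestrictionoperator} up to order $m$ by the Leibniz rule and the chain rule, and estimate the result in $L^2(\Sone)$. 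The term in which all $m$ derivatives fall on $u$ equals, up to a unimodular factor, a transfer operator applied to $u^{(m)}$ carrying an extra weight that is the $m$-th power of the derivative of an inverse branch of $E_\epsilon^{(N)}(\omega)$; since that derivative is $\le\lambda^{-N}$ by \eqref{emin} and the remaining weight is absorbed by the $L^2$ bound above, this term has $(m)$-norm $\le C^{1/2}\lambda^{-Nm}\lVert u\rVert_{(m)}$. Each of the remaining terms involves at most $m-1$ derivatives of $u$, together with derivatives of $e^{-i\nu\tau^{(N)}(\omega)}$ (each differentiation contributing a factor $\le\lvert\nu\rvert$ times quantities bounded uniformly by \eqref{convinC1}) and derivatives of the inverse branches (again bounded uniformly by \eqref{convinC1}); hence these terms are $\le B_0(\nu,m,N)\lVert u\rVert_{(m-1)}$ with $B_0$ independent of $\omega$ and $\epsilon$. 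Combining this with the interpolation inequality $\lVert u\rVert_{(m-1)}\le\delta\lVert u\rVert_{(m)}+C_\delta\lVert u\rVert_{(0)}$, one first chooses $N$ so large that $C^{1/2}\lambda^{-Nm}<\tfrac14$ and then $\delta$ so small that $B_0\delta<\tfrac14$, which gives the inequality with $B=B_0C_\delta$. The only genuine effort here is the bookkeeping needed to verify that every constant produced by these differentiations depends solely on $\nu$, $m$, $N$ and on the uniform $\Ci^\infty$ bounds for $E_\epsilon(\omega)$ and $\tau_\epsilon(\omega)$ granted by \eqref{convinC1}, and not on the individual $\omega$ or $\epsilon$ --- this is where the essential-supremum hypothesis \eqref{convinC1} enters.

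It then remains to iterate. Writing $n=qN+r$ with $0\le r<N$ and using $M_{\nu,n}^\ast(\omega)=M_{\nu,N}^\ast(\theta^{(q-1)N+r}\omega)\circ\cdots\circ M_{\nu,N}^\ast(\theta^{r}\omega)\circ M_{\nu,r}^\ast(\omega)$, $q$ successive applications of the Lasota--Yorke inequality, together with the uniform $L^2$ bound applied to the intermediate iterates appearing in the weak-norm terms, give
\[
\lVert M_{\nu,n}^\ast(\omega)u\rVert_{(m)}\le 2^{-q}\lVert M_{\nu,r}^\ast(\omega)u\rVert_{(m)}+2BC^{1/2}\lVert u\rVert_{(0)}.
\]
By \eqref{eq:consequenceofthm:Q} applied at $\omega,\theta\omega,\dots,\theta^{r-1}\omega$ the first term is $\le C_{\nu,m}^{N}\lVert u\rVert_{(m)}$ since $r<N$, and since $\lVert u\rVert_{(0)}\le\lVert u\rVert_{(m)}$ this yields $\lVert M_{\nu,n}^\ast(\omega)\rVert_{\mathscr L(H^m(\Sone))}\le C_{\nu,m}^{N}+2BC^{1/2}$ for all $n\ge1$, uniformly in $\omega$ and $0\le\epsilon<\epsilon_0$, which is \eqref{eq:LYineq1} after relabelling the constant. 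I expect the delicate step to be producing the Lasota--Yorke inequality with a contraction factor strictly below $1$ for \emph{every} $m\ge1$, in particular for small $m$, where $r_m$ may be nonnegative and the naive $L^2$ operator bound $(k/\lambda)^{n/2}$ grows; this is rescued precisely because the distortion estimate upgrades that bound to one uniform in $n$, so that the gain $\lambda^{-Nm}$ coming from the $m$ derivatives can be made to dominate.
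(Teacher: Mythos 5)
Your proposal is correct and follows essentially the same route as the paper's proof: the same uniform-in-$n$ bound $\lVert M_{\nu,n}^\ast(\omega)\rVert_{\mathscr L(L^2)}\le C^{1/2}$ obtained from the distortion estimate $M_{0,n}^\ast(\omega)1_{\Sone}\le C$ of Baladi et al., the same Leibniz expansion of $\partial^m M_{\nu,n}^\ast(\omega)u$ isolating the top-order weight $(dE_\epsilon^{(n)}/dx)^{-m}\le\lambda^{-mn}$, and the same block iteration in time. The only (cosmetic) difference is that you close the argument by interpolating $\lVert u\rVert_{(m-1)}\le\delta\lVert u\rVert_{(m)}+C_\delta\lVert u\rVert_{(0)}$ to get a Lasota--Yorke inequality with weak norm $L^2$, whereas the paper keeps the weak norm at $H^{m-1}$ and runs an induction on $m$; both are valid.
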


\begin{proof}
By the density of ${\Ci}^{\infty}(\Sone)$ in $H^m(\Sone)$ together with \eqref{eq:continuityestimate}, 
it suffices to show that for each $\nu\in\Z$ and $m\in \N$ there is a constant $C_{\nu,m}>0$ such that 
if $u \in {\Ci}^{\infty}(\Sone)$ and $n\geq 1$, then
\begin{equation}
\lVert M_{\nu,n}^\ast (\epsilon; \omega)u \rVert_{H^m} \leq C_{\nu,m}\lVert u \rVert_{H^m}.
\end{equation} 
Here $\lVert \phantom{i} \rVert_{H^m}$ is
the equivalent norm on $H^m(\Sone)$ which for $m\in\N$ is given by
$\lVert u\rVert_{H^m}^2=\sum_{j=0}^m\lVert \partial^j u\rVert_{L^2}^2$,
where $\partial^j u$ denotes the $j$th derivative of $u$ with respect to the independent variable.
We will let $0\le\epsilon<\epsilon_0$ be arbitrary, where $\epsilon_0=\epsilon_0(m)$
is the number provided by Theorem \ref{thm:Q},
and suppress $\epsilon$ from the notation.

We prove the lemma by induction, and consider first the case $m=0$. By duality we have 
$\lVert M^\ast _{\nu,n}(\omega)\rVert_{\mathscr L(L^2)}= \lVert M_{\nu,n}(\omega) \rVert_{\mathscr L(L^2)}$ 
for each $n\geq 1$ and $\mathbb{P}$-almost every $\omega \in \Omega$.
Let $E^{(n)}(\omega)$ and $\tau^{(n)}(\omega)$
be given by \eqref{notation:E^{(n)}}--\eqref{notation:tau^{(n)}}.
In view of \eqref{restrictionoperator} we
get
\begin{equation}\label{eq:firstind00}
\lVert M_{\nu,n}(\omega)u\rVert_{L^2}^2 =\int \lvert e^{i\nu \tau^{(n)}(\omega,x)}u(E^{(n)}(\omega,x))\rvert^2 dx
=( 1_{\Sone}, M_{0,n}(\omega)\lvert u\rvert^2)_{L^2}
\end{equation}
for $\nu\in\Z$,
where 
\begin{equation}\label{eq:secondind00}
( 1_{\Sone}, M_{0,n}(\omega)\lvert u\rvert^2)_{L^2}
=( M_{0,n}^\ast(\omega)1_{\Sone}, \lvert u\rvert^2)_{L^2}
\leq \sup _{x} M_{0,n}^\ast (\omega)1_{\Sone}(x)\lVert u\rVert_{L^2}^2
\end{equation}
for each $u\in L^2(\Sone)$, $n\geq 1$ and $\mathbb{P}$-almost every $\omega$.
Now recall that the time-one map of $M_{0,n}^\ast (\omega)$ is the transfer operator of the
expanding map $E(\omega)$. We can therefore use the results of Baladi et al.~\cite{BKS96}
to find a constant $C>0$ (independent of $\epsilon >0$) so that  
$\sup _{x} M_{0,n}^\ast (\omega)1_{\Sone}(x) \leq C\sup _{x} 1_{\Sone}(x)$ 
for each $n\geq 1$ and $\omega \in \Omega$, see in particular
the estimate (4.5) in Baladi et al.~\cite{BKS96}. Together with \eqref{eq:firstind00}, \eqref{eq:secondind00}
and a duality argument,
this is easily seen to imply that we $\mathbb{P}$-almost surely have
\begin{align}\label{eq:ind000}
\lVert M_{\nu,n}^\ast(\omega)u \rVert_{L^2}\leq C^{\frac{1}{2}}\lVert u \rVert_{L^2},
\quad u\in L^2(\Sone),
\end{align}
that is, \eqref{eq:LYineq1} holds for $m=0$.

Assume that \eqref{eq:LYineq1} holds with $m$ replaced by $m-1$. By the properties of the norm  
we then have
\begin{align}\label{eq:mthstep}
\lVert M_{\nu,n}^\ast (\omega)u \rVert_{H^m} & \leq
\lVert M_{\nu,n}^\ast (\omega)u \rVert_{H^{m-1}}
+ \lVert \partial^m M_{\nu,n}^\ast (\omega)u \rVert_{L^2} \\ & \leq
 C_{\nu,m-1}\lVert u \rVert_{H^m}+\lVert \partial^m M_{\nu,n}^\ast (\omega)u \rVert_{L^2},
\end{align}
so to show that \eqref{eq:LYineq1} holds also for $m$,
it suffices to estimate $\lVert \partial^m M_{\nu,n}^\ast (\omega)u \rVert_{L^2}$.
In view of \eqref{adjointrestrictionoperator},
a straightforward calculation followed by a moment's reflection shows that the $m$th derivative of $M_{\nu,n}^\ast(\omega)u(x)$ 
with respect to $x$ can be written as
\begin{equation}\label{eq:betterind00}
\partial^m (M_{\nu,n}^\ast (\omega)u)(x)
=\sum _{j=0}^m M_{\nu,n}^\ast (\omega) \big[\varphi_{\nu,n,j}(\omega) (\partial^j u)\big](x),
\end{equation}
where each $\varphi _{\nu,n,j}(\omega)$ is smooth, independent 
of $u$, and involves derivatives of the functions $E^{(n)}(\omega)$ and $\tau^{(n)}(\omega)$
and $(dE^{(n)} (\omega ,y)/dy)^{-1}$. In particular, for $j=m$ we have
\begin{equation}\label{eq:varphiforjequalm}
\varphi _{\nu,n,m}(\omega,x)=(dE^{(n)}(\omega,x)/dx)^{-m},
\end{equation}
which by \eqref{emin} satisfies
\[
\esssup_\omega{\sup_x{\lvert \varphi_{\nu,n,m}(\omega,x)\rvert}}
\le\lambda^{-m n},
\quad n\ge1,
\]
if $0\le\epsilon<\epsilon_0$.
Similarly, by \eqref{convinC1} and \eqref{emin} it follows (after decreasing $\epsilon_0$ if necessary) that 
\[
\esssup_\omega{\sup_x{\lvert \varphi_{\nu,n,j}(\omega,x)\rvert}} \le \tilde C_{\nu,n,j},
\quad 0\le j\le m-1.
\]
Combining \eqref{eq:betterind00} with \eqref{eq:ind000} and 
these estimates, we get
\begin{align}
\lVert \partial^m M_{\nu,n}^\ast (\omega)u \rVert_{L^2}& 
\le C^{\frac{1}{2}}\sum _{j=0}^{m-1} \tilde C_{\nu,n,j}\lVert \partial^j u\rVert_{L^2}
+C^{\frac{1}{2}}\lambda^{-mn}\lVert \partial^m u\rVert_{L^2}\\
&\le C_{\nu,n,m-1}\lVert u\rVert_{H^{m-1}}
+C^{\frac{1}{2}}\lambda^{-mn}\lVert \partial^m u\rVert_{L^2}.
\end{align}
Hence, if $1<\rho<\lambda$ we can find an $n_0$ such that we $\mathbb{P}$-almost surely have
\begin{equation}\label{eq:betterind000}
\lVert \partial^m M_{\nu,n_0}^\ast (\omega)u \rVert_{L^2}
\le C_{\nu,m}\lVert u\rVert_{H^{m-1}}
+\rho^{-n_0}\lVert \partial^m u\rVert_{L^2},
\end{equation}
where $C_{\nu,m-1}\equiv C_{\nu,n_0,m-1}$.

Now let $n\ge1$ be arbitrary, and write $n=\ell n_0+r$, where $\ell\in\N$
and $0\le r<n_0$. First note that for $0\le n<n_0$ we can use
\eqref{eq:continuityestimate} to find a constant independent of $n$ such that
$\lVert \partial^m M_{\nu,n}^\ast (\omega)u \rVert_{L^2}
\le C_{\nu,m}\lVert u\rVert_{H^{m}}$. Assume next that $n\ge n_0$.
Since $M_{\nu,n}^\ast (\omega)=M_{\nu,n_0}^\ast (\theta^{n-n_0}\omega)\circ
M_{\nu,n-n_0}^\ast (\omega)$ and $\theta$ is $\mathbb{P}$-preserving, an application of \eqref{eq:betterind000} gives
\begin{align}\label{eq:betterind0000}
\lVert \partial^m M_{\nu,n}^\ast (\omega)u \rVert_{L^2}
&= \lVert \partial^m M_{\nu,n_0}^\ast (\theta^{n-n_0}\omega) [
M_{\nu,n-n_0}^\ast (\omega) u]\rVert_{L^2} \\ &
\le C_{\nu,m}\lVert M_{\nu,n-n_0}^\ast (\omega) u\rVert_{H^{m-1}}
+\rho^{-n_0}\lVert \partial^m M_{\nu,n-n_0}^\ast (\omega) u\rVert_{L^2},
\end{align}
$\mathbb{P}$-almost surely.
According to the induction hypothesis, the first term on the right can be estimated using
\eqref{eq:LYineq1} with $m$ replaced by $m-1$, which gives
\begin{equation}
\lVert \partial^m M_{\nu,n}^\ast (\omega)u \rVert_{L^2}
\le C_{\nu,m}\lVert  u\rVert_{H^{m-1}}
+\rho^{-n_0}\lVert \partial^m M_{\nu,n-n_0}^\ast (\omega) u\rVert_{L^2},
\end{equation}
$\mathbb{P}$-almost surely, for some new constant $C_{\nu,m}$.
Iterating this procedure, we get
\begin{equation}
\lVert \partial^m M_{\nu,n}^\ast (\omega)u \rVert_{L^2}
\le C_{\nu,m} (1+\cdots+\rho^{-(\ell-1)n_0})\lVert  u\rVert_{H^{m-1}}
+\rho^{-\ell n_0}\lVert \partial^m M_{\nu,r}^\ast (\omega) u\rVert_{L^2}.
\end{equation}
Since $\rho>1$ and $0\le r<n_0$, the last term can be estimated by
$C_{\nu,m}\lVert u\rVert_{H^{m}}$ by using \eqref{eq:continuityestimate} as explained above. Clearly, the sum
$1+\rho^{-n_0}+\cdots+\rho^{-(\ell-1)n_0}$ is dominated by $(1-\rho^{-n_0})^{-1}$,
independent of $n$. Hence, by invoking \eqref{eq:mthstep} we conclude that
\eqref{eq:LYineq1} holds, which completes the proof.
\end{proof}

By Lemma \ref{lem:LYineq} we have
\[
r^\ast\{\nu\}=\lim _{n\to \infty} \frac{1}{n} \log{\lVert M_{\nu,n}^\ast(\epsilon ;\omega)\rVert_{\mathscr L(H^m)}} \leq 0.
\]
On the other hand, when $\nu=0$ we have according to Theorem \ref{thm:inv} that
\begin{equation}\label{eq:integralidentity1}
\lVert M_{0,n}^\ast(\epsilon;\omega)h_\epsilon(\omega)\rVert_{(m)}
=\sup_{\lVert\varphi\rVert_{(-m)}\le1}(h_\epsilon(\theta^n\omega),\varphi)_{L^2}
\ge(h_\epsilon(\theta^n\omega),1_{\Sone})_{L^2}=1,
\end{equation}
$\mathbb{P}$-almost surely. This implies that
\[
\log{\lVert M_{0,n}^\ast(\epsilon;\omega)\rVert_{\mathscr L(H^m)}}\ge
-\log{\lVert h_\epsilon(\omega)\rVert_{(m)}}.
\]
Hence, $r^\ast\{0\}\ge0$, and we conclude that $r^\ast\{0\}=0$. 
In view of \eqref{eq:rm} and \eqref{riclessthanrm}, this means that
$r_\mathrm{ic}^\ast\{0\}<r^\ast\{0\}$ if $m$ is sufficiently large,
establishing quasi-compactness in this case.
In particular, the Oseledets splitting exists for $\nu=0$.
Applying Theorem \ref{thm:cocycle} to
the case $X=H^m(\Sone)$
and $\Phi (n,\omega)=M^\ast_{0 ,n}(\epsilon;\omega)$,
we accordingly denote the Lyapunov subspace associated to $r^\ast\{0\}=\alpha_1$ 
by $\Sigma_1(\epsilon;\omega)$, and note that 
\eqref{eq:integralidentity1} implies that
$h_\epsilon(\omega)\in\Sigma_1(\epsilon;\omega)$.
The proof of Theorem \ref{thm:discretespectrum}
will be complete upon establishing statement $(\mathrm{ii})$ in Theorem \ref{thm:discretespectrum}
and showing that $\Sigma_1(\epsilon;\omega)$
is one dimensional if $\epsilon$ is sufficiently small.

\subsection{Partial captivity and the peripheral spectrum}\label{subsection:perspec}

We first digress to discuss the peripheral spectrum in the non-perturbed case.
For each $\nu\in\Z$, let $M_\nu^\ast(0)$ be the auxiliary transfer operator of $f_0$
corresponding to $M_\nu^\ast(\epsilon;\omega)$ at noise level $\epsilon=0$.
We owe the following proof to
Masato Tsujii~\cite{TsujiiPC}.

\begin{thm}\label{thm:nonrandomperspec}
For $\nu=0$, the only eigenvalue of $M_0^\ast(0)$
on the unit circle is the simple eigenvalue $1$. In particular,
the Lyapunov subspace of $M_0^\ast(0)$ associated to the eigenvalues of modulus $1$
is one dimensional.
Moreover, if $f_0$ is partially captive then the spectral radius of $M_\nu^\ast(0):H^m(\Sone)\to H^m(\Sone)$
is strictly less than $1$ for all $\nu\ne0$. 
\end{thm}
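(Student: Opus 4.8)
The plan is to argue by contradiction, combining the known structural facts about the deterministic reduced transfer operator with a Ruelle--Perron--Frobenius argument for the weighted operator $M_\nu^\ast(0)$, and to extract from a putative eigenvalue of modulus $1$ a coboundary representation of $\tau_0$. Fix $\nu\ne0$ and take $m\ge1$ large enough that $e^{r_m}<1$; since the Ruelle resonances of $M_\nu^\ast(0)$ are intrinsic and independent of $m$, this is no loss of generality. By Proposition~\ref{prop:discretespectrum2} applied at $\epsilon=0$ (equivalently Faure~\cite{Faure}*{Theorem 2}), the spectrum of $M_\nu^\ast(0)$ outside the disc of radius $e^{r_m}$ consists of isolated eigenvalues of finite multiplicity, while the bound $r^\ast\{\nu\}\le0$ from the deterministic case of Theorem~\ref{thm:discretespectrum} confines the whole spectrum to $\{\lvert z\rvert\le1\}$. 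It therefore suffices to rule out eigenvalues of modulus $1$: if the spectral radius were $1$, it would be attained at such an eigenvalue $\gamma$, with a nonzero eigenfunction $u\in H^m(\Sone)\subset\Ci(\Sone)$.

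Assume then that $M_\nu^\ast(0)u=\gamma u$ with $\lvert\gamma\rvert=1$. The first step is to identify $\lvert u\rvert$. From the explicit formula \eqref{adjointrestrictionoperator} at $\epsilon=0$, the triangle inequality together with $E_0'>0$ and $\lvert e^{-i\nu\tau_0}\rvert=1$ gives the pointwise domination $\lvert u\rvert\le M_0^\ast(0)\lvert u\rvert$. Integrating against Lebesgue measure on $\Sone$, which $M_0^\ast(0)$ preserves, forces $M_0^\ast(0)\lvert u\rvert=\lvert u\rvert$; since $E_0$ is a topologically mixing $\Ci^\infty$ expanding map, its invariant density $h_0$ is the only nonnegative fixed point of $M_0^\ast(0)$ up to a positive scalar, so $\lvert u\rvert=c\,h_0$ with $c>0$, and in particular $u$ vanishes nowhere.

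Next I would run the equality-case analysis. Write $u=c\,h_0e^{i\phi}$ with $\phi\colon\R\to\R$ a continuous lift, $\phi(y+1)=\phi(y)+2\pi d$ for some $d\in\Z$. Equality in the triangle inequality forces, for each $x$, all the numbers $e^{-i\nu\tau_0(y)}u(y)/E_0'(y)$ with $E_0(y)=x$ to share a common argument $\beta(x)$; substituting this back into $M_\nu^\ast(0)u=\gamma u$ and using $\lvert u\rvert=c\,h_0$ gives $\beta(x)\equiv\Arg\gamma+\phi(x)\pmod{2\pi}$, whereas directly $\beta(x)\equiv\phi(y)-\nu\tau_0(y)\pmod{2\pi}$ whenever $E_0(y)=x$. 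Hence $y\mapsto\phi(y)-\phi(E_0(y))-\Arg\gamma-\nu\tau_0(y)$ is a continuous $2\pi\Z$-valued function on $\R$, so it equals a constant $2\pi m_0$, which yields
\[
\nu\tau_0(y)=\phi(y)-\phi(E_0(y))+c',\qquad c'=-\Arg\gamma-2\pi m_0,\quad y\in\R.
\]
Evaluating this identity at $y$ and at $y+1$, and using $\tau_0(y+1)=\tau_0(y)$ and $E_0(y+1)=E_0(y)+k$ on the cover, gives $2\pi d(1-k)=0$, hence $d=0$ because $k\ge2$. Thus $\phi$ descends to a continuous function on $\Sone$ and $\tau_0=\nu^{-1}(\phi-\phi\circ E_0)+\nu^{-1}c'$ is cohomologous to a constant; by the standard regularity theory for cohomological equations over $\Ci^\infty$ expanding maps the transfer function $\nu^{-1}\phi$ is automatically $\Ci^\infty$, so this is cohomology in the sense of \eqref{eq:unperturbedsystem}. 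This contradicts partial captivity by the remark following Definition~\ref{def:randompartiallycaptive}, which proves that the spectral radius of $M_\nu^\ast(0)$ is strictly less than $1$.

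I expect the main obstacle to be the bookkeeping in the equality-case step, in particular keeping track of arguments modulo $2\pi$ and verifying that the defect above is genuinely $\R$-valued and constant, together with citing the appropriate regularity statement for the coboundary; the latter can be avoided altogether by observing that the transversality condition of Section~\ref{subsection:transversality}, which partial captivity implies, is already violated once $\tau_0$ is cohomologous to a constant with a merely continuous transfer function. Note also that this argument treats all $\nu\ne0$ simultaneously, so Faure~\cite{Faure}*{Theorem 3} (the deterministic case of Theorem~\ref{thm:spectralgap}), which by itself already gives the conclusion for $\lvert\nu\rvert$ large, is not needed.
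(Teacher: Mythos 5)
Your proposal is correct and follows essentially the same route as the paper's proof: extract an eigenfunction for a modulus-one eigenvalue, use the triangle-inequality-plus-integration argument to force $\lvert u\rvert=c\,h_0$, and read off from the equality case that $\tau_0$ is cohomologous to a constant, contradicting partial captivity via the remark after Definition \ref{def:randompartiallycaptive}. Your handling of the phase via a continuous lift and the check that the winding number $d$ vanishes is in fact slightly more careful than the paper's own write-up, which works with the principal value $\Arg(v)$ directly.
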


\begin{proof}
The statements concerning $M_0^\ast(0)$ are well-known since
this is the Perron-Frobenius transfer operator of
the uniformly expanding map $E_0$, 
see for example Section 1 in Ma{\~n}{\'e}~\cite{Mane}*{Chapter 3}
and in particular Theorem 1.1 there.

The other statement is proved by contradiction. Suppose therefore that $f_0$ is partially captive and
there is a $\nu\ne0$ such that the spectral radius of $M_\nu^\ast(0)$ is $1$.
This means that there is an eigenfunction $v\in H^m(\Sone)$
corresponding to an eigenvalue $e^{i\vartheta}$ of modulus 1, that is,
\begin{equation}\label{eq:perspec1}
\sum_{E_0(y)=x} \frac{v(y)}{E'_0(y)}e^{-i\nu\tau_0(y)}=
M_\nu^\ast(0)v(x)=e^{i\vartheta}v(x),
\end{equation}
see Faure~\cite{Faure}*{Theorem 2}. In fact, according to the remark on p.~1479 in Faure~\cite{Faure}
we even have $v\in\Ci^\infty(\Sone)$.
Taking the moduli of both sides of \eqref{eq:perspec1} yields
\begin{equation}\label{eq:perspec2}
\lvert v(x)\rvert=\bigg\lvert\sum_{E_0(y)=x} \frac{v(y)}{E'_0(y)}e^{-i\nu\tau_0(y)}\bigg\rvert
\le\sum_{E_0(y)=x} \frac{\lvert v(y)\rvert}{E'_0(y)}.
\end{equation}
If we integrate in $x$ we obtain
\begin{equation}\label{eq:perspec3}
\lVert v\rVert_{L^1}=\int_{\Sone}\bigg\lvert\sum_{E_0(y)=x} \frac{v(y)}{E'_0(y)}e^{-i\nu\tau_0(y)}\bigg\rvert dx
\le\int_{\Sone} \sum_{E_0(y)=x} \frac{\lvert v(y)\rvert}{E'_0(y)}dx=
\lVert v\rVert_{L^1},
\end{equation}
where the last identity follows by a change of variables. 
Hence we have equality in \eqref{eq:perspec3}, therefore also in \eqref{eq:perspec2},
which together with \eqref{eq:perspec1} in turn implies that
\begin{equation}\label{eq:perspec4}
\arg\bigg(\frac{v(y)}{E'_0(y)}e^{-i\nu\tau_0(y)}\bigg)=\arg(v(x))+\vartheta
\quad\text{for all }y\in E_0^{-1}(x).
\end{equation}
Since $E_0'$ is real valued, we thus find that
$\arg(v(y))-\nu\tau_0(y)=\arg(v(x))+\vartheta$ whenever $E_0(y)=x$.
Note also that since we have identity in \eqref{eq:perspec2}, it follows that
$M_0^\ast(0)\lvert v\rvert(x)=\lvert v(x)\rvert$, so by virtue of Theorem \ref{thm:inv}
we have $\lvert v\rvert= c h_0$
for some constant $c$, where $h_0\in\Ci^\infty(\Sone)$ is the positive probability density function
appearing in the mentioned theorem. If we set $\varphi(y)= \Arg(v(y))$
where $\Arg{}$ is the principal value,
it therefore follows that $\varphi\in\Ci^\infty(\Sone)$ and
\begin{equation}\label{eq:perspec5}
\nu\tau_0(y)=\varphi(y)-\varphi(E_0(y))-\vartheta+2\pi n(y)
\end{equation}
for some integer valued function $y\mapsto n(y)$. However, $n(y)$ must be a constant function of $y$
since it can be written as a linear combination of smooth functions. Since $\nu\ne0$ we conclude that
$\tau_0$ is cohomologous to a constant, which is a contradiction in view of the 
remark following Definition \ref{def:randompartiallycaptive}.
This completes the proof.
\end{proof}

\begin{rmk}The result shows that the additional assumptions on the peripheral spectrum
made in Faure~\cite{Faure}*{Theorem 5} may be omitted.
\end{rmk}

We will now show that Bogensch{\"u}tz~\cite{Bogenschutz00}*{Theorem 1.10} 
may be applied to the perturbation $M_\nu^\ast(\omega)$ of $M_\nu^\ast(0)$
for each $\nu\in\Z$.
Admitting this for the moment, we find in view of that result that $r^\ast\{\nu\}$
converges to the spectral radius of $M_\nu^\ast(0)$ as $\epsilon\to0$,
and that if $\epsilon$ is sufficiently small then
the dimension of $\Sigma_1(\epsilon;\omega)$ is equal to the dimension of
the direct sum of the generalized eigenspaces for the eigenvalues of $M_0^\ast(0)$ on the
unit circle. Since we have already shown that $h_\epsilon(\omega)\in\Sigma_1(\epsilon;\omega)$,
statements $(\mathrm{i})$ and $(\mathrm{ii})$ in Theorem \ref{thm:discretespectrum}
therefore follow by virtue of Theorem \ref{thm:nonrandomperspec}, thus completing the proof of Theorem \ref{thm:discretespectrum}.

\begin{prop}
For all $u\in H^m$ we have
\begin{equation}\label{eq:Bog1}
 \esssup_\omega{\lVert M_\nu^\ast(\epsilon;\omega) u-
M_\nu^\ast(0) u\rVert_{(m)}}\to0,\quad\epsilon\to0.
\end{equation}
For all $\rho>\lambda^{-1}$ there is an $N_0$ such that if $n\ge N_0$ then we can find an $\epsilon_\nu(n)$
such that 
\begin{equation}\label{eq:Bog2}
 \esssup_\omega{\lVert M_{\nu,n}^\ast(\epsilon;\omega) -
M_{\nu,n}^\ast(0) \rVert_{\mathscr L(H^m)}}<\rho^{mn},\quad 0\le\epsilon<\epsilon_\nu(n).
\end{equation}
\end{prop}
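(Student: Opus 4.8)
The plan is to treat the two assertions separately. Statement \eqref{eq:Bog1} is a soft consequence of the explicit formula \eqref{adjointrestrictionoperator}, whereas \eqref{eq:Bog2} requires in addition the \emph{uniform} Lasota--Yorke inequality that is already implicit in the proof of Lemma \ref{lem:LYineq}. For \eqref{eq:Bog1}, recall from \eqref{adjointrestrictionoperator} that $M_\nu^\ast(\epsilon;\omega)$ is the transfer operator of the uniformly expanding map $E_\epsilon(\omega)$ weighted by $e^{-i\nu\tau_\epsilon(\omega)}/E_\epsilon'(\omega)$. I would first establish \eqref{eq:Bog1} for $u\in\Ci^\infty(\Sone)$: then $M_\nu^\ast(\epsilon;\omega)u-M_\nu^\ast(0)u$ is an explicit finite sum, and differentiating up to order $m$ (using the equivalent norm $\lVert v\rVert_{H^m}^2=\sum_{0\le j\le m}\lVert\partial^j v\rVert_{L^2}^2$, as in the proof of Lemma \ref{lem:LYineq}) bounds its $H^m(\Sone)$ norm by a finite sum of products of the quantities $\lVert\partial^j u\rVert_{L^2}$, $0\le j\le m$, with sup-norms of differences of the functions $E_\epsilon^{-1}(\omega)$, $\tau_\epsilon(\omega)$, $1/E_\epsilon'(\omega)$ (and their derivatives up to order $m$) and the corresponding deterministic objects, composed with the inverse branches of $E_\epsilon(\omega)$. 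By \eqref{eq:assumptionCrclose} together with Lemma \ref{lem:technical1}, each such sup-norm is $o(1)$ as $\epsilon\to0$ uniformly in $\omega$, so $\esssup_\omega\lVert M_\nu^\ast(\epsilon;\omega)u-M_\nu^\ast(0)u\rVert_{(m)}\to0$. For general $u\in H^m(\Sone)$ one then uses the uniform bound $\esssup_\omega\lVert M_\nu^\ast(\epsilon;\omega)\rVert_{\mathscr L(H^m)}\le C_{\nu,m}$ provided by Lemma \ref{lem:LYineq} (valid also at $\epsilon=0$): approximate $u$ by $w\in\Ci^\infty(\Sone)$ in $H^m(\Sone)$ and split $M_\nu^\ast(\epsilon;\omega)u-M_\nu^\ast(0)u$ as $M_\nu^\ast(\epsilon;\omega)(u-w)+(M_\nu^\ast(\epsilon;\omega)-M_\nu^\ast(0))w+M_\nu^\ast(0)(w-u)$, the first and third summands being $\le C_{\nu,m}\lVert u-w\rVert_{(m)}$ uniformly in $\omega$ and the middle one handled by the smooth case.

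For \eqref{eq:Bog2} I would first record two preliminary facts. \emph{(a)} The same differentiation argument, but losing one derivative, gives
\[
\delta(\epsilon):=\esssup_\omega\lVert M_\nu^\ast(\epsilon;\omega)-M_\nu^\ast(0)\rVert_{\mathscr L(H^m(\Sone),H^{m-1}(\Sone))}\longrightarrow 0\quad\text{as }\epsilon\to0;
\]
indeed, the only term of the difference not directly controlled by \eqref{eq:assumptionCrclose} times $\lVert u\rVert_{(m-1)}$ is $\partial^{m-1}u$ composed with an inverse branch of $E_\epsilon(\omega)$ minus the same composed with the corresponding branch of $E_0$, and this is $o(1)\lVert u\rVert_{(m)}$ in $L^2$ uniformly in $\omega$ because $\partial^{m-1}u\in H^1(\Sone)$ embeds into the H\"older space $\Ci^{1/2}(\Sone)$ with norm controlled by $\lVert u\rVert_{(m)}$, while the two branches differ in sup-norm by $o(1)$ uniformly in $\omega$ (Lemma \ref{lem:technical1}). \emph{(b)} Inspection of \eqref{eq:betterind000}--\eqref{eq:betterind0000} shows that the proof of Lemma \ref{lem:LYineq} in fact yields, for every $\theta>\lambda^{-m}$, a constant $C$ with
\[
\lVert M_{\nu,p}^\ast(\epsilon;\omega)v\rVert_{(m)}\le C\theta^{\,p}\lVert v\rVert_{(m)}+C\lVert v\rVert_{(m-1)},\quad p\ge1,
\]
$\mathbb{P}$-almost surely, uniformly in $0\le\epsilon<\epsilon_0$.

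With these in hand, telescope: writing $A_j=M_\nu^\ast(\epsilon;\theta^{j-1}\omega)$ and $B=M_\nu^\ast(0)$ one has
\[
M_{\nu,n}^\ast(\epsilon;\omega)-M_{\nu,n}^\ast(0)=\sum_{j=1}^n M_{\nu,n-j}^\ast(\epsilon;\theta^{j}\omega)\,(A_j-B)\,B^{j-1}.
\]
For $\nu\ne0$, under the standing partial captivity assumption, Theorem \ref{thm:nonrandomperspec} gives that $B$ has spectral radius $\beta<1$ on $H^m(\Sone)$, hence $\lVert B^{j-1}\rVert_{\mathscr L(H^m)}\le C\beta^{j-1}$; for $\nu=0$ one first peels off the one-dimensional peripheral eigenprojection $\Pi_0u=(\int_{\Sone}u\,dx)h_0$ of $B=M_0^\ast(0)$, treating the $\Pi_0$-part by means of \eqref{eq:stability} and the identity $M_0^\ast(\epsilon;\omega)h_\epsilon(\omega)=h_\epsilon(\theta\omega)$ of Theorem \ref{thm:inv}, and applying the geometric decay to $B^{j-1}(\mathrm{Id}-\Pi_0)$. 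Bounding each summand by \emph{(b)} for the left factor, by $\lVert(A_j-B)w\rVert_{(m)}\le 2C_{\nu,m}\lVert w\rVert_{(m)}$ together with $\lVert(A_j-B)w\rVert_{(m-1)}\le\delta(\epsilon)\lVert w\rVert_{(m)}$ for the middle factor, and by the decay of $B^{j-1}$ for the right factor, and summing over $j$, yields an estimate of the form
\[
\esssup_\omega\lVert M_{\nu,n}^\ast(\epsilon;\omega)-M_{\nu,n}^\ast(0)\rVert_{\mathscr L(H^m)}\le C_1\,\gamma^{\,n}+C_2\,n\,\delta(\epsilon),
\]
with $\gamma<1$ (one may take $\gamma=\max(\theta,\beta)$ with $\theta$ as close to $\lambda^{-m}$ as desired). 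Given $\rho>\lambda^{-1}$, one then fixes $\theta$ so that $\gamma<\rho^{m}$, picks $N_0$ with $C_1\gamma^{\,n}<\tfrac12\rho^{mn}$ for $n\ge N_0$, and for each such $n$ chooses $\epsilon_\nu(n)>0$ so small that $C_2\,n\,\delta(\epsilon)<\tfrac12\rho^{mn}$ for $0\le\epsilon<\epsilon_\nu(n)$; this gives \eqref{eq:Bog2}. The main obstacle is exactly this last upgrade from the strong convergence \eqref{eq:Bog1} to the operator-norm control \eqref{eq:Bog2}: since transfer operators are not continuous in operator norm and the Oseledets constants of Theorem \ref{thm:cocycle} depend on $\omega$, it cannot be done naively, and one is forced to route it through the uniform-in-$\omega$ inequality \emph{(b)} together with the $\omega$-uniform density stability \eqref{eq:stability} for the peripheral ($\nu=0$) part.
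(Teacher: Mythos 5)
Your treatment of \eqref{eq:Bog1} is essentially the argument the paper intends (explicit formula, reduction to $u\in\Ci^\infty(\Sone)$, and the uniform bound of Lemma \ref{lem:LYineq} for the density step), and your auxiliary facts (a) and (b) are both extractable from the paper. The problem lies in how you assemble them to get \eqref{eq:Bog2}. Your telescoping identity is correct, but the final bound it produces is $C_1\gamma^{\,n}+C_2\,n\,\delta(\epsilon)$ with $\gamma=\max(\theta,\beta)$, and the step ``one then fixes $\theta$ so that $\gamma<\rho^m$'' is not something you can arrange: choosing $\theta$ close to $\lambda^{-m}$ controls only the first argument of the maximum, while $\beta$ is the decay rate of $\lVert B^{j-1}\rVert_{\mathscr L(H^m)}$ (or of $\lVert B^{j-1}(\id-\Pi_0)\rVert_{\mathscr L(H^m)}$ for $\nu=0$), an intrinsic quantity dictated by the sub-peripheral spectrum of $M_\nu^\ast(0)$ on $H^m(\Sone)$. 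Any Ruelle resonance outside the essential spectrum has modulus $>e^{r_m}=\lambda^{-m-\frac12}k^{\frac12}>\lambda^{-m}$ (since $k>\lambda$), so whenever such a resonance exists one has $\beta>\lambda^{-m}$, hence $\beta>\rho^{m}$ for $\rho$ sufficiently close to $\lambda^{-1}$, and $C_1\gamma^{\,n}$ is then \emph{not} eventually smaller than $\rho^{mn}$. Since the statement must hold for all $\rho>\lambda^{-1}$ — and the subsequent application of Bogensch\"utz's theorem with split index $r_m$ genuinely needs $\rho$ close to $\lambda^{-1}$ — this is a real gap, not a cosmetic one. A secondary defect is that your geometric decay of $B^{j-1}$ for $\nu\ne0$ is borrowed from Theorem \ref{thm:nonrandomperspec} and hence imports the partial captivity hypothesis, which is not assumed in the proposition.

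The paper avoids the issue by never routing the estimate through powers of the unperturbed operator. It bounds $\lVert M_{\nu,n}^\ast(0)u-M_{\nu,n}^\ast(\epsilon;\omega)u\rVert_{H^m}$ directly from the explicit sum over the $k^n$ inverse branches: after $m$ differentiations, every term except the top-order one is $o(1)$ as $\epsilon\to0$ for fixed $n$ (uniformly in $\omega$, via \eqref{eq:nepsilonclose}, Lemma \ref{lem:technical1} and a mean-value/Fubini argument for the composed $\partial^m u$), while the top-order remainder $R_m$ carries the factor $(\partial E^{(n)})^{-m}$, respectively $(\partial E_\epsilon^{(n)}(\omega))^{-m}$, inside \emph{both} operators and is therefore $\le C_{\nu,m}\lambda^{-2nm}\lVert\partial^m u\rVert_{L^2}^2$ by \eqref{emin} and Lemma \ref{lem:LYineq}. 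One then picks $N_0$ with $C_{\nu,m}^{1/2}\lambda^{-nm}<\tfrac12\rho^{mn}$ (possible exactly because $\rho>\lambda^{-1}$) and shrinks $\epsilon$ to kill the $o(1)$ part. To repair your argument you would need to replace the factor $\beta^{j-1}$ by something comparable to $\lambda^{-m(j-1)}$, which the unperturbed operator simply does not provide on all of $H^m(\Sone)$.
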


Note that the proposition shows that
Bogensch{\"u}tz~\cite{Bogenschutz00}*{Theorem 1.10} is applicable,
compare with Bogensch{\"u}tz~\cite{Bogenschutz00}*{Remark 1.11}.
In particular,
$\{M_\nu^\ast(\epsilon;\cdot):\Omega\to \mathscr L(H^m):\epsilon>0\}$ is an {\it asymptotically small random
perturbation} of $M_\nu^\ast(0)$ (see Bogensch{\"u}tz~\cite{Bogenschutz00}*{Definition 1.7})
for any so-called split index $\ge\log{\rho^{m}}$, an example
of which is $r_m$ (assuming that $\lambda<2\le k$ and $\rho$ is chosen
sufficiently close to $\lambda^{-1}$).

\begin{proof}
We only prove \eqref{eq:Bog2} since \eqref{eq:Bog1} is easier. (As in the proof of strong measurability,
the proof of \eqref{eq:Bog1}
is simplified by the fact that derivatives of $u$ may be estimated using the supremum norm.)
Note also that \eqref{eq:Bog2} is known to hold in the case $\nu=0$ when $H^m$ is replaced by
the Banach space $\Ci^m$, see Baladi et al.~\cite{BKS96}*{Lemma A.1}.
We will use similar arguments after making the necessary modifications.
In the proof we let $c_{n,\epsilon}$ denote a constant which for fixed $n$
tends to 0 as $\epsilon\to0$. Similarly, $C_\epsilon$ denotes a constant independent of $n$
which tends to 0 as $\epsilon\to0$. These are allowed to change between occurrences.
We will also use the short-hand notation $\lvert \varphi\rvert=\sup_x{\lvert \varphi(x)\rvert}$
for $\varphi\in\Ci^0$. Whenever convenient we will without mention adopt the viewpoint
from Section \ref{app:wPC} and identify maps with the appropriate lifts on $\R$.

For fixed $x\in\Sone$ and $n\ge1$,
let $y_0(j)=G_0^{(n)}(x+j)$ and $y_\omega(j)=G^{(n)}_\epsilon(\omega,x+j)$ for each $0\le j\le k^n-1$ be points in the pre-images
of $x$ under $E_0^{(n)}$ and $E_\epsilon^{(n)}(\omega)$, respectively.
Using the fact that $\theta$
is $\mathbb P$-preserving together with \eqref{emin}, \eqref{eq:assumptionCrclose} and Lemma \ref{lem:technical1}, 
it is possible to check that for $\mathbb P$-almost every $\omega$ we have
\begin{equation}\label{eq:nepsilonclose}
\bigg\lvert\frac{e^{-i\nu \tau_0^{(n)}(y_0(j))}}{dE_0^{(n)}(y_0(j))/dy}
-\frac{e^{-i\nu\tau_\epsilon^{(n)}(\omega,y_\omega(j))}}{dE_\epsilon^{(n)}(\omega,y_\omega(j))/dy}
\bigg\rvert\le c_{n,\epsilon},
\end{equation}
uniformly in $x$.
We use the equivalent norm $\lVert\varphi\rVert_{H^m}^2=\sum_{j=0}^m\lVert\partial^j\varphi\rVert_{L^2}^2$ 
on $H^m$
and first treat the case $m=1$. 
Consider $\lVert M_{\nu,n}^\ast(0)u - M_{\nu,n}^\ast(\epsilon;\omega)u \rVert_{L^2}^2$ and estimate this 
using the triangle inequality followed by Young's inequality by
\begin{multline}\label{eq:triangleandYoung}
2\int_{\Sone} \bigg\lvert\sum_{j=0}^{k^n-1}
\bigg(\frac{e^{-i\nu \tau_0^{(n)}(y_0(j))}}{dE_0^{(n)}(y_0(j))/dy}
-\frac{e^{-i\nu\tau_\epsilon^{(n)}(\omega,y_\omega(j))}}{dE_\epsilon^{(n)}(\omega,y_\omega(j))/dy}\bigg)
u(y_0(j))\bigg\rvert^2 dx\\
+2\int_{\Sone} \bigg\lvert\sum_{j=0}^{k^n-1}
\frac{e^{-i\nu\tau_\epsilon^{(n)}(\omega,y_\omega(j))}}{dE_\epsilon^{(n)}(\omega,y_\omega(j))/dy}
(u(y_0(j))-u(y_\omega(j)))\bigg\rvert^2 dx.
\end{multline}
Applying the same argument multiple times and using \eqref{eq:nepsilonclose} gives 
\begin{align}
\lVert M_{\nu,n}^\ast(0)u -M_{\nu,n}^\ast(\epsilon;\omega)u\rVert_{L^2}^2
&\le 2^{k^n} c_{n,\epsilon}\sum_{j=0}^{k^n-1}\int_{\Sone}\lvert u(y_0(j))\rvert^2 dx \\
&\quad +2^{k^n}\lambda^{-2n}\sum_{j=0}^{k^n-1}\int_{\Sone}\lvert u(y_\omega(j))-u(y_0(j))\rvert^2 dx.
\end{align}
We now insert the expressions for $y_0(j)$ and $y_\omega(j)$ and make the change of variables $x+j\mapsto x$.
By another change of variables, the first integral is easily seen to be bounded by $\lvert E_0'\rvert^n\lVert u\rVert_{L^2}^2$.
For the other integral, write
\[
u(G_0^{(n)}(x))-u(G_\epsilon^{(n)}(\omega,x))=
(G_0^{(n)}(x)-G_\epsilon^{(n)}(\omega,x))\int_0^1 u'(\mathscr G_t(x))dt,
\]
where $\mathscr G_t(x)=\mathscr G_t(\epsilon;\omega,x)=tG_0^{(n)}(x)+(1-t)G_\epsilon^{(n)}(\omega,x)$.
By Lemma \ref{lem:technical1} and the Cauchy-Schwartz inequality we get
\[
\int_{\Sone}\lvert u(G_0^{(n)}(x))-u(G_\epsilon^{(n)}(\omega,x))\rvert^2 dx
\le C_\epsilon\int_{\Sone} \int_0^1\lvert u'(\mathscr G_t(x))\rvert^2 dt dx.
\]
Use Fubini's theorem to change the order of integration, followed by the change of variables
$\mathscr G_t(x)\mapsto x$, with $t$ now fixed. The resulting Jacobian is easily seen to be
uniformly bounded for $0\le t\le1$ by a constant multiple of $\lvert E_0'\rvert^n$,
$\mathbb P$-almost surely,
if $\epsilon=\epsilon(n)$ is sufficiently small. Hence,
\begin{equation}\label{eq:stepforL2}
\lVert M_{\nu,n}^\ast(0)u -M_{\nu,n}^\ast(\epsilon;\omega)u\rVert_{L^2}^2
\le c_{n,\epsilon}\lVert u\rVert_{L^2}^2+
 c_{n,\epsilon} \lVert\partial u\rVert_{L^2}^2\le c_{n,\epsilon}\lVert u\rVert_{H^1}^2.
\end{equation}

Next, consider $\lVert \partial(M_{\nu,n}^\ast(0)u - M_{\nu,n}^\ast(\epsilon;\omega)u) \rVert_{L^2}^2$.
Separate the expression inside the norm into one part where $u$ is not differentiated, and
one part where $u$ is differentiated. For the first part, the arguments above can be repeated
after having established that the derivative of \eqref{eq:nepsilonclose} can be estimated by $c_{n,\epsilon}$,
the details of which are left to the reader. Thus, by the triangle inequality followed
by Young's inequality we get (compare with \eqref{eq:triangleandYoung})
\begin{equation}\label{eq:step2H1}
\lVert \partial(M_{\nu,n}^\ast(0)u - M_{\nu,n}^\ast(\epsilon;\omega)u) \rVert_{L^2}^2
\le c_{n,\epsilon}\lVert u\rVert_{H^1}^2+R_1,
\end{equation}
where, in view of \eqref{eq:betterind00}--\eqref{eq:varphiforjequalm},
\begin{equation}
R_1=2\,\Bigg\lVert M_{\nu,n}^\ast(0)\Bigg(\frac{u'}{\partial E_0^{(n)}}\Bigg) - M_{\nu,n}^\ast(\epsilon;\omega)
\Bigg(\frac{u'}{\partial E_\epsilon^{(n)}(\omega)}\Bigg)
\Bigg\rVert_{L^2}^2.
\end{equation}
Using \eqref{emin} and Lemma \ref{lem:LYineq} the estimate
$R_1 \le C_\nu\lambda^{-2n}\lVert u'\rVert_{L^2}^2$ easily follows.
Combined with \eqref{eq:stepforL2}--\eqref{eq:step2H1} this gives
\begin{equation}
\lVert M_{\nu,n}^\ast(0)u -M_{\nu,n}^\ast(\epsilon;\omega)u\rVert_{H^1}^2
\le c_{n,\epsilon}\lVert u\rVert_{H^1}^2
+C_\nu\lambda^{-2n}\lVert u\rVert_{H^1}^2
\end{equation}
since $\lVert u'\rVert_{L^2}^2\le \lVert u\rVert_{H^1}^2$,
which yields \eqref{eq:Bog2} in the case $m=1$.

For general $m$, an induction argument together with properties of the norm shows
that it suffices to consider $\lVert \partial^m(M_{\nu,n}^\ast(0)u - M_{\nu,n}^\ast(\epsilon;\omega)u) \rVert_{L^2}^2$, 
see \eqref{eq:mthstep}.
We split the expression into one part containing the derivatives of $u$ of order $\le m-1$,
and one part where all derivatives land on $u$. 
Arguing as in \eqref{eq:step2H1} we get
\begin{equation}\label{eq:step3Hm}
\lVert \partial^m(M_{\nu,n}^\ast(0)u - M_{\nu,n}^\ast(\epsilon;\omega)u) \rVert_{L^2}^2
\le c_{n,\epsilon}\lVert u\rVert_{H^m}^2+R_m,
\end{equation}
where
\begin{equation}
R_m=2\,\Bigg\lVert M_{\nu,n}^\ast(0)\Bigg(\frac{\partial^m u}{(\partial E_0^{(n)})^m}\Bigg) - M_{\nu,n}^\ast(\epsilon;\omega)
\Bigg(\frac{\partial^m u}{(\partial E_\epsilon^{(n)}(\omega))^m}\Bigg)
\Bigg\rVert_{L^2}^2.
\end{equation}
Using \eqref{emin} and Lemma \ref{lem:LYineq} to estimate $R_m$ we find that
\begin{equation}
\lVert \partial^m(M_{\nu,n}^\ast(0)u - M_{\nu,n}^\ast(\epsilon;\omega)u) \rVert_{L^2}^2
\le c_{n,\epsilon}\lVert u\rVert_{H^m}^2+C_{\nu,m}\lambda^{-nm}\lVert u\rVert_{H^m},
\end{equation}
from which \eqref{eq:Bog2} follows.
\end{proof}

\appendix
\section{}\label{app:PsiDO}
In this appendix we discuss pseudodifferential operators on $\Sone$,
and prove a special case of Egorov's theorem applicable to our context.
We also revisit some results concerning $L^2$ continuity of pseudodifferential operators in the presence
of a noise parameter.
This is then used to prove Theorems \ref{thm:Pn} and \ref{thm:Q}.

Let $g_\epsilon(\omega)$, $E_\epsilon(\omega)$ and $\tau_\epsilon(\omega)$ 
be defined through the perturbation scheme $\{f_\epsilon\}_{\epsilon>0}$
as in \eqref{eq:perturbedsystem}. As in Section \ref{app:wPC}, we will identify $\Sone$ with the fundamental domain
\[
\Sone\simeq\{x:0\le x<1\}\subset\R,
\]
and view $g_\epsilon(\omega): \Sone \rightarrow  \Sone$ as a smooth function $g_\epsilon(\omega):\R\to\R$
with the property that $g_\epsilon(\omega,x+1)=g_\epsilon(\omega,x)+1$ for all $x\in\R$, with an inverse $g_\epsilon^{-1}(\omega)$
enjoying the same property. Similarly, $E_\epsilon(\omega)$ will be identified with the smooth 
invertible function $E_\epsilon(\omega):\R\to\R$
given by $E_\epsilon(\omega,x)=kg_\epsilon(\omega,x)$ for $x\in\R$, so that
$E_\epsilon(\omega,x+\ell)=E_\epsilon(\omega,x)+k\ell$ for all $\ell\in\Z$ and all $x\in\R$. 
The inverse $E^{-1}(\omega,x)=g^{-1}(\omega,x/k)$ satisfies $E^{-1}(\omega,x+k)=E^{-1}(\omega,x)+1$ for $x\in\R$.
The derivative
$E_\epsilon'(\omega):\R\to\R$ is a smooth periodic function with period 1.
Similarly,
any smooth function $\tau_\epsilon(\omega):\Sone\to\R$ will be identified with a periodic function
$\tau_\epsilon(\omega):\R\to\R$ with period 1.
We shall assume that $\epsilon$ belongs to the range $0\le\epsilon<\epsilon_0$,
where $\epsilon_0$ is sufficiently small for \eqref{emin} to hold, that is, $E_\epsilon(\omega)$
is an expanding map, $\mathbb{P}$-almost surely. We will also make extensive use of \eqref{eq:assumptionCrclose}.
When there is no ambiguity, the noise level $\epsilon$ will mostly be omitted from the notation,
in particular when the dependence on the noise parameter $\omega\in\Omega$ is
already displayed. Thus, we shall for example mostly write $E(\omega)$ instead of $E_\epsilon(\omega)$
and $\tau(\omega)$ instead of $\tau_\epsilon(\omega)$.

We shall continue to use the notation $E^{(n)}(\omega)$ and $\tau^{(n)}(\omega)$
for the functions introduced in \eqref{notation:E^{(n)}}--\eqref{notation:tau^{(n)}}
lifted on $\R$. We let $G(\omega)=E^{-1}(\omega)$ denote the inverse of $E(\omega)$,
and let $G^{(n)}(\cdot)$ be the backward cocycle induced by $G$,
so that $G^{(n)}(\omega)$ is the inverse of $E^{(n)}(\omega)$.

Let $\Ci_p^\infty(\R)$ denote the class of periodic smooth
functions on $\R$ with period 1. 
Let $h>0$ be a semiclassical parameter. In order to adhere to the notation used in the main body of this paper,
introduce the real parameter $\nu=1/h$ and consider the operator $M_{\nu,n}(\omega)=M_{1/h,n}(\omega)$ defined for $n\ge1$ by
\begin{equation}\label{MonRgeneraltime}
M_{\nu,n}(\omega)u(x)=e^{i\tau^{(n)}(\omega,x)/h}u(E^{(n)}(\omega,x)),\quad u\in\Ci_p^\infty(\R),
\end{equation}
compare with the remark in Section \ref{subsection:reduction}.
We note that if $u\in\Ci_p^\infty(\R)$ and $\ell\in\Z$ then 
$M_{\nu,n}(\omega)u(x+\ell)=M_{\nu,n}(\omega)u(x)$ so $M_{\nu,n}(\omega)$ preserves periodicity.
When $\nu$ is restricted to the set of positive integers, it follows that
$M_{\nu,n}(\omega)$ can be identified with the operator given by \eqref{restrictionoperator}.

Let $\Es(\R)$ be the space of Schwartz functions on $\R$. For $\ell\in\Z$, we let $T_\ell$ be the
translation operator $T_\ell v(x)=v(x-\ell)$ for $v\in\Es(\R)$,
and we extend $T_\ell$ to the space $\Es'(\R)$ of tempered distributions on $\R$ by duality.
In view of \eqref{restrictionoperator}--\eqref{adjointrestrictionoperator}
and the discussion connected to \eqref{eq:setequality},
it follows that the $L^2(\Sone)$ adjoint of
the operator given by \eqref{MonRgeneraltime} is
\begin{equation}\label{MstaronRgeneraltime}
M_{\nu,n}^\ast(\omega) u(x)=\sum_{\bar\alpha=0}^{k^n-1}
T_{-\bar\alpha}(G^{(n)}(\omega))^\ast\bigg(\frac{e^{-i\tau^{(n)}(\omega)/h}}{\partial E^{(n)}(\omega)}
u\bigg)(x),\quad u\in\Ci_p^\infty(\R),
\end{equation}
where we by abuse of notation write $\partial E^{(n)}(\omega)$ for the derivative 
$x\mapsto dE^{(n)}(\omega,x)/dx$.

\subsection{Pseudodifferential operators on $\Sone$}

The general definition of semiclassical (pseudodifferential) operators on manifolds
has the disadvantage of not having globally defined symbols.
Instead, we will use the identification described above 
concerning functions on $\Sone$ and periodic functions on $\R$,
and identify semiclassical operators acting on ${\Ci}^\infty(\Sone)$ with
so-called periodic semiclassical operators on $\R$. The latter class consists of
semiclassical operators $a(x,hD)$ on $\R$ with symbols periodic in $x$,
\[
a(x+\ell,\xi)=a(x,\xi),\quad\ell\in\Z,
\]
acting on periodic functions, see for example Zworski~\cite{Zworski}*{Section 5.3}
where the $n$ dimensional torus $\T^n$ is considered. 
We mention here that 
Ruzhansky and Turunen~\cite{RuzhanskyTurunenPaper}
have developed an equivalent pseudodifferential calculus for operators on $\T^n$
using Fourier series and globally defined toroidal symbols. This was the viewpoint we used
in Section \ref{subsection:reduction},
where a semiclassical operator $a(x,hD)$ acting on ${\Ci}^\infty(\Sone)$ was defined as
acting through multiplication by $a(x,h\xi)$ on the side of Fourier coefficients,
compare with \eqref{eq:toroidalsymbols}. 
We briefly discuss the equivalence below.
For more on the development of this
topic we refer to the works mentioned above and the references therein.
As we shall see, in order to prove Theorem \ref{thm:Pn}
we will have to revisit most of the details of the
calculus in the presence of a noise parameter $\omega$;
our choice of approach is due mostly to being more familiar
with the standard calculus.

For $m\in\R$ we let $S^m=S^m(\R\times\R)$ be the symbol class
of functions 
$a\in {\Ci}^\infty (\R^{2})$ such that for all integers 
$\alpha,\beta\in\N$ the derivative
$a_{(\beta)}^{(\alpha)}(x,\xi)=\partial_\xi^\alpha D_{x}^\beta a(x,\xi)$ has the bound
\begin{equation}\label{symbolclass}
\lvert a_{(\beta)}^{(\alpha)}(x,\xi)\rvert
\le C_{\alpha,\beta}(1+\lvert\xi\rvert)^{m-\lvert\alpha\rvert},\quad x, \xi\in\R.
\end{equation}
Here $D_{x}=-i\partial_{x}$.
Note that the symbol $a$ is permitted to depend on $h$, but we require the estimates
to be uniform for $h$ in some interval $0<h\le h_0$. 
$S^m$ is a Fr{\'e}chet space with semi-norms given by the smallest constants which can be used in \eqref{symbolclass},
\begin{equation}\label{eq:seminormssingle}
\lvert a\rvert_\ell^{(m)}
=\max_{\lvert\alpha+\beta\rvert\le\ell}
\sup_{x,\xi}{\{\lvert a_{(\beta)}^{(\alpha)}(x,\xi)\rvert
(1+\lvert\xi\rvert)^{-(m-\lvert\alpha\rvert)}\}},
\quad a\in S^m.
\end{equation}
If $a\in S^m(\R\times\R)$ and $A$ is the semiclassical operator $A=a(x,hD)$, then we write $A\in\Psi^m(\R)$,
and we have
\begin{equation}\label{eq:PsiDO}
Au(x)=a(x,hD)u(x)=\frac{1}{2\pi h}\iint e^{i(x-y)\xi/h}a(x,\xi)u(y)dyd\xi,\quad u\in\Es(\R),
\end{equation}
where the right-hand side may be interpreted as an iterated integral, or as the limit
in $\Es'$ when we approach $a$ by a sequence of functions in $\Es$.
The pseudodifferential operator $a(x,D)$ is obtained by setting $h=1$ in \eqref{eq:PsiDO}.

Now suppose $u\in\Ci_p^\infty(\R)$ and let $a\in S^m(\R\times\R)$. 
Then $a(x,hD):{\Ci}^\infty(\R)\to {\Ci}^\infty(\R)$ is continuous, so
the action of $a(x,hD)$ on $u$ is well defined.
If in addition $a$ is periodic
in $x$, it follows that
\[
(a(x,hD)u)(x+\ell)=(a(x,hD)u(\cdot+\ell))(x),
\]
so $a(x,hD)$ preserves
periodicity. Hence we may regard $a(x,hD):\Ci_p^\infty(\R)\to\Ci_p^\infty(\R)$
as an operator on ${\Ci}^\infty(\Sone)$.
Moreover, let $b\in {\Ci}^\infty(\Sone\times\R)$ and identify $b$ with an
element in ${\Ci}^\infty(\R_x\times\R_\xi)$ which is periodic in $x$.
Suppose that the latter belongs to $S^m(\R\times\R)$. 
Define an operator $b(x,hD)$ on ${\Ci}^\infty(\Sone)$ 
acting through multiplication by $b(x,h\xi)$ on the side of Fourier coefficients,
\begin{equation}\label{eq:PsiDOonS1}
b(x,hD)v(x)=\sum_{\xi\in 2\pi\Z} b(x,h\xi)\hat v(\xi)e^{ix\xi},\quad v\in {\Ci}^\infty(\Sone).
\end{equation}
Using the identification above, define a semiclassical operator $B$
on $\R$ with symbol $b\in S^m(\R\times\R)$ in accordance with \eqref{eq:PsiDO}. Then
$B:\Ci_p^\infty(\R)\to\Ci_p^\infty(\R)$ can be identified with the operator given by \eqref{eq:PsiDOonS1}.
Indeed, identify $v\in {\Ci}^\infty(\Sone)$ with $v\in\Ci_p^\infty(\R)$, and write the latter
as a Fourier series $v(y)=\sum\hat v(\xi)e^{iy\xi}$, convergent in ${\Ci}^\infty(\R)$. Recall that
\[
e^{-ix\xi}B(e^{i(\cdot)\xi})(x)=e^{-ixh\xi/h}B(e^{i(\cdot)h\xi/h})(x)=b(x,h\xi),
\]
see Zworski~\cite{Zworski}*{Theorem 4.19}. Since $B:{\Ci}^\infty(\R)\to {\Ci}^\infty(\R)$ is continuous,
it follows that $Bv(x)=\sum B(e^{i(\cdot)\xi})(x)\hat v(\xi)=\sum b(x,h\xi) \hat v(\xi) e^{ix\xi}$
with convergence in ${\Ci}^\infty(\R)$.

\begin{rmk}\label{rmk:notation}
Based on the previous discussion, we will permit us to use the notation
$a\in S^m(T^\ast(\Sone))$ to express the fact that $a$ is an element in $S^m(\R_x\times\R_\xi)$
which is periodic in $x$.
\end{rmk}

\subsection{Egorov's theorem}
We now return to the problem at hand. We aim to show that if $\omega\in\Omega$
and $A$ is a semiclassical operator on the one dimensional torus, then
the same is true for the conjugation $M_{\nu,n}^\ast(\omega)\circ A\circ M_{\nu,n}(\omega)$, and we shall calculate the symbol.
This is a special case of general results concerning conjugations with Fourier integral operators,
collectively known as Egorov's theorem. However, due to the relatively simple nature of our problem,
it is possible to give a direct proof using only properties of pseudodifferential operators.
In doing so, we shall also be interested in tracking certain properties of uniformity with respect to the
noise parameter $\omega$, as well as time $n\in\N^\ast$.
This will be made possible by using the pseudodifferential calculus for operators with so-called double symbols
as presented by Kumano-go~\cite{Kumano-go}, and we are indebted to Yoshinori Morimoto for the suggestion.
We will briefly recall this calculus, and refer the reader to the book by 
Kumano-go~\cite{Kumano-go}*{Chapter 2, \S 2} for a detailed exposition in the case $h=1$.

For a ${\Ci}^\infty$ function in $\R_x\times\R_\xi\times\R_{x'}\times\R_{\xi'}$ and
positive integers (or more generally, multi-indices) 
$\alpha, \alpha', \beta$ and $\beta'$,
we will use the notation
\begin{equation}\label{derivativenotationfordoublesymbols}
p_{(\beta,\beta')}^{(\alpha,\alpha')}(x,\xi,x',\xi')=\partial_\xi^\alpha\partial_{\xi'}^{\alpha'} D_x^\beta D_{x'}^{\beta'} p(x,\xi,x',\xi').
\end{equation}
A ${\Ci}^\infty$ function $p$ 
belongs to the symbol class 
$S^{m,m'}=S^{m,m'}(\R\times\R\times\R\times\R)$
if for any $\alpha, \alpha', \beta ,\beta'$ there exists a constant $C_{\alpha, \alpha', \beta ,\beta'}$ such that
\begin{equation}\label{doublesymbolclass}
\lvert p_{(\beta,\beta')}^{(\alpha,\alpha')}(x,\xi,x',\xi')\rvert 
\le C_{\alpha, \alpha', \beta ,\beta'}
(1+\lvert \xi\rvert )^{m-\lvert \alpha\rvert }(1+\lvert \xi'\rvert )^{m'-\lvert \alpha'\rvert }.
\end{equation}
$S^{m,m'}$ is a Fr{\'e}chet space with semi-norms
\begin{equation}\label{eq:seminormsdouble}
\lvert p\rvert _\ell^{(m,m')}
=\max_{\lvert \alpha+\alpha'+\beta+\beta'\rvert \le\ell}\inf{\{ C_{\alpha, \alpha', \beta ,\beta'}
\text{ for which \eqref{doublesymbolclass} holds}\}}. 
\end{equation}
A symbol of class $S^{m,m'}$ is called a double symbol. Let
$\mathscr B$ be the space of smooth functions
with bounded derivatives of any order. For a double symbol $p$ in 
$S^{m,m'}$ we define the operator $P=p(x,hD_x,x',hD_{x'})$ acting on $u\in\mathscr B$ by
\begin{equation*}
Pu(x)=\frac{1}{(2\pi h)^2}\iint e^{-i(y\xi+y'\xi')/h} p(x,\xi,x+y,\xi')u(x+y+y') dy dy' d\xi d\xi',
\end{equation*}
where the integral can be interpreted
as the limit in $\Es'$ when we approach $p$ by a sequence of functions in $\Es$.
We remark that when $p$ in $S^{m,0}$ is independent of $\xi'$, then
\begin{equation}\label{operatorwithdoublesymbol}
Pu(x)=\frac{1}{2\pi h}\iint e^{i(x-x')\xi/h} p(x,\xi,x')u(x') dx' d\xi,
\quad u\in\Es,
\end{equation}
see Corollary $3^\circ$ of Lemma 2.3 in Kumano-go~\cite{Kumano-go}*{Chapter 2}.

The following special case of the expansion formula given by Theorem 3.1 in Kumano-go~\cite{Kumano-go}*{Chapter 2}
will be important. We state it here for easy reference, where we also include the results of Theorem 2.5 in Kumano-go~\cite{Kumano-go}*{Chapter 2}.

\begin{thm}\label{thm:simplifiedsymbol}
For $p\in S^{m,m'}(\R\times\R\times\R\times\R)$ and $P=p(x,hD_x,x',hD_{x'})$, set $p_0(x,\xi)=p(x,\xi,x,\xi)$
and
\[
r_\vartheta(x,\xi)=\frac{1}{2\pi h}\iint e^{-iy\eta/h} p_{(0,1)}^{(1,0)}(x,\xi+\vartheta\eta,x+y,\xi) dy d\eta,
\]
interpreted as an oscillatory integral. Then $p_0$ belongs to $S^{m+m'}(\R\times\R)$
and $\{r_\vartheta\}_{\lvert\vartheta\rvert\le 1}$ is a bounded set of $S^{m+m'-1}(\R\times\R)$,
and for any $\ell$ there exists a constant $C_\ell$ and an integer $\ell'$,
independent of $\vartheta$, such that
\begin{equation}\label{seminormcontinuity}
\lvert r_\vartheta\rvert_\ell^{(m+m'-1)}\le C_\ell \lvert p\rvert_{\ell'}^{(m,m')},
\end{equation}
where $\ell'$ depends only on $\ell$, $m$, $m'$ and the dimension $\dim{\R}=1$. Moreover,
\begin{equation*}
P=p_0(x,hD)+hp_1(x,hD),
\end{equation*}
where $p_1(x,\xi)=\int_0^1 r_\vartheta(x,\xi)d\vartheta$. The function $p_L(x,\xi)=p_0(x,\xi)+hp_1(x,\xi)$
is called the simplified symbol. The map $S^{m,m'}\ni p\mapsto p_L\in S^{m+m'}$ is continous,
and for any $\ell$ there exists a constant $C_\ell$ and an integer $\ell'$ as above, such that
\begin{equation}\label{seminormcontinuity2}
\lvert p_L\rvert_\ell^{(m+m')}\le C_\ell \lvert p\rvert_{\ell'}^{(m,m')}.
\end{equation}
\end{thm}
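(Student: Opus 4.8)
The plan is to derive Theorem~\ref{thm:simplifiedsymbol} as the order-one truncation of the double-symbol pseudodifferential calculus of Kumano-go~\cite{Kumano-go}*{Chapter 2}: the reduction of a double symbol to an equivalent single (``simplified'') symbol is Theorem 2.5 there, and the explicit formula for the first two terms together with the remainder is the first-order case of the expansion formula of Theorem 3.1, both stated for $h=1$. The $h$-dependent version is identical word for word, since the symbol estimates \eqref{doublesymbolclass} are imposed uniformly for $0<h\le h_0$; alternatively one rescales the frequency variables. Below I indicate how I would run the argument in the special case at hand.

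First I would reduce $P=p(x,hD_x,x',hD_{x'})$ to an operator with a single symbol. Starting from the defining integral for $Pu(x)$ with $u\in\mathscr B$, one changes variables so that the arguments $x+y$ and $x+y+y'$ become new integration variables, interchanges the order of the iterated oscillatory integrals --- legitimate after inserting the standard convergence factors $\langle y\rangle^{-2N}\langle\eta\rangle^{-2N}$ and integrating by parts, which is exactly the content of the oscillatory-integral lemmas in Kumano-go~\cite{Kumano-go}*{Chapter 2} --- and recognizes a Fourier inversion. The result is $P=p_L(x,hD)$ with
\[
p_L(x,\xi)=\frac{1}{2\pi h}\iint e^{-iy\eta/h}\,p(x,\xi+\eta,x+y,\xi)\,dy\,d\eta,
\]
interpreted as an oscillatory integral; the well-definedness of $P$ on $\mathscr B$ and of this integral are part of the same machinery.

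Next I would Taylor expand the integrand in the second slot about $\eta=0$,
\[
p(x,\xi+\eta,x+y,\xi)=p(x,\xi,x+y,\xi)+\eta\int_0^1(\partial_\xi p)(x,\xi+\vartheta\eta,x+y,\xi)\,d\vartheta.
\]
Inserting the first term and using the elementary identity $\frac{1}{2\pi h}\iint e^{-iy\eta/h}g(x+y)\,dy\,d\eta=g(x)$ gives precisely $p_0(x,\xi)=p(x,\xi,x,\xi)$. For the second term I would write $\eta\,e^{-iy\eta/h}=ih\,\partial_y e^{-iy\eta/h}$, integrate by parts in $y$, and use $\partial_{x'}=iD_{x'}$; this produces the factor $h$ in front of
\[
\int_0^1 r_\vartheta(x,\xi)\,d\vartheta,\qquad r_\vartheta(x,\xi)=\frac{1}{2\pi h}\iint e^{-iy\eta/h}\,p_{(0,1)}^{(1,0)}(x,\xi+\vartheta\eta,x+y,\xi)\,dy\,d\eta,
\]
which is the asserted decomposition $P=p_0(x,hD)+hp_1(x,hD)$ with $p_1=\int_0^1 r_\vartheta\,d\vartheta$.

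It then remains to prove the symbol estimates, which is the part that actually requires work. That $p_0\in S^{m+m'}(\R\times\R)$ with $\lvert p_0\rvert_\ell^{(m+m')}\le C_\ell\lvert p\rvert_\ell^{(m,m')}$ is immediate: apply $\partial_\xi^\alpha D_x^\beta$ to $p_0(x,\xi)=p(x,\xi,x,\xi)$, use the chain rule so that each $\partial_\xi$ lands on the second or the fourth slot, and use $(1+\lvert\xi\rvert)^{m-j}(1+\lvert\xi\rvert)^{m'-j'}=(1+\lvert\xi\rvert)^{m+m'-j-j'}$. The substantive step is to show that $\{r_\vartheta\}_{\lvert\vartheta\rvert\le1}$ is a bounded subset of $S^{m+m'-1}(\R\times\R)$ with the uniform bound \eqref{seminormcontinuity}: one differentiates $r_\vartheta$ under the oscillatory integral, regularizes as before, and trades the powers of $y$ generated by the $\partial_\xi$-derivatives (acting through the shift $\xi+\vartheta\eta$) and the powers of $\eta$ against integrations by parts, each integration by parts lowering the $\eta$-order of the amplitude $p_{(0,1)}^{(1,0)}\in S^{m-1,m'}$ and costing only a harmless power of $\lvert\vartheta\rvert\le1$. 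This bounds $\lvert r_\vartheta\rvert_\ell^{(m+m'-1)}$ by $C_\ell\lvert p\rvert_{\ell'}^{(m,m')}$ with $\ell'$ depending only on $\ell$, $m$, $m'$ and $\dim{\R}=1$; integrating in $\vartheta$ and using the inclusion $S^{m+m'-1}\hookrightarrow S^{m+m'}$ then yields \eqref{seminormcontinuity2}. The main obstacle is precisely this estimate for $r_\vartheta$: making the oscillatory-integral bounds rigorous while keeping every constant uniform in $\vartheta$ and in $h$, and certifying that $\ell'$ has the claimed dependence. Since this is exactly the content of the convergence-factor and integration-by-parts lemmas of Kumano-go~\cite{Kumano-go}*{Chapter 2}, I would invoke them directly rather than reprove them.
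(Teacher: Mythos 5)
Your proposal is correct and matches the paper's treatment: the paper gives no proof of Theorem \ref{thm:simplifiedsymbol} at all, stating it only "for easy reference" as the first-order special case of Theorems 2.5 and 3.1 in Kumano-go~\cite{Kumano-go}*{Chapter 2}, which is exactly what you invoke. Your added sketch — the change of variables reducing $P$ to $p_L(x,hD)$, the first-order Taylor expansion with the integration by parts producing the factor $h$ and the derivative $p^{(1,0)}_{(0,1)}$, and the deferral of the uniform oscillatory-integral bounds to Kumano-go's lemmas — is accurate and consistent with the source.
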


Note that Theorem 3.1 in Kumano-go~\cite{Kumano-go}*{Chapter 2} provides a full asymptotic expansion for the simplified symbol $p_L$,
but as mentioned above we shall only use the special case presented here.
Also, it will sometimes be convenient to identify a symbol $p_0\in S^m$ 
with a double symbol $p\in S^{m,0}$ which is independent of both $x'$ and $\xi'$.
This makes sense since by Theorem \ref{thm:simplifiedsymbol} we
then have $p(x,hD_x,x',hD_{x'})=p_0(x,hD_x)$.

Recall that $T_\ell$ denotes the translation operator $T_\ell v(x)=v(x-\ell)$. We will use
the notation $\mathscr T_\ell$ for the translation operator $\mathscr T_\ell w(x,\xi,x',\xi')=w(x-\ell,\xi,x'-\ell,\xi')$.
Most of the double symbols we will be working with will be 
invariant under the action of $\mathscr T_\ell$ for $\ell\in\Z$, that is,
\begin{equation}\label{eq:doublesymbolperiodicity}
p(x+\ell,\xi,x'+\ell,\xi')=p(x,\xi,x',\xi'),\quad\ell\in\Z.
\end{equation}
From the definition of the simplified symbol, it then immediately follows that if
$p\in S^{m,m'}$ satisfies \eqref{eq:doublesymbolperiodicity} then 
$p_L\in S^{m+m'}(T^\ast (\Sone))$,
that is, $p_L$ is periodic in $x$. Moreover, it is straightforward to check that
$P=p(x,hD_x,x',hD_{x'})$ preserves the periodicity of $u\in\Ci_p^\infty(\R)$, that is,
$Pu(x+\ell)=Pu(x)$ for $\ell\in\Z$. The same is true if $p$ in addition is independent of $\xi'$, which
follows from \eqref{operatorwithdoublesymbol} by a change of variables. Hence, we can think of these
operators as acting on ${\Ci}^\infty(\Sone)$, and in this context we say that $P$
is a semiclassical operator on $\Sone$ with double symbol $p$ in $S^{m,m'}$ 
(with $m'=0$ if $p$ is independent of $\xi'$) satisfying \eqref{eq:doublesymbolperiodicity}.

The proof of Egorov's theorem in the form that we shall need will be split into lemmas.
The reader may want to consult the proof of Theorem \ref{thm:Egorov'sgeneraltime}
for an explanation of how these lemmas will be used.

\begin{lem}\label{lem:multiplicationcomposition}
Let $m>0$. Let $a_m(\xi)$ be the escape function defined by \eqref{eq:escapefunction},
considered as a function in ${\Ci}^\infty(\R_x\times\R_\xi)$ which is constant in $x$.
Let $A_m$ be the semiclassical operator with symbol $a_m\in S^{m}(T^\ast(\Sone))$.
Then the operator $A(\omega)\equiv A(\epsilon;\omega,n,m)$ given by
\[
A(\omega)u(x)= \frac{A_m^{-2}u(x)}{dE^{(n)}(\omega,x)/dx} ,\quad u\in\Ci_p^\infty(\R),
\]
is $\mathbb{P}$-almost surely a semiclassical operator on $\Sone$ with double symbol 
$a(\omega)$ in $S^{-2m,0}$,
where
\begin{equation}\label{eq:symbolofmultiplication}
a(\omega,x,\xi)=a_m^{-2}(\xi)(dE^{(n)}(\omega,x)/dx)^{-1}
\end{equation}
is independent of $x'$ and $\xi'$ and satisfies \eqref{eq:doublesymbolperiodicity}.
For every $\ell\in\N$ there is an $\epsilon_0(\ell)$, independent of time $n\in\N^\ast$,
such that if $0\le\epsilon<\epsilon_0$
then
\begin{equation}\label{eq:preconditionA}
\lvert a(\omega)\rvert_\ell^{(-2m,0)}\le C_{\ell,n,m}
\end{equation}
$\mathbb{P}$-almost surely,
where the constant is independent of $\omega$.
\end{lem}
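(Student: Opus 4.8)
The plan is to show that $A(\omega)$ is the operator with double symbol $a(\omega,x,\xi)=a_m^{-2}(\xi)\,(dE^{(n)}(\omega,x)/dx)^{-1}$ and then to bound the $S^{-2m,0}$ semi-norms of this symbol uniformly in $\omega$. The identification of the operator is essentially by inspection: by definition $A_m^{-2}$ is the semiclassical operator acting by multiplication by $a_m^{-2}(h\xi)$ on the Fourier side, hence a semiclassical operator on $\Sone$ with symbol $a_m^{-2}(\xi)\in S^{-2m}(T^\ast(\Sone))$ (since $a_m^{-1}=a_{-m}\in S^{-m}$ by the construction following \eqref{eq:escapefunction}). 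Left-multiplication by the periodic smooth function $x\mapsto(dE^{(n)}(\omega,x)/dx)^{-1}$ is the zeroth-order operator with that function as symbol; composing, we obtain a semiclassical operator on $\Sone$ whose symbol (viewed as a double symbol independent of $x'$ and $\xi'$, in the sense of the identification following Theorem \ref{thm:simplifiedsymbol}) is exactly \eqref{eq:symbolofmultiplication}. Periodicity in $x$, i.e.\ \eqref{eq:doublesymbolperiodicity}, follows because $E^{(n)}(\omega)$ lifted to $\R$ satisfies $E^{(n)}(\omega,x+\ell)=E^{(n)}(\omega,x)+k^n\ell$, so its derivative is $1$-periodic.

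The substantive part is the uniform semi-norm bound \eqref{eq:preconditionA}. First I would record that $a_m^{-2}\in S^{-2m}$ with semi-norms depending only on $m$ (and $R,\delta_0$, which are fixed once and for all via $\kappa=(\lambda+1)/2$ and thus depend only on $f_0$); these contribute the $\xi$-decay of order $-2m$ and are harmless. Since $a(\omega,x,\xi)$ factors as a product of a function of $\xi$ alone and a function of $x$ alone, the Leibniz rule reduces \eqref{eq:preconditionA} to bounding, for each fixed $\ell$, the quantities $\sup_x|D_x^\beta\bigl((dE^{(n)}(\omega,x)/dx)^{-1}\bigr)|$ for $|\beta|\le\ell$, uniformly in $\omega$ and $n$. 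The lower bound \eqref{emin} gives $dE^{(n)}(\omega,x)/dx\ge\lambda^n>1$ pointwise, $\mathbb P$-almost surely, which controls the denominators after differentiation. For the numerators one uses the chain rule $E^{(n)}(\omega)=E(\theta^{n-1}\omega)\circ\cdots\circ E(\omega)$ together with the fact that, by \eqref{convinC1} (equivalently \eqref{eq:assumptionCrclose}), the maps $E_\epsilon(\omega)$ have $\Ci^{\ell+1}$ norms bounded uniformly in $\omega$ and in $0\le\epsilon<\epsilon_0(\ell)$ by a constant depending only on $f_0$ and $\ell$ (after possibly shrinking $\epsilon_0$ depending on $\ell$). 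A Faà di Bruno / induction on $n$ argument then shows that $\sup_x|D_x^\beta(dE^{(n)}(\omega,x)/dx)|$ grows at most like a constant (depending on $\ell,m$) times a power of $n$ times $\sup_\omega\sup_x(dE(\omega,x)/dx)^n$; dividing by the lower bound $\lambda^n$ from \eqref{emin} at each stage absorbs the exponential growth, leaving a bound $C_{\ell,n,m}$ that is independent of $\omega$ (it is allowed to depend on $n$, as the statement permits).

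The main obstacle I anticipate is the bookkeeping in the induction on $n$ for the higher $x$-derivatives of $(dE^{(n)}(\omega,x)/dx)^{-1}$: one must verify that each differentiation brings down only finitely many factors of $dE(\theta^j\omega,x_j)/dx$ and bounded derivatives thereof, and that these are all dominated after division by $\lambda^{n}$ — so that the $\epsilon_0$ needed for \eqref{emin} and for the uniform $\Ci^{\ell+1}$ bounds on $E_\epsilon(\omega)$ can be taken independent of $n$, with only the dependence on $\ell$ (through how many derivatives must be controlled) and the final constant $C_{\ell,n,m}$ left depending on $n$. None of this is deep, but it is the step that requires care; everything else is a direct reading-off of definitions from Appendix \ref{app:PsiDO} and the identification following Theorem \ref{thm:simplifiedsymbol}.
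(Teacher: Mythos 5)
Your proposal is correct and follows essentially the same route as the paper: identify the exact symbol $a_m^{-2}(\xi)\,(dE^{(n)}(\omega,x)/dx)^{-1}$ by composing the Fourier multiplier with left-multiplication by the periodic function, get periodicity from $E^{(n)}(\omega,x+\ell)=E^{(n)}(\omega,x)+k^n\ell$, and obtain \eqref{eq:preconditionA} from \eqref{emin}, the uniform $\Ci^{\ell+1}$ closeness to $E_0$ furnished by \eqref{eq:assumptionCrclose} (with $\epsilon_0$ shrunk depending on $\ell$ only), $\theta$-invariance of $\mathbb P$, and Fa\`a di Bruno plus Leibniz. The one cosmetic remark is that nothing needs to be ``absorbed'' by $\lambda^{-n}$, since the constant $C_{\ell,n,m}$ is allowed to depend on $n$; what matters, and what you correctly isolate, is that $\epsilon_0(\ell)$ is independent of $n$.
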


\begin{proof}
First note that since $a_m$ does not depend on the base variable,
it follows from Theorem \ref{thm:simplifiedsymbol} that the simplified symbol of $A_m^{-2}=A_m^{-1}\circ A_m^{-1}$
is equal to $a_m^{-2}$. 
By the definition of $a_m$ we also have that
$a_m^{-2}=a_{-m}^2$ which is an element in $S^{-2m}$. As shown by the discussion following Theorem \ref{thm:simplifiedsymbol},
we may view $a_m^{-2}$ as a double symbol in $S^{-2m,0}$ which is independent of both $x'$ and $\xi'$.

For almost every $\omega$ we may interpret multiplication by $(dE^{(n)}(\omega,x)/dx)^{-1}$ as the action of a semiclassical 
operator with symbol $(x,\xi)\mapsto (dE^{(n)}(\omega,x)/dx)^{-1}$. Indeed,
\begin{equation}\label{eq:derivativeofE(n)}
dE^{(n)}(\omega,x)/dx=\prod_{j=0}^{n-1}
E'(\theta^j\omega,E^{(j)}(\omega,x)),
\end{equation}
where $E'(\omega)$ is periodic and
$\mathbb{P}$-almost surely satisfies $E'(\omega,x)>\lambda$ for all $x\in\R$ by \eqref{emin},
where $\lambda>1$.
It is straightforward to check
that all $x$ derivatives of $1/E'(\omega,x)$ are uniformly bounded on $\R$,
while all $\xi$ derivatives vanish. 
Since $\theta$ is $\mathbb{P}$-preserving, it follows that 
$(dE^{(n)}(\omega,x)/dx)^{-1}$ belongs to $S^0(T^\ast(\Sone))$, $\mathbb{P}$-almost surely,
and the claim follows by Zworski~\cite{Zworski}*{Theorem 4.3}. 
Clearly, the symbol of $A(\omega)$ is given by \eqref{eq:symbolofmultiplication}
in view of \eqref{operatorwithdoublesymbol}. It is also clear that $a(\omega)$ satisfies \eqref{eq:doublesymbolperiodicity}.

Let $\ell\in\N$. By virtue of \eqref{eq:assumptionCrclose}, we have
\begin{equation}\label{eq:boundsforE}
\esssup_\omega{\sup_x{\lvert\partial_x^\beta(E_\epsilon(\omega,x)-E_0(x))\rvert}}<1
\end{equation}
for all $\lvert\beta\rvert\le\ell_0+1$ and $0\le\epsilon<\epsilon_{\ell_0+1}$, if
$\epsilon_{\ell_0+1}$ is chosen sufficiently small.
Since $\theta$ is $\mathbb{P}$-preserving, this is also true for $E(\theta^j\omega)$ for all $j$.
Using also \eqref{emin} and \eqref{eq:derivativeofE(n)} together with
Fa{\`a} di Bruno's formula and the Leibniz rule, it follows that
if $0\le\epsilon<\epsilon_{\ell_0+1}$ then
\eqref{eq:preconditionA} holds.
This completes the proof.
\end{proof}

For $n\in\N^\ast$, let $\phi^{(n)}(\omega)$ be the function in ${\Ci}^\infty(\R^2)$ defined by
\begin{equation}\label{eq:phigeneraltime}
\phi^{(n)}(\omega,x,x')=\int_0^1 \frac{d\tau^{(n)}(\omega,tx+(1-t)x')}{dy}dt.
\end{equation}

\begin{lem}\label{lem:conjugationexponential}
Let $m>0$.
Let $A(\omega)\equiv A(\epsilon;\omega,n,m)$ be a semiclassical operator on $\Sone$ with double symbol $a(\omega)$
in $S^{-2m,0}$, where $a(\omega,x,\xi,x')$ is independent of $\xi'$ and satisfies \eqref{eq:doublesymbolperiodicity}.
Assume that for every $\ell\in\N$ there is an $\epsilon_0(\ell)$, independent of time $n\in\N^\ast$,
such that if $0\le\epsilon<\epsilon_0$ then \eqref{eq:preconditionA} holds.
Let $\phi^{(n)}(\omega)$ be given by \eqref{eq:phigeneraltime}.
Then the operator $B(\omega)\equiv B(\epsilon;\omega,n,m)$ given by
\[
B(\omega)u(x)=e^{-i\tau^{(n)}(\omega,x)/h} A(\omega) (e^{i\tau^{(n)}(\omega)/h}u)(x)
\]
is $\mathbb{P}$-almost surely a semiclassical operator on $\Sone$ with double symbol $b(\omega)$ in $S^{-2m,0}$,
where
\begin{equation}\label{eq:symbolofconjugationTau}
b(w,x,\xi,x')=a(\omega,x,\xi+\phi^{(n)}(\omega,x,x'),x')
\end{equation}
is independent of $\xi'$ and satisfies \eqref{eq:doublesymbolperiodicity}
and \eqref{eq:preconditionA}.
\end{lem}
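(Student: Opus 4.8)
The plan is to write $B(\omega)$ out as an oscillatory integral and to absorb the extra exponential factor coming from the conjugation into a shift of the frequency variable. Since $a(\omega)$ is a double symbol in $S^{-2m,0}$ that does not depend on $\xi'$, the operator $A(\omega)$ acts by \eqref{operatorwithdoublesymbol},
\[
A(\omega)u(x)=\frac{1}{2\pi h}\iint e^{i(x-x')\xi/h}a(\omega,x,\xi,x')u(x')\,dx'\,d\xi .
\]
Conjugating by multiplication with $e^{\pm i\tau^{(n)}(\omega)/h}$ inserts the factor $e^{i(\tau^{(n)}(\omega,x')-\tau^{(n)}(\omega,x))/h}$ into the integrand. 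Writing $\tau^{(n)}(\omega,x)-\tau^{(n)}(\omega,x')=(x-x')\,\phi^{(n)}(\omega,x,x')$ by the fundamental theorem of calculus, with $\phi^{(n)}(\omega)$ as in \eqref{eq:phigeneraltime}, this factor equals $e^{-i(x-x')\phi^{(n)}(\omega,x,x')/h}$, which merges with the phase to give $e^{i(x-x')(\xi-\phi^{(n)}(\omega,x,x'))/h}$. For fixed $x$ the substitution $(x',\xi)\mapsto(x',\xi-\phi^{(n)}(\omega,x,x'))$ is a diffeomorphism of $\R^2$ with Jacobian $1$, and carrying it out in the oscillatory integral (which is legitimate under a smooth change of variables, e.g.\ by approximating $a(\omega)$ by Schwartz symbols, since $\phi^{(n)}(\omega)$ has bounded derivatives of every order) identifies $B(\omega)$ with the double-symbol operator \eqref{operatorwithdoublesymbol} whose symbol is the $b(\omega)$ of \eqref{eq:symbolofconjugationTau}.

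It remains to check that $b(\omega)$ has the asserted properties. Independence of $\xi'$ and the invariance \eqref{eq:doublesymbolperiodicity} are immediate: $\phi^{(n)}(\omega)$ does not involve $\xi'$, and since $\tau^{(n)}(\omega,\cdot)\in\Ci_p^\infty(\R)$ one has $\phi^{(n)}(\omega,x+\ell,x'+\ell)=\phi^{(n)}(\omega,x,x')$ for $\ell\in\Z$, so \eqref{eq:doublesymbolperiodicity} for $b(\omega)$ follows from that of $a(\omega)$. To show $b(\omega)\in S^{-2m,0}$ I would differentiate $b(\omega,x,\xi,x')=a(\omega,x,\xi+\phi^{(n)}(\omega,x,x'),x')$ using the chain rule (Fa{\`a} di Bruno) and the Leibniz rule; every resulting term is a single derivative of $a(\omega)$ evaluated at $(x,\xi+\phi^{(n)}(\omega,x,x'),x')$ multiplied by $x$- and $x'$-derivatives of $\phi^{(n)}(\omega)$. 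Because $m>0$, Peetre's inequality gives $(1+\lvert\xi+\phi^{(n)}(\omega,x,x')\rvert)^{-2m-\lvert\alpha\rvert}\le 2^{2m+\lvert\alpha\rvert}(1+\lvert\phi^{(n)}(\omega,x,x')\rvert)^{2m+\lvert\alpha\rvert}(1+\lvert\xi\rvert)^{-2m-\lvert\alpha\rvert}$, so each term is $O((1+\lvert\xi\rvert)^{-2m-\lvert\alpha\rvert})$ with a constant controlled by finitely many semi-norms of $a(\omega)$ and by $\Phi_\ell(\omega)=\max_{\lvert\gamma\rvert\le\ell}\sup_{x,x'}\lvert\partial_{x,x'}^{\gamma}\phi^{(n)}(\omega,x,x')\rvert$. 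This proves $b(\omega)\in S^{-2m,0}$ and yields, for each $\ell$, a bound $\lvert b(\omega)\rvert_\ell^{(-2m,0)}\le C_{\ell,m}\,\lvert a(\omega)\rvert_{\ell'}^{(-2m,0)}\,(1+\Phi_\ell(\omega))^{N}$ with $\ell'$ and $N$ depending only on $\ell$ and $m$.

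Finally, \eqref{eq:preconditionA} for $b(\omega)$ reduces to bounding $\Phi_\ell(\omega)$ uniformly in $\omega$. Differentiating \eqref{eq:phigeneraltime} gives $\sup_{x,x'}\lvert\partial_{x,x'}^{\gamma}\phi^{(n)}(\omega,x,x')\rvert\le\sup_x\lvert\partial_x^{\lvert\gamma\rvert+1}\tau^{(n)}(\omega,x)\rvert$, so it suffices to control the derivatives of $\tau^{(n)}(\omega)=\sum_{j=0}^{n-1}\tau(\theta^j\omega,E^{(j)}(\omega,\cdot))$; expanding by Fa{\`a} di Bruno and Leibniz, these are majorized by finitely many derivatives of the maps $\tau(\theta^j\omega)$ and $E^{(j)}(\omega)$. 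Exactly as in the proof of Lemma \ref{lem:multiplicationcomposition} (compare \eqref{eq:boundsforE}), one fixes $\ell$, chooses $\epsilon_0(\ell)$ small enough that $\esssup_\omega\sup_x\lvert\partial_x^{\beta}(E_\epsilon(\omega,x)-E_0(x))\rvert<1$ for all $\lvert\beta\rvert$ up to the needed order and all $0\le\epsilon<\epsilon_0(\ell)$ — a range independent of $n$ since it only constrains the one-step map — and then, using that $\theta$ is $\mathbb{P}$-preserving together with \eqref{emin}, \eqref{convinC1} and \eqref{eq:assumptionCrclose}, obtains $\esssup_\omega\Phi_\ell(\omega)\le C_{\ell,n}$, with a constant now allowed to grow with $n$. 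Combined with the previous paragraph and the hypothesis \eqref{eq:preconditionA} for $a(\omega)$, this gives $\esssup_\omega\lvert b(\omega)\rvert_\ell^{(-2m,0)}\le C_{\ell,n,m}$ for $0\le\epsilon<\epsilon_0(\ell)$, as required. The one point demanding genuine care is precisely this bookkeeping of dependences: the $\omega$-independent range $0\le\epsilon<\epsilon_0(\ell)$ must be chosen uniformly in time $n$, while the semi-norm bounds are permitted to deteriorate as $n\to\infty$.
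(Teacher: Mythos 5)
Your proposal is correct and follows essentially the same route as the paper: write $B(\omega)$ as an oscillatory integral, use $\tau^{(n)}(\omega,x)-\tau^{(n)}(\omega,x')=(x-x')\phi^{(n)}(\omega,x,x')$ to merge the conjugating exponential into the phase, shift the frequency variable, and then verify periodicity, membership in $S^{-2m,0}$, and \eqref{eq:preconditionA} via the uniform bounds on derivatives of $\tau_\epsilon(\omega)$ and $E_\epsilon(\omega)$ coming from \eqref{eq:assumptionCrclose}. Your Peetre-inequality argument merely fills in a step the paper labels ``straightforward to check,'' and your bookkeeping of the $n$-independence of $\epsilon_0(\ell)$ versus the $n$-dependence of the constants matches the paper's.
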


\begin{proof}
Note that $(x-y)\phi^{(n)}(\omega,x,y)=\tau^{(n)}(\omega,x)-\tau^{(n)}(\omega,y)$
by definition.
In view of \eqref{operatorwithdoublesymbol} we have
\begin{align}
B(\omega)u(x)&=\frac{1}{2\pi h}\iint e^{\frac{i}{h}((x-y)\xi-\tau^{(n)}(\omega,x)+\tau^{(n)}(\omega,y))}
a(\omega,x,\xi,y)u(y)dyd\xi
\\ & =\frac{1}{2\pi h}\iint e^{\frac{i}{h}((x-y)(\xi-\phi^{(n)}(\omega,x,y))}a(\omega,x,\xi,y)u(y)dyd\xi,
\end{align}
so a change of variables now shows that $B(\omega)$ has double symbol $b(\omega)$
given by \eqref{eq:symbolofconjugationTau}. 
Note that $E^{(j)}(\omega,x+\ell)=E^{(j)}(\omega,x)+k^j\ell$ for $\ell\in\Z$.
Using the periodicity of $E'(\omega)$ and $\tau'(\omega)$ together with
\eqref{notation:tau^{(n)}} and \eqref{eq:derivativeofE(n)}, it is then straightforward to check that
$\phi^{(n)}(\omega,x+\ell,y+\ell)=\phi^{(n)}(\omega,x,y)$ for $\ell\in\Z$. Since $a$ satisfies \eqref{eq:doublesymbolperiodicity},
it follows that $b(\omega)$ satisfies \eqref{eq:doublesymbolperiodicity}.
Using \eqref{notation:tau^{(n)}} and \eqref{eq:derivativeofE(n)}, 
it is also straightforward to check that $b(\omega)$ belongs to $S^{-2m,0}$,
$\mathbb{P}$-almost surely. Moreover, given $\ell\in\N$, 
we have by virtue of \eqref{eq:assumptionCrclose} that
\begin{equation}\label{eq:boundsfortau}
\esssup_\omega{\sup_x{\lvert\partial_x^\beta(\tau_\epsilon(\omega,x)-\tau_0(x))\rvert}}<1
\end{equation}
for all $\lvert\beta\rvert\le\ell_0+1$ and $0\le\epsilon<\epsilon_{\ell_0+1}$, if
$\epsilon_{\ell_0+1}$ is chosen sufficiently small. By decreasing $\epsilon_{\ell_0+1}$
if necessary we may also assume that \eqref{eq:boundsforE} is in force.
Arguing as in the proof of Lemma \ref{lem:multiplicationcomposition},
we conclude that $b(\omega)$ satisfies \eqref{eq:preconditionA}.
This completes the proof.
\end{proof}

For $n\in\N^\ast$, let $\psi^{(n)}(\omega)$ be the function in ${\Ci}^\infty(\R^2)$ defined by
\begin{equation}\label{eq:psigeneraltime}
\psi^{(n)}(\omega,x,x')=\int_0^1\frac{dG^{(n)}(\omega,tx+(1-t)x')}{dy}dt.
\end{equation}

\begin{lem}\label{lem:conjugationdiffeomorphism}
Let $A(\omega)\equiv A(\epsilon;\omega,n,m)$ be a semiclassical operator on $\Sone$
satisfying the assumptions of Lemma \ref{lem:conjugationexponential}.
Let $\psi^{(n)}(\omega)$ be given by \eqref{eq:psigeneraltime}
and set
\begin{equation}\label{eq:nottranslateddiffeomorphism}
\tilde a(\omega,x,\eta,x')=\frac{ dG^{(n)}(\omega,x')/dx'}{\psi^{(n)}(\omega,x,x')}
a(\omega,G^{(n)}(\omega,x),\eta/\psi^{(n)}(\omega,x,x'),G^{(n)}(\omega,x')).
\end{equation}
Then the operator $B(\omega)\equiv B(\epsilon;\omega,n,m)$ given by
\[
B(\omega)u(x)=\sum_{\bar\alpha=0}^{k^n-1}T_{-\bar\alpha}(G^{(n)}(\omega))^\ast \circ A(\omega) \circ (E^{(n)}(\omega))^\ast u(x)
\]
is $\mathbb{P}$-almost surely a semiclassical operator on $\Sone$ with double symbol $b(\omega)$ in $S^{-2m,0}$,
where
\begin{equation}\label{eq:translateddiffeomorphism}
b(w,x,\xi,x')=\sum_{\bar\alpha=0}^{k^n-1} \mathscr T_{-j} 
\tilde a(\omega,x,\xi,x')
\end{equation}
is independent of $\xi'$ and satisfies \eqref{eq:doublesymbolperiodicity} and \eqref{eq:preconditionA}.
\end{lem}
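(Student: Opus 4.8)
The plan is to reduce Lemma \ref{lem:conjugationdiffeomorphism} to a change of variables in the oscillatory integral representation \eqref{operatorwithdoublesymbol}, followed by an application of the double-symbol calculus of Theorem \ref{thm:simplifiedsymbol} to control the seminorms uniformly in $\omega$ and $n$. First I would write out $B(\omega)u(x)$ using \eqref{operatorwithdoublesymbol}: since $A(\omega)$ has double symbol $a(\omega,x,\xi,x')$ independent of $\xi'$, we have
\begin{equation*}
A(\omega)v(x)=\frac{1}{2\pi h}\iint e^{i(x-x')\xi/h}a(\omega,x,\xi,x')v(x')\,dx'\,d\xi,
\end{equation*}
and substituting $v=(E^{(n)}(\omega))^\ast u$, i.e.\ $v(x')=u(E^{(n)}(\omega,x'))$, gives an integral with phase $(x-x')\xi/h$ and amplitude $a(\omega,x,\xi,x')u(E^{(n)}(\omega,x'))$. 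Then I would precompose with $(G^{(n)}(\omega))^\ast$ (replace $x$ by $G^{(n)}(\omega,x)$) and change variables $x'\mapsto G^{(n)}(\omega,x')$ in the $x'$-integral, whose Jacobian is $dG^{(n)}(\omega,x')/dx'$. The crucial algebraic identity is that $G^{(n)}(\omega,x)-G^{(n)}(\omega,x')=(x-x')\psi^{(n)}(\omega,x,x')^{-1}\cdot\psi^{(n)}(\omega,x,x')$\,--- more precisely, by the fundamental theorem of calculus and the definition \eqref{eq:psigeneraltime},
\[
G^{(n)}(\omega,x)-G^{(n)}(\omega,x')=(x-x')\,\psi^{(n)}(\omega,x,x'),
\]
so after the further change of variable $\xi\mapsto \eta/\psi^{(n)}(\omega,x,x')$ (legitimate since $\psi^{(n)}(\omega,x,x')\ge L(n)>0$ by \eqref{emin}, uniformly in $\omega$) the phase becomes $(x-x')\eta/h$ and the amplitude becomes exactly $\tilde a(\omega,x,\eta,x')$ from \eqref{eq:nottranslateddiffeomorphism}. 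Summing over the $k^n$ inverse branches and applying the translations $T_{-\bar\alpha}$ produces the sum \eqref{eq:translateddiffeomorphism}; here I should be careful that $T_{-\bar\alpha}$ acting on the output variable $x$ corresponds to $\mathscr T_{-j}$ on the double symbol (with the index bookkeeping $\bar\alpha\leftrightarrow j$ as in Section \ref{app:wPC}).

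Next I would verify the three claimed properties of $b(\omega)$. Membership in $S^{-2m,0}$ and independence of $\xi'$ are immediate from \eqref{eq:nottranslateddiffeomorphism} once one knows that $G^{(n)}(\omega)$, its derivative, and $\psi^{(n)}(\omega)$ together with $1/\psi^{(n)}(\omega)$ all lie in $\mathscr B$ with derivatives controlled by \eqref{emin}, \eqref{eq:assumptionC1close} and Fa\`a di Bruno / the Leibniz rule (exactly as in the proofs of Lemmas \ref{lem:multiplicationcomposition} and \ref{lem:conjugationexponential}); note that composition with $a(\omega)$, which satisfies \eqref{eq:preconditionA}, only improves the $\xi$-decay since $\lvert\eta/\psi^{(n)}\rvert\gtrsim\lvert\eta\rvert$. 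Periodicity \eqref{eq:doublesymbolperiodicity} follows because $G^{(n)}(\omega,x+k^n\ell)=G^{(n)}(\omega,x)+\ell$ for $\ell\in\Z$ and because summing $\mathscr T_{-j}$ over $0\le j\le k^n-1$ restores full $\Z$-periodicity in $x$; I would spell this out by checking that a shift $x\mapsto x+1$ permutes the inverse branches, i.e.\ permutes the terms in \eqref{eq:translateddiffeomorphism}. The seminorm bound \eqref{eq:preconditionA} for $b(\omega)$ is obtained by the same mechanism as in Lemma \ref{lem:conjugationexponential}: for each fixed $\ell$ choose $\epsilon_0(\ell)$ small enough that \eqref{eq:boundsforE} holds for all derivatives of order $\le\ell_0+1$ (some $\ell_0$ depending only on $\ell$ and the dimension), use \eqref{eq:ee1}--\eqref{eq:derivativeofE(n)} to bound derivatives of $dG^{(n)}(\omega)/dx$ and of $G^{(n)}(\omega)$ uniformly in $\omega$, and invoke \eqref{eq:preconditionA} for $a(\omega)$; the number of inverse branches $k^n$ is absorbed into the constant $C_{\ell,n,m}$ which is permitted to depend on $n$.

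The main obstacle I anticipate is keeping all the bounds genuinely \emph{uniform in $\omega$} (and tracking the dependence on $n$) through the change of variables $\xi\mapsto\eta/\psi^{(n)}(\omega,x,x')$. The difficulty is that $\psi^{(n)}(\omega,x,x')$ itself carries $\omega$-dependence and that differentiating $\tilde a$ in $\xi$ produces factors of $\psi^{(n)}(\omega,x,x')^{-1}$ and its derivatives, while differentiating in $x$ or $x'$ brings down chain-rule factors from $G^{(n)}(\omega,\cdot)$ and from the inner argument $\eta/\psi^{(n)}(\omega,x,x')$. To handle this I would record, as in Lemma \ref{lem:technical1} and its surroundings, that $dG^{(n)}(\omega,x)/dx$ lies between $\lambda^{-n}$ and $(\sup_x\lvert E_0'(x)\rvert+1)^{-n}>0$ uniformly in $\omega$, so $\psi^{(n)}(\omega)$ and $1/\psi^{(n)}(\omega)$ are both bounded above and below by $n$-dependent but $\omega$-independent constants; all higher derivatives are then handled by the Leibniz rule and Fa\`a di Bruno applied to the product formula \eqref{eq:ee1}, each factor of which is periodic and $\mathbb{P}$-almost surely bounded together with its derivatives by \eqref{emin} and \eqref{eq:boundsforE}. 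Once this a priori control is in place, the seminorm estimate \eqref{seminormcontinuity2} of Theorem \ref{thm:simplifiedsymbol} is not needed at this stage (it will be used later when the double symbol is reduced to a simplified symbol), so the proof of the lemma reduces to the bookkeeping just described, and I would close it by noting that the finitely many $\epsilon_0(\ell)$ can be taken decreasing in $\ell$ so that the statement holds for every $\ell\in\N$ with an $\epsilon_0(\ell)$ independent of $n$.
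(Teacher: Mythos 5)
Your proposal follows essentially the same route as the paper's proof: the oscillatory-integral change of variables $x'\mapsto G^{(n)}(\omega,x')$, the identity $(x-x')\psi^{(n)}(\omega,x,x')=G^{(n)}(\omega,x)-G^{(n)}(\omega,x')$, the substitution $\xi\mapsto\eta/\psi^{(n)}$, the branch/translation bookkeeping via $G^{(n)}(\omega,x+k^n)=G^{(n)}(\omega,x)+1$, and the $\omega$-uniform seminorm bounds from the two-sided control of $dG^{(n)}(\omega,\cdot)/dx$. One small caveat: the positive lower bound on $\psi^{(n)}(\omega,x,x')$ needed to invert $\xi\mapsto\eta/\psi^{(n)}$ comes from the uniform \emph{upper} bound on $E_\epsilon'(\omega)$ (i.e.\ from \eqref{eq:assumptionC1close}--\eqref{eq:assumptionCrclose}, as you correctly use later), not from \eqref{emin}, which only yields the upper bound $\lambda^{-n}$ on $dG^{(n)}(\omega,\cdot)/dx$.
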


\begin{proof}
In view of \eqref{eq:assumptionCrclose} we may without loss of generality assume that
\[
1<\essinf_{\omega} \min_{x} E'(\omega,x)\le \esssup_\omega
\max_{x} E'(\omega,x)< \max_{x} E_0'(x)+1,
\]
where $E_0$ is some fixed smooth periodic function such that $\min E_0'(x)>1$. Using the definition of
$G(\omega)$, this translates into the estimates
\begin{equation}\label{eq:kappaestimatesAppendix}
(1+\lVert E_0'\rVert_{L^\infty(\R)})^{-n}<dG^{(n)}(\omega,x)/dx<1,\quad\text{$\mathbb{P}$-almost surely},
\end{equation}
compare with \eqref{eq:controlofG(n)}.
Now let $\tilde A(\omega)u(x)
=A (u\circ E^{(n)}(\omega))( G^{(n)}(\omega,x))$, so that $B(\omega)=\sum_{\bar\alpha=0}^{k^n-1}
T_{-\bar\alpha}\tilde A(\omega)$.
Then
\[
\tilde A(\omega)u(x)=\frac{1}{2\pi h}\iint e^{\frac{i}{h}(G^{(n)}(\omega,x)-y)\xi}
a(G^{(n)}(\omega,x),\xi,y)u(E^{(n)}(\omega,y)) dy d\xi.
\]
By the change of variables $E^{(n)}(\omega,y)\mapsto y$ we obtain
\begin{multline}
\tilde A(\omega)u(x)=\frac{1}{2\pi h}\iint e^{\frac{i}{h}(G^{(n)}(\omega,x)-G^{(n)}(\omega,y))\xi}\\
\times
a(G^{(n)}(\omega,x),\xi,G^{(n)}(\omega,y))u(y) (dG^{(n)}(\omega,y)/dy)dy d\xi.
\end{multline}

By definition, 
$(x-y)\psi^{(n)}(\omega,x,y)=G^{(n)}(\omega,x)-G^{(n)}(\omega,y)$.
Hence, for the phase we can write
$(G^{(n)}(\omega,x)-G^{(n)}(\omega,y))\xi=(x-y)\psi^{(n)}(\omega,x,y)\xi$.
Now make the change of variables $\eta=\psi^{(n)}(\omega,x,y)\xi$, and note that this is
$\mathbb{P}$-almost surely invertible in view
of \eqref{eq:kappaestimatesAppendix} and the definition of $\psi^{(n)}(\omega)$. Thus,
\[
\tilde A(\omega)u(x)=\frac{1}{2\pi h}\iint e^{\frac{i}{h}(x-y)\eta}
\tilde a(\omega,x,\eta,y) u(y) dy d\eta,
\]
with $\tilde a(\omega,x,\eta,y)$ as in the statement of the lemma.
Since $a(\omega)\in S^{-2m,0}$, we find 
that differentiation with respect to $\eta$ gives the estimate
\begin{align}\label{eq:inpreparartionforconditionA1}
\lvert\partial_\eta^\alpha \tilde a(\omega,x,\eta,x')\rvert
&\le \frac{C_\alpha(\omega)}{(\psi^{(n)}(\omega,x,x'))^{\lvert\alpha\rvert}}
\bigg(1+\frac{\lvert\eta\rvert}{\psi^{(n)}(\omega,x,x')}\bigg)^{-2m-\lvert\alpha\rvert}\\
&\le C_\alpha(\omega) (1+\lVert E_0'\rVert_{L^\infty(\R)})^{n\lvert\alpha\rvert}(1+\lvert\eta\rvert)^{-2m-\lvert\alpha\rvert}.
\end{align}
We can similarly check that
\begin{equation}\label{eq:inpreparartionforconditionA2}
\lvert\partial_\eta^\alpha D_x^\beta D_{x'}^{\beta'}\tilde a(\omega,x,\eta,x')\rvert
\le C_{\alpha,\beta,\beta'}(\omega)(1+\lvert\eta\rvert)^{-2m-\lvert\alpha\rvert},
\end{equation}
so $\tilde a(\omega)$ is a double symbol
in $S^{-2m,0}$, $\mathbb{P}$-almost surely.
In fact, $x$ and $x'$ derivatives landing on $a^{(\alpha)}(\omega)$ will produce terms
containing factors of the form $\eta^j a^{(\alpha+j)}(\omega)$, but 
in view of the symbol class definition,
these can be handled using the same methods.
It is also straightforward to check that if $\epsilon$ is sufficiently small
then the constants $C_\alpha(\omega)$ and $C_{\alpha,\beta,\beta'}(\omega)$
in \eqref{eq:inpreparartionforconditionA1} and \eqref{eq:inpreparartionforconditionA2}
can for all $n\ge1$ be chosen independently of $\omega$, $\mathbb{P}$-almost surely,
so $\tilde a(\omega)$ satisfies \eqref{eq:preconditionA}. In fact,
$a(\omega)$ is assumed to satisfy \eqref{eq:preconditionA}
so it suffices to check that we
can estimate derivatives of $G^{(n)}(\omega,x)$
and $(\psi^{(n)}(\omega))^{-1}$ independently of $\omega$ for all $n\ge1$,
if $\epsilon$ is sufficiently small.
But this can be done by using \eqref{eq:boundsforE} together with estimates of the type
\eqref{eq:kappaestimatesAppendix},
and arguing as in the proof of Lemma \ref{lem:multiplicationcomposition}.

Now $B(\omega)=\sum_{\bar\alpha=0}^{k^n-1}T_{-j}\tilde A(\omega)$ so
\[
B(\omega)u(x)= 
\sum_{\bar\alpha=0}^{k^n-1}\frac{1}{2\pi h}\iint e^{i(x+\bar\alpha-y)\xi/h}
\tilde a(\omega,x+\bar\alpha,\xi,y)u(y)dyd\xi,\quad u\in\Ci_p^\infty(\R).
\]
Hence, the change of variables
$y-\bar\alpha\mapsto y$ in each of the integrals 
together with periodicity of $u$ shows that the symbol $b(\omega)$ of $B(\omega)$ 
is given by \eqref{eq:translateddiffeomorphism}.

Finally, we check that $b(\omega)$ satisfies \eqref{eq:doublesymbolperiodicity}. It is
of course sufficient to prove that $b(\omega,x+1,\xi,x'+1)
=b(\omega,x,\xi,x')$.
In view of the sum in \eqref{eq:translateddiffeomorphism}, this holds as long as
\begin{equation}\label{eq:tildeaalmostperiodic}
\tilde a(\omega,x+k^n,\xi,x'+k^n)
=\tilde a(\omega,x,\xi,x').
\end{equation}
Recall that $G(\omega)=E^{-1}(\omega)$ so we have $G(\omega,x+\ell)=g^{-1}(\omega,(x+\ell)/k)$.
Since $g^{-1}(\omega,x+1)=g^{-1}(\omega,x)+1$, it follows that
$G^{(n)}(\omega,x+k^n)=G^{(n)}(\omega,x)+1$.
The same arguments show that $dG^{(n)}(\omega,x)/dx$ is periodic with period $k^n$, 
which in turn implies that
\[
\psi^{(n)}(\omega,x+k^n,x'+k^n)=\psi^{(n)}(\omega,x,x').
\]
Thus, \eqref{eq:tildeaalmostperiodic} now follows from the definition of $\tilde a(\omega)$
and the fact that $a(\omega)$ is assumed to satisfy
\eqref{eq:doublesymbolperiodicity}. This completes the proof.
\end{proof}

Combining Lemmas \ref{lem:multiplicationcomposition}, \ref{lem:conjugationexponential}
and \ref{lem:conjugationdiffeomorphism} we can obtain a formula for the double symbol of
the semiclassical operator $M_{\nu,n}^\ast(\omega)A_mM_{\nu,n}(\omega)$, see the proof of
Theorem \ref{thm:Egorov'sgeneraltime} below. For this double symbol, the following definition is applicable.

\begin{dfn}\label{def:conditionA}
A family $\{a(\omega)\}_{\omega\in\Omega}$ of symbols 
$a(\omega)\equiv a(\epsilon;\omega,n)$ in $S^m$ (or double symbols in $S^{m,m'}$)
is said to satisfy condition $(\mathscr A{(m)})$ (or $(\mathscr A{(m,m')})$) 
if for any $\ell\in\N$ there is a positive number $\epsilon_0$
(allowed to depend on $\ell$ and $m$ (and $m'$) but
not on time $n\in\N^\ast$), and a constant $C_{\ell,n}$
such that for $0\le\epsilon<\epsilon_0$ we have
\[
\esssup_{\omega}{\lvert a(\omega)\rvert_\ell^{(m)}}\le C_{\ell,n}
\quad (\text{or }\esssup_{\omega}{\lvert a(\omega)\rvert_\ell^{(m,m')}}\le C_{\ell,n}),
\]
uniformly for $h$ in the interval $0<h\le 1$.
Here the semi-norms $\lvert\phantom{i}\rvert_\ell^{(m)}$ and $\lvert\phantom{i}\rvert_\ell^{(m,m')}$
are given by \eqref{eq:seminormssingle} and \eqref{eq:seminormsdouble}, respectively.
If $a(\omega)$ belongs to a family satisfying condition $(\mathscr A(m))$ or $(\mathscr A(m,m'))$
we shall say that $a(\omega)$ satisfies condition $(\mathscr A(m))$ or $(\mathscr A(m,m'))$.
\end{dfn}

By the definition of the semi-norms $\lvert\phantom{i}\rvert_\ell^{(m)}$ in $S^m$ it follows that if
$a(\omega)$ satisfies condition $(\mathscr A(m-1))$ for some $m$ 
then $a(\omega)$ and $ha(\omega)$ also satisfy condition $(\mathscr A(m))$.
Before proceeding to the proof of the version of Egorov's theorem that we need,
we digress to make some more general remarks which show how Definition \ref{def:conditionA} will be used.

\begin{lem}\label{lem:conditionA}
Let the double symbol $p(\omega)$ in $S^{m,m'}$ satisfy condition $(\mathscr A(m,m'))$. 
Then the simplified symbol $p_L(\omega)=p_0(\omega)+hp_1(\omega)$ in $S^{m+m'}$ 
satisfies condition $(\mathscr A(m+m'))$, where $p_0(\omega)$ and $p_1(\omega)$
are defined as in Theorem \ref{thm:simplifiedsymbol}. Moreover, $p_0(\omega)$ satisfies condition $(\mathscr A(m+m'))$
and $p_1(\omega)$ satisfies condition $(\mathscr A(m+m'-1))$.
\end{lem}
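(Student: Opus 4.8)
The plan is to unwind the definitions in Theorem \ref{thm:simplifiedsymbol}, keeping track of the dependence on the noise parameter $\omega$ at every stage. The statement we must prove is of the same flavor as Theorem \ref{thm:simplifiedsymbol} itself --- it is simply a ``uniform in $\omega$'' upgrade --- so the main work is to check that the constants $C_\ell$ and the integers $\ell'$ appearing in \eqref{seminormcontinuity} and \eqref{seminormcontinuity2} are independent of $\omega$, and then to invoke the hypothesis that $p(\omega)$ satisfies condition $(\mathscr A(m,m'))$.

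First I would recall that condition $(\mathscr A(m,m'))$ means precisely that for each $\ell\in\N$ there is an $\epsilon_0=\epsilon_0(\ell,m,m')$ (not depending on $n\in\N^\ast$) and a constant $C_{\ell,n}$ such that $\esssup_\omega{\lvert p(\omega)\rvert_\ell^{(m,m')}}\le C_{\ell,n}$ for $0\le\epsilon<\epsilon_0$, uniformly for $0<h\le1$. Now fix $\ell$. By the estimate \eqref{seminormcontinuity2} of Theorem \ref{thm:simplifiedsymbol} (applied for $\mathbb{P}$-almost every fixed $\omega$ to the symbol $p(\omega)$), there is a constant $C_\ell$ and an integer $\ell'$, both depending only on $\ell$, $m$, $m'$ and $\dim\R=1$ --- and in particular \emph{not} on $\omega$, $n$ or $h$ --- such that $\lvert p_L(\omega)\rvert_\ell^{(m+m')}\le C_\ell\lvert p(\omega)\rvert_{\ell'}^{(m,m')}$. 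Taking the essential supremum over $\omega$ and using condition $(\mathscr A(m,m'))$ with $\ell$ replaced by $\ell'$ gives $\esssup_\omega{\lvert p_L(\omega)\rvert_\ell^{(m+m')}}\le C_\ell\,C_{\ell',n}=:C_{\ell,n}'$ for $0\le\epsilon<\epsilon_0(\ell',m,m')$. Since $\ell'$ depends only on $\ell,m,m'$, the bound $\epsilon_0$ ultimately depends only on $\ell,m,m'$ and not on $n$, which is exactly what condition $(\mathscr A(m+m'))$ requires. The uniformity for $0<h\le1$ is inherited because the estimates in Theorem \ref{thm:simplifiedsymbol} hold uniformly in $h$. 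This proves that $p_L(\omega)$ satisfies $(\mathscr A(m+m'))$.

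For the two summands I would argue analogously. The symbol $p_0(\omega,x,\xi)=p(\omega,x,\xi,x,\xi)$ is obtained from $p(\omega)$ by restriction to the diagonal, and a direct inspection of the derivative bounds \eqref{doublesymbolclass} --- using the chain rule to compute $\partial_\xi^\alpha D_x^\beta[p(\omega,x,\xi,x,\xi)]$ as a finite linear combination of $p_{(\beta_1,\beta_2)}^{(\alpha_1,\alpha_2)}(\omega,x,\xi,x,\xi)$ with $\alpha_1+\alpha_2\le\alpha$, $\beta_1+\beta_2\le\beta$ --- shows that $\lvert p_0(\omega)\rvert_\ell^{(m+m')}\le C_\ell\lvert p(\omega)\rvert_\ell^{(m,m')}$ with $C_\ell$ depending only on $\ell$ (and the dimension). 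Combined with $(\mathscr A(m,m'))$, this gives that $p_0(\omega)$ satisfies $(\mathscr A(m+m'))$. For $p_1(\omega)=\int_0^1 r_\vartheta(\omega)\,d\vartheta$, estimate \eqref{seminormcontinuity} of Theorem \ref{thm:simplifiedsymbol} provides, for each $\ell$, a constant $C_\ell$ and integer $\ell'$ (independent of $\vartheta$ and of $\omega$) with $\lvert r_\vartheta(\omega)\rvert_\ell^{(m+m'-1)}\le C_\ell\lvert p(\omega)\rvert_{\ell'}^{(m,m')}$; since the semi-norm $\lvert\phantom{i}\rvert_\ell^{(m+m'-1)}$ is given by a supremum it passes through the integral $\int_0^1 d\vartheta$, so $\lvert p_1(\omega)\rvert_\ell^{(m+m'-1)}\le C_\ell\lvert p(\omega)\rvert_{\ell'}^{(m,m')}$, and taking $\esssup_\omega$ and applying $(\mathscr A(m,m'))$ shows $p_1(\omega)$ satisfies $(\mathscr A(m+m'-1))$. (As a consistency check, the remark following Definition \ref{def:conditionA} then shows $hp_1(\omega)$ satisfies $(\mathscr A(m+m'))$, so $p_L(\omega)=p_0(\omega)+hp_1(\omega)$ satisfies $(\mathscr A(m+m'))$, matching the first part.)

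The only mild obstacle is bookkeeping: one must make sure that in each application of Theorem \ref{thm:simplifiedsymbol} the integer $\ell'$ and constant $C_\ell$ genuinely depend only on $\ell,m,m'$ and the fixed dimension, so that the threshold $\epsilon_0$ produced for $p_L$ (resp.\ $p_0$, $p_1$) still fails to depend on $n$ --- but this is guaranteed verbatim by the statement of Theorem \ref{thm:simplifiedsymbol}, which was recorded precisely in that form. There are no genuine analytic difficulties here; the lemma is essentially a ``transport of uniformity'' along the already-established continuity of the maps $p\mapsto p_L$, $p\mapsto p_0$, $p\mapsto p_1$ between the relevant Fr\'echet symbol spaces.
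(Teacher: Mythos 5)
Your proof is correct and follows essentially the same route as the paper: invoke the uniform (in $\omega$, $n$, $h$) continuity estimates \eqref{seminormcontinuity} and \eqref{seminormcontinuity2} of Theorem \ref{thm:simplifiedsymbol}, take the essential supremum in $\omega$, and pass the semi-norm bound for $r_\vartheta(\omega)$ through the integral defining $p_1(\omega)$. The only (immaterial) difference is in the treatment of $p_0(\omega)$: you estimate the diagonal restriction directly via the chain rule, whereas the paper deduces it from $p_0=p_L-hp_1$ using the triangle inequality and the remark following Definition \ref{def:conditionA}.
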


\begin{proof}
By Theorem \ref{thm:simplifiedsymbol}, the map $S^{m,m'}\ni p\mapsto p_L\in S^{m+m'}$ is continuous
in the sense of \eqref{seminormcontinuity2}. In view of Definition \ref{def:conditionA}
it is therefore clear that the simplified symbol $p_L(\omega)$ satisfies condition $(\mathscr A(m+m'))$.
Hence, if we show that $p_1(\omega)$ satisfies condition $(\mathscr A(m+m'-1))$,
then by the comment following Definition \ref{def:conditionA} together with the triangle inequality
it follows that $p_0(\omega)$ satisfies condition $(\mathscr A(m+m'))$ for $\lvert h\rvert\le1$.
Now, by \eqref{seminormcontinuity} it follows that 
the function $r_\vartheta(\omega)$ defined as in Theorem \ref{thm:simplifiedsymbol}
satisfies condition $(\mathscr A(m+m'-1))$ uniformly with respect to $\vartheta\in[0,1]$.
This implies that $p_1(\omega)$ satisfies condition $(\mathscr A(m+m'-1))$,
for $\{r_\vartheta(\omega)\}_{\lvert\vartheta\rvert\le 1}$ is a bounded set of $S^{m+m'-1}$
for fixed $\omega$, so we may differentiate under the integral sign in the definition of $p_1(\omega)$.
This completes the proof.
\end{proof}

As a first application of Lemma \ref{lem:conditionA} we prove the following result.

\begin{prop}\label{prop:composition}
Let $A(\omega)$ and $B(\omega)$ be semiclassical operators on $\Sone$ with symbols $a(\omega)\in S^m(T^\ast(\Sone))$
and $b(\omega)\in S^{m'}(T^\ast(\Sone))$ satisfying conditions $(\mathscr A(m))$ and $(\mathscr A(m'))$, respectively.
Then the composed operator $A(\omega)B(\omega)$ is a
semiclassical operator on $\Sone$ with
symbol $c(\omega)\in S^{m+m'}(T^\ast(\Sone))$ satisfying condition $(\mathscr A(m+m'))$.
Moreover, $c(\omega)=c_0(\omega)+hc_1(\omega)$, where $c_0(\omega,x,\xi)=a(\omega,x,\xi)b(\omega,x,\xi)$
satisfies condition $(\mathscr A(m+m'))$ and $c_1(\omega)$ satisfies condition $(\mathscr A(m+m'-1))$.
\end{prop}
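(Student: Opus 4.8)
The plan is to reduce the composition of two single-symbol operators to the double-symbol calculus of Kumano-go, so that Lemma \ref{lem:conditionA} applies directly. First I would view $A(\omega)$ as a semiclassical operator with double symbol $a(\omega,x,\xi)$ that is independent of $x'$ and $\xi'$, and view $B(\omega)$ through its action as in \eqref{eq:PsiDO}. Writing out $A(\omega)B(\omega)u(x)$ as an iterated oscillatory integral, one sees that it has the form of an operator with double symbol
\[
p(\omega,x,\xi,x',\xi')=a(\omega,x,\xi)b(\omega,x',\xi'),
\]
in the sense of $P=p(x,hD_x,x',hD_{x'})$ from Kumano-go~\cite{Kumano-go}*{Chapter 2, \S 2}. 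Since $a(\omega)$ is independent of $x'$, this is a legitimate element of $S^{m,m'}$, and it is periodic in the sense of \eqref{eq:doublesymbolperiodicity} because both $a(\omega)$ and $b(\omega)$ are periodic in their base variables. Thus $A(\omega)B(\omega)$ is a semiclassical operator on $\Sone$ with double symbol $p(\omega)$ in $S^{m,m'}$.

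Next I would check that $p(\omega)$ satisfies condition $(\mathscr A(m,m'))$. This is where the uniformity in $\omega$ is harvested: by Leibniz's rule, the semi-norm $\lvert p(\omega)\rvert_\ell^{(m,m')}$ is controlled by a finite product/sum of the semi-norms $\lvert a(\omega)\rvert_{\ell}^{(m)}$ and $\lvert b(\omega)\rvert_{\ell}^{(m')}$, since $a(\omega)$ depends only on $(x,\xi)$ and $b(\omega)$ only on $(x',\xi')$ in $p(\omega)$. By hypothesis these are $\mathbb{P}$-almost surely bounded by constants $C_{\ell,n}$ independent of $\omega$ for $0\le\epsilon<\epsilon_0(\ell)$, with $\epsilon_0$ independent of time $n$; taking the essential supremum over $\omega$ therefore gives condition $(\mathscr A(m,m'))$ for $p(\omega)$, uniformly for $h$ in $0<h\le 1$.

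Then I would invoke Theorem \ref{thm:simplifiedsymbol} to pass to the simplified symbol $c(\omega)=p_L(\omega)=p_0(\omega)+hp_1(\omega)$, which belongs to $S^{m+m'}$ and, by the periodicity of $p(\omega)$, lies in $S^{m+m'}(T^\ast(\Sone))$; by \eqref{operatorwithdoublesymbol} the operator is unchanged, so $A(\omega)B(\omega)=c(\omega,x,hD)$ is a semiclassical operator on $\Sone$ with symbol $c(\omega)$. The leading term is $p_0(\omega,x,\xi)=p(\omega,x,\xi,x,\xi)=a(\omega,x,\xi)b(\omega,x,\xi)=c_0(\omega,x,\xi)$, as asserted. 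Applying Lemma \ref{lem:conditionA} to $p(\omega)$, we get that $c(\omega)=p_L(\omega)$ satisfies condition $(\mathscr A(m+m'))$, that $c_0(\omega)=p_0(\omega)$ satisfies condition $(\mathscr A(m+m'))$, and that $c_1(\omega)=p_1(\omega)$ satisfies condition $(\mathscr A(m+m'-1))$. This completes the proof.

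The only genuinely delicate point is the second step — verifying that condition $(\mathscr A(m,m'))$ is inherited by the product double symbol with constants and threshold $\epsilon_0$ independent of $\omega$ and of $n$. This is essentially bookkeeping with Leibniz's rule, but one must be careful that the $\epsilon_0$ furnished by the two hypotheses for a given $\ell$ can be taken as a common value (just take the minimum of the two), and that the resulting constant $C_{\ell,n}$, while allowed to depend on $n$, does not secretly depend on $\omega$ through, say, an $L^\infty$ bound that degenerates; this is guaranteed because the hypotheses on $a(\omega)$ and $b(\omega)$ already encode $\mathbb{P}$-essential-supremum bounds. Everything else is a direct citation of the Kumano-go calculus and Lemma \ref{lem:conditionA}.
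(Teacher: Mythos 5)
Your proposal is correct and follows essentially the same route as the paper: form the separated double symbol $a(\omega,x,\xi)b(\omega,x',\xi')$, check periodicity and condition $(\mathscr A(m,m'))$, identify it with $A(\omega)B(\omega)$ via the Kumano-go calculus (the paper cites Corollary $2^\circ$ of Lemma 2.3 in Chapter 2 for this step), and conclude with Theorem \ref{thm:simplifiedsymbol} and Lemma \ref{lem:conditionA}. The only cosmetic difference is that you invoke Leibniz's rule where none is needed, since the variables are separated and the derivatives simply factor.
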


\begin{proof}
Define a double symbol $\tilde c(\omega)$ by $\tilde c(\omega,x,\xi,x',\xi')=a(\omega,x,\xi)b(\omega,x',\xi')$.
It is straightforward to check that the hypotheses imply that 
$\tilde c(\omega)$ satisfies \eqref{eq:doublesymbolperiodicity} and
condition $(\mathscr A(m,m'))$. By Corollary $2^\circ$ of Lemma 2.3 in Kumano-go~\cite{Kumano-go}*{Chapter 2},
we have that $A(\omega)B(\omega)=\tilde c(\omega,x,hD_x,x',hD_{x'})$. In view of Theorem \ref{thm:simplifiedsymbol},
the result now follows by an application of Lemma \ref{lem:conditionA}.
\end{proof}

We are now ready to prove the version of Egorov's theorem that we need.
This is then used to calculate the symbol of the operator $P_n(\omega)$ appearing in Theorem \ref{thm:Pn}.

\begin{thm}\label{thm:Egorov'sgeneraltime}
Let $m>0$. Let $a_m(\xi)$ be the escape function defined by \eqref{eq:escapefunction},
considered as a function in ${\Ci}^\infty(\R_x\times\R_\xi)$ which is constant in $x$.
Let $A_m$ be the semiclassical operator with symbol $a_m\in S^{m}(T^\ast(\Sone))$.
Then the operator $B(\omega)\equiv B(\epsilon;\omega,n,m)$ given by
$B(\omega)=M_{\nu,n}^\ast(\omega) A_m^{-2}M_{\nu,n}(\omega)$ is
$\mathbb{P}$-almost surely a semiclassical operator on $\Sone$
with simplified symbol $b_L(\omega)=b_{0}(\omega)+hb_{1}(\omega)$ in $S^{-2m}(T^\ast(\Sone))$
satisfying condition $(\mathscr A(-2m))$.
The principal symbol $b_{0}(\omega)$ is given by
\begin{equation}\label{eq:principalsymbolBgeneraltime}
b_{0}(\omega,y,\eta)=\sum_{\bar\alpha=0}^{k^n-1}T_{-\bar\alpha}\bigg(
a_m^{-2}(\tilde F^{(n)}(\omega,y,\eta))\cdot\frac{dG^{(n)}(\omega,y)}{dy}\bigg),
\end{equation}
where $\tilde F^{(n)}(\omega)$ is the lifted dynamics defined by \eqref{eq:lifteddynamics}.
Moreover, $b_{0}(\omega)$ satisfies condition $(\mathscr A(-2m))$ and
$b_{1}(\omega)$ satisfies condition $(\mathscr A(-2m-1))$.
\end{thm}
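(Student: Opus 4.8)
The idea is to obtain $B(\omega)=M_{\nu,n}^\ast(\omega)A_m^{-2}M_{\nu,n}(\omega)$ by applying the three lemmas in sequence, tracking at each stage that condition $(\mathscr A(-2m,0))$ (and subsequently $(\mathscr A(-2m))$) is preserved, and then invoking Lemma \ref{lem:conditionA} to pass from the double symbol to the simplified symbol. Concretely, write
\[
M_{\nu,n}^\ast(\omega)A_m^{-2}M_{\nu,n}(\omega)u(x)
=\sum_{\bar\alpha=0}^{k^n-1}T_{-\bar\alpha}(G^{(n)}(\omega))^\ast\Big(\frac{e^{-i\tau^{(n)}(\omega)/h}}{\partial E^{(n)}(\omega)}\,A_m^{-2}(e^{i\tau^{(n)}(\omega)/h}u)\circ E^{(n)}(\omega)\Big)(x),
\]
using \eqref{MonRgeneraltime}--\eqref{MstaronRgeneraltime}. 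First I would read off from Lemma \ref{lem:multiplicationcomposition} that $v\mapsto (\partial E^{(n)}(\omega))^{-1}A_m^{-2}v$ is a semiclassical operator on $\Sone$ with double symbol $a(\omega)$ in $S^{-2m,0}$ given by \eqref{eq:symbolofmultiplication}, satisfying \eqref{eq:preconditionA} uniformly in $\omega$ and $n$. Next I would apply Lemma \ref{lem:conjugationexponential} to conjugate by $e^{\pm i\tau^{(n)}(\omega)/h}$: this shifts the $\xi$-variable by $\phi^{(n)}(\omega,x,x')$ from \eqref{eq:phigeneraltime} and again preserves $S^{-2m,0}$, \eqref{eq:doublesymbolperiodicity} and \eqref{eq:preconditionA}. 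Finally I would apply Lemma \ref{lem:conjugationdiffeomorphism} to handle the composition with $(E^{(n)}(\omega))^\ast$ and the sum of translated pullbacks $\sum_{\bar\alpha} T_{-\bar\alpha}(G^{(n)}(\omega))^\ast$, producing a double symbol $b(\omega)$ in $S^{-2m,0}$ of the form \eqref{eq:translateddiffeomorphism}, still satisfying \eqref{eq:doublesymbolperiodicity} and \eqref{eq:preconditionA}.

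\textbf{From double symbol to simplified symbol.} Since \eqref{eq:preconditionA} holding for every $\ell$ with the $\epsilon_0$ independent of $n$ is precisely the statement that the family $\{b(\omega)\}_\omega$ satisfies condition $(\mathscr A(-2m,0))$ in the sense of Definition \ref{def:conditionA}, I would invoke Lemma \ref{lem:conditionA} with $m'=0$: the simplified symbol $b_L(\omega)=b_0(\omega)+hb_1(\omega)$ belongs to $S^{-2m}(T^\ast(\Sone))$ and satisfies condition $(\mathscr A(-2m))$, with $b_0(\omega)$ satisfying $(\mathscr A(-2m))$ and $b_1(\omega)$ satisfying $(\mathscr A(-2m-1))$. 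By Theorem \ref{thm:simplifiedsymbol}, $b_0(\omega,y,\eta)=b(\omega,y,\eta,y)$, i.e.\ the double symbol restricted to the diagonal $x'=x$. It then remains to compute this restriction explicitly. On the diagonal we have $\psi^{(n)}(\omega,y,y)=dG^{(n)}(\omega,y)/dy$ by \eqref{eq:psigeneraltime}, and $\phi^{(n)}(\omega,y,y)=d\tau^{(n)}(\omega,y)/dy$ by \eqref{eq:phigeneraltime}; substituting the composition of the three symbol formulas and comparing with the definition \eqref{eq:lifteddynamics}--\eqref{lifted} of $\tilde F^{(n)}(\omega)$ (noting that the second component of $\tilde F^{(n)}(\omega,y,\eta)$ is exactly $\eta/(dG^{(n)}(\omega,y)/dy)$ plus the $\tau$-correction produced by the $\phi^{(n)}$-shift), one arrives at \eqref{eq:principalsymbolBgeneraltime}. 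This is a routine but careful bookkeeping of the change-of-variables factors, which I would only sketch.

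\textbf{Main obstacle.} The genuinely delicate point is the uniformity in $\omega$ and in time $n$ throughout the chain of lemmas — i.e.\ that condition $(\mathscr A)$, not merely membership in $S^{-2m,0}$, survives each step. Each lemma already asserts this, but the chaining requires that the hypothesis of the next lemma (always of the form ``\eqref{eq:preconditionA} holds for every $\ell$ with $\epsilon_0(\ell)$ independent of $n$'') is met by the output of the previous one; this is exactly why the lemmas were phrased with $n$-independent $\epsilon_0$'s. The only subtlety beyond citing the lemmas is that the $\xi$-shift by $\phi^{(n)}(\omega,x,x')$ in Lemma \ref{lem:conjugationexponential}, combined with the rescaling by $\psi^{(n)}(\omega,x,x')$ in Lemma \ref{lem:conjugationdiffeomorphism}, produces, after differentiation, factors of $dG^{(n)}(\omega)/dy$ raised to negative powers and derivatives of $\tau^{(n)}(\omega)$; these must be bounded uniformly in $\omega$ and $n$. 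But by \eqref{emin} the former is controlled by $\lambda^{-n}$ (hence $\le 1$), the inverses are controlled by estimates of the type \eqref{eq:kappaestimatesAppendix}, and derivatives of $\tau^{(n)}(\omega)$ and $E^{(n)}(\omega)$ are controlled via Fa\`a di Bruno and \eqref{eq:boundsforE}--\eqref{eq:boundsfortau} exactly as in the proofs of Lemmas \ref{lem:multiplicationcomposition}--\ref{lem:conjugationdiffeomorphism}, so no new difficulty arises. With the uniform bounds in hand, the statement follows by assembling the three lemmas and Lemma \ref{lem:conditionA}, and reading off \eqref{eq:principalsymbolBgeneraltime} from the diagonal restriction.
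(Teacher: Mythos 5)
Your proposal follows essentially the same route as the paper's own proof: you chain Lemmas \ref{lem:multiplicationcomposition}, \ref{lem:conjugationexponential}, and \ref{lem:conjugationdiffeomorphism} in the same order starting from the factored expression for $B(\omega)$, invoke Lemma \ref{lem:conditionA} to pass to the simplified symbol with the correct grading of conditions $(\mathscr A(-2m))$ and $(\mathscr A(-2m-1))$, and compute $b_0$ by restricting the double symbol to the diagonal using $\psi^{(n)}(\omega,y,y)=dG^{(n)}(\omega,y)/dy$ and $\phi^{(n)}(\omega,y,y)=d\tau^{(n)}(\omega,y)/dy$, matching the second component of $\tilde F^{(n)}(\omega,y,\eta)$ via \eqref{eq:Ftildetimensecondargument}--\eqref{eq:Ftildetimensecondargument2}. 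The observation that uniformity in $\omega$ and $n$ — i.e.\ condition $(\mathscr A)$ rather than mere membership in $S^{-2m,0}$ — is the real substance of the chaining is precisely what the paper's lemma statements were engineered for, and you identify the controlling estimates (\eqref{emin}, \eqref{eq:kappaestimatesAppendix}, \eqref{eq:boundsforE}--\eqref{eq:boundsfortau}) correctly.
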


\begin{proof}
In view of \eqref{MonRgeneraltime}--\eqref{MstaronRgeneraltime} we have that
\begin{equation}\label{eq:Egorov1}
B(\omega)=\sum_{\bar\alpha=0}^{k^n-1} T_{-\bar\alpha}
(G^{(n)}(\omega))^\ast \left(
\frac{e^{-i\tau^{(n)}(\omega)}}{\partial E^{(n)}(\omega)}\circ A_m^{-2}\circ
e^{i\tau^{(n)}(\omega)}\right)(E^{(n)}(\omega))^\ast,
\end{equation}
where for example $e^{i\tau^{(n)}(\omega)}$ denotes the operator acting through multiplication
by $\exp{(i\tau^{(n)}(\omega))}$. Here, the operator
$(\partial E^{(n)}(\omega))^{-1}\circ A_m^{-2}$ can be treated by Lemma \ref{lem:multiplicationcomposition},
and the resulting operator can in turn be treated by Lemma \ref{lem:conjugationexponential}.
Finally, by an application of Lemma \ref{lem:conjugationdiffeomorphism}
we thus find that $B(\omega)$ is $\mathbb{P}$-almost surely a semiclassical operator
on $\Sone$ with double symbol $b(\omega)$ in $S^{-2m,0}$,
where
\begin{equation}\label{eq:doublesymboltimen}
b(\omega,x,\xi,x')=\sum_{\bar\alpha=0}^{k^n-1}\mathscr T_{-\bar\alpha}\bigg(
\frac{(dG^{(n)}(\omega,x)/dx)(dG^{(n)}(\omega,x')/dx')}{\psi^{(n)}(\omega,x,x')
a_m^{2}(\xi/\psi^{(n)}(\omega,x,x')+\sigma^{(n)}(\omega,x,x'))}
\bigg)
\end{equation}
is independent of $\xi'$ and satisfies \eqref{eq:doublesymbolperiodicity} and condition $(\mathscr A(-2m,0))$.
Here, $\sigma^{(n)}(\omega)$ is the composition
\[
\sigma^{(n)}(\omega,x,x')=\phi^{(n)}(\omega,G^{(n)}(\omega,x),G^{(n)}(\omega,x')),
\]
and, similarly, the factor $dG^{(n)}(\omega,x)/dx$ is the outcome of composing
the factor $(dE^{(n)}(\omega,x)/dx)^{-1}$ resulting from Lemma \ref{lem:multiplicationcomposition}
with $G^{(n)}(\omega,x)$.

By Lemma \ref{lem:conditionA} it now follows that
$B(\omega)=b_L(\omega,x,hD)$, where
the simplified symbol $b_L(\omega,x,\xi)=b_{0}(\omega,x,\xi)+hb_{1}(\omega,x,\xi)$ in $S^{-2m}(T^\ast(\Sone))$
satisfies condition $(\mathscr A(-2m))$,
and $b_{0}(\omega)$ and $b_{1}(\omega)$ are defined in accordance with Theorem \ref{thm:simplifiedsymbol}.
Moreover, $b_{0}(\omega)$ satisfies condition $(\mathscr A(-2m))$
and $b_{1}(\omega)$ satisfies condition $(\mathscr A(-2m-1))$.

Finally, we check that $b_{0}(\omega)$ is given by \eqref{eq:principalsymbolBgeneraltime}.
By definition we have $b_{0}(\omega,y,\eta)=b(\omega,y,\eta,y)$
where $b(\omega)$ is the double symbol introduced above. By \eqref{eq:psigeneraltime}
we have $\psi^{(n)}(\omega,y,y)=dG^{(n)}(\omega,y)/dy$. Similarly,
by \eqref{eq:phigeneraltime} we have
$\phi^{(n)}(\omega,y,y)=d\tau^{(n)}(\omega,y)/dy$, and using \eqref{notation:E^{(n)}}
and \eqref{eq:Ftildetimensecondargument2} it is straightforward to check that
\begin{equation}
\sigma^{(n)}(\omega,y,y)=\sum_{j=0}^{n-1}\frac{\tau'(\theta^j\omega,G^{(n-j)}(\theta^j\omega ,y))}
{dG^{(j)}(\omega,G^{(n-j)}(\theta^j\omega ,y))/dy},
\end{equation}
where $dG^{(0)}(\omega,\cdot)/dy=1$. In view of \eqref{eq:Ftildetimensecondargument}
it follows that $b_{0}(\omega,y,\eta)$ can be expressed as in \eqref{eq:principalsymbolBgeneraltime},
which completes the proof.
\end{proof}

In the following corollary we adjust our notation to adhere to the conventions used
in Section \ref{section:spectralgap}.

\begin{cor}\label{cor:Egorov'sgeneraltime}
Let $m>0$, and let $A_m$ be the semiclassical operator on $\Sone$
defined as in Theorem \ref{thm:Egorov'sgeneraltime}.
For $n\ge1$, let $P_n(\omega)\equiv P_n(\epsilon;\omega,m)$ be the operator
\[
P_n(\omega)=A_m M_{\nu,n}^\ast(\omega) A_m^{-2}M_{\nu,n}(\omega)A_m.
\]
Then $P_n(\omega)=P_n(\omega,x,hD)$ is $\mathbb{P}$-almost surely a semiclassical operator on $\Sone$ with
symbol $P_n(\omega,y,\eta)$ satisfying condition $(\mathscr A(0))$. Moreover, $P_n(\omega)=p_n(\omega)+hR_n(\omega)$,
where $p_n(\omega)$ satisfies condition $(\mathscr A(0))$
and $R_n(\omega)$ satisfies condition $(\mathscr A(-1))$.
We have
\begin{equation}\label{eq:principalsymbolgeneraltime}
p_{n}(\omega,y,\eta)=\sum_{\bar\alpha=0}^{k^n-1}T_{-\bar\alpha}\bigg(
\frac{a_m^2(\eta)}{a_m^{2}(\tilde F^{(n)}(\omega,y,\eta))}\cdot\frac{dG^{(n)}(\omega,y)}{dy}\bigg),
\end{equation}
where $\tilde F^{(n)}(\omega)$ is the lifted dynamics defined by \eqref{eq:lifteddynamics}.
\end{cor}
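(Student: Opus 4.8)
The plan is to deduce Corollary \ref{cor:Egorov'sgeneraltime} directly from Theorem \ref{thm:Egorov'sgeneraltime} by conjugating the operator $B(\omega)=M_{\nu,n}^\ast(\omega)A_m^{-2}M_{\nu,n}(\omega)$ with $A_m$ on both sides, so that $P_n(\omega)=A_m B(\omega) A_m$. Since $A_m$ is the semiclassical operator with symbol $a_m\in S^m(T^\ast(\Sone))$, which does not depend on the base variable and hence trivially satisfies condition $(\mathscr A(m))$ for all $\epsilon$, and $B(\omega)$ is a semiclassical operator on $\Sone$ with simplified symbol $b_L(\omega)\in S^{-2m}(T^\ast(\Sone))$ satisfying condition $(\mathscr A(-2m))$ by Theorem \ref{thm:Egorov'sgeneraltime}, I would invoke Proposition \ref{prop:composition} twice. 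First apply it to $A_m B(\omega)$ to obtain a semiclassical operator with symbol in $S^{-m}(T^\ast(\Sone))$ satisfying $(\mathscr A(-m))$, whose principal part is $a_m(\eta)\cdot b_0(\omega,y,\eta)$; then apply it again to the right multiplication by $A_m$ to obtain $P_n(\omega)=P_n(\omega,x,hD)$, a semiclassical operator on $\Sone$ with symbol in $S^0(T^\ast(\Sone))$ satisfying condition $(\mathscr A(0))$.

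The next step is to extract the principal symbol. Proposition \ref{prop:composition} tells us that at each stage the new symbol splits as the product of the old principal symbols plus $h$ times a symbol of one order lower satisfying the appropriate condition $(\mathscr A)$. Tracking this through the two compositions, I would write $P_n(\omega)=p_n(\omega)+hR_n(\omega)$ where $p_n(\omega,y,\eta)=a_m^2(\eta)\,b_0(\omega,y,\eta)$ satisfies $(\mathscr A(0))$ and $R_n(\omega)$ satisfies $(\mathscr A(-1))$; here one collects all the terms carrying an explicit factor of $h$ (there are a bounded number of them, coming from the two applications of Theorem \ref{thm:simplifiedsymbol}), and uses the remark following Definition \ref{def:conditionA} that symbols satisfying $(\mathscr A(-1))$ continue to satisfy $(\mathscr A(0))$ after multiplication by $h$, so the sum is consistent. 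Substituting the formula \eqref{eq:principalsymbolBgeneraltime} for $b_0(\omega)$ and pulling the $\eta$-independent factor $a_m^2(\eta)$ into the sum (noting $a_m^2(\eta)=T_{-\bar\alpha}a_m^2(\eta)$ since $a_m$ is independent of the base variable) yields precisely \eqref{eq:principalsymbolgeneraltime}, with $a_m^{-2}(\tilde F^{(n)}(\omega,y,\eta))$ rewritten as $1/a_m^{2}(\tilde F^{(n)}(\omega,y,\eta))$.

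I do not expect any serious obstacle here: the corollary is a bookkeeping exercise built on the substantive content already established in Theorem \ref{thm:Egorov'sgeneraltime} and Proposition \ref{prop:composition}. The only point requiring a little care is making sure that the uniformity in $\omega$ and in time $n$ is preserved under the two compositions — but this is exactly what condition $(\mathscr A)$ was designed to encode, and Proposition \ref{prop:composition} propagates it, since the constant $C_{\ell,n}$ is allowed to depend on $n$ while the threshold $\epsilon_0$ is not. The mild ``hard part'' is purely notational: reconciling the conventions of the appendix with those of Section \ref{section:spectralgap}, in particular identifying the lifted symbol $p_n(\omega,y,\eta)$ of \eqref{eq:principalsymbolgeneraltime} with the expression \eqref{principalsymbolP} in Theorem \ref{thm:Pn} via the set equality \eqref{eq:setequality} and the relation $dG_\alpha^{(n)}(\omega,y)/dy$ from \eqref{eq:derivativeofG}; but this translation has already been laid out in the discussion surrounding \eqref{eq:introducingG}.
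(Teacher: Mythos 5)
Your proposal is correct and follows essentially the same route as the paper: two applications of Proposition~\ref{prop:composition} to the conjugation by $A_m$, tracking condition~$(\mathscr A)$ and the principal part at each stage, then substituting~\eqref{eq:principalsymbolBgeneraltime} and absorbing the $\eta$-independent factor $a_m^2(\eta)$ into the translated sum. The only cosmetic difference is organizational --- the paper composes on the right first (forming $\tilde B_n(\omega)=B_n(\omega)A_m$) and splits $B_n$ into $b_{n0}+hb_{n1}$ before applying Proposition~\ref{prop:composition} to each piece, while you compose on the left first and collect the $h$-terms at the end --- but the bookkeeping is equivalent.
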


\begin{proof}
For $\mathbb{P}$-almost every $\omega$ we may argue as follows:
For $n\ge1$, let $B_n(\omega)=M_{\nu,n}^\ast(\omega) A_m^{-2}M_{\nu,n}(\omega)$
be the semiclassical operator on $\Sone$
given by Theorem \ref{thm:Egorov'sgeneraltime}.
By the same result, the simplified symbol $B_n(\omega,x,\xi)$
of $B_n(\omega)$ belongs to $S^{-2m}(T^\ast(\Sone))$
and satisfies condition $(\mathscr A(-2m))$. Moreover,
\[
B_n(\omega,x,\xi)=b_{n0}(\omega,x,\xi)+hb_{n1}(\omega,x,\xi),
\]
where the principal symbol $b_{n0}(\omega)$ 
satisfies condition $(\mathscr A(-2m))$ and is given by
\eqref{eq:principalsymbolBgeneraltime}, 
while $b_{n1}(\omega)$ satisfies condition $(\mathscr A(-2m-1))$.

Write $P_n(\omega)=A_m \tilde B_n(\omega)$, with $\tilde B_n(\omega)=(b_{n0}(\omega,x,hD)+hb_{n1}(\omega,x,hD))A_m$.
To calculate the symbol of $\tilde B_n(\omega)$ we apply Proposition \ref{prop:composition} individually to
the operators $b_{n0}(\omega,x,hD)A_m$ and $b_{n1}(\omega,x,hD)A_m$, and denote the resulting operators by
$c(\omega,x,hD)$ and $r(\omega,x,hD)$, respectively. For the latter it suffices to apply only the first part of the results of
Proposition \ref{prop:composition},
and conclude that $r(\omega,x,hD)$ is a semiclassical operator on $\Sone$ with symbol $r(\omega)$ satisfying condition $(\mathscr A(-m-1))$.
On the other hand, $c(\omega,x,hD)$ is a semiclassical operator on $\Sone$ with symbol
$c(\omega)\in S^{-m}(T^\ast(\Sone))$ satisfying condition $(\mathscr A(-m))$.
Moreover, $c(\omega)=c_{0}(\omega)+hc_{1}(\omega)$, where $c_{0}(\omega,y,\eta)=b_{n0}(\omega,y,\eta)a_m(\eta)$
satisfies condition $(\mathscr A(-m))$, while $c_{1}(\omega)$ satisfies condition $(\mathscr A(-m-1))$.
Summing up, we conclude that $\tilde B_n(\omega)=c(\omega,x,hD)+hr(\omega,x,hD)$ has symbol
\begin{equation*}
\tilde b_n(\omega)=c(\omega)+hr(\omega)=c_{0}(\omega)+h(c_{1}(\omega)+r(\omega)) \in S^{-m}(T^\ast(\Sone)).
\end{equation*}
Clearly, the sum $c_{1}(\omega)+r(\omega)$ satisfies condition $(\mathscr A(-m-1))$.
In view of the discussion following Definition \ref{def:conditionA} together with the fact that
$c_{0}(\omega)$ satisfies condition $(\mathscr A(-m))$, this implies that $\tilde b_n(\omega)$
satisfies condition $(\mathscr A(-m))$. 
A repetition of these arguments applied to $A_m \tilde B_n(\omega)$
shows that $P_n(\omega,x,hD)$ has
symbol $P_n(\omega,y,\eta)=p_n(\omega,y,\eta)+hR_n(\omega,y,\eta)$ satisfying condition $(\mathscr A(0))$, where
$p_n(\omega)$ satisfies condition $(\mathscr A(0))$ and is given by \eqref{eq:principalsymbolgeneraltime},
while $R_n(\omega)$ satisfies condition $(\mathscr A(-1))$.
This completes the proof.
\end{proof}

\begin{rmk}
It is somewhat treacherous to omit the dependence on $m$ from the notation of the operator $P_n(\omega)$,
especially concerning the fact that the symbol $P_n(\omega,x,\xi)$ satisfies condition $(\mathscr A(0))$.
In particular, inspecting Lemma \ref{lem:conditionA}
we find that the numbers $\epsilon_0(\ell)$ appearing in Definition \ref{def:conditionA}
will by our method of proof depend on $m$, since we have in essence shown that
$P_n(\omega,x,\xi)$ satisfies condition $(\mathscr A(m-2m+m))$. Naturally, the constants
$C_{\ell,n}$ appearing in Definition \ref{def:conditionA}
will also depend on $m$. This is taken into account when we finally prove
Theorem \ref{thm:Pn} below.
\end{rmk}

\subsection{$L^2$ continuity}

Here we shall study the norm of semiclassical operators of the type
treated above, acting on $L^2(\Sone)$. 
We start by briefly discussing 
operators on $L^2(\R)$. Using the identification between semiclassical operators acting on
${\Ci}^\infty(\Sone)$ and semiclassical operators
$p(\omega,x,hD)$, periodic in $x$, acting on smooth periodic functions, we shall then apply these
results to obtain norm estimates for operators $p(w,x,hD):L^2(\Sone)\to L^2(\Sone)$.
For normed vector spaces $X$ and $Y$ we let $\mathscr L(X,Y)$ as before denote the space of bounded linear operators
from $X$ into $Y$ endowed with the operator norm $\lVert\phantom{i}\rVert_{\mathscr L(X,Y)}$. When $X=Y$ we simply write
$\mathscr L(X)=\mathscr L(X,X)$.

For operators on $L^2$ it is beneficial to work with symbol classes $S(m,g)$
more general than $S^0(\R\times\R)$. Here $m(x,\xi)$ is an order function
and $g$ is a Riemannian metric. When $g$ is the Euclidean metric and $m(x,\xi)\equiv 1$
we simply write $S(1)$ for the resulting symbol class, which consists of smooth functions that,
together with all derivatives, are bounded on $\R^2$, that is, $a\in {\Ci}^\infty(\R^2)$ belongs to $S(1)$ if and only if
\[
\lvert\partial_\xi^\alpha D_x^\beta a(x,\xi)\rvert\le C_{\alpha,\beta}.
\]
We allow the symbols in $S(1)$ to depend on $h$ but require the estimates to be uniform for
$0<h\le 1$. Note that $S^0\subset S(1)$.

When $a\in S(1)$, the pseudodifferential operator $a^w(x,D)$ defined by
\[
a^w(x,D)u(x)=\frac{1}{2\pi}\int e^{i(x-y)\xi} a((x+y)/2,\xi)u(y)dyd\xi,
\quad u\in\Es,
\]
is bounded on $L^2(\R)$ with operator norm satisfying
\begin{equation}\label{eq:Weyloperatorbound}
\lVert a^w(x,D)\rVert_{\mathscr L(L^2(\R))}\le C_{\mathrm{dim}}\sum_{\lvert\alpha\rvert\le 
C_{\mathrm{uni}}} \lVert\partial^\alpha a\rVert_{L^\infty(\R^2)},
\end{equation}
where $C_{\mathrm{uni}}$ is a universal constant and $C_{\mathrm{dim}}$ only depends on the dimension $\dim{\R}=1$,
see the first part of Zworski~\cite{Zworski}*{Theorem 4.23}.
The operator $a^w(x,D)$ is referred to as the pseudodifferential Weyl quantization of $a$.
Estimates of the type \eqref{eq:Weyloperatorbound} are also satisfied by
the standard quantization $a(x,D)$; in fact, changing quantization we can
write $a(x,D)=b^w(x,D)$, and the map $a\mapsto b$ defined in this way
is an isomorphism of $S(1)$,
see H{\"o}rmander~\cite{Hormander3}*{Theorem 18.5.10}.
In particular, the semi-norms of $b$ can be estimated by semi-norms of $a$ in $S(1)$,
which proves the claim.
Using a standard scaling argument,
it follows that the semiclassical operator $a(x,hD)$ satisfies
\[
\lVert a(x,hD)\rVert_{\mathscr L(L^2(\R))}\le C_{\mathrm{dim}}\sum_{\lvert\alpha\rvert\le 
C_{\mathrm{uni}}} h^{\lvert\alpha\rvert/2}
\lVert\partial^\alpha a\rVert_{L^\infty(\R^2)}.
\]
For the corresponding statement for the semiclassical Weyl quantization $a^w(x,hD)$ of $a$,
see the second part of Zworski~\cite{Zworski}*{Theorem 4.23}.
Hence, if $a(\omega)\equiv a(\epsilon;\omega,n)$ satisfies condition $(\mathscr A(0))$, then
there is an $\epsilon_0$, depending on $C_{\mathrm{uni}}$ but independent of time $n\in\N^\ast$,
such that if $0\le\epsilon<\epsilon_0$ then
we $\mathbb{P}$-almost surely have
\begin{equation}\label{eq:CalderonVaillancourt}
\lVert a(\omega,x,hD)\rVert_{\mathscr L(L^2(\R))}\le C_{\mathrm{dim}}
\lVert a(\omega)\rVert_{L^\infty(\R^2)}+C_n h^{\frac{1}{2}}
\end{equation}
for $0<h\le 1$, where the constant $C_n$ is independent of $\omega$.
Sharper estimates can be obtained by using Wick
quantization, see Ando and Morimoto~\cite{AndoMorimoto}, or by using
Toeplitz quantization, see Zworski~\cite{Zworski}*{Chapter 13}.
In particular, it is possible to replace $C_{\mathrm{dim}}$ with a factor 1,
and $h^{\frac{1}{2}}$ by $h$. However, this will not needed here; in fact, the bounds we obtain
for the norm of operators on $L^2(\Sone)$ will
introduce an additional factor in front of $\lVert a(\omega)\rVert_{L^\infty}$,
so the presence of $C_{\mathrm{dim}}$ makes little difference,
and also does not in any
way complicate the proof of Theorem \ref{thm:spectralgap} where it appears
in \eqref{eq:choosingnzero1}--\eqref{eq:choosingnzero2}.
Since any refinements of \eqref{eq:CalderonVaillancourt} would need to be reexamined in the presence of
a noise parameter $\omega$, this has therefore been avoided for brevity.

Note that \eqref{eq:CalderonVaillancourt} of course implies the less precise estimate
\begin{equation}\label{eq:CalderonVaillancourtlessprecise}
\lVert a(\omega,x,hD)\rVert_{\mathscr L(L^2(\R))}\le C_n
\end{equation}
for $0<h\le 1$ and some new constant $C_n$. Indeed,
if $a(\omega)$ satisfies condition $(\mathscr A(0))$ then
we can $\mathbb{P}$-almost surely estimate $\lVert a(\omega)\rVert_{L^\infty(\R^2)}$
by a constant independent of $\omega$, which gives \eqref{eq:CalderonVaillancourtlessprecise}.

\begin{thm}\label{L2cont2}
Let $a(\omega)\in S^0(T^\ast(\Sone))$, where $a(\omega)\equiv a(\epsilon;\omega,n)$ in addition is allowed
to depend on a positive parameter $h$.
Suppose that $a(\omega)$ satisfies condition $(\mathscr A(0))$. Then
there is an $\epsilon_0$, independent of time $n\in\N^\ast$, such that for all $0\le\epsilon<\epsilon_0$
and $n\in\N^\ast$,
the semiclassical operator $a(\omega,x,hD)$ is bounded on $L^2(\Sone)$ with norm
\[
\lVert a(\omega,x,hD)\rVert_{\mathscr L(L^2(\Sone))}\le C_{\mathrm{dim}} 
\lVert a(\omega)\rVert_{L^\infty(T^\ast(\Sone))}+C_n h^\frac{1}{2}
\]
for $0<h\le 1$, where the constant $C_n$ is independent of $\omega$, $\mathbb{P}$-almost surely,
and $C_{\mathrm{dim}}$ only depends on the dimension $\dim{\R}=1$.
\end{thm}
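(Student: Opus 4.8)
The goal is to transfer the $L^2(\R)$ continuity estimate \eqref{eq:CalderonVaillancourt} for the semiclassical operator $a(\omega,x,hD)$ to the torus. The plan is to use the identification between semiclassical operators acting on $\Ci^\infty(\Sone)$ and periodic semiclassical operators acting on $\Ci_p^\infty(\R)$, and to realize the action on $L^2(\Sone)$ by localizing a periodic distribution to a fundamental domain with the help of a suitable partition of unity on $\R$. Concretely, I would fix a cutoff $\chi\in\Ci_c^\infty(\R)$ supported in $(-1/2,3/2)$, say, with $\sum_{\ell\in\Z}\chi(\cdot-\ell)\equiv 1$ on $\R$, and then write, for $u\in\Ci^\infty(\Sone)$ identified with its periodic extension,
\[
a(\omega,x,hD)u(x)=\sum_{\ell\in\Z}a(\omega,x,hD)\big(\chi(\cdot-\ell)u\big)(x),\quad x\in\Sone,
\]
where only finitely many terms (those with $\ell$ in a fixed finite set, by the support of $\chi$ and periodicity) contribute once $x$ is restricted to the fundamental domain $[0,1)$.

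\textbf{Key steps.} First I would record that $a(\omega)\in S^0(T^\ast(\Sone))\subset S(1)$, so by \eqref{eq:CalderonVaillancourt} (applied with the same $\epsilon_0$, which is independent of $n$ since condition $(\mathscr A(0))$ provides $\omega$-uniform bounds on the finitely many semi-norms entering $C_{\mathrm{uni}}$) the operator $a(\omega,x,hD):L^2(\R)\to L^2(\R)$ satisfies
\[
\lVert a(\omega,x,hD)\rVert_{\mathscr L(L^2(\R))}\le C_{\mathrm{dim}}\lVert a(\omega)\rVert_{L^\infty(\R^2)}+C_n h^{\frac12},\quad 0<h\le 1,
\]
$\mathbb P$-almost surely, with $C_n$ independent of $\omega$. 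Second, for each $\ell$ in the relevant finite index set, $\chi(\cdot-\ell)u\in L^2(\R)$ with $\lVert\chi(\cdot-\ell)u\rVert_{L^2(\R)}\le \lVert\chi\rVert_{L^\infty}\lVert u\rVert_{L^2([\ell-1/2,\ell+3/2])}$, and by periodicity of $u$ the latter is bounded by a fixed constant (depending only on $\chi$) times $\lVert u\rVert_{L^2(\Sone)}$. Third, summing the finitely many pieces and restricting to $[0,1)$ gives
\[
\lVert a(\omega,x,hD)u\rVert_{L^2(\Sone)}\le C\,\big(C_{\mathrm{dim}}\lVert a(\omega)\rVert_{L^\infty(\R^2)}+C_n h^{\frac12}\big)\lVert u\rVert_{L^2(\Sone)},
\]
which already yields the stated bound up to the harmless constant $C$. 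To get exactly the claimed form (with the bare $C_{\mathrm{dim}}$ in front of $\lVert a(\omega)\rVert_{L^\infty}$ and no extra factor), I would instead appeal to the cleaner route used in the literature, e.g.\ Zworski~\cite{Zworski}*{Section 5.3} or Saranen--Vainikko, whereby the norm of a periodic semiclassical operator on $L^2(\Sone)$ is dominated by the norm on $L^2(\R)$ of the same symbol — the periodization/restriction argument sketched above shows the two norms are comparable, but the $L^\infty$ term can in fact be kept with constant $C_{\mathrm{dim}}$ by noting that $\lVert a(\omega)\rVert_{L^\infty(\R^2)}=\lVert a(\omega)\rVert_{L^\infty(T^\ast(\Sone))}$ and that the extra error incurred by the cutoffs is itself $O(h^{\frac12})$ with an $\omega$-independent constant (the commutator $[a(\omega,x,hD),\chi(\cdot-\ell)]$ has symbol in $hS(1)$ with semi-norms controlled uniformly in $\omega$ by condition $(\mathscr A(0))$).

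\textbf{Main obstacle.} The delicate point is the $\omega$-uniformity and $n$-uniformity of the error term: one must check that the constant $C_n$ does not secretly depend on $\omega$ and that $\epsilon_0$ can be chosen independently of $n$. This is exactly what condition $(\mathscr A(0))$ is designed to supply — it bounds the finitely many semi-norms $\lvert a(\omega)\rvert_\ell^{(0)}$ with $\ell\le C_{\mathrm{uni}}$ by a constant $C_{C_{\mathrm{uni}},n}$ independent of $\omega$, for $0\le\epsilon<\epsilon_0(C_{\mathrm{uni}})$ with $\epsilon_0$ independent of $n$ — so the obstacle is really just bookkeeping: threading the number $C_{\mathrm{uni}}$ (the universal constant from Calder\'on--Vaillancourt) through Definition~\ref{def:conditionA} and verifying that the commutator errors from the partition of unity inherit the same uniformity. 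Once that is in place, the estimate for the torus follows immediately from \eqref{eq:CalderonVaillancourt} and the comparison of $L^2(\Sone)$ and $L^2(\R)$ operator norms.
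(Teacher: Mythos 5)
Your overall strategy (decompose via periodicity into translated pieces, apply the $L^2(\R)$ Calder\'on--Vaillancourt bound \eqref{eq:CalderonVaillancourt} to the near-diagonal pieces, and track $\omega$- and $n$-uniformity through condition $(\mathscr A(0))$) is the same as the paper's, but there is a genuine gap at the central step. You assert that after writing $a(\omega,x,hD)u=\sum_{\ell}a(\omega,x,hD)(\chi(\cdot-\ell)u)$ ``only finitely many terms contribute once $x$ is restricted to $[0,1)$.'' This is false: a pseudodifferential operator with symbol in $S^0$ is not local, so $a(\omega,x,hD)(\chi(\cdot-\ell)u)(x)$ does not vanish for $x\in[0,1)$ when $\ell$ is large --- its kernel is only \emph{rapidly decaying} off the diagonal. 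Consequently you are left with a genuinely infinite sum, and the issue is not discarding far terms but proving that their operator norms decay summably in $\ell$. Your later remark about the commutators $[a(\omega,x,hD),\chi(\cdot-\ell)]$ does not repair this: those commutators have symbols in $hS(1)$ with semi-norms bounded \emph{uniformly} in $\ell$ but with no decay in $\ell$, so the resulting bound $O(h)$ per term cannot be summed over $\ell\in\Z$.

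The missing ingredient is precisely the quantitative pseudolocality estimate that occupies most of the paper's proof: writing $A_j(\omega)=1_\T T_{-j}A(\omega)1_\T$ and, for $\lvert j\rvert\ge2$, inserting $L_j^N e^{i(x-y+j)\xi/h}=e^{i(x-y+j)\xi/h}$ with $L_j=-h^2\partial_\xi^2/\lvert x-y+j\rvert^2$ and integrating by parts $N$ times in $\xi$. This converts $A_j(\omega)$ into an operator whose symbol carries the factor $h^{2N}\lvert x-y\rvert^{-2N}\partial_\xi^{2N}a(\omega,x,\xi)$; since $\lvert x-y\rvert\sim 1+\lvert j\rvert$ on the relevant supports, choosing $N=3$ gives $\lVert A_j(\omega)\rVert_{\mathscr L(L^2(\Sone))}=\mathcal O(h^{\frac{1}{2}}(1+\lvert j\rvert)^{-3/2})$ with constants independent of $\omega$ (by condition $(\mathscr A(0))$ applied to $\partial_\xi^{2N}a(\omega)$), and this is summable in $j$. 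Only the three terms $j=0,\pm1$ are then estimated by \eqref{eq:CalderonVaillancourt}, which is why the leading constant in the statement is really $3C_{\mathrm{dim}}$ absorbed into $C_{\mathrm{dim}}$ --- your concern about recovering the ``bare'' $C_{\mathrm{dim}}$ is a non-issue, but the appeal to ``the cleaner route in the literature'' cannot replace the integration-by-parts estimate above, which is the actual content of the theorem.
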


\begin{proof}
The proof follows that of Zworski~\cite{Zworski}*{Theorem 5.5}. First, recall that we identify $\Sone$
with the fundamental domain $\T=\{x:0\le x<1\}$ in $\R$.
Elements in $L^2(\Sone)$
are identified with periodic functions on $\R$ belonging to $L^2([0,1])$,
and $a(\omega)$ with a function in
${\Ci}^\infty(\R_x\times\R_\xi)$ which is periodic in $x$. 
Hence
\[
\lVert a(\omega)\rVert_{L^\infty(\R^2)}
=\lVert a(\omega)\rVert_{L^\infty(T^\ast(\Sone))}.
\]
We let $A(\omega)$ denote the operator $A(\omega)=a(\omega,x,hD)$.
Next, use periodicity to write
$A(\omega)u(x)=\sum_{j\in\Z} A_j(\omega) u(x)$ for $x\in\T$, where
\begin{equation}\label{splittingofA}
A_j(\omega) u(x)=\frac{1}{2\pi h}\int_\R\int_\T e^{i(x-y+j)\xi/h}a(\omega,x,\xi)u(y)dyd\xi,\quad x\in\T.
\end{equation}
Let $1_\T$ denote the characteristic function of $\T$, and note that $a(\omega,x,\xi)=a(\omega,x+j,\xi)$
for all $j\in\Z$. By \eqref{splittingofA} it follows that $A_j(\omega) u(x)=(A(\omega)1_\T u)(x+j)$ for $x\in\T$.
With $T_j$ denoting the translation operator $T_j v(x)=v(x-j)$
we thus have
\begin{equation}\label{shapeofAn}
A_j(\omega)=1_\T T_{-j} A(\omega) 1_\T.
\end{equation}

For $x,y\in\T$ and $\lvert j\rvert\ge 2$ we now let $L_j$ be the differential operator
\[
L_j=-\frac{h^2\partial_\xi^2}{\lvert x-y+j\rvert^2},\quad \lvert j\rvert\ge 2,
\]
and note that $L_j^N e^{i(x-y+j)\xi/h}=e^{i(x-y+j)\xi/h}$ for every $N\in\N$.
Also, the coefficient of $L_j$ is smooth, and the (real) transpose ${}^t\! L_j$ of $L_j$
satisfies ${}^t\! L_j=L_j$. Let $\chi\in \Ci_0^\infty(\R)$ be a cutoff function identically equal to 1 near $\T$
and vanishing in a neighborhood of $\R\smallsetminus(-\frac{1}{2},\frac{3}{2})$, and set
\[
\tilde A_j(\omega) u(x)=\frac{1}{2\pi h}\int_\R\int_\R e^{i(x-y)\xi/h}\tilde a_j(\omega,x,y,\xi)u(y)dyd\xi,
\]
where
\begin{equation}\label{auxiliarysymbol}
\tilde a_j(\omega,x,y,\xi)=\chi(x-j)\chi(y) h^{2N}\lvert x-y\rvert^{-2N}\partial_\xi^{2N}a(\omega,x,\xi).
\end{equation}
It follows that
\begin{align*}
1_\T T_{-j} \tilde A_j(\omega) 1_\T u(x)&=
\frac{1}{2\pi h}\int_\R\int_\T e^{i(x-y+j)\xi/h}\chi(x)\chi(y)L_j^N a(\omega,x+j,\xi)u(y)dyd\xi\\
&=\frac{1}{2\pi h}\int_\R\int_\T (L_j^Ne^{i(x-y+j)\xi/h})\chi(x)\chi(y)a(\omega,x+j,\xi)u(y)dyd\xi,
\end{align*}
where the partial integration can be justified by approximation of $a(\omega)$ by functions in $\Es$.
Using the properties of $a(\omega)$, $\chi$ and $L_j$, 
a comparison with \eqref{splittingofA} shows that $A_j(\omega)=1_\T T_{-j} \tilde A_j(\omega) 1_\T$.

In the support of $(x,y)\mapsto\chi(x-j)\chi(y)$,
we have for $\lvert j\rvert\ge 3$ that $\lvert x-y\rvert\sim 1+\lvert j\rvert$, that is,
\[
\frac{1}{C}(1+\lvert j\rvert)\le \lvert x-y\rvert\le C(1+\lvert j\rvert),
\]
where the constant $C$ only depends on the support of $\chi$.
In fact, if $0<\delta<1$ and $\supp\chi\subset(-\delta,1+\delta)$ then it is straightforward to check
that $\lvert x-y\rvert\ge\frac{1-\delta}{2}(1+\lvert j\rvert)$ for all $\lvert j\rvert\ge3$. Since
$\lvert x-y\rvert\le(1+2\delta)(1+\lvert j\rvert)$ in the support of $(x,y)\mapsto\chi(x-j)\chi(y)$,
and $1+2\delta<\frac{2}{1-\delta}$ if $\delta$ is small,
we can take $C=\frac{2}{1-\delta}$. In particular, this implies that
\begin{align*}
\lvert x-y\rvert^{-2N}&\le 2^{2N}(1-\delta)^{-2N}(1+\lvert j\rvert)^{-2N}\\
&=(4(1-\delta)^{-2}(1+\lvert j\rvert)^{-3/2})^N(1+\lvert j\rvert)^{-N/2}.
\end{align*}
Choosing $\delta<1-2^{-\frac{1}{2}}$ we find that $\lvert x-y\rvert^{-2N}\le(1+\lvert j\rvert)^{-N/2}$ for all $\lvert j\rvert\ge3$.
We have similar estimates for the derivatives of $\lvert x-y\rvert^{-2N}$.
Since $a(\omega)$ is assumed to satisfy condition $(\mathscr A(0))$,
Definition \ref{def:conditionA} together with \eqref{auxiliarysymbol} and
\eqref{eq:CalderonVaillancourtlessprecise} applied to $\partial_\xi^{2N}a(\omega)$
then shows that for any $N\in\N$ there is an $\epsilon_0(N)$ such that
if $0\le\epsilon<\epsilon_0$ then
$\lVert\tilde A_j(\omega)\rVert_{\mathscr L(L^2(\R))}\le Ch^{2N}(1+\lvert j\rvert)^{-N/2}$, $\mathbb{P}$-almost surely,
for some new constant $C$ which is independent of $j\in\Z$. 
If $a(\omega)$ depends on time $n\in\N^\ast$, then so might
$C$; however $\epsilon_0$ does not,
see Definition \ref{def:conditionA}.
Since the dimension is one it suffices to choose $N=3$. Since
\begin{align*}
\lVert A_j(\omega) u\rVert_{L^2(\Sone)}&=\lVert1_\T T_{-j}\tilde A_j(\omega)1_\T u\rVert_{L^2(\R)}\\
&\le \lVert\tilde A_j(\omega)\rVert_{\mathscr L(L^2(\R))}\lVert 1_\T u\rVert_{L^2(\R)}
=\lVert\tilde A_j(\omega)\rVert_{\mathscr L(L^2(\R))}\lVert u\rVert_{L^2(\Sone)},
\end{align*}
it follows that, in particular, $\lVert A_j(\omega)\rVert_{\mathscr L(L^2(\Sone))}
=\mathcal O(h^{\frac{1}{2}}(1+\lvert j\rvert)^{-3/2})$
for $\lvert j\rvert\ge3$, $\mathbb{P}$-almost surely. When $\lvert j\rvert=2$ we have $\lvert x-y\rvert>1-2\delta$ in the support of 
$(x,y)\mapsto\chi(x-j)\chi(y)$, so similar arguments show that
if $0\le\epsilon<\epsilon_0$ then
we at least have $\lVert A_{\pm 2}(\omega)\rVert_{\mathscr L(L^2(\Sone))}
=\mathcal O(h^{\frac{1}{2}})$,
$\mathbb{P}$-almost surely.

Now let $\lvert j\rvert<2$. By \eqref{shapeofAn} we have
$\lVert A_j(\omega) u\rVert_{L^2(\Sone)}\le \lVert A(\omega)\rVert_{\mathscr L(L^2(\R))}\lVert u\rVert_{L^2(\Sone)}$.
By hypothesis, $A(\omega)=a(\omega,x,hD)$ where $a(\omega)$ satisfies condition $(\mathscr A(0))$. 
After decreasing $\epsilon_0$ is necessary, we find by virtue of \eqref{eq:CalderonVaillancourt}
that
\[
\lVert A_j(\omega)\rVert_{\mathscr L(L^2(\Sone))}\le C_{\mathrm{dim}}
\lVert a(\omega)\rVert_{L^\infty(\R^2)}+\mathcal O(h^{\frac{1}{2}})
\]
for $j=0,\pm 1$, where the error term is independent of $\omega$, $\mathbb{P}$-almost surely,
and $C_{\mathrm{dim}}$ only depends on the dimension. 
Replacing $C_{\mathrm{dim}}$ in the statement with $3C_{\mathrm{dim}}$,
the result now follows by a summation over $j$.
This completes the proof.
\end{proof}

We end this appendix with a proof of Theorem \ref{thm:Pn},
which essentially contains the results proved thus far.
We also include a proof of Theorem \ref{thm:Q} concerning
operators in the pseudodifferential regime $h=1$, when $\nu\in\Z$
is no longer viewed as a semiclassical parameter.

\begin{proof}[Proof of Theorem \ref{thm:Pn}]\label{appendixproofoftheorem}
Recall that
\[
P_n(\omega)=A_m M_{\nu,n}^\ast(\omega) A_m^{-2}M_{\nu,n}(\omega)A_m,\quad n\ge1.
\]
By Corollary \ref{cor:Egorov'sgeneraltime},
$P_n(\omega)$ is a semiclassical operator on $\Sone$ with
symbol $P_n(\omega,y,\eta)=p_n(\omega,y,\eta)+hR_n(\omega,y,\eta)$,
where $p_n(\omega)$ is given by \eqref{eq:principalsymbolgeneraltime} and satisfies condition $(\mathscr A(0))$,
while $R_n(\omega)$ satisfies condition $(\mathscr A(-1))$.
Note that $p_n(\omega)$ and $R_n(\omega)$ depend on $m$ by construction although this is not showcased in the notation,
see the remark following Corollary \ref{cor:Egorov'sgeneraltime}.
By Theorem \ref{L2cont2} we can find $\epsilon_0$ (depending on $m$ but 
independent of time $n\in\N^\ast$) such that if $0\le\epsilon<\epsilon_0$ then
\[
\lVert p_n(\omega,x,hD)\rVert_{\mathscr L(L^2(\Sone))}
\le C_{\mathrm{dim}}\lVert p_n(\omega)\rVert_{L^\infty(T^\ast(\Sone))}+\mathcal O_{n,m}(h^{\frac{1}{2}})
\]
for all $n\ge1$,
where the error term is independent of $\omega$, $\mathbb{P}$-almost surely,
and $C_{\mathrm{dim}}$ only depends on the dimension.
The same is true with $p_n(\omega)$ replaced by $R_n(\omega)$.
According to Definition \ref{def:conditionA} we also 
have, for sufficiently small $\epsilon$, that
$\lVert R_n(\omega)\rVert_{L^\infty(T^\ast(\Sone))}\le C_{n,m}$ for all $n\ge1$, where the constant $C_{n,m}$
is independent of $\omega$. This gives
\[
\lVert P_n(\omega)\rVert_{\mathscr L(L^2(\Sone))}\le
C_{\mathrm{dim}}\lVert p_n(\omega)\rVert_{L^\infty(T^\ast(\Sone))}+\mathcal O_{n,m}(h^{\frac{1}{2}}),
\quad n\ge1.
\]
The only thing that remains is to check that the expressions \eqref{eq:principalsymbolgeneraltime}
and \eqref{principalsymbolP} for the principal symbol coincide, but this follows from
the discussion surrounding \eqref{eq:setequality}.
The proof is complete.
\end{proof}

In the following proof we will treat the case $h=1$ and allow $\nu\in\Z$ to be arbitrary, fixed and unrelated to $h$.
Note also that time $n=1$ here.

\begin{proof}[Proof of Theorem \ref{thm:Q}]
By \eqref{eq:definitionofQnew} we have that
\[
(Q_\nu(\omega))^\ast Q_\nu(\omega)=\langle D\rangle^{m}M_\nu^\ast(\omega)\langle D\rangle^{-2m}M_\nu(\omega)\langle D\rangle^{m}
\]
since $\langle D\rangle^{-m}\langle D\rangle^{-m}=\langle D\rangle^{-2m}$.
Introduce $B_\nu(\omega)=M_\nu^\ast(\omega)\langle D\rangle^{-2m}M_\nu(\omega)$.
By \eqref{restrictionoperator} and \eqref{adjointrestrictionoperator} it follows that
$B_\nu(\omega)$ can (in the sense of this appendix) be identified with the operator
\[
B_\nu(\omega)u(x)=\sum_{j=0}^{k-1}T_{-j}(E^{-1}(\omega))^\ast\bigg(\frac{e^{-i\nu\tau(\omega)}}{E'(\omega)}
\langle D\rangle^{-2m}\Big(e^{i\nu\tau(\omega)}u\circ E(\omega) \Big)\bigg)(x) 
\]
for $u\in\Ci_p^\infty(\R)$, where $E(\omega)$ and $\tau(\omega)$
are the functions on $\R$ described in the beginning of this appendix
(compare with \eqref{eq:Egorov1} where we had $h=1/\nu$). Now note that when $\nu\in\Z$
is fixed, the operator acting through multiplication by $x\mapsto e^{\pm i\nu\tau(\omega,x)}$
is a pseudodifferential operator of order 0 with symbol $e^{\pm i\nu\tau(\omega)}$.
By \eqref{eq:assumptionCrclose}, this symbol satisfies condition $(\mathscr A(0))$
with the number $\epsilon_0$ in Definition \ref{def:conditionA} independent of $\nu$.
By \eqref{eq:assumptionCrclose}, the symbol $1/E'(\omega)$ also satisfies condition $(\mathscr A(0))$.
Since $\xi\mapsto \langle \xi\rangle^{-2m}$ clearly satisfies condition $(\mathscr A(-2m))$, 
we find by applying Proposition \ref{prop:composition} three times
(all in the case when $h=1$) that
\[
B_\nu(\omega)=\sum_{j=0}^{k-1}T_{-j}(E^{-1}(\omega))^\ast
A_\nu(\omega) E(\omega)^\ast,
\]
where the symbol of $A_\nu(\omega)$ satisfies condition $(\mathscr A(-2m))$.
By Lemmas \ref{lem:conjugationdiffeomorphism} and \ref{lem:conditionA} it follows that
the same is true for the symbol of $B_\nu(\omega)$,
so by finally applying Proposition \ref{prop:composition}
twice we find that the symbol $q_{\nu,m}(\omega)$ of 
$(Q_\nu(\omega))^\ast Q_\nu(\omega)=\langle D\rangle^{m}B_\nu(\omega)\langle D\rangle^{m}$
satisfies condition $(\mathscr A(0))$. In particular, we can find $\epsilon_0$ independent of $\nu$
(but depending on $m$ in view of the remark following Corollary \ref{cor:Egorov'sgeneraltime})
such that if $0\le\epsilon<\epsilon_0$ then
$\lVert q_{\nu,m}(\omega)\rVert_{L^\infty(T^\ast(\Sone))}\le C_{\nu,m}$ for all $\nu$,
$\mathbb{P}$-almost surely, where the constant depends on $\nu$ and $m$ but not on $\omega$.
Using this together with an application of Theorem \ref{L2cont2}, we find that for sufficiently small $\epsilon$ we have
\[
\lVert(Q_\nu(\omega))^\ast Q_\nu(\omega)\rVert_{\mathscr L(L^2(\Sone))}\le \tilde C_{\nu,m},\quad\text{$\mathbb{P}$-almost surely},
\]
where the constant is independent of $\omega$. This completes the proof.
\end{proof}

\begin{rmk}
With only minor changes, the previous proof would yield a formula for the double
symbol of $B_\nu(\omega)$, which could then be used to calculate the principal symbol of 
$(Q_\nu(\omega))^\ast Q_\nu(\omega)$, as in the proof of Theorem \ref{thm:Egorov'sgeneraltime}
and Corollary \ref{cor:Egorov'sgeneraltime}. 
By using this explicit formula and essentially
repeating the proof of Faure~\cite{Faure}*{Theorem 2}, one would obtain an alternative proof
of Proposition \ref{prop:discretespectrum2}.
\end{rmk}

\section*{Acknowledgements}
We would like to express our profound gratitude to
Fr{\'e}d{\'e}ric Faure, Masato Tsujii, Yoshinori Morimoto, Nils Dencker and Shigehiro Ushiki for many fruitful discussions and valuable comments.
We are also deeply grateful to an anonymous reviewer for many suggestions, all of which
substantially improved the paper.
The research of Yushi Nakano was supported in part by JSPS Kakenhi Grant No.~11J01842, Japan Society for the Promotion of Science.
The research of Jens Wittsten was supported in part by JSPS Kakenhi Grant No.~24-02782, Japan Society for the Promotion of Science.


\bibliography{NakanoWittsten}

\end{document}